%%LaTeX2e, November13, 2011
\documentclass[11pt,a4paper,reqno]{amsart}
\pagestyle{plain}

\usepackage{amsmath}
\usepackage{amsfonts}
\usepackage{amssymb}
\usepackage{amsthm}
\usepackage{latexsym}

\usepackage{xy}
\xyoption{all}

\reversemarginpar
\setlength{\marginparwidth}{60pt}

\newtheorem{thm}{Theorem}[section]
\newtheorem{prop}[thm]{Proposition}
\newtheorem{lemma}[thm]{Lemma}%[section]
\newtheorem{cor}[thm]{Corollary}

\newtheorem{definition}[thm]{Definition}
\newtheorem{rem}[thm]{Remark}

\newtheorem{ex}[thm]{Example}
\newtheorem{property}[thm]{Property}
%%%%%%%%%%%%%%%%%%%%%%%%%%%%%%%%
\def\v{\mathfrak{v}}

\def\g{\mathfrak{g}}
\def\a{\mathfrak{a}}

\def\k{\mathfrak{k}}

\def\p{\mathfrak{p}}

\def\N{\mathbb{N}}
\def\Z{\mathbb{Z}}
\def\R{\mathbb{R}}
\def\C{\mathbb{C}}

\def\E{\mathcal{E}}

\def\E{\mathcal{E}}
\def\Fou{\mathcal{F}}

\def\H{\mathbb{H}}
\def\SS{\mathcal{S}}
\def\O{\mathcal{O}}

\def\P{\mathcal{P}}

\def\cM{\mathcal{M}}
\def\cI{\mathcal{I}}

\def\ann{\mathrm{ann}}
\def\ev{\mathrm{ev}}
\def\id{\mathrm{id}}
\newcommand{\End}[1]{\mathrm{End}({#1})}
\newcommand{\tr}[1]{\mathrm{tr}({#1})}

\def\Hom{\mathrm{Hom}}

%%%%%%%%%%%%%%%%%%%%%%%%%%%%%%%%%%%%%%%%%%%%%%%%%%%%%%

\def\fv{{\mathfrak v}}
\def\E{\mathcal{E}}

%counters

\renewcommand{\theequation}{\thesection.\arabic{equation}}

%text helpers
\def\medno{\medbreak\noindent}
\def\text#1{\;\;\;\;{\rm \hbox{#1}}\;\;\;\;}
\def\qquad{\quad\quad}

%items
\def\itema{\item[{\rm (a)}]}
\def\itemb{\item[{\rm (b)}]}
\def\itemc{\item[{\rm (c)}]}
\def\itemd{\item[{\rm (d)}]}

%special symbols
\def\msy#1{{\mathbb #1}}
\def\C{{\msy C}}
\def\N{{\msy N}}
\def\Z{{\msy Z}}
\def\R{{\msy R}}

%greek small
\def\ga{\alpha}
\def\gb{\beta}
\def\gd{\delta}

\def\gf{\varphi}
\def\gg{\gamma}

\def\gl{\lambda}

\def\gs{\sigma}

%greek caps

%gothic
\def\got#1{\mathfrak #1}
\def\fa{{\got a}}

\def\fg{{\got g}}

\def\fk{{\got k}}

\def\fm{{\got m}}
\def\fn{{\got n}}

\def\fp{{\got p}}

\def\fv{{\got v}}

%math symbols

\def\to{\rightarrow}
\def\Re{{\rm Re}\,}

\def\inp#1#2{\langle#1\,,\,#2\rangle}

%math symbols%%%%%%%%%%%%%%%%%%%%%%%%%%%%%
\def\Ad{{\rm Ad}}
\def\Endo{{\rm End}}
\def\Hom{{\rm Hom}}

\def\ad{{\rm ad}}
\def\after{\,{\scriptstyle\circ}\,}

\def\pr{{\rm pr}}

\def\tr{{\rm tr}\,}
\def\ideal{\,\lhd\,}

%indices%%%%%%%%%%%%%%%%%%%%%%%%%%%%%%%%%

\def\iC{{\scriptscriptstyle \C}}

%calligraphic%%%%%%%%%%%%%%%%%%%%%%%%%%%%%

\def\cD{{\mathcal D}}
\def\cE{{\mathcal E}}
\def\cF{{\mathcal F}}
\def\cG{{\mathcal G}}
\def\cH{{\mathcal H}}
\def\cI{{\mathcal I}}

\def\cL{{\mathcal L}}
\def\cM{{\mathcal M}}

\def\cO{{\mathcal O}}
\def\cP{{\mathcal P}}

\def\cS{{\mathcal S}}
\def\cT{{\mathcal T}}

%math specialties%%%%%%%%%%%%%%%%%%%%%%%%%%%%%%%%%%%%%%%%%%%%%
%\def\maps{\nobreak\hskip.1111em\mathpunct{}\nonscript%
%       \mkern-\thinmuskip{:}\hskip.3333emplus.0555em\relax}
%\def\coleq{\mathchar"303A=}%defines := with : as a relation
\mathcode`:="603A%defines : as a punctuation instead of a relation
\def\col{\,:\,}%redefines col as a colon, in light of above
%%%%%%%%%%%%%%%%%%%%%%%%%%%%%%%%%%%%%%%%%%%%%%%%%%%%%%%%

\def\too{\longrightarrow}

\def\Cartan{\theta}

\def\Cci{C_c^\infty}

\def\ev{{\rm ev}}

\def\dotvar{\,\cdot\,}

\def\embeds{\hookrightarrow}

\def\PW{{\rm PW}}

\def\Fou{\cF}

%final section

%\def\spaceind{{}_{\scriptscriptstyle \circ}}

\def\Cartan{\theta}

\def\inj{{\mathrm i}}

\def\gL{{\Lambda}}
\def\faPdc{\fa_{P\iC}^*}

\def\PWpreP{{\rm PW}_P^{\rm pre}}

\def\fadc{{\fa_\iC^*}}
\def\PWPzeropre{{\rm PW}_{P_0}^{\rm pre}}
\def\bfi{{\mathbf i}}
\def\diffD{J}
\def\Cartan{\theta}

\def\fac{\fa_\iC}
\def\data{\Sigma}
\def\faPd{\fa_P^*}
\def\half{{\textstyle \frac12}}

\def\Vect{{\rm Vect}}
\def\FDVect{{\rm FDVect}}

%%%added jan 2011
\def\coleq{\;:\!=}
\def\dM{M^\wedge}
\def\dMP{M_P^\wedge}
\def\dMPds{M_{P, ds}^\wedge}

\def\hatMPds{\widehat M_{P,ds}}
\def\hatMPzero{\widehat M_{P_0}}
\def\dK{K^\wedge}

\def\hatotimes{\widehat\otimes}
\def\holspace{\cF}

\def\HFG{{\rm H\rm F}_G}
\def\HFGinfty{\HFG^\infty}
\def\HFGinftyadm{\HFG^{\infty, {\rm adm}}}
\def\HFGO{{}^\circ\HFG}
\def\HFGOinfty{{}^\circ\HFGinfty}
\def\HFGOinftyadm{{}^\circ\HFGinftyadm}
\def\HFL{{\rm H\rm F}_L}
\def\HFLinfty{\HFL^\infty}

\def\obj{{\rm obj}}
\def\rmi{{\rm i}}
\def\fg{{\mathfrak g}}
\def\HFgKadm{{\rm H\rm F}_{(\fg, K)}^{\,{\rm adm}}}

\def\gs{\sigma}
\def\Fou{{\mathfrak F}}

\def\FMod{{\rm F}{\rm M}}
\def\pre{{\rm pre}}
\def\bs{\backslash}

\numberwithin{equation}{section}

\title{A comparison of Paley--Wiener theorems for real reductive Lie groups}
\author{E.~P.~van den Ban,  S.~Souaifi}

\begin{document}
\maketitle
\tableofcontents
\section*{Introduction}
In this paper we make a detailed comparison between the Paley--Wiener
theorem of J. Arthur \cite{arthurpw},
and the one recently established by P.\ Delorme \cite{delormepw}.
         \par
Let $G$ be a real reductive Lie group of the Harish-Chandra class and let
$K$ be a maximal compact subgroup. Let $C_c^\infty(G, K)$ denote the
space of smooth
compactly supported functions on $G$ which behave finitely under
both left and right
translation by $K.$ The Paley--Wiener theorem of each of the above
mentioned authors describes the image of $C_c^\infty(G,K)$ under
Fourier transformation,
in terms of a so called Paley--Wiener space. In this paper we will show that the two
Paley--Wiener spaces are equal, without using the proof or the validity of any of the
associated Paley--Wiener theorems.
It thus follows that the two theorems are equivalent from an a priori point of view.
         \par
In order to be able to be more specific about the contents of this paper,
we shall first give a more detailed description of the two
Paley--Wiener theorems mentioned above.
         \par
Let $\fg = \fk \oplus \fp$ be a Cartan decomposition associated with
the maximal compact subgroup $K.$
Here and in the following we use the convention to denote Lie groups
by Roman capitals, and their
Lie algebras by the corresponding lower case German letters. Let $\fa$
be a maximal abelian subspace
of $\fp,$
and let $A = \exp \fa$ be the associated vectorial closed subgroup of $G.$
We denote by $\cP(A)$ the (finite) set of cuspidal parabolic subgroups of
$G$ containing $A.$
Each parabolic subgroup $P \in \cP(A)$ has a Langlands decomposition of the form
$P = M_P A_P N_P,$ with $A_P \subset A.$ Let
$\dMPds$ denote the set of (equivalence classes of)
discrete series representations of $M_P.$
For $(\xi,\cH) \in \dMPds$ and $\gl \in \faPdc,$ we denote by
$\pi_{\xi,\gl}= \pi_{P, \xi,\gl}$
the representation of $G$  induced from the representation
$\xi \otimes (\gl + \rho_P) \otimes 1$ of $P.$
The associated module of smooth vectors for $\pi_{\xi, \gl}$ has a realization on the space
$C^\infty(K\col \xi)$ consisting of smooth functions $\psi: \;K \to \cH_\xi,$
transforming according to the rule:
$$
\psi(mk) = \xi(m) f(k),\qquad\mbox{\rm for all}  \quad k \in K, m \in K\cap M_P.
$$
Accordingly, each function $f \in C^\infty_c(G)$ has an operator valued Fourier transform
$$
\hat f (P, \xi, \gl) \coleq \pi_{\xi,\gl}(f)
               = \int_G f(x)\, \pi_{\xi, \gl} (x) \; dx \in \Endo(C^\infty(K: \xi)).
$$
Moreover, the endomorphism $\hat f(P, \xi, \gl)$ depends holomorphically
on the variable $\gl \in \faPdc.$
If $f$ is bi-K-finite, then $\hat f(P, \xi, \gl)$ belongs to the space
$\cS(P: \xi)$ of bi-K-finite
elements of $\Endo(C^\infty(K: \xi)).$ Moreover, the holomorphic maps
$\gl \mapsto \hat f (P, \xi, \gl)$
 are non-zero for only finitely many of the pairs $(P, \xi).$ It follows
 that $\hat f(P)$ may be viewed as
 an element of the algebraic direct sum
\begin{equation}
\label{e: holomorphic functions for PW}
\oplus_{\xi}\;\;\cO(\faPdc) \otimes \cS(P:\xi).
\end{equation}
We define the pre-Paley--Wiener space $\PWpreP(G,K)$ as the space of elements
$(\gf(P,\xi)\mid \xi \in \dMPds)$ in (\ref{e: holomorphic functions for PW})
for which there exists a constant $R> 0$ and for every $n > 0$ a constant $C_n > 0$ such that
\begin{equation}
\label{e: PW estimate}
\| \gf (P, \xi, \gl) \| \leq C_n ( 1 + |\gl|)^{-n} e^{R |\Re \gl|}
\end{equation}
for all $\xi, \gl.$
\par
We can now describe the Paley--Wiener space involved in Arthur's theorem in \cite{arthurpw}.
Let $P_0$ be a fixed minimal parabolic subgroup in $\cP(A).$ Its Langlands decomposition
is of the form $P_0 = M A N_0,$ where $M$ is the centralizer of $A$ in $K.$
The Arthur Paley--Wiener space
\begin{equation}
\label{e: PW A}
\PW^A(G,K)
\end{equation}
is defined as the space of $\gf \in \PWPzeropre(G,K)$ satisfying all finite linear
relations of the form
\begin{equation}
\label{e: arthurs condition}
\sum_i \inp{ \gf(P_0,\xi_i, \gl_i; u_i)}{ \psi_i} = 0,
\end{equation}
with $\xi_i \in \dM,$ $\psi_i \in \cS(P_0: \xi_i)^*_{K \times K},$ $\gl_i \in \fadc$
and with
$u_i \in S(\fa^*)$ acting as differential operators in the $\gl$-variable
(see Section \ref{ss: derivation process} for notation), as soon as these
 relations are satisfied by all families of functions
$(\gl \mapsto \pi_{P_0, \xi, \gl}(x)\mid \xi\in \dM),$ for $x \in G.$
These are the so-called Arthur-Campoli relations.
In \cite{arthurpw}, Arthur defines a similar Paley--Wiener space involving all
minimal parabolic subgroups from $\cP(A).$ In \cite{BSpwspace} this space is
shown to be isomorphic to the one defined in (\ref{e: PW A}).
\par
Next, let us describe the Paley--Wiener space introduced by Delorme \cite{delormepw}.
The definition of this Paley--Wiener space involves the operation of taking successive
derivatives of a family $\pi_{\gl}$ of representations, depending holomorphically on
a parameter $\gl \in \faPdc,$ for some $P \in\cP(A).$ Such an operation is encoded
by a sequence $\eta =(\eta_1, \ldots, \eta_N)$ in $\faPdc,$ listing the directions
in which the derivatives should be taken successively.
The associated family $\pi^{(\eta)}_\gl$ is again a holomorphic family
of representations.
The operation of derivation also applies to a holomorphic family $\gl \mapsto \gf_\gl$ of
continuous endomorphisms of $\pi_\gl$ and then gives a holomorphic family $\gf^{(\eta)}$ of
endomorphisms of $\pi^{(\eta)}_\gl.$
Let $\cD$ be the set of all $4$-tuples $(P, \xi, \gl, \eta)$ with $P \in \cP(A),$
$\xi \in \dMPds,$ $\gl \in \faPdc$ and $\eta$ a finite sequence of linear functionals
from $\faPdc$ as above.
Given a datum $\delta = (P, \xi,\gl ,\eta)$ we define $\pi_\gd \coleq \pi_{P, \xi, \gl}^{(\eta)}.$
Moreover, given  $\gf \in \PWPzeropre(G,K)$ we define $\gf_\gd$ in a similar fashion.
Finally, given a sequence $\gd = (\gd_1, \ldots, \gd_N)$ of data from $\cD,$ we write
$\pi_{\gd}\coleq  \pi_{\gd_1} \oplus \cdots \oplus \pi_{\gd_N},$ and
$\gf_\gd\coleq  \gf_{\gd_1} \oplus \cdots \oplus \gf_{\gd_N}.$
\par
Delorme's Paley--Wiener space is defined as the space $\PW^D(G,K)$ of
functions $\gf \in \oplus_{P\in\cP(A)}\PWpreP(G,K)$ such that
\begin{enumerate}
\itema
for each finite sequence $\gd \in \cD^N$  the function $\gf_\gd$
preserves all invariant subspaces of $\pi_{\gd};$
\itemb
for any two finite sequences $\gd_1 \in \cD^{N_1}$ and
 $\gd_2 \in\cD^{N_2},$
and any two sequences of closed invariant subspaces
$U_j \subset V_j$
for $\pi_{\gd_j},$ $(j=1,2),$ the induced maps
$\bar \gf_{\gd_j} \in \Endo(V_j/U_j)$
are intertwined by all $G$-equivariant operators
$T: V_1/U_1 \to V_2/U_2.$
\end{enumerate}
\par
There is a natural map $\PW^D(G,K) \to \PWPzeropre(G,K),$ given by
\begin{equation}\label{e: restriction map PW}
(\gf(P, \xi)\mid P \in \cP(A),\xi \in \hatMPds) \mapsto
(\gf(P_0, \xi)\mid \xi \in \hatMPzero).
\end{equation}
In this paper we show that the map (\ref{e: restriction map PW})
 is a linear isomorphism
from $\PW^D(G,K)$
onto $\PW^A(G,K),$ see Theorem \ref{t: main thm}.
\par
To understand better the conditions involving the derivatives in the
 definition of the
Paley--Wiener spaces, we start, in
Section \ref{s: holomorphic families}, with the
study of holomorphic families and their derivatives. Instead of
focusing on
first order derivatives, we replace a holomorphic family by the
associated holomorphic section
in a suitable jet bundle. This idea also occurs in W.\ Casselman's paper
\cite[\S 9]{casselman}.
We reformulate it slightly, by using a suitable trivialization of the jet bundle.
Our construction starts with fixing a finite dimensional module $E$ for the ring $\cO_0$
of germs of holomorphic functions (of $\gl$) at zero. It then gives,
for $\pi_\gl$ a holomorphic family of
representations in a fixed complete space $V$ a new holomorphic family
$\pi_\gl^{(E)}$ in the space $E \otimes V.$
The differentiation procedure of Delorme turns out to be a
special case of this procedure, with $E$ a suitable module of dimension $2.$
In the same Section \ref{s: holomorphic families} we study how the functor
$\pi_\gl \mapsto \pi_\gl^{(E)}$ behaves with respect to analytic families of
intertwining operators and with respect to induction.
\par
In the next section, \ref{s: Arthur's PW}, we give an equivalent definition of
Arthur's Paley--Wiener space
by invoking the functors $\pi \mapsto \pi^{(E)},$ instead of the derivations given by
elements of $S(\fa^*)$ in the Arthur-Campoli relations.
These relations may then be reformulated as linear relations on differentiated
families of representations.
        \par
In Section \ref{s: Delorme's PW} we simplify the definition of ${\rm PW}^D(G,K).$
First of all, due to the intertwining relations in the definition, the space can be defined in terms
of just the minimal parabolic subgroup $P_0.$
Next, the intertwining conditions (b) turn out to be a consequence of the invariant subspace conditions (a).
        \par
In the final Section \ref{s: Hecke algebra},  we bring into play the Hecke
algebra $\H(G,K)$
consisting of all bi-K-finite distributions on $G$ supported by $K.$ The importance of this
algebra for representation theory is based on the fact that the category of
 Harish-Chandra
modules is isomorphic to the category of finitely generated admissible modules for this algebra.

A key lemma in this section is the following. For $(\pi,V)$ a
 Harish-Chandra module, let $\Endo(\pi)^\#$ denote
the space of $K\times K$-finite endomorphisms $\gf$  of $V$ with the property that for every positive
integer $n$ the Cartesian power $\gf^{\times n}$ preserves all invariant subspaces of $V^{\times n}.$
The mentioned key lemma asserts that
$$
\Endo(\pi)^{\#} = {\rm image}(\H(G,K))\subset \Endo(V).
$$
It follows from this lemma that the Paley--Wiener space $\PW^{D}(G,K)$ allows the following
description in terms of the Hecke algebra. For every finite dimensional $\cO_0$-module $E$,
 and all finite sets $\Xi \subset \dM$ and $\gL \subset \fadc,$ we define the representation
  $\pi_{E, \Xi, \gL}$ to be the direct sum of the representations $\pi_{P_0, \xi, \gl}^{(E)},$
  for $(\xi, \gl) \in \Xi\times \gL.$
Moreover, for $\gf \in \PWPzeropre(G,K),$ we define the endomorphism $\gf_{E, \Xi, \gL}$ of $\pi_{E, \Xi, \gL}$ by
taking a similar direct sum.
Then $\PW^{D}(G,K)$ maps isomorphically onto the space of $\gf \in \PWPzeropre(G,K)$ such
that for all $E, \Xi, \gL$ as above,
\begin{equation}
\label{e: gf in Hecke}
\gf_{E, \Xi, \gL} \in \pi_{E, \Xi, \gL}(\H(G,K)).
\end{equation}

On the other hand, it follows from its definition that Arthur's Paley--Wiener space $\PW^A(G,K)$
is equal to  the space of $\gf \in \PWPzeropre(G,K)$ such that for all $E, \Xi, \gL$ as above,
$
\gf_{E, \Xi, \gl}
$
is annihilated by the annihilator of $\pi_{E, \Xi, \gL}(\H(G,K))$ in the contragredient module.
Since this condition is equivalent to (\ref{e: gf in Hecke}), it thus follows that the map
(\ref{e: restriction map PW}) is a linear isomorphism onto $\PW^A(G,K)$ (Theorem \ref{t: main thm}).

Returning to the original formulation of Arthur's Paley--Wiener theorem, we finally wish to mention that the condition (\ref{e: arthurs condition}) may
be replaced by the condition that for all $\xi_i, u_i, \gl_i$ as in (\ref{e: arthurs condition}), there exists a $h \in \H(G,K)$ such that for all $i,$
$$
\gf(\xi_i, \gl_i;u_i) =
\pi_{P_0, \xi_i, \gl_i;u_i}(h).
$$
This characterization is given in Subsection
 \ref{s: another useful char of PW}  where it is used to derive, from Arthur's theorem,  the Paley-Wiener theorem for bi-K-invariant
functions, due to
S.\ Helgason \cite{helgasonpw} and R.\ Gangolli \cite{gangolli}.
\medno
{\bf Acknowledgments:\ } We thank Pierre Baumann for a helpful discussion,
which led to a simpler proof of Lemma \ref{l: dcomm}.
The second named author was partially supported by a grant of The Netherlands Organization for Scientific Research, NWO, under project number 613.000.213.

\section{Notation and preliminaries}
Throughout this paper, $G$ will be a real reductive Lie group in the
Harish-Chandra class and $K$ a maximal compact subgroup. Let $U(\g)$ be the universal
enveloping algebra of the complexification $\g_\iC$ of $\g.$ We denote by $X\mapsto X^\vee$ the anti-automorphism of $U(\fg)$ which on $\fg$ is given
by $X \mapsto - X.$
           \par
 In this paper, locally convex spaces will always
be assumed to be Hausdorff and defined over $\C.$
\par
For any continuous representation $(\tau,V)$ of $K$
(in a quasi-complete locally convex space) and any class $\gamma$ in the unitary dual
$\dK$ of $K$, the $K$-isotypic component of $(\tau,V)$ of type $\gamma$ is denoted
by $V_\gg.$ The associated $K$-equivariant projection onto $V_\gg$ is denoted by $P_\gg.$

For every finite subset $\theta$ of $\dK$, we put
$$
V_\theta\coleq \oplus_{\gamma\in\theta}\;V_\gamma \quad \text{and}\quad
P_\theta\coleq \oplus_{\gamma\in\theta}\;P_\gamma.
$$
          \par
For any continuous representation $(\pi,V)$ of $G$, with $V$ a quasi-complete locally convex
space, let $V^\infty$ and $V_K$ denote the vector subspaces of smooth
and $K$-finite elements of $V,$ respectively. The first one gives rise to a subrepresentation of $\pi$
and the second one to its underlying $(\g,K)$-module $(\pi,V_K).$          \par
We say that a (continuous) $G$-representation or a $(\g,K)$-module is admissible
if all its $K$-isotypic components are finite dimensional. A Harish-Chandra module is an
admissible $(\g,K)$-module which is finitely generated as a $U(\g)$-module.          \par
The space $C^\infty(G)$ of complex valued smooth functions on $G$ is equipped with the
left and right regular actions of $G;$ the subspace $C_c^\infty(G)$ of compactly supported functions
is invariant for these actions. The actions are continuous for the usual locally convex topologies
on these spaces and may be dualized by taking contragredients. Let $\E'(G)$ denote the space of
compactly supported distributions on $G,$ i.e., the topological linear dual of $C^\infty(G).$
          \par
Fix a (bi-invariant) Haar measure $dx$ on $G.$ Then the linear map
$$
C_c^\infty(G) \; \longrightarrow \;\E'(G),\quad f \;\longmapsto \; f\,dx
$$
is an injective intertwining operator for both $G$-action.
Accordingly, we will use this map to view $C_c^\infty(G)$ as a submodule of $\cE'(G).$
          \par
For any continuous representation $(\pi,V)$ and any $f\in C_c^\infty(G)$, let  $\pi(f)$ denote the endomorphism of $V$ defined by
$$\pi(f)v\coleq \int_Gf(x)\pi(x)v\,dx,\quad v\in V.$$
Then for all $v\in V^\infty$ and $\xi\in (V^\infty)^*$, the following equality holds
$$
\xi(\pi(f)v)=\langle f\,dx,\xi(\pi(\,\cdot\,)v)\rangle;
$$
the bracket on the right-hand side of the equation indicates the natural pairing between $\E'(G)$ and $C^\infty(G)$.
          \par
Let $(\pi_i,V_i)$, $i=1,2$, be two $(\g,K)$-modules. The space $\Hom(V_1,V_2)$ of
 (linear) homomorphisms from $V_1$ to $V_2$ is naturally endowed
with a $(\g\times\g,K\times K)$-module structure.
Indeed,
for any $T\in \Hom(V_1,V_2)$,
$$
\begin{array}{l}
(X_1,X_2)T=\pi_2(X_2)\circ T -  T \circ \pi_1(X_1),\quad X_1,X_2\in \g,\\
(k_1,k_2)T=\pi_2(k_2)\circ T\circ \pi_1(k_1^{-1}),\quad k_1,k_2\in K.
\end{array}
$$
Accordingly, the subspace $\Hom_{(\g,K)}(V_1,V_2)$ of $(\g,K)$-homomorphisms consists of
the elements of $\Hom(V_1,V_2)$ which are invariant under the diagonal action.
\begin{lemma}\label{ext1}
Let $(\pi,V)$ be an admissible representation of $G$ and let
$\theta_1$ and ${\theta_2}$ be finite subsets of $\dK.$ Then the linear map
$$
C_c^\infty(G) \; \longrightarrow \;\Hom(V_{\theta_1}, V_{\theta_2}),\qquad
f \; \longmapsto \; P_{{\theta_2}} \pi(f) P_{{\theta_1}}
$$
uniquely extends to a continuous linear map from $\E'(G)$ to $\Hom(V_{\theta_1}, V_{\theta_2}).$
\end{lemma}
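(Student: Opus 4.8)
The plan is to reduce the statement, via the pairing identity displayed just above the lemma, to the elementary fact that $\E'(G)$ pairs with all of $C^\infty(G)$, applied to matrix coefficients of $\pi$. First, since $\pi$ is admissible, the spaces $V_{\theta_1}$ and $V_{\theta_2}$ are finite dimensional, hence so is $\Hom(V_{\theta_1},V_{\theta_2})$; it therefore suffices to extend, for each $v\in V_{\theta_1}$ and each $\eta\in V_{\theta_2}^*$, the linear functional $f\mapsto\langle\eta,P_{\theta_2}\pi(f)P_{\theta_1}v\rangle$ on $C_c^\infty(G)$ to a continuous functional on $\E'(G)$, in a way that is bilinear in $(v,\eta)$. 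Here I would use that, $\pi$ being admissible, every $K$-finite vector of $V$ is smooth, so that $v\in V_{\theta_1}\subseteq V_K\subseteq V^\infty$; since moreover $P_{\theta_2}\colon V\to V_{\theta_2}$ is continuous with finite-dimensional range, the functional $\eta\circ P_{\theta_2}$ belongs to $(V^\infty)^*$, and the matrix coefficient
$$
c_{v,\eta}\colon x\longmapsto (\eta\circ P_{\theta_2})(\pi(x)v)=\langle\eta,P_{\theta_2}\pi(x)v\rangle
$$
is a smooth function on $G$, bilinear in $(v,\eta)$. Applying the displayed identity $\xi(\pi(f)v)=\langle f\,dx,\xi(\pi(\,\cdot\,)v)\rangle$ with $\xi=\eta\circ P_{\theta_2}$, and using $P_{\theta_1}v=v$, gives
$$
\langle\eta,P_{\theta_2}\pi(f)P_{\theta_1}v\rangle=\langle f\,dx,\,c_{v,\eta}\rangle,\qquad f\in C_c^\infty(G).
$$

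The right-hand side here makes sense when $f\,dx$ is replaced by an arbitrary $u\in\E'(G)$, the bracket now being the natural pairing between $\E'(G)$ and $C^\infty(G)$, precisely because $c_{v,\eta}$ is smooth on all of $G$ with no support restriction. I would therefore define $\Phi(u)\in\Hom(V_{\theta_1},V_{\theta_2})$ to be the unique endomorphism satisfying $\langle\eta,\Phi(u)v\rangle=\langle u,c_{v,\eta}\rangle$ for all $v\in V_{\theta_1}$ and $\eta\in V_{\theta_2}^*$; this is well defined by finite-dimensional duality together with the bilinearity of $c_{v,\eta}$, and the displayed identity shows $\Phi(f\,dx)=P_{\theta_2}\pi(f)P_{\theta_1}$, so $\Phi$ extends the map in the lemma. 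Continuity of $\Phi$ is immediate: after choosing bases of $V_{\theta_1}$ and $V_{\theta_2}^*$, each matrix entry of $\Phi(u)$ is of the form $u\mapsto\langle u,c_{v,\eta}\rangle$ for a fixed $c_{v,\eta}\in C^\infty(G)$, hence continuous on $\E'(G)$, and $\Hom(V_{\theta_1},V_{\theta_2})$ is finite dimensional. Uniqueness follows because $C_c^\infty(G)$ is dense in $\E'(G)$ --- in fact already for the weak topology $\sigma(\E'(G),C^\infty(G))$, for which $\Phi$ is also continuous --- so any two continuous extensions agree on it.

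I do not anticipate a real obstacle: the only substantial inputs beyond bookkeeping are the smoothness of $K$-finite vectors of an admissible representation, which is needed to know $c_{v,\eta}\in C^\infty(G)$, and the density of $C_c^\infty(G)$ in $\E'(G)$, both classical. The one point to keep straight is the duality: as a functional on $C_c^\infty(G)$ the original map is represented by the distribution $c_{v,\eta}\,dx$, which is \emph{not} compactly supported and hence not an element of $\E'(G)$; the extension to $\E'(G)$ is obtained not by evaluating this distribution but by letting $u\in\E'(G)=C^\infty(G)'$ act on the test function $c_{v,\eta}\in C^\infty(G)$.
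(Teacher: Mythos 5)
Your proposal is correct and follows essentially the same route as the paper's own proof: reduce by finite-dimensionality to scalar functionals $f\mapsto\xi(P_{\theta_2}\pi(f)v)$, observe that each such functional is pairing $f\,dx$ against a fixed smooth matrix coefficient $m(x)=\xi(P_{\theta_2}\pi(x)v)$, and let $u\in\E'(G)$ act directly on $m$ to obtain the extension, with uniqueness from density of $C_c^\infty(G)\,dx$ in $\E'(G)$. You are slightly more explicit than the paper about why the matrix coefficient is smooth (smoothness of $K$-finite vectors in an admissible representation, and the role of $\eta\circ P_{\theta_2}\in(V^\infty)^*$), but this is a clarification of the same argument rather than a different one.
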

\begin{proof}
Uniqueness of the extension follows by density of $C_c^\infty(G)\, dx$ in $\E'(G).$
Let $v\in V_{\theta_1}$ and $\xi\in V_{\theta_2}^*.$ Then by finite dimensionality of
$\Hom(V_{\theta_1}, V_{\theta_2})$ it suffices to show
that the linear map
$$
\mathcal{L}\,:\,C_c^\infty(G) \; \longrightarrow \; \C,\quad
f \; \longmapsto \;\xi(\pi(f)v)
$$
extends continuously to $\E'(G).$ Define the function $m \in C^\infty(G)$ by
 $m(x) = \xi(\pi(x)v).$ Then $\mathcal{L}(f) = \langle f\, dx , m\, \rangle$, for $f\in C_c^\infty(G).$
 Thus, $u \mapsto \langle u , m \rangle$ defines a continuous linear
extension of $\cL$ to $\E'(G).$
\end{proof}
We consider the convolution product $*$ on $C^\infty_c(G)$ given  by
$$
f * g\, (x) = \int_G f(y) g(y^{-1} x )\; dy,
$$
for $f ,g \in C^\infty_c(G)$ and $x \in G.$ It defines an algebra structure on $C_c^\infty(G).$
The  subspace $C_c^\infty(G,K)$ of left and right $K$-finite elements in $C_c^\infty(G)$ is closed under convolution, hence a subalgebra of $C_c^\infty(G).$
          \par
The convolution product has a unique extension
to a separately continuous bilinear map $\E'(G) \times \E'(G) \to \E'(G),$ denoted $(u,v) \mapsto u * v.$
This turns $\E'(G)$ into an algebra. It is readily seen that the subspace
$\E'(G,K)$ of left and right $K$-finite elements in $\E'(G)$ is closed under convolution,
hence a subalgebra. Likewise, the subspace $\E_K'(G)$ of distributions
with support in $K$ is a subalgebra, and so is the intersection
\begin{equation}
\label{e: Hecke algebra}
\H(G,K)\coleq \E_K'(G) \cap \E'(G,K).
\end{equation}
The latter is also called the Hecke algebra of the pair $(G,K)$ and is sometimes denoted by $\H$ for simplicity.
          \par
From Lemma \ref{ext1}, we obtain the continuous linear map
\begin{equation}
\label{e: pi on E'(G,K)}
\E'(G, K) \; \to \; \mathrm{End}(V_K)_{K\times K}, \quad
u \; \mapsto \; \pi(u)
\end{equation}
which intertwines the $(\g\times \g,K\times K)$-actions. Here the space on the right
is equipped with the weakest topology for which the $K \times K$-equivariant projections of finite
rank are continuous.
By application of Fubini's theorem we see that $\pi$ is a morphism on the convolution algebra $C_c^\infty(G,K),$ which is a dense
subalgebra of $ \E'(G,K).$

By separate continuity of $\ast$  and continuity of (\ref{e: pi on E'(G,K)}), it now follows that
the map (\ref{e: pi on E'(G,K)}) is a homomorphism of algebras.
          \par
Fix normalized Haar measure $dk$  on $K.$ Then each $\gf \in C(K)$ defines a distribution
$\gf\,dk $ in $\E'_K(G),$ given by
$$
\inp{\gf\, dk}{f} = \int_K \gf(k) f(k) \, dk.
$$
           \par
For a given representation  $\gamma\in\dK,$ we define the distribution  $\ga_\gg \in \H$
by $\ga_\gg = \dim(\gg) \chi_{\gg^\vee}\, dk,$ where $\chi_{\gg^\vee}$ denotes the
character of the contragredient $\gg^\vee$
of $\gg.$
Moreover, for $\theta \subset \dK$ a finite subset,
we define the element $\ga_\theta \in \H$ by
$$
\alpha_\theta\coleq \sum_{\gg\in \theta} \ga_\gg.
$$
The functions $\alpha_\theta,$ viewed as elements of $\H,$ will later be seen to
define an approximation of the identity in $\H.$
            \par
Let $\theta_1, \theta_2 \subset \dK$ be finite subsets.
We agree to write $\cE'(G,K)_{\theta_1\theta_2}$ for the space of distributions $\gf \in \cE'(G,K)$ satisfying
$$
\ga_{\theta_1} * \gf * \ga_{\theta_2} = \gf.
$$
Then $\cE'(G,K)_{\theta_1\theta_2}$ consists of the distributions in $\cE'(G,K)$ of left
$K$-type in $\theta_1$ and of right $K$-type in $\theta_2^\vee = \{\gg^\vee \mid \gg \in \theta_2\}.$
Similarly, we write
\begin{eqnarray*}
C^\infty_c(G,K)_{\theta_1\theta_2} & \coleq  & C_c^\infty(G,K)\; \cap\;  \cE'(G,K)_{\theta_1\theta_2},\;\;
\text{and}\\
\H_{\theta_1\theta_2} &\coleq  &\H \; \cap\; \cE'(G,K)_{\theta_1\theta_2}.
\end{eqnarray*}
          \par
so that, for all admissible $G$-representations $(\pi,V)$ and  $(\tau,U)$,
$$
\Hom(U, V)_{\theta_1\theta_2} \simeq \Hom(U_{\theta_2}, V_{ \theta_1}),
$$
naturally.
Viewing
$\Hom(U, V)_{K\times K} \simeq V_K \otimes (U^*)_K$ as a $(\fg\times \fg, K \times K)$-module  in a natural way,
we see that
$$
\Hom(U, V)_{\theta_1\theta_2} = \Hom(U, V)_{\theta_1\otimes \theta_2^\vee}.
$$
In particular,
it is readily seen that
$$
\pi(\cE'(G,K)_{\theta_1\theta_2}) \subset \End{V}_{\theta_1\theta_2} \simeq \End{V_K}_{\theta_1\theta_2}.
$$
Here we note that $\End{V_K}_{K \times K} \subset \End{V},$ naturally.
\begin{prop}\label{end-hecke}
For any admissible representation $(\pi,V)$ of $G$,
$$
\pi(C_c^\infty(G,K))=\pi(\E'(G,K))=\pi(\H).
$$
\end{prop}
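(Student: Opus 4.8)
The plan is to reduce to fixed $K$-types and run a finite-dimensional duality argument, the only substantial input being that matrix coefficients of Harish-Chandra modules are real-analytic. The inclusions $\pi(C_c^\infty(G,K))\subseteq\pi(\E'(G,K))$ and $\pi(\H)\subseteq\pi(\E'(G,K))$ are immediate, so the task is the two reverse inclusions. Fix finite $\theta_1,\theta_2\subseteq\dK$. By admissibility $\Hom(V_{\theta_2},V_{\theta_1})$ is finite-dimensional and $\pi$ maps $\E'(G,K)_{\theta_1\theta_2}$ into it. For $u\in\E'(G,K)_{\theta_1\theta_2}$, $v\in V_{\theta_2}$, $\xi\in V_{\theta_1}^*$ one has $\langle\xi,\pi(u)v\rangle=\langle u,c_{\xi,v}\rangle$, where $c_{\xi,v}(x):=\langle\xi,\pi(x)v\rangle$ (a consequence of the displayed identity in the preliminaries and Lemma~\ref{ext1}, extended from $C_c^\infty(G)$ by density of $C_c^\infty(G,K)$ in $\E'(G,K)$). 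Let $M\subseteq C^\infty(G)$ be the finite-dimensional span of these $c_{\xi,v}$; its elements have left $K$-type in $\theta_1$ and right $K$-type in $\theta_2^\vee$, and a short computation (using $P_{\theta_2}v=v$ and $P_{\theta_1}^*\xi=\xi$) gives $\langle\alpha_{\theta_1}*u*\alpha_{\theta_2},m\rangle=\langle u,m\rangle$ for every $u\in\E'(G)$ and $m\in M$.

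The key observation is that, for $S$ equal to any one of $C_c^\infty(G,K)$, $\H$, $\E'(G,K)$ (each stable under the idempotent operation $u\mapsto\alpha_{\theta_1}*u*\alpha_{\theta_2}$), one has $\pi(S_{\theta_1\theta_2})=\pi(\E'(G,K)_{\theta_1\theta_2})$ if and only if every $m\in M$ annihilated by all of $S_{\theta_1\theta_2}$ vanishes. This is finite-dimensional duality: a functional on $\Hom(V_{\theta_2},V_{\theta_1})$ is $T\mapsto\sum_i\langle\xi_i,Tv_i\rangle$; it vanishes on $\pi(S_{\theta_1\theta_2})$ exactly when $\langle\,\cdot\,,m\rangle\equiv 0$ on $S_{\theta_1\theta_2}$ with $m=\sum_i c_{\xi_i,v_i}$, and it vanishes on $\pi(\E'(G,K)_{\theta_1\theta_2})$ exactly when $m=0$ (take $u=\alpha_{\theta_1}*\delta_x*\alpha_{\theta_2}$, for which $\langle u,m\rangle=m(x)$). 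So it suffices to verify the criterion for $S=C_c^\infty(G,K)$ and for $S=\H$.

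For $S=C_c^\infty(G,K)$ this is soft: if $m\in M$ is annihilated by $C_c^\infty(G,K)_{\theta_1\theta_2}$, then replacing an arbitrary $f\in C_c^\infty(G)$ by $\alpha_{\theta_1}*f*\alpha_{\theta_2}$ shows $\langle f\,dx,m\rangle=0$ for all $f\in C_c^\infty(G)$, so $m=0$. For $S=\H$ the argument has real content: if $m\in M$ is annihilated by $\H_{\theta_1\theta_2}$, then replacing an arbitrary $h_0\in\E_K'(G)$ by $\alpha_{\theta_1}*h_0*\alpha_{\theta_2}\in\H_{\theta_1\theta_2}$ shows $m$ is annihilated by every distribution supported in $K$; letting $h_0$ run over derivatives of Dirac masses at points of $K$, this says precisely that $m$ vanishes to infinite order along $K$. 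Now each $v\in V_{\theta_2}$ lies in a finitely generated, hence Harish-Chandra, submodule of $V_K$, so $v$ is $\fk$-finite and $Z(\g)$-finite; therefore $c_{\xi,v}$ is annihilated under the right regular representation by a power of an elliptic operator with analytic coefficients (a standard combination of the Casimir elements of $\g$ and of $\fk$), and so is real-analytic; hence $m$ is real-analytic. Since $G=K\exp\fp$, the group $K$ meets every connected component of $G$, and a real-analytic function vanishing to infinite order at a point of each component is identically zero; so $m=0$.

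Combining the three cases gives $\pi(C_c^\infty(G,K)_{\theta_1\theta_2})=\pi(\E'(G,K)_{\theta_1\theta_2})=\pi(\H_{\theta_1\theta_2})$ for all finite $\theta_1,\theta_2$, and taking the union over all such pairs proves the proposition. I expect the real obstacle to be the inclusion $\pi(\E'(G,K))\subseteq\pi(\H)$: distributions supported in $K$ see a function only through its infinite jet along $K$, so this cannot hold for a general representation and genuinely uses analyticity of matrix coefficients of $K$-finite vectors of an admissible representation (an elliptic-regularity / unique-continuation input on $G$). As a by-product, one could instead deduce $\pi(C_c^\infty(G,K))=\pi(\E'(G,K))$ more directly from density of $C_c^\infty(G,K)_{\theta_1\theta_2}$ in $\E'(G,K)_{\theta_1\theta_2}$ together with the closedness of linear subspaces of the finite-dimensional space $\Hom(V_{\theta_2},V_{\theta_1})$.
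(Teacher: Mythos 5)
Your proof is correct, and it takes essentially the same route as the paper's: reduce to fixed $K$-types, use finite-dimensional duality to pass from the image sets to their annihilating matrix coefficients, and then use real-analyticity of $K\times K$-finite matrix coefficients (via $Z(\g)$- and $\fk$-finiteness and an elliptic operator, together with the fact that $K$ meets every component of $G$) to show that a matrix coefficient annihilated by $\H$ must vanish. The only cosmetic difference is that the paper sets up the duality as the trace form on $\End{V_K}_{\theta\theta}$ rather than the natural pairing on $\Hom(V_{\theta_2},V_{\theta_1})$, and shows the infinite-order vanishing at $e$ before invoking $K$-stability to propagate it, rather than at all points of $K$ at once.
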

\begin{proof}
We denote the three given subspaces of $\End{V_K}_{K\times K}$ by $E_\infty,$
$E_{\E'}$ and $E_\H$ respectively.
Let $\gb$ be the  $K$-equivariant bilinear form on $\End{V_K}_{K \times K}$ given by
$$
\gb(A, B) = \tr (A \after B).
$$
By admissibility of $\pi$ it follows that $\gb$ defines a non-degenerate pairing, which is perfect
when restricted to $\End{V_K}_{\theta\theta},$ for $\theta$ any finite subset of $\dK$.
Therefore, it suffices to show that the $\gb$-orthocomplements
$E_\infty^\perp,$ $E_{\cE'}^\perp$ and $\H^\perp$
are equal. Thus, let $T\in \End{V_K}_{K\times K}$. Then it suffices to show that the following
assertions are equivalent:
\begin{enumerate}
\item[(i)] $\tr{T\pi(x)}=0$, for all $x\in G$;
\item[(ii)]  $\tr{T\pi(f)}=0$, for all $f\in C_c^\infty(G,K)$;
\item[(iii)] $\tr{T\pi(f)}=0$, for all $f\in \E'(G,K)$;
\item[(iv)] $\tr{T\pi(f)}=0$, for all $f\in \H.$
\end{enumerate}
Obviously, (i) implies (ii). By density of $C_c^\infty(G,K)_{\theta\theta}$ in $\cE'(G,K)_{\theta\theta},$
for every finite subset $\theta\subset \dK,$ if follows that (ii) implies (iii).
Moreover, (iii) implies (iv). We will finish the proof by showing that (iv) implies (i).
          \par
Assume (iv).
For $x\in G$, we define $M(x)\coleq \tr{T\pi(x)}.$ By admissibility and $K\times K$-finiteness of $T$
it follows that $M$ is an analytic function on $G.$ From (iv) it follows that
$\langle f,M\rangle=0$ for any $f\in\H.$
Fix a finite subset $\theta \subset \dK$ such that
$M \in C_c^\infty(G,K)_{\theta\theta}.$ Then $M = \ga_\theta * M * \ga_\theta.$
Let $u\in U(\g)$, and set
$$
f\coleq \alpha_\theta\ast(L_u\delta_e)\ast\alpha_\theta\in\H,
$$
where $\delta_e$ is the Dirac measure at the unit element $e$ of $G.$ Then
$$
0=\langle f,M\rangle=\langle L_u\delta_e,M\rangle=(L_{\check{u}}M)(e).
$$
By analyticity this implies that $M$ vanishes on the identity component $G_0$ of $G.$
By $K$-stability of $\H$, we deduce that $M$ vanishes on $KG_0.$ Since $G$ is of the Harish-Chandra class,
this means that $M=0$ on $G.$
\end{proof}
         \par
\begin{cor}\label{end-hecke-ktype}
Let $(\pi,V)$ be an admissible representation of $G$ and assume that
$\theta_1$ and $\theta_2$ are finite subsets of $\dK$.
Then
$$
\begin{array}{rcl}
\pi(C_c^\infty(G,K)_{\theta_1\theta_2}) & = & \pi(\cE'(G,K)_{\theta_1\theta_2})\\
& = & \pi(\H_{\theta_1\theta_2})\\
& = & \pi(\H)\cap\End{V_K}_{\theta_1\theta_2}
\end{array}
$$
\end{cor}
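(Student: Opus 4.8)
The plan is to reduce the corollary to Proposition \ref{end-hecke} by conjugating with the idempotents $\alpha_{\theta_1},\alpha_{\theta_2}\in\H$. First I would set $E\coleq\pi(C_c^\infty(G,K))=\pi(\cE'(G,K))=\pi(\H)$; by Proposition \ref{end-hecke} these three subspaces of $\End{V_K}_{K\times K}$ coincide, and since $\pi$ restricts to an algebra homomorphism on $\cE'(G,K)$, the image $E$ is a subalgebra of $\End{V_K}_{K\times K}$. I would also recall that $\pi(\alpha_\theta)=P_\theta$ for every finite subset $\theta\subseteq\dK$ (and that the $\alpha_\theta$ are idempotents for the convolution product), so that in particular $P_{\theta_1},P_{\theta_2}\in E$.

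The first step is to show that, for each of $\cA\in\{C_c^\infty(G,K),\cE'(G,K),\H\}$, one has $\alpha_{\theta_1}\ast\cA\ast\alpha_{\theta_2}=\cA_{\theta_1\theta_2}$. The inclusion $\cA_{\theta_1\theta_2}\subseteq\alpha_{\theta_1}\ast\cA\ast\alpha_{\theta_2}$ is immediate from the defining relation $\alpha_{\theta_1}\ast u\ast\alpha_{\theta_2}=u$ for $u\in\cA_{\theta_1\theta_2}$. For the reverse inclusion, when $\cA=\cE'(G,K)$ or $\H$ one just uses that $\alpha_{\theta_i}\in\cA$ and $\cA$ is closed under convolution; when $\cA=C_c^\infty(G,K)$ one uses instead that the convolution of a compactly supported distribution with a smooth compactly supported function is again smooth and compactly supported, so that $\alpha_{\theta_1}\ast C_c^\infty(G,K)\ast\alpha_{\theta_2}\subseteq C_c^\infty(G,K)\cap\cE'(G,K)_{\theta_1\theta_2}=C_c^\infty(G,K)_{\theta_1\theta_2}$. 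Applying the algebra homomorphism $\pi$ and using $\pi(\alpha_{\theta_i})=P_{\theta_i}$ then gives
$$
\pi(\cA_{\theta_1\theta_2})=P_{\theta_1}\,\pi(\cA)\,P_{\theta_2}=P_{\theta_1}\,E\,P_{\theta_2},
$$
a subspace that does not depend on $\cA$. This establishes the first three equalities.

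It then remains to identify $P_{\theta_1}EP_{\theta_2}$ with $\pi(\H)\cap\End{V_K}_{\theta_1\theta_2}=E\cap\End{V_K}_{\theta_1\theta_2}$. Here I would invoke the natural isomorphism $\End{V_K}_{\theta_1\theta_2}\simeq\Hom(V_{\theta_2},V_{\theta_1})$ recalled before the statement, which amounts to saying that $A\in\End{V_K}_{K\times K}$ lies in $\End{V_K}_{\theta_1\theta_2}$ precisely when $A=P_{\theta_1}AP_{\theta_2}$. On the one hand this gives $P_{\theta_1}EP_{\theta_2}\subseteq\End{V_K}_{\theta_1\theta_2}$, while $P_{\theta_1},P_{\theta_2}\in E$ together with $E$ being a subalgebra gives $P_{\theta_1}EP_{\theta_2}\subseteq E$; on the other hand, any $A\in E\cap\End{V_K}_{\theta_1\theta_2}$ satisfies $A=P_{\theta_1}AP_{\theta_2}\in P_{\theta_1}EP_{\theta_2}$. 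This yields the last equality. I expect no real obstacle here: the argument is essentially formal manipulation with the idempotents $\alpha_\theta$, and the only point requiring a genuine (if small) check is that conjugation by $\alpha_{\theta_1}$ and $\alpha_{\theta_2}$ preserves $C_c^\infty(G,K)$ and not just $\cE'(G,K)$; some care is also needed to keep the $\theta$ versus $\theta^\vee$ bookkeeping straight.
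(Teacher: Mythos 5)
Your proof is correct and spells out precisely the argument that the paper compresses into the one-line remark ``this follows from Proposition~\ref{end-hecke} by using $K$-equivariant projections'': conjugation by the idempotents $\alpha_{\theta_i}$, the identity $\pi(\alpha_\theta)=P_\theta$, and the characterization of $\End{V_K}_{\theta_1\theta_2}$ as the set of $A$ with $A=P_{\theta_1}AP_{\theta_2}$. The only point that genuinely required justification beyond the formal manipulation — that $\alpha_{\theta_1}\ast C_c^\infty(G,K)\ast\alpha_{\theta_2}$ lands back in $C_c^\infty(G,K)$ — you handled correctly.
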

\begin{proof}
This follows from Proposition \ref{end-hecke} by using $K$-equivariant projections.
\end{proof}

\section{Holomorphic families of representations and their derivatives}\label{s: holomorphic families}

Let $\fv$ be a finite dimensional real linear space. For any open subset $\Omega$ of its complexification
$\fv_\iC$, we denote by $\O(\Omega)$ the space of holomorphic $\C$-valued functions
on $\Omega$, endowed with the topology of uniform convergence on compact subsets.          \par
For $\mu\in\fv_\iC,$ we denote by $\O_\mu$  the algebra of germs at $\mu$ of
holomorphic functions defined on a neighborhood of $\mu.$ For any $\Omega$ as above,
$\mu\in\Omega$ and $f \in  \O(\Omega)$, the germ of $f$ at $\mu$ is denoted by
$\gamma_\mu(f)\in\O_\mu.$
             \par
Let $\cP = \cP(\fv_\iC)$ denote the algebra of polynomial functions $\fv_\iC \to \C.$ Then the map
$p \mapsto \gg_0(p)$ is an embedding of algebras, $\cP \hookrightarrow \cO_0.$
Accordingly, we shall view $\cP$ as a subalgebra of $\cO_0.$
             \par
The ring $\cO_0$ is local; its unique maximal ideal $\cM$ consists of the elements vanishing at $0.$
An ideal $\cI \ideal \cO_0$ is said to be cofinite if the quotient $\cO_0/\cI$ is finite
dimensional as a vector space over $\C.$ For $k \in \N,$ let  $\cP_k$ denote the space of polynomial functions
$\fv_\iC \to \C$ of degree at most $k.$ Then
\begin{equation}
\label{e: deco cO}
\cO_0 = \cP_k \oplus \cM^{k+1}.
\end{equation}
Therefore, the ideal $\cM^{k+1}$ is cofinite in $\cO_0.$

\begin{lemma}
\label{l: cofinite ideals}
Let $\cI$ be an ideal in $\cO_0.$ Then the following assertions are equivalent:
\begin{enumerate}
\itema the ideal $\cI$ is cofinite;
\itemb there exists a $k \in \N$ such that $\cM^{k+1} \subset \cI.$
\end{enumerate}
\end{lemma}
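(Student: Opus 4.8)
The plan is to prove the two implications separately. The implication (b) $\Rightarrow$ (a) will be immediate from the direct-sum decomposition (\ref{e: deco cO}): since $\cO_0 = \cP_k \oplus \cM^{k+1}$, the quotient $\cO_0/\cM^{k+1}$ is isomorphic, as a $\C$-vector space, to $\cP_k$, hence is finite dimensional. Thus, if $\cM^{k+1} \subset \cI$, then $\cO_0/\cI$ is a quotient of the finite dimensional space $\cO_0/\cM^{k+1}$, so it is finite dimensional, i.e. $\cI$ is cofinite.

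For the converse (a) $\Rightarrow$ (b), I would exploit the local structure of $\cO_0$ together with Nakayama's lemma. Let $q \colon \cO_0 \to \cO_0/\cI$ be the canonical projection, and consider the images $q(\cM^j)$, $j \ge 0$. These form a decreasing chain of $\C$-linear subspaces of the space $\cO_0/\cI$, which is finite dimensional by hypothesis; hence the dimensions $\dim_\C q(\cM^j)$ form a non-increasing sequence of non-negative integers, which must be eventually constant. Combined with $q(\cM^{j+1}) \subseteq q(\cM^j)$, this yields a $k \in \N$ with $q(\cM^k) = q(\cM^{k+1})$. Now $M \coleq q(\cM^k)$ is finitely generated as an $\cO_0$-module (indeed it is already finite dimensional over $\C$, so any $\C$-basis generates it over $\cO_0$), and it satisfies $\cM M = q(\cM \cdot \cM^k) = q(\cM^{k+1}) = M$. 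Since $\cM$ is the unique maximal ideal of the local ring $\cO_0$, Nakayama's lemma forces $M = 0$, that is, $\cM^k \subset \cI$, and hence also $\cM^{k+1} \subset \cM^k \subset \cI$, which is (b).

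I do not expect a serious obstacle here; the only point deserving a moment's care is the invocation of Nakayama's lemma, for which one must observe that $q(\cM^k)$ is a finitely generated $\cO_0$-module, and this is clear from its finite dimensionality over $\C$. If one prefers to keep the argument self-contained and avoid citing Nakayama, the same conclusion follows from the Cayley--Hamilton (determinant) form of the statement: from $\cM M = M$ one obtains an element of the form $1 + m$ with $m \in \cM$ annihilating $M$, and such an element is a unit in $\cO_0$ because it does not vanish at $0$; hence $M = 0$.
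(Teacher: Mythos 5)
Your argument for (b)~$\Rightarrow$~(a) is the same as the paper's, via the decomposition $\cO_0 = \cP_k \oplus \cM^{k+1}$. For (a)~$\Rightarrow$~(b), you take a genuinely different route. The paper regards $V = \cO_0/\cI$ as a finite-dimensional module over the commutative algebra $\cO_0$, decomposes it into generalized weight spaces $V_\chi$, observes that the only character of the local ring $\cO_0$ is evaluation at $0$, so $V = V_{\chi_0}$, deduces $f^p \in \cI$ for all $f \in \cM$, and then passes from this to $\cM^{np} \subset \cI$ using that the ideal $\cM$ is generated by $n$ elements (a nontrivial fact about $\cO_0$, resting on the Weierstrass preparation theorem). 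Your argument instead notes that the descending chain $q(\cM^j)$ must stabilize in the finite-dimensional quotient, producing a $k$ with $q(\cM^k) = q(\cM^{k+1})$, and then applies Nakayama's lemma to the finitely generated $\cO_0$-module $q(\cM^k)$ to conclude it is zero, i.e.\ $\cM^k \subset \cI$. Both proofs are correct; yours is arguably cleaner and, notably, does not need the finite generation of $\cM$ at all, whereas the paper's pigeonhole step does. Your remark about replacing the black-box invocation of Nakayama with the determinant trick (an element $1+m$, $m \in \cM$, annihilates $q(\cM^k)$ and is a unit since it is nonzero at the origin) is also correct and keeps the argument fully self-contained.
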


\begin{proof}
As $\cM^{k+1}$ is cofinite, (b) implies (a). Conversely, assume (a).

The space $V = \cO_0/\cI$ is a finite dimensional vector space, and an $\cO_0$-module for
left multiplication.  The associated algebra homomorphism $\cO_0 \to \End{V}$ is denoted
by $\gL.$ As $\cO_0$ is a commutative algebra and $V$ is finite dimensional, there exists a positive integer $p$ such that
the module $V$ decomposes as a finite direct sum of generalized weight spaces
$$
V_\chi\coleq \bigcap_{f \in \cO_0}  \ker (\gL(f) - \chi(f) {\rm id}_V)^p
$$
with $\chi \in \widehat \cO_0 \coleq  \Hom(\cO_0, \C).$ Since $\cO_0$ is a local ring with maximal ideal $\cM,$
the set $\widehat \cO_0$ of characters consists of the single element $\chi_0: g \mapsto g(0).$ It follows
that $V = V_{\chi_0},$ so that $(f - f(0))^p \in \cI$ for all $f \in \cO_0.$ In particular, $f^p \in \cI$ for all $f \in \cM.$
As an ideal, $\cM$ is generated by $n$ elements.
Hence,  $\cM^{k+1} \subset \cI,$ for $k  \geq np -1.$
\end{proof}

\subsection{The derivation process}\label{ss: derivation process}
For each vector $X \in \fv$ we denote by $\partial_X$ the first order differential operator
given by $\partial_X \gf (a) = \frac{d}{dz} \gf(a + z X)|_{z = 0}$ for $a \in \fv_\iC$ and $\gf$
a holomorphic function defined on a neighborhood of $a$ in $\fv_\iC.$
The map $X \mapsto \partial_X$ has a unique extension
to an algebra isomorphism $u \mapsto \partial_u$ from the symmetric algebra $S(\fv)$ of $\fv_\iC$
onto the algebra of constant coefficient (holomorphic) differential operators on $\fv_\iC.$
We will follow Harish-Chandra's convention to write
\begin{equation}
\label{e: HC notation for action of Sv}
\gf(a; u) \coleq  \partial_u \gf (a),
\end{equation}
for $\gf$ a holomorphic function defined on a neighborhood of $a.$

We define the pairing $\langle\,\cdot\, ,\,\cdot\,\rangle$ between $\O_0$ and
$S(\fv)$  by
\begin{equation}\label{sec3}
\begin{array}{rcl}
\O_0\times S(\fv) & \rightarrow & \C\\
(\varphi,u) & \mapsto & \varphi(0;u).
\end{array}
\end{equation}
For a given cofinite ideal $\cI \ideal \cO_0,$ let $S_\cI(\fv)$ denote the annihilator of $\cI$ in $S(\fv)$ relative
to this pairing.
For $k\in\N$, let $S_k(\fv)$ be the linear subspace of $S(\fv)$ consisting of the elements of order at most $k$.
\begin{lemma}\label{sec4}
Let $k \in \N.$ Then
\begin{enumerate}
\itema $S_k(\fv)=S_{\cM^{k+1}}(\fv);$
\itemb the pairing (\ref{sec3}) induces a perfect pairing $(\O_0/\cM^{k+1})\times S_k(\fv) \to \C.$
\end{enumerate}
\end{lemma}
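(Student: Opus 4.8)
The plan is to reduce both assertions to the classical perfect pairing between homogeneous polynomials and symmetric powers, using the grading of $S(\fv)$ and the decomposition (\ref{e: deco cO}). One inclusion in (a) is immediate: if $u \in S_k(\fv)$, then $\partial_u$ is a constant coefficient differential operator of order at most $k$, so $\langle \gf, u\rangle = \partial_u\gf(0) = 0$ whenever $\gf$ is a germ vanishing to order $\ge k+1$ at $0$; thus $u$ annihilates $\cM^{k+1}$, i.e. $S_k(\fv) \subset S_{\cM^{k+1}}(\fv)$.

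For the opposite inclusion I would exploit the grading $S(\fv) = \bigoplus_{m\ge 0} S^m(\fv)$ together with the grading of the polynomial algebra $\cP$ by homogeneous degree. The one ingredient needing care is the standard fact that, writing $\cP_{(m)}$ for the space of homogeneous polynomial functions $\fv_\iC \to \C$ of degree $m$, the scalar $\langle p, u\rangle = \partial_u p(0)$ vanishes for $p \in \cP_{(i)}$ and $u \in S^j(\fv)$ unless $i=j$ (if $j>i$ then $\partial_u p = 0$; if $j<i$ then $\partial_u p$ is homogeneous of positive degree, hence vanishes at $0$), and that the induced pairing $\cP_{(m)} \times S^m(\fv) \to \C$ is perfect --- visible at once on monomials, where in a basis $e_1,\dots,e_n$ of $\fv$ with dual coordinates $x_1,\dots,x_n$ one has $\partial_{e^\beta}(x^\alpha)(0) = \alpha!\,\delta_{\alpha\beta}$ (multi-indices $\alpha,\beta$). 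Granting this, I would take $u \in S_{\cM^{k+1}}(\fv)$ and write $u = \sum_m u_m$ with $u_m \in S^m(\fv)$. If $u_m \ne 0$ for some $m \ge k+1$, choose $p \in \cP_{(m)}$ with $\langle p, u_m\rangle \ne 0$; since a homogeneous polynomial of degree $m \ge 1$ is a sum of products of $m$ coordinate functions (each lying in $\cM$), one has $p \in \cM^m \subset \cM^{k+1}$, while by the orthogonality just noted $\langle p, u\rangle = \langle p, u_m\rangle \ne 0$, contradicting $u \in S_{\cM^{k+1}}(\fv)$. Hence $u$ has order at most $k$, which completes (a).

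For (b), part (a) guarantees that the pairing (\ref{sec3}) descends to $(\O_0/\cM^{k+1}) \times S_k(\fv) \to \C$. Identifying $\O_0/\cM^{k+1}$ with $\cP_k$ via (\ref{e: deco cO}) and decomposing $\cP_k = \bigoplus_{m=0}^k \cP_{(m)}$ and $S_k(\fv) = \bigoplus_{m=0}^k S^m(\fv)$ by homogeneous degree, the induced pairing becomes block diagonal with $m$-th block the perfect pairing $\cP_{(m)} \times S^m(\fv) \to \C$, and a finite orthogonal direct sum of perfect pairings is perfect. (Equivalently, both spaces have dimension $\binom{n+k}{k}$ with $n = \dim\fv$, and non-degeneracy on one side follows from the monomial computation.) I do not anticipate a real obstacle: the sole substantive point is the non-degeneracy of $\cP_{(m)} \times S^m(\fv) \to \C$, which is classical and checked in one line; everything else is bookkeeping with the two gradings and with (\ref{e: deco cO}).
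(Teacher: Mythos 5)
Your proof is correct, and it takes a somewhat different route from the paper's. The paper handles the hard inclusion $S_{\cM^{k+1}}(\fv)\subset S_k(\fv)$ and part (b) together by a dimension count: from the decomposition $\cO_0 = \cP_k \oplus \cM^{k+1}$ and non-degeneracy of the full pairing $\cO_0\times S(\fv)\to\C$, it gets an injection $S_{\cM^{k+1}}(\fv)\hookrightarrow \cP_k^*$, whence $\dim S_{\cM^{k+1}}(\fv)\le\dim\cP_k=\dim S_k(\fv)$; combined with the easy inclusion this forces equality, and the resulting isomorphism $S_k(\fv)\to(\cO_0/\cM^{k+1})^*$ is exactly assertion (b). You instead exhibit both gradings explicitly, establish orthogonality $\langle\cP_{(i)},S^j(\fv)\rangle=0$ for $i\ne j$, and kill off any homogeneous component $u_m$ of degree $m\ge k+1$ by producing a test polynomial $p\in\cP_{(m)}\subset\cM^{k+1}$ with $\langle p,u_m\rangle\ne0$; part (b) then falls out as a finite direct sum of the perfect blocks $\cP_{(m)}\times S^m(\fv)\to\C$. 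Both arguments rest on the same elementary kernel — the monomial identity $\partial_{e^\beta}x^\alpha(0)=\alpha!\,\delta_{\alpha\beta}$ — but yours makes the block structure visible and avoids appealing to the equality $\dim\cP_k=\dim S_k(\fv)$ as a separate fact, whereas the paper's dimension count is shorter on the page and requires less explicit bookkeeping with homogeneity. Either is a perfectly good way to organize the proof.
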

\begin{proof}
The pairing $\langle\,\cdot\, ,\,\cdot\,\rangle$,
defined in \eqref{sec3}, vanishes on $\cM^{k+1}\times S_k(\fv)$.
Thus, $S_k(\fv)\subset S_{\cM^{k+1}}(\fv).$          \par
From the decomposition
$\O_0=\cM^{k+1}\oplus \P_k$, and non-degeneracy of the pairing, it follows
that $S_{\cM^{k+1}}(\fv)\hookrightarrow \P_k^*.$ In particular, the dimension of $S_{\cM^{k+1}}(\fv)$
does not exceed the dimension of $\P_k,$ which in turn equals the dimension of $S_k(\fv).$ Assertion
(a) now follows.

It also follows that
the induced embedding $S_k(\fv) = S_{\cM^{k+1}}(\fv)\hookrightarrow \P_k^* \simeq (\cO_0/ \cM^{k+1})^*$
is an isomorphism onto. By finite dimensionality, this implies assertion (b).
\end{proof}

\begin{lemma}\label{sec5}
Let $\cI$  be a cofinite ideal of $\O_0$. Then the pairing (\ref{sec3}) induces
a linear isomorphism $S_\cI(\fv)\simeq (\O_0/\cI)^*.$
\end{lemma}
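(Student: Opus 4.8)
The statement to prove is that for a cofinite ideal $\cI \ideal \cO_0$, the pairing \eqref{sec3} induces a linear isomorphism $S_\cI(\fv) \simeq (\cO_0/\cI)^*$. The plan is to reduce to the already-established case $\cI = \cM^{k+1}$ of Lemma \ref{sec4}(b) by using the cofiniteness characterization of Lemma \ref{l: cofinite ideals}. First I would recall that, by definition, $S_\cI(\fv)$ is the annihilator of $\cI$ in $S(\fv)$ for the pairing \eqref{sec3}; in particular every element of $S_\cI(\fv)$ kills $\cI$, so the pairing descends to a well-defined bilinear map $S_\cI(\fv) \times (\cO_0/\cI) \to \C$, giving a linear map $\Phi: S_\cI(\fv) \to (\cO_0/\cI)^*$. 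The content of the lemma is that $\Phi$ is bijective.

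For injectivity: if $u \in S_\cI(\fv)$ pairs to zero with every element of $\cO_0/\cI$, then $\langle \varphi, u\rangle = 0$ for all $\varphi \in \cO_0$, and by non-degeneracy of the pairing between $\cO_0$ and $S(\fv)$ (which was noted in the proof of Lemma \ref{sec4}) we get $u = 0$. For surjectivity, I would invoke Lemma \ref{l: cofinite ideals}: since $\cI$ is cofinite, there is $k \in \N$ with $\cM^{k+1} \subset \cI$. Given $\lambda \in (\cO_0/\cI)^*$, compose with the natural surjection $\cO_0/\cM^{k+1} \twoheadrightarrow \cO_0/\cI$ to obtain $\bar\lambda \in (\cO_0/\cM^{k+1})^*$. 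By Lemma \ref{sec4}(b) there is a unique $u \in S_k(\fv) = S_{\cM^{k+1}}(\fv)$ representing $\bar\lambda$, i.e.\ $\langle \varphi, u\rangle = \bar\lambda(\varphi + \cM^{k+1})$ for all $\varphi$. Since $\bar\lambda$ factors through $\cO_0/\cI$, $u$ annihilates $\cI$, hence $u \in S_\cI(\fv)$; and by construction $\Phi(u) = \lambda$. This proves surjectivity.

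The main (modest) obstacle is purely bookkeeping: one must check that the element $u$ produced via the $\cM^{k+1}$ case indeed lies in $S_\cI(\fv)$, not merely in $S_{\cM^{k+1}}(\fv)$. This is immediate from the fact that $\langle \varphi, u \rangle$ depends only on the class of $\varphi$ modulo $\cI$ (because $\bar\lambda$ was pulled back along $\cO_0/\cM^{k+1} \to \cO_0/\cI$), so $\langle \cI, u\rangle = 0$. There is no analytic difficulty and no issue with the non-Archimedean local structure of $\cO_0$ beyond what Lemmas \ref{l: cofinite ideals} and \ref{sec4} already supply; the whole argument is a short diagram chase combined with the finite-dimensional duality already in hand. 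I would also remark that, dually, the inclusion $\cM^{k+1} \subset \cI$ gives an inclusion of annihilators $S_\cI(\fv) \subset S_{\cM^{k+1}}(\fv) = S_k(\fv)$, so in fact $S_\cI(\fv)$ is automatically finite dimensional and consists of differential operators of bounded order — a fact worth noting in passing but not needed for the proof.
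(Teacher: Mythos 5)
Your proposal is correct and uses essentially the same approach as the paper: both reduce to the case $\cI=\cM^{k+1}$ via Lemma~\ref{l: cofinite ideals} and then invoke the perfect pairing of Lemma~\ref{sec4}(b); you phrase it as directly checking injectivity and surjectivity of the induced map, whereas the paper exhibits two mutually inverse inclusions, but the underlying reasoning is the same.
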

\begin{proof}
By Lemma \ref{l: cofinite ideals} there exists a $k \in \N$ such that
$\cM^{k+1} \subset \cI.$
This inclusion induces an embedding
of $(\O_0/\cI)^*$ into $(\O_0/\cM^{k+1})^*.$
In view of Lemma \ref{sec4}, it follows that the pairing induces an
embedding  $(\O_0/\cI)^* \hookrightarrow S(\fv).$ Its image is contained in the annihilator
$S_\cI(\fv),$ by definition of the latter.
On the other hand, the pairing induces an inclusion $S_\cI(\fv)\hookrightarrow\O_0^*$
and elements of $S_\cI(\fv)$ vanish on $\cI$, so that $S_\cI(\fv)\hookrightarrow (\O_0/\cI)^*.$
The result follows.
\end{proof}

For $\mu \in \fv_\iC$ we denote by $T_\mu$ the translation in $\fv_\iC$ given by
$ \nu \mapsto \nu + \mu.$ We note that the pull-back map $T_\mu^*: \gf \mapsto \gf \after T_\mu$
induces a ring isomorphism from $\cO_\mu$ onto $\cO_0.$

Let $\cI\ideal \O_0$ be a cofinite ideal and let $\Omega$ be an open subset of $\fv_\iC$.
Then for every $f\in\O(\Omega)$ and each $\mu\in\Omega$ we define
\begin{equation}\label{e: JI}
J_\cI f(\mu)\coleq \mathrm{pr}_\cI(\, \gamma_0( T_\mu^*f)\,  )\;\in  \cO_0/\cI,
\end{equation}
where $\mathrm{pr}_\cI$ denotes the projection of $\cO_0$ onto $\cO_0/\cI.$

In the following lemma, which is a straightforward consequence of the definitions,
$\langle\, \cdot\,,\,\cdot\, \rangle$ denotes the pairing induced by
(\ref{sec3}), see Lemma \ref{sec5}.

\begin{lemma}\label{sec7}%LEMMA
Let $f \in \cO(\Omega).$ Then for all $\mu \in \Omega,$
$$
\langle J_\cI f(\mu),u\rangle = f(\mu;u),\qquad u\in S_{\cI}(\fv).
$$
\end{lemma}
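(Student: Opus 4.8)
The plan is to unwind all the definitions and reduce the identity to the defining property of $S_\cI(\fv)$ as the annihilator of $\cI$ under the pairing \eqref{sec3}. First I would observe that the expression $f(\mu;u)$ on the right-hand side, by the Harish-Chandra convention \eqref{e: HC notation for action of Sv}, equals $(\partial_u f)(\mu)$. Using the translation $T_\mu$, one rewrites $(\partial_u f)(\mu) = \bigl(\partial_u (T_\mu^* f)\bigr)(0)$, since $\partial_u$ has constant coefficients and hence commutes with translation; this in turn is exactly $\langle \gamma_0(T_\mu^* f),\, u\rangle$ by the definition of the pairing \eqref{sec3} between $\cO_0$ and $S(\fv)$.

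So the claim reduces to showing that for $u \in S_\cI(\fv)$,
$$
\langle \mathrm{pr}_\cI(\gamma_0(T_\mu^* f)),\, u\rangle_{\cO_0/\cI} = \langle \gamma_0(T_\mu^* f),\, u\rangle_{\cO_0},
$$
where the bracket on the left is the pairing on $\cO_0/\cI \times S_\cI(\fv)$ coming from Lemma \ref{sec5}. This is immediate: the pairing on $\cO_0/\cI$ is by construction the one induced from the pairing on $\cO_0$, and it is well-defined on the quotient precisely because $u \in S_\cI(\fv)$ annihilates $\cI$. Concretely, writing $\gamma_0(T_\mu^* f) = g_0 + h$ with $g_0$ a representative of $\mathrm{pr}_\cI(\gamma_0(T_\mu^* f))$ and $h \in \cI$, we have $\langle g_0 + h, u\rangle = \langle g_0, u\rangle$ because $\langle h, u\rangle = 0$ for $u \in S_\cI(\fv)$.

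There is no real obstacle here; the statement is, as the authors say, a straightforward consequence of the definitions. The only point requiring a moment's care is the compatibility of the translation operator $T_\mu^*$ with the differential operator $\partial_u$ — i.e. that $\partial_u (g \after T_\mu) = (\partial_u g) \after T_\mu$ for $u \in S(\fv)$ — which follows because $X \mapsto \partial_X$ is given by directional derivatives invariant under translation, and both sides are algebra homomorphisms on $S(\fv)$, so it suffices to check it on $X \in \fv$, where it is the chain rule. Assembling these two observations gives the identity $\langle J_\cI f(\mu), u\rangle = f(\mu; u)$ for all $u \in S_\cI(\fv)$ and all $\mu \in \Omega$.
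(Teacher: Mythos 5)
Your proof is correct and is exactly the expected unwinding of the definitions (the paper itself offers no proof, merely asserting the lemma is a straightforward consequence of the definitions). The two key points you identify — translation invariance of $\partial_u$, and the fact that the pairing on $\cO_0/\cI \times S_\cI(\fv)$ is induced from that on $\cO_0 \times S(\fv)$ precisely because $u$ annihilates $\cI$ — are the substance of the argument, and you handle both cleanly.
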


\begin{cor}%COROLLARY
\label{c: sec7}
The map $f\mapsto J_\cI f$ defines a continuous algebra homomorphism from $\O(\Omega)$ to $\O(\Omega,\O_0/\cI)$.
\end{cor}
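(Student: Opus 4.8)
The plan is to prove Corollary \ref{c: sec7} directly from the definition \eqref{e: JI} of $J_\cI$, using that $\cI$ is cofinite so that $\cO_0/\cI$ is a finite-dimensional commutative algebra, together with the fact that $\gamma_0$ and $T_\mu^*$ are ring homomorphisms and Lemma \ref{sec7} pins down $J_\cI f(\mu)$ via the perfect pairing with $S_\cI(\fv)$.

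First I would treat the algebra homomorphism property pointwise in $\mu$. Fix $\mu\in\Omega$. By definition, $J_\cI f(\mu) = \mathrm{pr}_\cI(\gamma_0(T_\mu^* f))$, and the composite $f\mapsto \gamma_0(T_\mu^* f)$ is the composition of the pull-back $T_\mu^*\colon \cO(\Omega)\to\cO(T_{-\mu}\Omega)$ with the germ map $\gamma_0$, both of which are unital algebra homomorphisms; then $\mathrm{pr}_\cI\colon\cO_0\to\cO_0/\cI$ is an algebra homomorphism because $\cI$ is an ideal. Hence $\mu\mapsto J_\cI f(\mu)$ is, for each fixed $\mu$, the image of $f$ under a ring homomorphism $\cO(\Omega)\to\cO_0/\cI$; in particular $J_\cI(fg)(\mu)=J_\cI f(\mu)\cdot J_\cI g(\mu)$ and $J_\cI 1 = 1$. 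Since these identities hold for every $\mu$, the map $f\mapsto J_\cI f$ is an algebra homomorphism into the algebra of $\cO_0/\cI$-valued functions on $\Omega$ under pointwise operations.

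Next I would check that $J_\cI f$ actually lands in $\cO(\Omega,\cO_0/\cI)$, i.e.\ that $\mu\mapsto J_\cI f(\mu)$ is holomorphic with values in the finite-dimensional space $\cO_0/\cI$, and that $f\mapsto J_\cI f$ is continuous. For holomorphy, choose by Lemma \ref{l: cofinite ideals} an integer $k$ with $\cM^{k+1}\subset\cI$; then the dual pairing of Lemma \ref{sec7} identifies $J_\cI f(\mu)$ with the functional $u\mapsto f(\mu;u)$ on $S_\cI(\fv)$, and since $S_\cI(\fv)$ is spanned by finitely many elements $u_1,\dots,u_r$ of order $\le k$, the coordinates of $J_\cI f(\mu)$ in the corresponding dual basis are finite linear combinations of the partial derivatives $\mu\mapsto\partial_{u_i}f(\mu)$, which are holomorphic on $\Omega$ by holomorphy of $f$. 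The same description shows continuity: uniform convergence of $f_n\to f$ on compact subsets of $\Omega$ implies, by Cauchy estimates, uniform convergence of all derivatives $\partial_{u_i}f_n\to\partial_{u_i}f$ on compact subsets, hence $J_\cI f_n\to J_\cI f$ in $\cO(\Omega,\cO_0/\cI)$.

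The only mildly delicate point — the ``main obstacle,'' such as it is — is bookkeeping about which algebra structure one puts on $\cO_0/\cI$ and on $\cO(\Omega,\cO_0/\cI)$: one must take the ring structure on $\cO_0/\cI$ coming from multiplication of germs (so that $\mathrm{pr}_\cI$ is multiplicative, using that $\cI$ is a two-sided, hence ordinary, ideal in the commutative ring $\cO_0$), and the pointwise product on $\cO(\Omega,\cO_0/\cI)$. Once these conventions are fixed, everything reduces to the elementary observation that a composition of algebra homomorphisms applied pointwise is an algebra homomorphism, plus the Cauchy-estimate argument for holomorphy and continuity; there is no real analytic difficulty beyond what is already packaged in Lemmas \ref{l: cofinite ideals}, \ref{sec5} and \ref{sec7}.
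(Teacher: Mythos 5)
Your proof is correct and follows essentially the same route as the paper's: the algebra homomorphism property is reduced pointwise to the fact that $T_\mu^*$, $\gamma_0$, and $\mathrm{pr}_\cI$ are all algebra homomorphisms, while holomorphy and continuity are obtained by pairing $J_\cI f(\mu)$ against $S_\cI(\fv)$ via Lemma \ref{sec7} (equivalently, against $(\cO_0/\cI)^*$ via Lemma \ref{sec5}) to reduce everything to the continuity of the constant-coefficient operators $\partial_u$ on $\cO(\Omega)$. The only cosmetic difference is that you unfold the Cauchy-estimate justification for that continuity, which the paper leaves implicit.
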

\begin{proof}
Let $u\in S(\fv).$ For every $f\in\O(\Omega)$, the function $\partial_u f = f(\cdot;u)$ belongs to $\O(\Omega).$
Moreover, the map $\partial_u$ is a continuous linear endomorphism of $\cO(\Omega).$
In view of Lemma \ref{sec7}, it follows that for each $\xi \in (\cO_0/I)^*$ the map
$f \mapsto \xi \after [J_\cI(f)]$ is a continuous linear endomorphism of $\cO(\Omega).$
By finite dimensionality of $\cO_0/\cI,$ it follows that $J_\cI$ is continuous.

The assertion that $J_\cI$ is an algebra homomorphism follows from combining
the observations that $T_\mu^*,$ $\gg_0,$ and $\pr_{\cI}$ are algebra homomorphisms.
\end{proof}

\begin{ex}
\rm
Let $\xi\in\fv_\iC^*$. Denote by $e^\xi$ the holomorphic function on $\fv_\iC$ given by:
$$
e^\xi(\mu)\coleq e^{\xi(\mu)},\quad \mu\in\fv_\iC.
$$
In terms of the canonical identification of the symmetric algebra $S(\fv)$
with the algebra $\P(\fv_\iC^*)$ of polynomial functions on
$\fv_\iC^*$ we have $ \partial_u e^\xi = u(\xi) e^\xi,$ for $u \in S(\fv).$
Hence, if $\cI$ is a cofinite ideal in $\cO_0,$ then
for all  $\mu \in \fv_\iC$ and $u \in S_\cI(\fv),$
\vspace{1mm}
\begin{enumerate}
\item[(i)]
$  \langle\,  J_\cI e^\xi(\mu)  \, , \,   u  \, \rangle = u(\xi) \, e^{\xi(\mu)},$
\vspace{1mm}
\item[(ii)]
$\;J_\cI e^\xi \,(\mu) = e^{\xi(\mu)}\, \mathrm{pr}_\cI \circ\gamma_0(e^\xi).$
\end{enumerate}
\end{ex}

\begin{definition}\label{d: diffD}%DEFINITION
\rm
Let $E$ be a finite dimensional $\O_0$-module. For every $f\in\O(\Omega)$ and all $\mu\in\Omega$,
we define $f^{(E)}(\mu)\in\mathrm{End}(E)$ by:
$$
f^{(E)}(\mu)e\coleq \gamma_0(T_\mu^*f)\cdot e, \quad e\in E.
$$
\end{definition}

\begin{ex}%EXAMPLE
\rm
If $E=\O_0/\cI$ for some cofinite ideal $\cI$ of $\O_0$, then, for any $f\in\O(\Omega),$

$$
J_\cI f(\mu)=
f^{(E)}(\mu)(1+\cI), \qquad \mu\in\Omega.
$$
\end{ex}
\medbreak\medbreak
Let $A$ be an algebra and $E$ an $A$-module. We denote by $\mathrm{ann}_A(E)$
the annihilator of $E$ in $A,$ i.e., the
kernel of the natural algebra homomorphism
$A\rightarrow\mathrm{End}(E)$.
If $E$ is finite dimensional, $\mathrm{ann}_A(E)$ is a cofinite ideal of $A$.

\begin{lemma}\label{sec13}%LEMMA
Let $\Omega \subset \fv_\iC$ be open, and let $E$ be a finite dimensional $\cO_0$-module.
Then for every $f \in \cO(\Omega)$ and all $\mu \in \Omega,$
\vspace{1mm}
\begin{enumerate}
\itema
$
f^{(E)}(\mu)e= J_{\mathrm{ann}_{\O_0}(E)}f(\mu)\cdot e,\quad \text{for all} e \in E;
$
\vspace{1mm}

\itemb
$
(T_\mu^* f)^{(E)} = T_\mu^* f^{(E)}.
$
\end{enumerate}
\end{lemma}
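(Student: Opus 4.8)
The plan is simply to unwind the definitions: both assertions are formal consequences of Definition~\ref{d: diffD}, of the formula \eqref{e: JI} for $J_\cI$, and of the fact recorded just above the lemma that $\cI \coleq \ann_{\cO_0}(E)$ is a cofinite ideal, so that the $\cO_0$-module structure on $E$ factors through the quotient ring $\cO_0/\cI$ in the natural way $(g + \cI)\cdot e = g\cdot e$.

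For part (a), I would first observe that since $\mu \in \Omega$ the translate $\Omega - \mu$ is an open neighbourhood of $0$ and $T_\mu^* f = f \after T_\mu$ is holomorphic on it, so the germ $\gamma_0(T_\mu^* f) \in \cO_0$ is well defined. Because $\cI$ annihilates $E$, the element $\gamma_0(T_\mu^* f)$ acts on any $e \in E$ only through its image $\mathrm{pr}_\cI(\gamma_0(T_\mu^* f)) \in \cO_0/\cI$; and by \eqref{e: JI} this image is exactly $J_\cI f(\mu)$. Comparing with Definition~\ref{d: diffD} then yields $f^{(E)}(\mu)e = \gamma_0(T_\mu^* f)\cdot e = \mathrm{pr}_\cI(\gamma_0(T_\mu^* f))\cdot e = J_\cI f(\mu)\cdot e$ for every $e \in E$, which is the claim.

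For part (b), the only ingredient is the additivity law $T_\mu \after T_\nu = T_{\mu+\nu}$ of translations on $\fv_\iC$, from which $T_\nu^*(T_\mu^* f) = (T_\mu^* f) \after T_\nu = f \after T_\mu \after T_\nu = T_{\mu+\nu}^* f$. Applying Definition~\ref{d: diffD} to the function $T_\mu^* f \in \cO(\Omega - \mu)$ at a point $\nu \in \Omega - \mu$ gives $(T_\mu^* f)^{(E)}(\nu)\,e = \gamma_0\bigl(T_\nu^*(T_\mu^* f)\bigr)\cdot e = \gamma_0\bigl(T_{\mu+\nu}^* f\bigr)\cdot e = f^{(E)}(\mu+\nu)\,e = \bigl(T_\mu^* f^{(E)}\bigr)(\nu)\,e$. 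Since $e \in E$ and $\nu \in \Omega - \mu$ are arbitrary, this is exactly (b).

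There is no real obstacle here; the only point deserving a word of care is the bookkeeping of domains. The maps $f^{(E)}$ and $J_\cI f$ are defined on $\Omega$, whereas $(T_\mu^* f)^{(E)}$ is defined on $\Omega - \mu$, so in (b) the stated equality must be read as an identity between $\End{E}$-valued maps on $\Omega - \mu$; and to regard $T_\mu^* f^{(E)}$ as such a map one may invoke that $f^{(E)}$ is holomorphic, which follows from part (a) combined with Corollary~\ref{c: sec7}, although this holomorphy is not needed for the pointwise identity itself.
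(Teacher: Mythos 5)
Your proof is correct and is exactly the ``straightforward computation'' the paper has in mind: part (a) is the observation that the $\cO_0$-action on $E$ factors through $\cO_0/\ann_{\cO_0}(E)$, so $\gamma_0(T_\mu^*f)\cdot e$ depends only on $\pr_\cI(\gamma_0(T_\mu^*f))=J_\cI f(\mu)$, and part (b) is just $T_\nu^*\after T_\mu^* = T_{\mu+\nu}^*$ applied inside $\gamma_0$. Your remark about the domain being $\Omega-\mu$ in (b) is the one point genuinely worth spelling out.
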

\begin{proof}%PROOF
These formulas follow by straightforward computation.
\end{proof}

\begin{lemma}%LEMMA
\label{l: dual of End E}
Let $E$ be a finite dimensional $\cO_0$-module and let  $\eta \in \Endo(E)^*.$
Then there exists an element $u = u_\eta \in S(\fv)$ such that
$$
\eta \after f^{(E)} =  \partial_u f,
$$
for every open $\Omega \subset \fv_\iC$ and all $f \in \cO(\Omega).$
\end{lemma}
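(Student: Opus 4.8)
The plan is to reduce the statement to the jet homomorphism $J_\cI$ of \eqref{e: JI} and then invoke the duality between $\cO_0/\cI$ and $S_\cI(\fv)$ supplied by Lemmas \ref{sec5} and \ref{sec7}. First I would set $\cI \coleq \ann_{\cO_0}(E)$. Since $E$ is finite dimensional this is a cofinite ideal of $\cO_0$ (as noted just before the statement), and the structure map $\cO_0 \to \Endo(E)$ descends to an injective algebra homomorphism $\rho \col \cO_0/\cI \hookrightarrow \Endo(E)$. By Lemma \ref{sec13}(a), for every open $\Omega \subset \fv_\iC$, every $f \in \cO(\Omega)$ and every $\mu \in \Omega$ we have $f^{(E)}(\mu) = \rho\bigl(J_\cI f(\mu)\bigr)$, i.e. $f^{(E)}$ factors through the jet map $J_\cI \col \cO(\Omega) \to \cO(\Omega, \cO_0/\cI)$.

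Next, given $\eta \in \Endo(E)^*$, I would pass to the pulled-back functional $\eta \after \rho \in (\cO_0/\cI)^*$. By Lemma \ref{sec5} the pairing \eqref{sec3} induces a linear isomorphism $S_\cI(\fv) \simeq (\cO_0/\cI)^*$; let $u = u_\eta \in S_\cI(\fv) \subset S(\fv)$ be the element of $S_\cI(\fv)$ corresponding to $\eta \after \rho$ under this isomorphism, so that $\langle w, u\rangle = \eta(\rho(w))$ for all $w \in \cO_0/\cI$.

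Combining these two steps with Lemma \ref{sec7}, which applies precisely because $u \in S_\cI(\fv)$, one obtains for every $f \in \cO(\Omega)$ and every $\mu \in \Omega$
$$
(\eta \after f^{(E)})(\mu) = \eta\bigl(\rho(J_\cI f(\mu))\bigr) = \langle J_\cI f(\mu), u \rangle = f(\mu; u) = (\partial_u f)(\mu),
$$
the last equality being the Harish-Chandra notation \eqref{e: HC notation for action of Sv}. This is exactly the asserted identity $\eta \after f^{(E)} = \partial_u f$. Since every step is a direct application of a result already established, there is no genuine obstacle; the only point worth flagging is that $\eta$ is a functional on all of $\Endo(E)$ while $f^{(E)}(\mu)$ always lies in the subalgebra $\rho(\cO_0/\cI)$, so it is the restriction $\eta \after \rho$ alone that matters and that pins down $u_\eta$ (uniquely within $S_\cI(\fv)$, though of course not within $S(\fv)$).
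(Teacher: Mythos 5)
Your proof is correct. Every reduction it invokes — Lemma \ref{sec13}(a), Lemma \ref{sec5}, Lemma \ref{sec7} — is applied within its hypotheses, and the chain of equalities
$$
(\eta \after f^{(E)})(\mu) = (\eta\after\rho)\bigl(J_\cI f(\mu)\bigr) = \langle J_\cI f(\mu), u\rangle = f(\mu;u)
$$
closes cleanly. It does, however, take a genuinely different route from the paper's. The paper first reduces by linearity to rank-one functionals $\eta = e^*\otimes e$ with $e^*\in E^*$ and $e\in E$, then works with the cofinite ideal $\cI_e = \ann_{\cO_0}(e)$ (the annihilator of the single vector $e$, which in general strictly contains $\ann_{\cO_0}(E)$) and shows directly that the functional $\gf\mapsto e^*(\gf\cdot e)$ factors through $\cO_0/\cI_e$, after which Lemma \ref{sec5} produces $u\in S_{\cI_e}(\fv)$. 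You instead work with an arbitrary $\eta$ from the outset by pulling it back along the structure map $\rho\colon \cO_0/\ann_{\cO_0}(E)\to\End{E}$, which Lemma \ref{sec13}(a) makes available in exactly the form $f^{(E)} = \rho\after J_{\ann_{\cO_0}(E)}f$. Your version buys a shorter argument (no rank-one reduction) and produces $u$ in the fixed space $S_{\ann_{\cO_0}(E)}(\fv)$, the same space for every $\eta$; the paper's version lands $u$ in a possibly smaller space $S_{\cI_e}(\fv)$ depending on $e$, which is a marginally sharper localization but is never used later. Both are equally rigorous; yours is arguably the more economical phrasing. One small remark for precision: injectivity of $\rho$, which you mention, plays no role — only well-definedness of $\rho$ (i.e.\ that $f^{(E)}$ factors through $J_\cI$) is used.
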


\begin{proof}
It suffices to prove this for $\eta = e^* \otimes e,$ with $e^* \in E^*$ and $e \in E.$
Then, for $f \in \cO(\Omega)$ and $\gl \in \Omega,$
\begin{equation}\label{e: linear-f as diff}
\eta \after f^{(E)}(\gl) = e^*(\gg_0(T^*_\gl f) \cdot e).
\end{equation}
Let $\cI$ be the (cofinite) annihilator of $e$ in $\cO_0.$  Then the linear functional $L: \gf \mapsto e^*(\gf \cdot e)$ on $\cO_0$ factors through a linear map
$\cO_0/\cI \to \C.$  Hence, in view of Lemma \ref{sec5}, there exists
an element $u \in S_\cI(\fv)$ such that $L(\gf) = \partial_u \gf(0)$ for all $\gf \in \cO_0.$
It follows that the expression on the right-hand side of \eqref{e: linear-f as diff} equals $\partial_u f(\gl).$
\end{proof}

We shall also need a kind of converse to the above lemma.

\begin{lemma}%LEMMA
\label{l: E associated with u}
Let $F \subset S(\fv)$ be a finite subset. Then there exists a finite dimensional $\cO_0$-module
$E,$ and linear functionals $\eta_u \in \Endo(E)^*,$ for $u \in F,$ such that
$$
\eta_u \after f^{(E)} = \partial_u f
$$
for every $u \in F,$ every open $\Omega \subset \fv_\iC$ and all $f \in \cO(\Omega).$
\end{lemma}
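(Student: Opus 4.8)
The natural approach is to build $E$ uniformly over all of $F$ by taking a single cofinite ideal $\cI$ large enough to "see" every $u \in F,$ and then dualize Lemma~\ref{l: dual of End E}. Concretely, since $F \subset S(\fv)$ is finite, there is a common bound $k$ on the orders of its elements, so $F \subset S_k(\fv).$ By Lemma~\ref{sec4}(a), $S_k(\fv) = S_{\cM^{k+1}}(\fv),$ so every $u \in F$ annihilates the cofinite ideal $\cI \coleq \cM^{k+1}.$ Set $E \coleq \cO_0/\cI,$ which is finite dimensional by Lemma~\ref{sec4}(b) (or (\ref{e: deco cO})), viewed as an $\cO_0$-module by left multiplication. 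With this choice, $\ann_{\cO_0}(E)$ is exactly $\cI$ (the class $1 + \cI$ generates $E$ freely over $\cO_0/\cI$), so by Lemma~\ref{sec13}(a) we have $f^{(E)}(\mu)(1 + \cI) = J_\cI f(\mu)$ for all $f$ and $\mu.$

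Now fix $u \in F.$ By Lemma~\ref{sec5}, $u \in S_\cI(\fv)$ corresponds to a linear functional $\bar u \in (\cO_0/\cI)^* = E^*$ characterized by $\bar u(J_\cI f(\mu)) = \langle J_\cI f(\mu), u\rangle = f(\mu; u) = \partial_u f(\mu)$ for all $f \in \cO(\Omega)$ (Lemma~\ref{sec7}). Define $\eta_u \in \Endo(E)^*$ by $\eta_u(A) \coleq \bar u\bigl(A(1 + \cI)\bigr)$ for $A \in \Endo(E)$; equivalently $\eta_u = \bar u \otimes (1+\cI) \in E^* \otimes E \iso \Endo(E)^*.$ Then for every open $\Omega$ and every $f \in \cO(\Omega),$
$$
\eta_u(f^{(E)}(\mu)) = \bar u\bigl(f^{(E)}(\mu)(1 + \cI)\bigr) = \bar u\bigl(J_\cI f(\mu)\bigr) = \partial_u f(\mu),
$$
which is the desired identity $\eta_u \after f^{(E)} = \partial_u f.$ Since the same $E$ works for every $u \in F,$ this completes the construction.

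There is no real obstacle here: the statement is essentially the assembly of the pieces already in place. The only point requiring a moment's care is checking that $\ann_{\cO_0}(\cO_0/\cM^{k+1}) = \cM^{k+1}$ so that Lemma~\ref{sec13}(a) applies with the right ideal — this is immediate since a cyclic module $\cO_0/\cI$ has annihilator $\cI$ — and keeping straight the identification $\Endo(E)^* \iso E^* \otimes E$ used to produce $\eta_u$ from the pair $(\bar u, 1 + \cI).$ One could alternatively invoke Lemma~\ref{l: dual of End E} directly: it produces, for each individual $\eta \in \Endo(E)^*,$ some $u_\eta \in S(\fv)$; the content of the present lemma is the reverse direction and the uniformity of $E$ across $F,$ both of which the above argument delivers.
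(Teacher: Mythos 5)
Your proof is correct and takes essentially the same approach as the paper: both use a quotient module $E = \cO_0/\cM^{k+1}$ and define $\eta_u$ as the rank-one functional $e^* \otimes e$ with $e = 1 + \cI$ and $e^*$ the linear form on $E$ corresponding to $u$ under the pairing of Lemma~\ref{sec5}. The only difference is that you take a single $k$ bounding the orders of all $u \in F$ and hence one module $E$ for the whole of $F,$ whereas the paper first reduces to the case that $F$ is a singleton by passing to direct sums of $\cO_0$-modules; your version is marginally more economical but the substance is identical.
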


\begin{proof}%PROOF
By taking direct sums of finite dimensional $\cO_0$-modules we may reduce to the case that $F$ consists
of a single element $u \in  S(\fv).$ Let $k$ be the order of $u.$ Then $\gf \mapsto \gf(0;u)$ defines
a linear functional $e^*$ on $\cO_0/\cI,$ for $\cI = \cM^{k+1}.$ We put $E = \cO_0/\cI$
and let $e$ denote the image of $1\in\cO_0$ in $E.$ Let $\eta = e^*\otimes e$ be the linear
functional on $\Endo(E)$ defined by $T \mapsto e^*(Te).$ Then for all $f \in \cO(\Omega)$ and all $\gl \in \Omega$ we have
$$
\eta \after f^{(E)}(\gl) = \eta(\gg_0 (T_\gl^* f)) = e^* (\gg_0 (T_\gl^* f)\cdot e) =
\partial_u (\gg_0(T_\gl^*f))(0) = \partial_u f(\gl).
$$
\end{proof}

We retain the assumption that $\Omega$ is an open subset of $\fv_\iC.$

\begin{cor}%COROLLARY
\label{c: continuity of diffD}
Let $E$ be a finite dimensional $\O_0$-module. Then $f\mapsto f^{(E)}$ is a continuous algebra homomorphism from $\O(\Omega)$ to $\O(\Omega,\mathrm{End}(E))$.
\end{cor}

\begin{proof}%PROOF
The map is an algebra homomorphism by Lemma \ref{sec13} (a) and Corollary \ref{c: sec7}.
The continuity is an immediate consequence of Lemma \ref{l: dual of End E}.
\end{proof}
We  agree to use the following notation for the map of Corollary \ref{c: continuity of diffD},
\begin{equation}
\label{e: defi diffD}
\diffD^{(E)} : \;f \mapsto f^{(E)}, \quad \cO(\Omega) \to \cO(\Omega, \End{E}).
\end{equation}
The following property is an immediate consequence of the definitions;
here we keep
in mind that $\End{E} \oplus \End{F} \embeds \End{E \oplus F},$ naturally.

\begin{property}%PROPERTY
Let $E$, $F$ be two finite dimensional $\O_0$-modules. Then for every $f\in\O(\Omega),$
$$
f^{(E\oplus F)}=f^{(E)}\oplus f^{(F)}.
$$
\end{property}

To prepare for deriving more properties of the map $\diffD^{(E)},$ we formulate
a few  results on finite dimensional $\cO_0$-modules.

\begin{lemma}%LEMMA
Let $E$ be a finite dimensional $\O_0$-module. Then $E$ is cyclic if and only if there exists a cofinite ideal $\cI$ of $\O_0$ such that $E\simeq\O_0/\cI$.
\end{lemma}

\begin{proof}%PROOF
Straightforward.
\end{proof}

\begin{cor}\label{sec18}%COROLLARY
Let $E$ be a finite dimensional $\O_0$-module. Then
there exist finitely many cofinite ideals $\cI_1, \ldots, \cI_n$ of $\cO_0$ such that
$E$ is a quotient of the direct sum $\cO_0/ \cI_1 \oplus \cdots \oplus \cO_0 /\cI_n$
of $\cO_0$-modules.          \par
In particular, there exist $k,N  \in \N$ such that $E$ is a quotient of the $\cO_0$-module
$(\O_0/\cM^{k+1})^N$ for some $k,N \in\N.$
\end{cor}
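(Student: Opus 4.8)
The plan is to deduce the corollary from two facts already at hand: that a finite dimensional $\cO_0$-module is finitely generated over $\cO_0$ (a $\C$-basis is in particular an $\cO_0$-generating set), and that a cyclic finite dimensional $\cO_0$-module is of the form $\cO_0/\cI$ with $\cI$ cofinite (the preceding lemma, or equivalently the first isomorphism theorem together with the remark that the annihilator of a finite dimensional module is cofinite).

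Concretely, I would fix a $\C$-basis $e_1,\dots,e_n$ of $E$ and consider the $\cO_0$-linear map $\Phi\colon \cO_0^n \to E$ defined by $\Phi(f_1,\dots,f_n)=\sum_{i=1}^n f_i\, e_i$, which is surjective since the $e_i$ already span $E$ over $\C.$ For each $i,$ the restriction of $\Phi$ to the $i$-th summand is a surjection of $\cO_0$-modules onto the cyclic submodule $\cO_0 e_i \subseteq E$; being a $\C$-subspace of the finite dimensional space $E,$ this submodule is finite dimensional, so its annihilator $\cI_i\coleq \ann_{\cO_0}(e_i)$ is a cofinite ideal and $\cO_0 e_i \simeq \cO_0/\cI_i$ as $\cO_0$-modules. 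Forming the direct sum of these $n$ surjections gives a surjection of $\cO_0$-modules $\cO_0/\cI_1 \oplus \cdots \oplus \cO_0/\cI_n \twoheadrightarrow E,$ which is the first assertion.

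For the second assertion, I would apply Lemma \ref{l: cofinite ideals} to each $\cI_i$ to obtain $k_i \in \N$ with $\cM^{k_i+1}\subseteq \cI_i,$ and then set $k\coleq \max_i k_i$ and $N\coleq n.$ Since $k\geq k_i$ gives $\cM^{k+1}\subseteq \cM^{k_i+1}\subseteq \cI_i,$ the quotient map $\cO_0/\cM^{k+1}\twoheadrightarrow \cO_0/\cI_i$ is defined for every $i$; composing its $N$-fold direct sum with the surjection of the previous paragraph yields $(\cO_0/\cM^{k+1})^N \twoheadrightarrow E.$ There is no real obstacle here: the argument is formal, and the only points that need a moment's attention are that finite dimensionality alone (not any Noetherian property or local-ring structure beyond what is recorded) yields finite generation, and that $\cM^{k+1}\subseteq \cM^{k_i+1}$ holds precisely because $k\ge k_i,$ so that cofiniteness propagates correctly along the chain of inclusions.
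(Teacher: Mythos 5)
Your proof is correct and follows essentially the same route as the paper's (very terse) proof: the paper deduces the first assertion from the preceding lemma characterizing cyclic finite dimensional $\cO_0$-modules as quotients $\cO_0/\cI$ with $\cI$ cofinite, and the second from Lemma \ref{l: cofinite ideals}, which is precisely the decomposition by cyclic submodules $\cO_0 e_i$ and the passage to a common $k$ that you spell out explicitly.
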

\begin{proof}%PROOF
The first assertion results from the previous lemma. The second follows from Lemma \ref{l: cofinite ideals}.
\end{proof}

Besides the decomposition (\ref{e: deco cO}), we have the following decomposition
of $\cP = \cP(\fv_\iC),$ for $k \in \N,$
\begin{equation}
\label{e: deco cP}
\P  =  \P_k\oplus (\cM\cap\P)^{k+1}.
\end{equation}
Hence, the
embedding $\cP \hookrightarrow \cO_0$ induces, for each $k \in \N,$ an isomorphism of algebras
\begin{equation}\label{sec23}
\iota_k: \;\;\P/(\cM\cap\P)^{k+1}\stackrel{\sim}{\longrightarrow}\O_0/\cM^{k+1}.
\end{equation}
It follows that $\cP/(\cM \cap \P)^{k+1}$ is a local ring, with unique maximal
ideal equal to $(\cM \cap \cP)/(\cM \cap \cP)^{k+1}.$ Thus, if $\widetilde \cI \subset \cP$
is an ideal with $(\cM \cap \cP)^{k+1} \subset \widetilde\cI,$ then $\widetilde \cI \subset \cM.$

\begin{lemma}\label{sec24}%LEMMA
Let $\widetilde{\cI}$ be an ideal of $\P.$  Then the following assertions are equivalent:
\begin{enumerate}
\itema
there exists a  $k \in \N$ such that $(\cM \cap \cP)^{k + 1} \subset
\widetilde \cI;$
\itemb
there exists an ideal $\cI\;\lhd\;\O_0 $ of finite codimension, such that
$\widetilde{\cI}=\cI\cap\P.$
\end{enumerate}
If any of these conditions is fulfilled, then the ideal $\cI$ in (b) is unique
and the embedding of $\cP$ into $\cO_0$ induces an isomorphism of algebras $\cP/\widetilde \cI \to \cO_0 / \cI.$
\end{lemma}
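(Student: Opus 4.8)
\emph{Plan of proof.}
The whole statement is a short bookkeeping exercise built on the truncation isomorphism $\iota_k$ of \eqref{sec23} and on Lemma~\ref{l: cofinite ideals}. The facts I will use repeatedly are that, for every $k \in \N$, the embedding $\cP \hookrightarrow \cO_0$ satisfies $\cP + \cM^{k+1} = \cO_0$ and $\cP \cap \cM^{k+1} = (\cM \cap \cP)^{k+1}$; the first of these is surjectivity of $\iota_k$ and the second is its injectivity. Given a $k$ as in (a), the candidate for the ideal in (b) will simply be $\cI := \widetilde\cI + \cM^{k+1}$.

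First I will dispatch (b)$\Rightarrow$(a): if $\widetilde\cI = \cI \cap \cP$ with $\cI \lhd \cO_0$ cofinite, then Lemma~\ref{l: cofinite ideals} provides a $k$ with $\cM^{k+1} \subset \cI$, whence $(\cM \cap \cP)^{k+1} \subset \cM^{k+1} \cap \cP \subset \cI \cap \cP = \widetilde\cI$. For (a)$\Rightarrow$(b), assume $(\cM \cap \cP)^{k+1} \subset \widetilde\cI$ and put $\cI := \widetilde\cI + \cM^{k+1}$. I will then check three things. That $\cI$ is an ideal of $\cO_0$: writing an arbitrary $f \in \cO_0$ as $f = p + m$ with $p \in \cP$, $m \in \cM^{k+1}$ (using $\cP + \cM^{k+1} = \cO_0$), one gets for $q \in \widetilde\cI$ that $qf = qp + qm \in \widetilde\cI + \cM^{k+1}$, since $\widetilde\cI$ is an ideal of $\cP$ and $\cM^{k+1}$ an ideal of $\cO_0$. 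That $\cO_0/\cI$ is finite dimensional: it is a quotient of $\cO_0/\cM^{k+1}$, which is finite dimensional by \eqref{e: deco cO}. And that $\cI \cap \cP = \widetilde\cI$: the inclusion $\supset$ is clear, and conversely if $p = q + m$ lies in $\cP$ with $q \in \widetilde\cI$ and $m \in \cM^{k+1}$, then $m = p - q \in \cP \cap \cM^{k+1} = (\cM \cap \cP)^{k+1} \subset \widetilde\cI$, so $p \in \widetilde\cI$.

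There remains the uniqueness clause together with the final isomorphism. Let $\cI \lhd \cO_0$ be cofinite with $\cI \cap \cP = \widetilde\cI$ and choose $k$ with $\cM^{k+1} \subset \cI$. I claim $\cI = \widetilde\cI + \cM^{k+1}$: indeed $\supset$ is clear, and for $x \in \cI$ one writes $x = p + m$ with $p \in \cP$ and $m \in \cM^{k+1} \subset \cI$, so that $p = x - m \in \cI \cap \cP = \widetilde\cI$ and $x \in \widetilde\cI + \cM^{k+1}$. Hence any cofinite ideal with trace $\widetilde\cI$ on $\cP$ equals $\widetilde\cI + \cM^{k+1}$ for every admissible $k$; and if $\cI_1, \cI_2$ are two such ideals, choosing a $k$ with $\cM^{k+1} \subset \cI_1 \cap \cI_2$ (possible because the powers $\cM^j$ form a decreasing chain) forces $\cI_1 = \widetilde\cI + \cM^{k+1} = \cI_2$. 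Finally, the composite $\cP \hookrightarrow \cO_0 \to \cO_0/\cI$ is an algebra homomorphism with kernel $\cP \cap \cI = \widetilde\cI$, and it is surjective because $\cP + \cM^{k+1} = \cO_0$ while $\cM^{k+1} \subset \cI$; so it induces the asserted isomorphism of algebras $\cP/\widetilde\cI \stackrel{\sim}{\longrightarrow} \cO_0/\cI$.

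I do not expect any genuine obstacle: once the two consequences of $\iota_k$ being an isomorphism are recorded, the argument consists of elementary manipulations with finite-dimensional quotients of $\cO_0$. The one point deserving a little care is the uniqueness clause, where one must make sure the reconstructed ideal $\widetilde\cI + \cM^{k+1}$ does not depend on the auxiliary integer $k$ — which is exactly what the computation $\cI = \widetilde\cI + \cM^{k+1}$ gives, and which is why one can pass freely to a larger $k$ when comparing two ideals.
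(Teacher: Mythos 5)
Your proof is correct and takes essentially the same approach as the paper's: your candidate ideal $\cI = \widetilde\cI + \cM^{k+1}$ coincides with the paper's preimage in $\cO_0$ of $\iota_k(\widetilde\cI')$, and the two decompositions you extract from $\iota_k$ being an isomorphism (namely $\cP + \cM^{k+1} = \cO_0$ and $\cP \cap \cM^{k+1} = (\cM\cap\cP)^{k+1}$) are exactly what the paper's diagram-chases are encoding. Your direct observation that any cofinite ideal with trace $\widetilde\cI$ on $\cP$ is forced to equal $\widetilde\cI + \cM^{k+1}$ is a slightly more transparent packaging of the paper's uniqueness argument.
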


\begin{proof}%PROOF
Assume (a).
The image $\widetilde \cI'$ of $\widetilde \cI$ in $\cP/(\cM \cap \cP)^{k+1}$
is an ideal.
Its image $\iota_k(\tilde \cI')$ is an ideal of $\cO_0/ \cM^{k+1}.$ Let $\cI$ be the
preimage of $\iota_k(\tilde\cI')$ in $\cO_0.$ Then the following diagram commutes
$$
\xymatrix@1{
\cP/(\cM \cap \cP)^{k+1}\;\;\ar[d]_p\ar[r]^{\iota_k} & \;\;\ar[d]^q\cO_0/\cM^{k+1}\\
\cP/\widetilde\cI\; \ar[r]^{\bfi} & \;\; {\cO_0/\cI}.
}
$$
Here $\bfi$ is induced by the inclusion map $\cP\hookrightarrow \cO_0.$
The kernel of $p$ equals $\widetilde \cI'$ and the kernel of $q$ equals $\iota_k(\widetilde \cI').$
As $\iota_k$ is an isomorphism of algebras, it follows that $\bfi$ is an isomorphism of algebras,
and (b) is immediate.

Conversely, assume (b). Let $\cI_j \ideal \cO_0$ be ideals such that $\widetilde \cI = \cI_j \cap \cP,$
for $j =1, 2.$ We will complete the proof by showing that (a) holds and that $\cI_1 = \cI_2.$

Since  $\cI_1$ and $\cI_2$ are cofinite, there exists a constant $k \in \N$ such that
$\cM^{k+1}\subset \cI_j,$ for both $j =1,2.$ Therefore,
$$
(\cM \cap \cP)^{k+1} \subset (\cM^{k+1} \cap \cP ) \subset \widetilde \cI
$$
and (a) follows.
Moreover, for each $j=1,2$ we have
the following commutative diagram:
$$
\xymatrix@1{
\cP/(\cM \cap \cP)^{k+1}\;\;\ar[d]_{p}\ar[r]^{\iota_k} & \;\;\ar[d]^{q_j}\cO_0/\cM^{k+1}\\
\cP/\widetilde\cI\; \ar[r]^{\bfi_j} & \;\; {\cO_0/\cI_j}.
}
$$
From the assumption on $\cI_j$ it follows that the map $\bfi_j$ is injective. Moreover, since
$\iota_k$ and $q_j$ are surjective, it follows that $\bfi_j$ is an isomorphism of algebras,
for $j =1, 2.$ This implies that $\iota_k(\ker p) = \ker (q_j).$ Since $\cI_j$ equals
the preimage of $\ker(q_j)$ in $\cO_0,$ for $j =1,2,$ it follows that $\cI_1 = \cI_2.$
\end{proof}
\par
Let $\FMod_{\O_0}$ denote the category of  finite dimensional $\O_0$-modules and $\FMod_\P$
the category of finite dimensional $\P$-modules $E$
for which there exists a $k \in \N$ such that
$$
(\cM\cap \cP)^{k+1} \subset \mathrm{ann}_\P(E).
$$
If $E,F$ belong to $\FMod_\P$ then the $\FMod_\P$-morphisms are defined to be the
$\cP$-module homomorphisms $E \to F.$
We observe that their kernels and images belong to the category $\FMod_\P$
as well.
            \par
If $E$ is a finite dimensional $\cO_0$-module, then its annihilator $\cI$ is cofinite.
Since $\cP \hookrightarrow \O_0,$ the space
$E$ is a finite dimensional $\cP$-module as well and its annihilator $\widetilde \cI \coleq  \ann_\cP(E)$ in $\cP$ is given by $\widetilde \cI = \cI \cap \cP.$
Furthermore, by Lemma \ref{l: cofinite ideals} there exists a $k \in \N$ such that
$$
(\cM \cap \cP )^{k+1} \subset  \ann_\cP(E).
$$
We conclude that there is a well-defined forgetful functor $\cF: \FMod_{\cO_0} \to \FMod_\cP.$

\begin{lemma}\label{sec25}%LEMMA
The forgetful functor $\cF: \FMod_{\cO_0} \to \FMod_\cP $ is an isomorphism of categories.
\end{lemma}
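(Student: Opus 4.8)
The plan is to exhibit an explicit inverse functor $\cG\colon \FMod_\cP \to \FMod_{\cO_0}$ satisfying $\cG\circ\cF = \mathrm{id}$ and $\cF\circ\cG = \mathrm{id}$. Everything hinges on Lemma \ref{sec24}: for an ideal $\widetilde\cI \ideal \cP$ that contains some power $(\cM\cap\cP)^{k+1}$, it produces a unique cofinite ideal $\cI \ideal \cO_0$ with $\cI\cap\cP = \widetilde\cI$, together with the algebra isomorphism $j\colon\cP/\widetilde\cI \to \cO_0/\cI$ induced by the inclusion $\cP\hookrightarrow\cO_0$.

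On objects I would argue as follows. Given $E\in\FMod_\cP$ with structure homomorphism $\sigma\colon\cP\to\End{E}$, put $\widetilde\cI = \ker\sigma = \ann_\cP(E)$; by definition of $\FMod_\cP$ this ideal contains a power of $\cM\cap\cP$, so Lemma \ref{sec24} applies and furnishes $\cI$ and $j\colon\cP/\widetilde\cI\to\cO_0/\cI$. Writing $\bar\sigma\colon\cP/\widetilde\cI\hookrightarrow\End{E}$ for the injection induced by $\sigma$, I define $\cG(E)$ to be the underlying space of $E$ equipped with the $\cO_0$-module structure $\rho_E \coleq \bar\sigma\circ j^{-1}\circ q_\cI$, where $q_\cI\colon\cO_0\to\cO_0/\cI$ is the quotient map. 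Since $j$ is induced by the inclusion, $\rho_E$ restricts on $\cP$ to $\sigma$, so $\cF\cG(E) = E$. Conversely, if $E$ is a finite dimensional $\cO_0$-module with structure map $\rho$ and $\cI = \ker\rho$, then $\cI\cap\cP = \ker(\rho|_\cP)$, and the uniqueness clause of Lemma \ref{sec24} identifies $\cI$ with the ideal attached to $\rho|_\cP$ by the construction above; comparing the two factorizations of $\rho|_\cP$ through the isomorphism $j$ then shows that the construction applied to $\rho|_\cP$ returns exactly $\rho$, that is, $\cG\cF(E) = E$. Thus $\cF$ and $\cG$ are mutually inverse bijections on objects.

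It remains to deal with morphisms. Faithfulness of $\cF$ is trivial, since $\cF$ leaves underlying linear maps unchanged; the real point is \emph{fullness}, equivalently that $\cG$ is well defined on morphisms: a $\cP$-linear map $f\colon E\to F$ between the underlying modules of two $\cO_0$-modules is automatically $\cO_0$-linear. To see this I would show that, given $g\in\cO_0$, there is a single polynomial $p\in\cP$ whose action on $E$ and on $F$ coincides with that of $g$. Indeed, with $\cI_E = \ker\rho_E$ and $\cI_F = \ker\rho_F$ cofinite, the intersection $\cI_E\cap\cI_F$ is cofinite in $\cO_0$, hence $(\cI_E\cap\cI_F)\cap\cP$ contains a power of $\cM\cap\cP$ and Lemma \ref{sec24} makes $\cP\to\cO_0/(\cI_E\cap\cI_F)$ surjective; choose $p$ with $g-p\in\cI_E\cap\cI_F$. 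Then $\rho_E(g)=\rho_E(p)$ and $\rho_F(g)=\rho_F(p)$, and $\cP$-linearity of $f$ gives $f(\rho_E(g)e) = f(\rho_E(p)e) = \rho_F(p)f(e) = \rho_F(g)f(e)$ for all $e\in E$. Hence $f$ is $\cO_0$-linear, $\cF$ is full, and setting $\cG(f)=f$ on morphisms completes the description of the inverse functor.

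The step I expect to require the most care is this last one, fullness, and within it the need to match the action of $g$ on $E$ and on $F$ by one and the same polynomial; treating the two modules simultaneously through $\cI_E\cap\cI_F$ (equivalently, through $E\oplus F$) is what makes it go through. The rest amounts to routine diagram chasing with the isomorphisms supplied by Lemma \ref{sec24}.
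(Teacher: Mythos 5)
Your proof is correct, and the object-level construction is exactly the paper's: both take $\widetilde\cI=\ann_\cP(E)$, invoke Lemma~\ref{sec24} to produce the unique cofinite $\cI\ideal\cO_0$ with $\cI\cap\cP=\widetilde\cI$, and transport the module structure along the isomorphism $\cP/\widetilde\cI\simeq\cO_0/\cI$. Where you diverge is in how you establish fullness. The paper first shows that each $\cP$-submodule of $E$ is automatically an $\cO_0$-submodule of $\cG(E)$ and that quotients behave compatibly, and then concludes (implicitly via a graph argument in $E\oplus F$) that any $\cP$-linear $f:E\to F$ is $\cO_0$-linear. You instead argue directly: given $g\in\cO_0$, the surjectivity of $\cP\to\cO_0/(\cI_E\cap\cI_F)$ furnished by Lemma~\ref{sec24} produces one polynomial $p$ that acts as $g$ on both $E$ and $F$, and $\cP$-linearity of $f$ then transfers immediately. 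This is a cleaner and more self-contained route — it bypasses the submodule/quotient bookkeeping — while being, as you note yourself, secretly the same idea packaged through $\cI_E\cap\cI_F$ rather than through submodules of $E\oplus F$. One nitpick: you should dispose of the case $E=0$ (or $F=0$) separately, since there $\ker\rho=\cO_0$ and the phrase ``cofinite ideal'' is being stretched; the paper explicitly restricts to non-trivial $E$. The $\cG\cF=\mathrm{id}$ verification, which you compress into one sentence, is correct but deserves the short computation you sketch — it turns on the uniqueness clause of Lemma~\ref{sec24} plus the factorization $\bar\sigma=\bar\rho\circ j$.
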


\begin{proof}%PROOF
Let $E$ be a non-trivial $\cP$-module from the category $\FMod_\cP.$
We will first show that $E$ carries a unique compatible
structure of $\cO_0$-module.
             \par
The ideal $\widetilde \cI  = \mathrm{ann}_\P(E)$ satisfies condition (a)
of Lemma \ref{sec24}, hence equals
$\cP \cap \cI$ for a unique cofinite
ideal $\cI \ideal \cO_0.$ Via the isomorphism $\cP/\widetilde \cI \simeq \cO_0/\cI$ we equip
$E$ with the structure of $\cO_0$-module; this establishes existence.
            \par
For uniqueness, assume that $E$ is equipped with a compatible structure of $\cO_0$-module.
Let $\cI'$ be the annihilator of $E$ in $\cO_0.$ Then $\cI'$ is cofinite and, by compatibility, $\widetilde \cI = \cI' \cap \cP.$ By the first part of the proof, we
see that  $\cI' =  \cI.$ Furthermore, by compatibility it follows that
the diagram
$$
\xymatrix{
\ar[d] \cP/\widetilde \cI \ar[rd] & \\
\cO_0/\cI \ar[r] & \End{E}
}
$$
commutes. The vertical arrow is induced by the inclusion $\cP \to \cO_0$ and represents an isomorphism.
This establishes uniqueness.
                   \par
It follows from these considerations that each submodule
of $E$ in the category $\FMod_\P$ is a submodule for the associated compatible structure of
$\cO_0$-module as well.
                  \par
For $E$ a finite dimensional $\cP$-module from $\FMod_\P,$ let $\cG(E)$ denote the same space
with the uniquely defined compatible structure of $\cO_0$-module. If $E_0$ is a submodule in the category
$\FMod_\P,$ then by what we said before, $\cG(E_0)$ is an $\cO_0$-submodule of $\cG(E).$
The quotient $\cG(E)/\cG(E_0)$ is an $\cO_0$-module in a natural way, whose $\cO_0$-module
structure is compatible with the $\cP$-module structure of $E/E_0.$ Hence, $\cG(E)/\cG(E_0) = \cG(E/E_0).$
It now follows that every morphism $f: E \to F$ in the category $\FMod_\P$ is an $\cO_0$-module
homomorphism $\cG(E) \to \cG(F).$ We thus see that $\cG$ defines a functor $\FMod_\P \to  \FMod_{\cO_0},$
which obviously is a two-sided inverse to $\cF.$
\end{proof}

\begin{cor}%COROLLARY
For every pair $E,\,F$ of finite dimensional $\cO_0$-modules,
$$\mathrm{Hom}_{\O_0}(E,F)=\mathrm{Hom}_\P(E,F).$$
\end{cor}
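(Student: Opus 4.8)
The plan is to obtain this as an immediate consequence of Lemma \ref{sec25}. One inclusion is trivial: since $\cP$ is a subalgebra of $\cO_0$, every $\cO_0$-linear map between finite dimensional $\cO_0$-modules is a fortiori $\cP$-linear, so $\Hom_{\cO_0}(E,F) \subseteq \Hom_\cP(E,F)$. Here I am implicitly using that $E$ and $F$, regarded as $\cP$-modules by restriction of scalars, belong to the category $\FMod_\cP$: indeed, by Lemma \ref{l: cofinite ideals} the annihilator in $\cO_0$ of such a module contains $\cM^{k+1}$ for some $k$, hence its annihilator in $\cP$ contains $(\cM \cap \cP)^{k+1}$, which is condition (a) of Lemma \ref{sec24}.

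For the reverse inclusion, the key observation is that the forgetful functor $\cF : \FMod_{\cO_0} \to \FMod_\cP$ of Lemma \ref{sec25} does nothing to underlying linear maps: it sends an $\cO_0$-homomorphism $f : E \to F$ to the identical map $f$, now viewed merely as a $\cP$-homomorphism. Lemma \ref{sec25} asserts that $\cF$ is an isomorphism of categories, so in particular it induces, for each pair $E, F$, a bijection from $\Hom_{\cO_0}(E,F)$ onto $\Hom_\cP(\cF E, \cF F)$. As this bijection is simply the map $f \mapsto f$ inside $\Hom_\C(E,F)$, it is an equality of subsets, which is exactly the assertion of the corollary.

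I do not expect any genuine obstacle: all the substance sits in Lemma \ref{sec25}, whose proof already records (in the construction of the two-sided inverse $\cG$) both that $\cG E = E$ and $\cG F = F$ as $\cO_0$-modules and that every morphism in $\FMod_\cP$ is automatically $\cO_0$-linear. If one preferred a self-contained argument avoiding the categorical language, I would instead take $f \in \Hom_\cP(E,F)$, note that its graph $\{(e, f(e)) \mid e \in E\}$ is a $\cP$-submodule of $E \oplus F$, invoke the remark from the proof of Lemma \ref{sec25} that a submodule in $\FMod_\cP$ of an object is a submodule for that object's compatible $\cO_0$-structure --- which on $E \oplus F$ is the direct sum of the original structures --- and conclude that the graph is stable under $\cO_0$, forcing $g \cdot f(e) = f(g \cdot e)$ for all $g \in \cO_0$ and $e \in E$, i.e. $f \in \Hom_{\cO_0}(E,F)$.
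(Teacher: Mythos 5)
Your proposal is correct and follows the paper's intent exactly: the corollary is stated with no separate proof precisely because it is an immediate consequence of Lemma \ref{sec25}, with the forgetful functor acting as the identity on underlying linear maps, so full faithfulness gives the equality of Hom-sets. Your alternative graph argument is a nice self-contained rephrasing of the same content.
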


Our next objective is to consider tensor products in the categories $\FMod_\P$ and $ \FMod_{\cO_0}.$
                   \par
Let $n \in \N^*$ and consider the $n$-fold Cartesian product $\fv_\iC^n$ of
$\fv_\iC.$ Projection onto the $j$-th coordinate
is denoted by $\pr_j.$ Pull-back by $\pr_j$ defines an embedding of algebras
$\pr_j^*: \; p \mapsto p \after \pr_j,$ $\cP \to \cP(\fv_\iC^n).$ The multi-linear map
$$
(p_1, \ldots , p_n) \mapsto \prod_{j=1}^n \pr_j^*(p_j)
$$
induces an isomorphism of algebras $\cP^{\otimes n } \to \cP(\fv_\iC^n),$ via which we shall identify the
elements of these spaces. Accordingly,
$$
(p_1 \otimes \cdots \otimes p_n)(\mu_1, \ldots, \mu_n) = p_1(\mu_1) \cdots p_n(\mu_n),
$$
for $p_j \in \cP$ and $\mu_j \in \fv_\iC.$ The maximal ideal $\cM_{\cP^{\otimes n}}$ in $\cP(\fv_\iC^n)$ consisting
of the polynomials vanishing at $0$ is now given by $\cM_{\cP^{\otimes n}} = \sum_{i=1}^n \cM_{\cP, n, i},$
where
$$
\cM_{\cP, n , i} \coleq  \cP \otimes \cdots \otimes \cP \otimes
{\buildrel i \over {\overbrace{(\cM \cap \cP)}}
}\otimes \cP \otimes \cdots \otimes \cP.
$$
\begin{lemma}
\label{l: tensor products and cC}%LEMMA
Let $E_1, \ldots, E_n$ be finite dimensional $\cP$-modules from the category $\FMod_\P.$
Then $E_1 \otimes \cdots \otimes E_n$ is a $\cP^{\otimes n}$-module from the category $\FMod_{\P^{\otimes n}}.$
\end{lemma}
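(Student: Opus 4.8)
The statement to prove is that if $E_1,\dots,E_n$ belong to $\FMod_\P$, then the tensor product $E_1\otimes\cdots\otimes E_n$, viewed as a $\cP^{\otimes n}$-module in the obvious way (with $p_1\otimes\cdots\otimes p_n$ acting on $e_1\otimes\cdots\otimes e_n$ as $(p_1 e_1)\otimes\cdots\otimes(p_n e_n)$), lies in the category $\FMod_{\P^{\otimes n}}$. By definition of that category there are two things to check: that $E_1\otimes\cdots\otimes E_n$ is finite dimensional, and that there is some $k\in\N$ with $(\cM_{\cP^{\otimes n}})^{k+1}\subset\ann_{\cP^{\otimes n}}(E_1\otimes\cdots\otimes E_n)$. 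The first point is immediate since each $E_j$ is finite dimensional over $\C$, so the dimension of the tensor product is the product of the dimensions.

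For the second point, the plan is to exploit the description of the maximal ideal $\cM_{\cP^{\otimes n}} = \sum_{i=1}^n \cM_{\cP,n,i}$ recorded just before the statement. Since each $E_j\in\FMod_\P$, choose for each $j$ an integer $k_j\in\N$ with $(\cM\cap\P)^{k_j+1}\subset\ann_\P(E_j)$, and set $k$ large enough, say $k\ge k_1+\cdots+k_n$ (actually $k+1 \ge (k_1+1)+\cdots+(k_n+1) - (n-1)$ will do, but a crude bound suffices). The key observation is that $\cM_{\cP,n,i}$ acts on $E_1\otimes\cdots\otimes E_n$ only through its $i$-th tensor slot, i.e. through the image of $\cM\cap\P$ acting on $E_i$; hence any product of $k_i+1$ elements drawn from $\cM_{\cP,n,i}$ annihilates the tensor product, because it factors through $(\cM\cap\P)^{k_i+1}\subset\ann_\P(E_i)$ in the $i$-th slot. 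Now expand a product of $k+1$ elements of $\cM_{\cP^{\otimes n}} = \sum_i \cM_{\cP,n,i}$ by multilinearity into a sum of products, each of which is a product of $k+1$ factors, each factor belonging to some $\cM_{\cP,n,i}$. Since $\cP^{\otimes n}$ is commutative, each such product can be regrouped according to which index $i$ the factors came from; by the pigeonhole principle, because $k+1 > (k_1+1)+\cdots+(k_n+1) - n = k_1+\cdots+k_n$, wait — let me state it cleanly: if $k+1 \ge (k_1+1)+\cdots+(k_n+1)$ then some index $i$ occurs at least $k_i+1$ times among the $k+1$ factors, and collecting those factors (using commutativity) produces a subproduct lying in $(\cM_{\cP,n,i})^{k_i+1}$, which annihilates $E_1\otimes\cdots\otimes E_n$. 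Hence the whole product is zero, so $(\cM_{\cP^{\otimes n}})^{k+1}\subset\ann_{\cP^{\otimes n}}(E_1\otimes\cdots\otimes E_n)$ for $k+1 = (k_1+1)+\cdots+(k_n+1)$.

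The only mildly delicate point, and the one I would spell out carefully, is the claim that $\cM_{\cP,n,i}$ acts through the $i$-th slot only: an element of $\cM_{\cP,n,i}$ is a sum of pure tensors $p_1\otimes\cdots\otimes p_n$ with $p_i\in\cM\cap\P$, and on $e_1\otimes\cdots\otimes e_n$ this acts as $(p_1 e_1)\otimes\cdots\otimes(p_i e_i)\otimes\cdots\otimes(p_n e_n)$; a product of $k_i+1$ such elements therefore acts in the $i$-th slot by an element of $(\cM\cap\P)^{k_i+1}$, which kills $E_i$ by choice of $k_i$, hence kills the pure tensor and, by linearity, everything. Combined with commutativity of $\cP^{\otimes n}$ to justify the regrouping, this finishes the argument. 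I do not expect any real obstacle here; the proof is a bookkeeping exercise using the explicit form of $\cM_{\cP^{\otimes n}}$ and a pigeonhole count, and could reasonably be dispatched in the paper with the phrase "straightforward" followed by the pigeonhole sentence. If one prefers a cleaner route, one can instead note that $\ann_{\cP^{\otimes n}}(E_1\otimes\cdots\otimes E_n) \supset \sum_j \pr_j^*(\ann_\P(E_j))\cdot\cP^{\otimes n}$ and that the quotient of $\cP^{\otimes n}$ by the ideal generated by the $\pr_j^*(\ann_\P(E_j))$ is the tensor product $\bigotimes_j \cP/\ann_\P(E_j)$, each factor of which is a finite-dimensional local $\C$-algebra whose maximal ideal is nilpotent; the tensor product of finitely many such is again finite dimensional with nilpotent maximal ideal, which is exactly condition (a) defining $\FMod_{\P^{\otimes n}}$.
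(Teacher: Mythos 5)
Your proof is correct and follows essentially the same route as the paper: the paper takes a single $k$ with $(\cM\cap\cP)^{k+1}$ annihilating all $E_i$ and asserts that $(\cM_{\cP^{\otimes n}})^{n(k+1)}$ lies in the annihilator of the tensor product, leaving the pigeonhole regrouping implicit as "readily checked." You spell out that pigeonhole step explicitly (with slightly sharper individual exponents $k_j$), which is exactly the content the paper is eliding.
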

\begin{proof}%PROOF
There exists a $k \in \N$ such that $(\cM\cap\cP)^{k+1}$ annihilates each of the modules
 $E_i,$ for $1 \leq i \leq n.$ It is now readily checked that
$$
(\cM_{\cP^{\otimes n}})^{n (k+1)}\subset \ann_{\cP^{\otimes n}}(E_1 \otimes \cdots \otimes E_n).
$$
\end{proof}
We consider the map
$$
\alpha_n:\;\; \fv_\iC^{ n}  \rightarrow  \fv_\iC, \quad
(\mu_1,\dots,\mu_n)  \mapsto \mu_1 + \cdots +\mu_n.
$$
Pull-back by $\alpha_n$ induces an algebra homomorphism
\begin{equation}\label{alphaene}
\alpha_n^*:\;\P\longrightarrow\P^{\otimes n}.
\end{equation}

\vspace{2mm}
\begin{lemma}%LEMMA
The homomorphism $\ga_n^*$ maps the maximal ideal $\cM\cap \cP$ of $\cP$ into the maximal ideal
$\cM_{\cP^{\otimes n}}$ of $\cP^{\otimes n}.$
\end{lemma}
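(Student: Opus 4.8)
The plan is to reduce the statement to the elementary observation that $\alpha_n$ sends the origin of $\fv_\iC^n$ to the origin of $\fv_\iC$, combined with the identification of the two maximal ideals in question as evaluation-at-zero ideals.

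First I would recall that, under the identification of $\cP^{\otimes n}$ with $\cP(\fv_\iC^n)$ fixed above, the maximal ideal $\cM \cap \cP$ consists precisely of the polynomials $p \in \cP$ with $p(0) = 0$, while $\cM_{\cP^{\otimes n}}$ consists precisely of the polynomials $q \in \cP(\fv_\iC^n)$ with $q(0,\dots,0) = 0$. Next, since $\alpha_n(0,\dots,0) = 0 + \cdots + 0 = 0$, for any $p \in \cM \cap \cP$ we compute
$$
\alpha_n^*(p)(0,\dots,0) = (p \after \alpha_n)(0,\dots,0) = p(\alpha_n(0,\dots,0)) = p(0) = 0,
$$
so that $\alpha_n^*(p) \in \cM_{\cP^{\otimes n}}$, which is the assertion.

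There is essentially no obstacle here; the only point requiring a little care is the bookkeeping identifying $\cP^{\otimes n}$ with $\cP(\fv_\iC^n)$ and recognizing $\cM_{\cP^{\otimes n}}$ as the ideal of polynomials vanishing at the origin, both of which have just been arranged in the paragraph preceding the lemma. More conceptually, one may simply invoke the general principle that the pull-back homomorphism along a base-point-preserving polynomial map of affine spaces carries the augmentation ideal of the target ring into the augmentation ideal of the source ring.
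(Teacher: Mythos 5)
Your proof is correct, but it takes a genuinely different (and more conceptual) route than the paper's. The paper proceeds by observing that $\cM\cap\cP$ is generated by the linear functionals $\xi\in\fv_\iC^*$, and then computes explicitly
$$
\ga_n^*(\xi)=\sum_{i=1}^n 1\otimes\cdots\otimes 1\otimes\xi\otimes 1\otimes\cdots\otimes 1,
$$
each summand of which visibly lies in one of the ideals $\cM_{\cP,n,i}$, hence in $\cM_{\cP^{\otimes n}}$. You instead observe that both maximal ideals are the vanishing ideals at the respective origins and that $\alpha_n$ is base-point-preserving, so $\alpha_n^*$ automatically carries one into the other. Your argument is shorter and does not rely on identifying generators of $\cM\cap\cP$; the paper's more computational route, however, produces the explicit formula for $\ga_n^*(\xi)$, which is not incidental — it is immediately reused (as formula (\ref{e: ga n on xi})) to derive the description of the $\cP$-module structure on tensor products in Remark \ref{r: tensor products in cC}. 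So the paper is front-loading a computation it will need anyway, while your argument is the cleaner proof of the lemma in isolation.
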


\begin{proof}%PROOF
The ideal $\cM\cap \cP$ is generated by the first order polynomials $\xi \in \fv_\iC^*.$
Now
\begin{equation}
\label{e: ga n on xi}
\ga_n^*(\xi) = \sum_{i = 1}^n    1 \otimes \cdots \otimes 1 \otimes {\buildrel i \over {{\xi}}} \otimes 1 \otimes \cdots \otimes 1,
\end{equation}
and the result follows.
\end{proof}

\begin{cor}\label{c: sec28}%LEMMA
Let $n\in\N^*$. Via the homomorphism (\ref{alphaene}),
every finite dimensional $\P^{\otimes n}$-module from
the category $\FMod_{\P^{\otimes n}}$ becomes a $\cP$-module from the category $\FMod_\P.$
\end{cor}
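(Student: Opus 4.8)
The plan is to reduce the statement to the fact, already established in Lemma \ref{sec25} (and its preceding discussion via Lemma \ref{l: cofinite ideals} and Lemma \ref{sec24}), that a finite dimensional $\cP$-module lies in $\FMod_\cP$ precisely when it carries a (unique) compatible $\cO_0$-module structure, together with the previous lemma asserting that $\ga_n^*$ carries $\cM \cap \cP$ into $\cM_{\cP^{\otimes n}}.$ So let $E$ be a finite dimensional $\cP^{\otimes n}$-module from $\FMod_{\cP^{\otimes n}},$ and view it as a $\cP$-module via $\ga_n^*.$ The only thing to check is that there exists $k \in \N$ with $(\cM \cap \cP)^{k+1} \subset \ann_\cP(E).$

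First I would choose, using membership of $E$ in $\FMod_{\cP^{\otimes n}},$ an integer $m \in \N$ such that $(\cM_{\cP^{\otimes n}})^{m+1} \subset \ann_{\cP^{\otimes n}}(E).$ Then, by the previous lemma, for every $\xi_1, \ldots, \xi_{m+1} \in \cM \cap \cP$ we have $\ga_n^*(\xi_1) \cdots \ga_n^*(\xi_{m+1}) = \ga_n^*(\xi_1 \cdots \xi_{m+1})$ lying in $(\cM_{\cP^{\otimes n}})^{m+1},$ hence annihilating $E.$ Since $\cM \cap \cP$ is generated as an ideal by the first order polynomials $\xi \in \fv_\iC^*,$ the power $(\cM \cap \cP)^{m+1}$ is spanned by products $\xi_1 \cdots \xi_{m+1} \cdot p$ with $\xi_j \in \fv_\iC^*$ and $p \in \cP$; applying $\ga_n^*$ (an algebra homomorphism) and using that $\ann_{\cP^{\otimes n}}(E)$ is an ideal, each such element acts as $0$ on $E.$ Therefore $\ga_n^*\big((\cM \cap \cP)^{m+1}\big)$ annihilates $E,$ i.e.\ $(\cM \cap \cP)^{m+1} \subset \ann_\cP(E)$ for the $\cP$-module structure on $E$ obtained via $\ga_n^*.$ This is exactly the defining condition for $E$ to belong to $\FMod_\cP,$ and the corollary follows with $k = m.$

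There is no serious obstacle here; the argument is a direct bookkeeping computation built on the preceding two lemmas. The only point requiring a little care is the passage from "annihilator of $E$ in $\cP^{\otimes n}$ contains $(\cM_{\cP^{\otimes n}})^{m+1}$" to "annihilator of $E$ in $\cP$ (via $\ga_n^*$) contains $(\cM \cap \cP)^{m+1}$," which rests on $\ga_n^*$ being an algebra homomorphism mapping $\cM \cap \cP$ into $\cM_{\cP^{\otimes n}}$ and on the ideal $\cM \cap \cP$ being generated by linear forms, so that products of $m+1$ of its generators already generate the relevant power. Once this inclusion is in hand, membership in $\FMod_\cP$ is immediate from the definition of that category.
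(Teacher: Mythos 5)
Your proof is correct and follows essentially the same route as the paper's: pick $m$ with $(\cM_{\cP^{\otimes n}})^{m+1}\subset\ann_{\cP^{\otimes n}}(E)$, use the preceding lemma together with multiplicativity of $\ga_n^*$ to conclude $\ga_n^*((\cM\cap\cP)^{m+1})\subset(\cM_{\cP^{\otimes n}})^{m+1}\subset\ann_{\cP^{\otimes n}}(E)$, and then read off $(\cM\cap\cP)^{m+1}\subset\ann_{\cP}(E)$ from the fact that the $\cP$-action factors through $\ga_n^*$. The opening reference to Lemma \ref{sec25} is unnecessary (you never use it; you verify the defining condition of $\FMod_\cP$ directly), but the argument itself matches the paper's.
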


\begin{proof}%PROOF
Let $E$ be a non-trivial module from the category $\FMod_{\P^{\otimes n}}.$ Then there exists a
$k \in \N$ such that
$$
(\cM_{\cP^{\otimes n}})^{k+1} \subset \ann_{\cP^{\otimes n}}(E).
$$
By application of the previous lemma, we obtain
$$
\ga_n^*((\cM\cap \cP)^{k+1}) \subset  \ann_{\cP^{\otimes n}}(E).
$$
The $\P$-module structure on $E$ is defined by:
$$
\xymatrix{
\P\ar[r]^{\alpha_n^*}&\P^{\otimes n}\ar[r]&\End{E}.
}
$$
From this we see that
$
\mathrm{ann}_\P(E) = \alpha_n^{*-1}(\ann_{\P^{\otimes n}}(E))
$
and we infer that
$$
(\cM\cap \cP)^{k+1} \subset \ann_{\cP}(E).
$$
\end{proof}

\begin{rem}
\label{r: tensor products in cC}%REMARK
\rm
If $E$ is a finite dimensional $\cP$-module, we define
$$
m_E:\;\P \; \longrightarrow\; \End{E}
$$
by $m_E(p)e\coleq p\cdot e,$ for $p \in \cP$ and $e\in E$.

If $E_1, \cdots, E_n$ are finite dimensional modules from the category $\FMod_\P,$ then by
combining Lemma \ref{l: tensor products and cC} and Corollary \ref{c: sec28} we may equip the tensor product
$E_1 \otimes \cdots \otimes E_n$ with the structure of a module from the same category
$\FMod_\P.$ The module structure is given by the
rule
\begin{equation}
\label{e: rule for multiplication}
m_{E_1 \otimes \cdots \otimes E_n} = (m_{E_1} \otimes \cdots \otimes m_{E_n})\after \ga_n^*.
\end{equation}
In view of (\ref{e: ga n on xi}) this module structure is completely determined by the
rule
\begin{eqnarray}
\lefteqn{
m_{E_1 \otimes \cdots \otimes E_n}(\xi) =}\nonumber\\
\label{e: m on xi}
& = &
\sum_{i=1}^n \id_{E_1}  \otimes \cdots \otimes \id_{E_{i-1}} \otimes m_{E_i}(\xi) \otimes \id_{E_{i+1}}
\otimes \cdots \otimes \id_{E_n}
\end{eqnarray}
for $\xi \in \fv_\iC^*\subset \cP.$
                     \par
Accordingly, if $E_1, \ldots, E_n$
are finite dimensional $\cO_0$-modules, then in particular they
are $\cP$-modules from the category $\FMod_\P.$ We equip the module $E_1 \otimes \cdots \otimes E_n$
from $\FMod_\P$ with the unique compatible structure of $\cO_0$-module.
\end{rem}

\begin{lemma}\label{sec34}%LEMMA
Let $E_1,\ldots, E_n$ be finite dimensional $\O_0$-modules. Then, as $\O_0$-modules:
$$
E_1\otimes \cdots \otimes E_n\simeq E_1\otimes(E_2\otimes\cdots \otimes E_n).
$$
\end{lemma}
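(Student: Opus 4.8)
The plan is to exhibit the canonical vector-space isomorphism
$$
E_1 \otimes \cdots \otimes E_n \stackrel{\sim}{\to} E_1 \otimes (E_2 \otimes \cdots \otimes E_n),
$$
coming from associativity of the tensor product of vector spaces, and to check that it is $\cO_0$-linear for the module structures described in Remark \ref{r: tensor products in cC}. By Lemma \ref{sec25}, the $\cO_0$-module structure on each side is the \emph{unique} one compatible with the underlying $\cP$-module structure, so it suffices to verify that the two $\cP$-module structures coincide. A preliminary remark is needed here: by Lemma \ref{sec25} again (the identity $\cF\circ\cG = \id$), the $\cP$-module underlying the $\cO_0$-module $E_2 \otimes \cdots \otimes E_n$ is exactly the $\cP$-tensor product formed in $\FMod_\cP$, so that $E_1 \otimes (E_2 \otimes \cdots \otimes E_n)$ is genuinely the $\cP$-tensor product of $E_1$ with that module, and the comparison I want to make is a statement purely about $\cP$-modules.

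Both $\cP$-module structures are algebra homomorphisms $\cP \to \End{E_1 \otimes \cdots \otimes E_n}$ sending $1$ to $\id$, and $\cP = \cP(\fv_\iC)$ is generated as a unital algebra by the first-order polynomials $\xi \in \fv_\iC^*$. Hence it is enough to compare the two structures on such $\xi.$ For the left-hand side, formula (\ref{e: m on xi}) gives directly
$$
m_{E_1 \otimes \cdots \otimes E_n}(\xi) = \sum_{i=1}^n \id_{E_1} \otimes \cdots \otimes m_{E_i}(\xi) \otimes \cdots \otimes \id_{E_n}.
$$
For the right-hand side, applying (\ref{e: m on xi}) first to the two-factor product $E_1 \otimes (E_2 \otimes \cdots \otimes E_n)$ and then to the $(n-1)$-factor product $E_2 \otimes \cdots \otimes E_n$, one obtains
$$
m_{E_1 \otimes (E_2 \otimes \cdots \otimes E_n)}(\xi) = m_{E_1}(\xi) \otimes \id \;+\; \id_{E_1} \otimes \Big( \sum_{i=2}^n \id_{E_2} \otimes \cdots \otimes m_{E_i}(\xi) \otimes \cdots \otimes \id_{E_n}\Big),
$$
which, under the canonical identification of the two underlying spaces, is the same expression. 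Thus the two $\cP$-module structures agree on $\fv_\iC^*$, hence on all of $\cP.$

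Finally, by the uniqueness of the compatible $\cO_0$-module structure (Lemma \ref{sec25}, which rests on Lemma \ref{sec24}), the two $\cO_0$-module structures coincide as well, so the displayed map is an isomorphism of $\cO_0$-modules. There is essentially no obstacle in this argument: the only point requiring care is the passage back and forth between $\cP$- and $\cO_0$-module structures, which is handled entirely by Lemma \ref{sec25}; an implicit induction on $n$ enters in treating the iterated product $E_2 \otimes \cdots \otimes E_n$, but it introduces no new ideas.
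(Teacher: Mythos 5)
Your proof is correct and follows essentially the same route as the paper: reduce to an identity of $\cP$-module structures via the uniqueness in Lemma \ref{sec25}, then verify agreement on the algebra generators $\xi \in \fv_\iC^*$ using formula (\ref{e: m on xi}). You simply spell out in more detail the bookkeeping (the preliminary remark about the underlying $\cP$-module of $E_2\otimes\cdots\otimes E_n$ and the explicit computation on $\xi$) that the paper leaves implicit.
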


\begin{proof}%PROOF
In view of Remark \ref{r: tensor products in cC}, it suffices to establish the identity
as an identity of $\cP$-modules. Thus, it suffices to show that
\begin{equation}
m_{{E_1} \otimes \cdots \otimes {E_n}} =  m_{{E_1} \otimes ({E_2\otimes\cdots \otimes E_n})}.
\end{equation}
Since $\fv_\iC^*$ generates $\cP,$ it suffices to check this identity on any
element $\xi \in \fv_\iC^*.$ This is easily done by using the identity
(\ref{e: m  on xi}).
\end{proof}

We return to  the setting of an open subset $\Omega \subset \fv_\iC$ and resume
our study of the map $\diffD^{(E)}: f \mapsto f^{(E)},$
$\cO(\Omega) \to \cO(\Omega , \End{E}),$
introduced in (\ref{e: defi diffD}), for $E$ a finite dimensional $\cO_0$-module.

\begin{lemma}%LEMMA
\label{l: restriction diffD to cP}
The restriction to $\P$ of the map
$\diffD^{(E)}$ is given by
$$
\diffD^{(E)}|_\cP = (\id_\P\otimes m_E)\circ\alpha_2^*.
$$
 In particular, this restriction maps $\P$ into $\P\otimes \End{E}$.
\end{lemma}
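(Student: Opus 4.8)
The plan is to unravel both sides of the claimed identity on a polynomial $p \in \cP$, evaluated at a point $\mu \in \fv_\iC$, and apply the functional $e^* \otimes e$ (for $e^* \in E^*$, $e \in E$, which together span $\End{E}^*$) to reduce everything to a computation on $\cP$ itself. First I would recall from Definition~\ref{d: diffD} that for $p \in \cP$ we have $p^{(E)}(\mu)e = \gamma_0(T_\mu^*p)\cdot e$, and that since $p$ is a polynomial, $\gamma_0(T_\mu^*p)$ is simply the germ at $0$ of the polynomial $\nu \mapsto p(\mu + \nu)$; in particular $p^{(E)}$ lands in $\cP \otimes \End{E}$ rather than merely $\cO(\Omega, \End{E})$. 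On the other side, $\alpha_2^* p \in \cP^{\otimes 2} = \cP(\fv_\iC^2)$ is the polynomial $(\mu,\nu) \mapsto p(\mu+\nu)$, and applying $\id_\cP \otimes m_E$ sends this to the element of $\cP \otimes \End{E}$ whose value at $\mu$ is $m_E$ applied (in the second slot) to the polynomial $\nu \mapsto p(\mu + \nu)$, i.e.\ to $(m_E \text{ of } \gamma_0(T_\mu^* p))$. Thus both sides, evaluated at $\mu$, equal the endomorphism $e \mapsto p(\mu + \,\cdot\,)\cdot e$ of $E$, where $\cO_0$ acts through $m_E$; this is exactly the content of the identity.

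To make the bookkeeping clean and avoid hand-waving about tensor factors, I would argue it suffices to check the identity on linear functionals $\xi \in \fv_\iC^*$, since these generate $\cP$ as an algebra and all three of $\diffD^{(E)}$, $\id_\cP \otimes m_E$, and $\alpha_2^*$ are algebra homomorphisms (for $\diffD^{(E)}$ this is Corollary~\ref{c: continuity of diffD}; for $\alpha_2^*$ it is immediate; $m_E$ is an algebra homomorphism by definition, hence so is $\id_\cP \otimes m_E$). For $\xi \in \fv_\iC^*$ one computes directly: $\xi^{(E)}(\mu)e = \gamma_0(T_\mu^*\xi)\cdot e = (\xi(\mu) + \xi(\,\cdot\,))\cdot e = \xi(\mu)\,e + \xi\cdot e$, using that $\xi(\mu)$ is a scalar (so acts as $\xi(\mu)\,\id_E$) and $\xi \cdot e = m_E(\xi)e$. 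On the other hand, $\alpha_2^*\xi = \xi \otimes 1 + 1 \otimes \xi$ by the analogue of (\ref{e: ga n on xi}) with $n=2$, and applying $\id_\cP \otimes m_E$ gives $\xi \otimes \id_E + 1 \otimes m_E(\xi)$, which as a function of $\mu$ is precisely $\mu \mapsto \xi(\mu)\,\id_E + m_E(\xi)$. The two expressions agree, and since both sides are algebra homomorphisms agreeing on generators of $\cP$, they agree on all of $\cP$.

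The only mild subtlety — and the step I would be most careful about — is the identification of $\cP \otimes \End{E}$ as a subspace of $\cO(\Omega, \End{E})$ and the verification that $\id_\cP \otimes m_E$ composed with $\alpha_2^*$ really does produce an element of $\cO(\Omega, \End{E})$ matching $\diffD^{(E)}|_\cP$ under that identification, rather than some a priori different object living in $\cP^{\otimes 2} \otimes \End{E}$ or the like. This is handled by remembering that, under $\cP^{\otimes 2} \cong \cP(\fv_\iC^2)$, evaluating the first tensor slot at a point of $\fv_\iC$ and keeping the second as the polynomial variable is exactly the passage $p(\mu,\nu) \rightsquigarrow (\mu \mapsto p(\mu,\,\cdot\,))$ from $\cP(\fv_\iC^2)$ to $\cP \otimes \cP$; after post-composing with $\id_\cP \otimes m_E$ one lands in $\cP \otimes \End{E} \subset \cO(\Omega, \End{E})$. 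With that identification fixed, the generator computation above finishes the proof, and the ``in particular'' clause is immediate since $\alpha_2^*$ maps into $\cP^{\otimes 2}$ and $\id_\cP \otimes m_E$ maps into $\cP \otimes \End{E}$.
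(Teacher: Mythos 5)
Your proposal is correct, and your first paragraph is essentially the paper's own proof: compute $p^{(E)}(\mu)e = m_E(T_\mu^*p)e$ from Definition~\ref{d: diffD}, note that for $p\in\cP$ one has $T_\mu^*p\in\cP$ and that $T_\mu^*|_\cP = (\ev_\mu\otimes\id_\cP)\after\alpha_2^*$, and commute $m_E$ past $\ev_\mu\otimes\id$. Your second paragraph then adds a generator argument: since $\diffD^{(E)}$, $\alpha_2^*$, and $\id_\cP\otimes m_E$ are all algebra homomorphisms, it suffices to check the identity on $\xi\in\fv_\iC^*$ (and on $1$), which is a short direct computation using $\alpha_2^*\xi = \xi\otimes 1 + 1\otimes\xi$. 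That variation is not needed, but it is valid and actually sidesteps the tensor bookkeeping you flag as a ``mild subtlety''; the paper handles the same bookkeeping via the explicit commutation relation $m_E\after(\ev_\mu\otimes\id_\cP)=(\ev_\mu\otimes\id_{\End{E}})\after(\id_\cP\otimes m_E)$. Either route gives the lemma, and the ``in particular'' clause follows as you say.
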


\begin{proof}%PROOF
Let $p\in\P$. For any $\mu\in\fv_\iC$,  we have $T_\mu^*p\in\P,$ and for any $e\in E,$
$$
p^{(E)}(\mu)e  =  (T_\mu^*p)\cdot e = m_E(T_\mu^*p)e.
$$
Hence,
$$
(\ev_\mu \otimes \id_{\End{E}})\after \diffD^{(E)}|_\cP = m_E \after T_\mu^*|_\cP.
$$
Viewing $\ev_\mu\otimes\id_\P$ as a $\P$-valued function on $\P\otimes\P$,
we may identify the map  $T_\mu^*$  with $(\ev_\mu\otimes\id_\P)\circ\alpha^*_2$ on $\P.$
Using the  relation
$$
m_E\circ(\ev_\mu\otimes\id_\P)=(\ev_\mu\otimes\id_{\End{E}})\circ(\id_\P\otimes m_E),
$$
we obtain the assertion of the lemma.
\end{proof}

\begin{prop}\label{sec33}%PROPOSITION
Let $E_1$ and $E_2$ be two finite dimensional $\O_0$-modules. Then, for every $f\in\O(\Omega)$,
$$(\diffD^{(E_1)}\otimes \id_{\End{E_2}})f^{(E_2)}=f^{(E_1\otimes E_2)}.$$
\end{prop}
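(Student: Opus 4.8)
The plan is to reduce the claim --- an equality of two holomorphic maps $\Omega\to\End{E_1\otimes E_2}$ --- first to the case where $f$ is a polynomial, and then to verify it on algebra generators of $\cP$. Throughout I use the natural identifications $\End{E_1\otimes E_2}\simeq\End{E_1}\otimes\End{E_2}$ and $\cO(\Omega,\End{E})\simeq\cO(\Omega)\otimes\End{E}$ (legitimate since $\End{E}$ is finite dimensional), under which both $(\diffD^{(E_1)}\otimes\id_{\End{E_2}})\after\diffD^{(E_2)}$ and $\diffD^{(E_1\otimes E_2)}$ become maps $\cO(\Omega)\to\cO(\Omega,\End{E_1\otimes E_2})$ with the same source and target. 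By Corollary \ref{c: continuity of diffD} the second is a continuous algebra homomorphism. So is the first: it is $\diffD^{(E_2)}$ followed by the map $\diffD^{(E_1)}\otimes\id_{\End{E_2}}\colon g\otimes A\mapsto g^{(E_1)}\otimes A$, which is multiplicative because the product on $\cO(\Omega,\End{E_i})$ is pointwise in $\Omega$ and $\diffD^{(E_1)}$ is multiplicative, so that $(gh)^{(E_1)}\otimes AB=(g^{(E_1)}\otimes A)(h^{(E_1)}\otimes B)$; its continuity is again clear from Corollary \ref{c: continuity of diffD}.

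Next I would reduce to $f\in\cP$. Fix $\mu\in\Omega$. By Corollary \ref{sec18} (together with Lemma \ref{l: cofinite ideals}) there is a $k\in\N$ such that $\cM^{k+1}$ annihilates each of $E_1$, $E_2$ and $E_1\otimes E_2$. Inspecting Definition \ref{d: diffD}, the value $g^{(E_i)}(\nu)$ depends on $g$ only through its Taylor polynomial of degree $k$ at $\nu$; and since applying $\diffD^{(E_1)}$ involves at most $k$ differentiations, the value at $\mu$ of either side of the asserted identity depends on $f$ only through its Taylor polynomial of degree $2k$ at $\mu$. Replacing $f$ by that Taylor polynomial --- which lies in $\cP$ and has the same $2k$-jet at $\mu$ --- reduces the identity, evaluated at the arbitrary point $\mu$, to the case $f\in\cP$.

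So let $f\in\cP$. As both sides are algebra homomorphisms and $\cP$ is generated as a $\C$-algebra by $\fv_\iC^*$, it suffices to prove the identity for $f=\xi\in\fv_\iC^*$. By Lemma \ref{l: restriction diffD to cP}, for any finite dimensional $\cO_0$-module $E$ one has $\xi^{(E)}(\nu)=\xi(\nu)\,\id_E+m_E(\xi)$. Hence, using $\diffD^{(E_1)}(\xi)=\xi^{(E_1)}$ and $\diffD^{(E_1)}(1)=\id_{E_1}$,
$$
(\diffD^{(E_1)}\otimes\id_{\End{E_2}})(\xi^{(E_2)})(\nu)
= \xi(\nu)\,\id_{E_1}\otimes\id_{E_2} + m_{E_1}(\xi)\otimes\id_{E_2} + \id_{E_1}\otimes m_{E_2}(\xi).
$$
On the other hand $\xi^{(E_1\otimes E_2)}(\nu)=\xi(\nu)\,\id_{E_1\otimes E_2}+m_{E_1\otimes E_2}(\xi)$, and by the rule (\ref{e: m on xi}) (the case $n=2$ of (\ref{e: rule for multiplication}), cf.\ Remark \ref{r: tensor products in cC}) we have $m_{E_1\otimes E_2}(\xi)=m_{E_1}(\xi)\otimes\id_{E_2}+\id_{E_1}\otimes m_{E_2}(\xi)$. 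The two expressions agree, which settles the polynomial case and hence, via the reduction above, the proposition.

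The only step needing genuine care is the reduction to polynomials in the second paragraph: one must check that the relevant data really do factor through a finite-order jet, which is precisely where the cofiniteness of the annihilators (Lemma \ref{l: cofinite ideals}, Corollary \ref{sec18}) enters; everything else is bookkeeping with the formulas of Section \ref{s: holomorphic families}. If one prefers to avoid the jet argument, an equivalent route is: both sides are continuous (Corollary \ref{c: continuity of diffD}) and ``local'' (for open $U\subset\Omega$, $g^{(E)}|_U$ depends only on $g|_U$), so it is enough to prove the identity on $\cO(B)$ for every ball $B\subset\Omega$, where polynomials are dense (the Taylor series converges uniformly on compacta), and there it follows from the polynomial case by continuity.
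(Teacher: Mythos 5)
Your proof is correct, and the core computation --- using Lemma \ref{l: restriction diffD to cP} and the tensor product module structure --- is the same as the paper's. The differences are in organization. The paper reduces to $p\in\cP$ by density and continuity of both sides (Lemma \ref{l: density of P in O} and Corollary \ref{c: continuity of diffD}), evaluates both sides at $0$ for a general polynomial $p$ using the closed formula $m_{E_1\otimes E_2}=(m_{E_1}\otimes m_{E_2})\circ\alpha_2^*$ from (\ref{e: rule for multiplication}), and finishes by translation invariance (Lemma \ref{sec13}(b)). You instead reduce further to $\C$-algebra generators $\xi\in\fv_\iC^*$ and apply (\ref{e: m on xi}), at the cost of verifying that $(\diffD^{(E_1)}\otimes\id_{\End{E_2}})\circ\diffD^{(E_2)}$ is a unital algebra homomorphism (a fact the paper's calculation does not need, since it works termwise at $0$). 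Your verification of multiplicativity is fine, and your generator computation is a transparent check of consistency with (\ref{e: m on xi}). The paper's route is marginally leaner because it exploits the full identity $m_{E_1\otimes E_2}=(m_{E_1}\otimes m_{E_2})\circ\alpha_2^*$ once and for all rather than building it back up from the degree-one case.

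One stylistic point: the jet reduction in your second paragraph is more work than is needed and is the least clean part of the argument (one must be careful that both sides depend only on a finite jet at $\mu$, which in turn uses the cofiniteness of $\ann_{\cO_0}(E_1\otimes E_2)$, not just of the two factors). You correctly note at the end that the density of $\cP$ in $\cO(\Omega)$ (Lemma \ref{l: density of P in O}) plus the continuity of both sides gives the reduction to polynomials immediately; that is what the paper does, and you should lead with it rather than offering it as a fallback.
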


\begin{proof}%PROOF
By density of $\P$ in $\O(\Omega)$ and continuity of the maps $\diffD^{(E_1\otimes E_2)}$
and $(\diffD^{(E_1)}\otimes \id_{\End{E_2}})\circ \diffD^{(E_2)},$ see
Corollary \ref{c: continuity of diffD}, it suffices to establish the validity of the identity
on a fixed element $p \in \cP.$
                    \par
By definition of $\diffD^{(E_1)},$ we have $\ev_0 \after \diffD^{(E_1)}q = m_{E_1}(q)$ for each $q \in \cP.$
Using Lemma \ref{l: restriction diffD to cP} we  obtain
$$
\begin{array}{rcl}
(\diffD^{(E_1)}\otimes \id_{\End{E_2}})p^{(E_2)}(0) & = & (m_{E_1}\otimes \id_{\End{E_2}})p^{(E_2)}\\
& = & (m_{E_1}\otimes \id_{\End{E_2}})\circ (\id_{\P}\otimes m_{E_2})(\alpha_2^*p)\\
& = & (m_{E_1}\otimes m_{E_2})\circ\alpha_2^*(p)\\
& = & m_{E_1\otimes E_2}(p)
=  p^{(E_1\otimes E_2)}(0).
\end{array}
$$
The result now follows by translation invariance (see Lemma \ref{sec13}(b)).
\end{proof}

\begin{prop}\label{p: iterateder}%PROPOSITION
Let $E_1,\dots,E_n$ be finite dimensional $\O_0$-modules.
Then, for every $f\in\O(\Omega)$,
$$
\left(\diffD^{(E_1)}\circ\cdots\circ \diffD^{(E_n)}\right)(f)=f^{(E_1\otimes\cdots\otimes E_n)}.
$$
\end{prop}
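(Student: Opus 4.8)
The plan is to argue by induction on $n$, the only substantive inputs being the two-factor identity of Proposition \ref{sec33} and the associativity of the tensor product of finite dimensional $\cO_0$-modules recorded in Lemma \ref{sec34}. The base case $n=1$ is a tautology. For $n=2$ the assertion is precisely Proposition \ref{sec33}; here one must read the composite $\diffD^{(E_1)}\circ\diffD^{(E_2)}$ with the convention that $\diffD^{(E_1)}$ is applied coordinate-wise to an $\End{E_2}$-valued holomorphic family, i.e.\ as $(\diffD^{(E_1)}\otimes\id_{\End{E_2}})\circ\diffD^{(E_2)}$, whose target $\cO(\Omega,\End{E_1}\otimes\End{E_2})$ is identified with $\cO(\Omega,\End{E_1\otimes E_2})$ via the natural embedding $\End{E_1}\otimes\End{E_2}\hookrightarrow\End{E_1\otimes E_2}$.

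For the inductive step I would assume the statement for $n-1$ modules and set $F\coleq E_2\otimes\cdots\otimes E_n$, which is again a finite dimensional $\cO_0$-module by Remark \ref{r: tensor products in cC}. The induction hypothesis applied to $E_2,\ldots,E_n$ gives $\diffD^{(E_2)}\circ\cdots\circ\diffD^{(E_n)}=\diffD^{(F)}$; composing with $\diffD^{(E_1)}$ (again coordinate-wise) and invoking Proposition \ref{sec33} for the pair $(E_1,F)$ yields, for $f\in\cO(\Omega)$,
$$
\bigl(\diffD^{(E_1)}\circ\cdots\circ\diffD^{(E_n)}\bigr)(f)=\bigl(\diffD^{(E_1)}\otimes\id_{\End{F}}\bigr)\,f^{(F)}=f^{(E_1\otimes F)}.
$$
It then remains to identify $E_1\otimes F=E_1\otimes(E_2\otimes\cdots\otimes E_n)$ with $E_1\otimes\cdots\otimes E_n$ as an $\cO_0$-module, which is exactly Lemma \ref{sec34}. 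Since by Definition \ref{d: diffD} the operator $f^{(E)}(\mu)$ is nothing but multiplication on $E$ by the germ $\gamma_0(T_\mu^*f)$, the family $f^{(E)}$ depends on $E$ only through its $\cO_0$-module structure, so this identification carries $f^{(E_1\otimes F)}$ to $f^{(E_1\otimes\cdots\otimes E_n)}$, closing the induction.

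I expect the only delicate point to be organizational rather than mathematical: pinning down once and for all the convention under which the iterated composite $\diffD^{(E_1)}\circ\cdots\circ\diffD^{(E_n)}$ takes values in $\cO(\Omega,\End{E_1}\otimes\cdots\otimes\End{E_n})\subset\cO(\Omega,\End{E_1\otimes\cdots\otimes E_n})$, and verifying that Proposition \ref{sec33} and Lemma \ref{sec34} are being applied compatibly with the associativity isomorphisms of the underlying vector spaces. No genuinely new idea beyond the $n=2$ case and the associativity of $\otimes$ on $\FMod_{\cO_0}$ enters.
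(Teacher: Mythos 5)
Your proof is correct and is essentially the paper's own argument, which says simply that the result follows by repeated application of Proposition \ref{sec33} in view of Lemma \ref{sec34}; you have merely unpacked the induction and made explicit the notational convention (stated just after the proposition in the paper) that $\diffD^{(E_k)}$ abbreviates $\diffD^{(E_k)}\otimes\id_{\End{E_{k+1}}}\otimes\cdots\otimes\id_{\End{E_n}}$.
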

In the formulation of this proposition we have slightly abused notation, by using
the abbreviation $\diffD^{(E_k)}$ for
$\diffD^{(E_k)} \otimes \id_{\End{E_{k+1}}} \otimes \cdots \otimes \id_{\End{E_n}}.$

\begin{proof}%PROOF
In view of Lemma \ref{sec34}, the result follows by repeated application of Proposition \ref{sec33}.
\end{proof}

Let $\lambda\in\fv_\iC$. By $\cI_\lambda$ we denote the subset of $\O_0$
consisting of all elements $\varphi\in\cM$ satisfying
\begin{equation}\label{e: idealcodim2}
\partial_\lambda \varphi \in \cM.
\end{equation}
By Leibniz's rule, $\cI_\lambda$ is an ideal of $\cO_0$ containing $\cM^2$.

\begin{lemma}
\label{l: codimension two}%LEMMA
Assume $\lambda\neq 0.$ Then $\cI_{\lambda}$ is an ideal in $\cO_0$ of codimension 2. If $\xi \in \fv_\iC^*$
is such that $\xi(\gl) \neq 0,$ then $\cO_0 = \C 1 \oplus \C \xi \oplus \cI_\gl.$
\end{lemma}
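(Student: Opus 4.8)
The plan is to establish the direct sum decomposition $\cO_0 = \C 1 \oplus \C\xi \oplus \cI_\lambda$ first; the codimension statement then follows at once, since that decomposition exhibits the images of $1$ and $\xi$ as a basis of $\cO_0/\cI_\lambda$. Since $\lambda \neq 0$, there exists $\xi \in \fv_\iC^*$ with $\xi(\lambda) \neq 0$; fix any such $\xi$. Viewed as a germ in $\cO_0$, the functional $\xi$ lies in $\cM$ (it vanishes at $0$), while $\partial_\lambda \xi = \xi(\lambda)$ is a nonzero constant and hence does not lie in $\cM$; in particular $\xi \notin \cI_\lambda$.

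First I would check that the sum $\C 1 + \C\xi + \cI_\lambda$ is direct. Every element of $\C\xi + \cI_\lambda$ lies in $\cM$, because $\xi \in \cM$ and $\cI_\lambda \subset \cM$; since $\C 1 \cap \cM = 0$, the constant summand splits off. For the other two summands, if $c\xi \in \cI_\lambda$ with $c \in \C$, then $0 = \partial_\lambda(c\xi)(0) = c\,\xi(\lambda)$, forcing $c = 0$; hence $\C\xi \cap \cI_\lambda = 0$.

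Next I would show that the sum exhausts $\cO_0$. Given $\varphi \in \cO_0$, set $c_0 \coleq \varphi(0)$ so that $\varphi - c_0 \in \cM$, then set $c_1 \coleq \partial_\lambda\varphi(0)$ and put $\psi \coleq \varphi - c_0 - (c_1/\xi(\lambda))\,\xi$. Since $\xi \in \cM$ we have $\psi \in \cM$, and by linearity of $\partial_\lambda$ together with $\partial_\lambda\xi = \xi(\lambda)$ we compute $\partial_\lambda\psi(0) = c_1 - (c_1/\xi(\lambda))\,\xi(\lambda) = 0$, so $\psi \in \cI_\lambda$. Therefore $\varphi = c_0 + (c_1/\xi(\lambda))\,\xi + \psi \in \C 1 \oplus \C\xi \oplus \cI_\lambda$, which completes the decomposition and, as remarked, shows $\cI_\lambda$ has codimension $2$.

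There is no serious obstacle here; the single point to keep in mind is that $\partial_\lambda$ need not preserve $\cM$, which is exactly why the $\xi$-term cannot be dispensed with and why the codimension comes out $2$ rather than $1$. As an independent check one may argue via $\cM^2 \subset \cI_\lambda$ (established just before the lemma): in $\cO_0/\cM^2 \cong \C \oplus \fv_\iC^*$ the image of $\cI_\lambda$ is $\{0\} \oplus \ker\big(\ell \mapsto \ell(\lambda)\big)$, which has codimension $2$ because $\lambda \neq 0$.
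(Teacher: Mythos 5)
Your proof is correct, and it takes a genuinely different (and more direct) route than the paper. The paper fixes a basis $\xi_1,\ldots,\xi_N$ of $\fv_\iC^*$ adapted to $\lambda$, expands polynomials in the associated monomial basis to obtain the decomposition $\cP = \C 1 \oplus \C\xi_1 \oplus (\cI_\lambda \cap \cP)$, and then transfers this to $\cO_0$ by invoking the isomorphism $\cP/(\cI_\lambda\cap\cP) \simeq \cO_0/\cI_\lambda$ furnished by Lemma~\ref{sec24}. You instead work entirely inside $\cO_0$: you split off $c_0 = \varphi(0)$ and $c_1 = \partial_\lambda\varphi(0)$ explicitly, verify that the remainder $\psi = \varphi - c_0 - (c_1/\xi(\lambda))\xi$ satisfies both defining conditions $\psi\in\cM$ and $\partial_\lambda\psi\in\cM$, and check directness by the elementary observations that $\C\xi + \cI_\lambda \subset \cM$ and that $c\xi\in\cI_\lambda$ forces $c\,\xi(\lambda)=0$. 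This avoids the detour through $\cP$ and Lemma~\ref{sec24} entirely and is self-contained, at the modest cost of not reusing machinery the paper has already built and exploits elsewhere; the paper's approach, conversely, slots naturally into its systematic treatment of $\cP$ versus $\cO_0$. Your closing cross-check via $\cO_0/\cM^2 \cong \C \oplus \fv_\iC^*$ is also correct and gives the codimension count cleanly once one knows $\cM^2\subset\cI_\lambda$.
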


\begin{proof}%PROOF
Fix a basis
$\xi_1,\dots,\xi_N$ for $\fv_\iC^*$ such that $\xi_1 = \xi$ and such that
$\xi_j(\gl) = 0$ for $j > 1.$
The monomials
$$
\xi^\beta\coleq \xi_1^{\beta_1}\cdots\xi_N^{\beta_N}, \qquad \gb\in \N^N,
$$
form a basis of the complex vector space $\cP.$
Consider a polynomial $p\in\P$ and write
$
p=\sum_\beta c_\beta \xi^\beta,
$
with $c_\gb \in \C.$ Then by an easy calculation we find that
$$
p(0) = c_0 \quad \textrm{and}\quad \partial_\lambda p(0)
= c_{(1,0,\ldots, 0)}\xi_1(\gl).
$$
It follows that $p$ belongs to $\cI_\gl$ if and only if
$c_\gb = 0$ for $\gb = 0$ and for $\gb = (1,0,\ldots, 0).$ This implies that
$\cP = \C 1 \oplus \C \xi_1 \oplus (\cI_\gl \cap \cP).$ By Lemma \ref{sec24} the inclusion map $\cP \to \cO_0$
induces an isomorphism of algebras $\cP/(\cI_\gl\cap \cP) \simeq \cO_0/\cI_\gl.$
All assertions follow.
\end{proof}

Let $\widetilde \cI_1, \ldots, \widetilde \cI_n$ be a collection of ideals from $\cP$
containing $(\cM \cap \cP)^{k+1}$ for some $k \in \N.$ Then the algebra homomorphism
$\ga_n^*: \cP \to \cP^{\otimes n}$ induces an algebra homomorphism
$$
\bar\ga_n^* : \cP \to (\cP/\widetilde \cI_1) \otimes \cdots \otimes (\cP/\widetilde \cI_n)
$$
by composition with the natural projection from $\cP^{\otimes n}$ onto the quotient algebra on the right.

\begin{lemma}\label{sec40}%LEMMA
Let $\lambda\in\fv_\iC\setminus \{0\}$, and let $\tilde{\cI} \ideal \cP$ be an ideal
with $(\cM\cap\P)^{k+1}\subset\tilde{\cI}$ for some $k\in\N$. Then the kernel of
$$
\bar\alpha_2^*:\;\;\P\longrightarrow \P/\tilde{\cI}\otimes(\P/\cI_\lambda\cap\P)
$$
is equal to $\{p\in\tilde{\cI}\mid \,
\partial_\gl p \in\tilde{\cI}\}.$
In particular, this kernel contains $\cM^{k+2} \cap \cP.$
\end{lemma}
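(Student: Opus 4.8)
The plan is to compute the kernel of $\bar\alpha_2^*$ directly, using the explicit description of the tensor-product module structure from Remark~\ref{r: tensor products in cC} together with the codimension-two structure of $\cI_\lambda$ given by Lemma~\ref{l: codimension two}. Recall that $\bar\alpha_2^* = (\pi_{\tilde\cI}\otimes \pi_{\cI_\lambda\cap\cP})\circ\alpha_2^*$, where $\pi_{\tilde\cI}$ and $\pi_{\cI_\lambda\cap\cP}$ denote the quotient projections. First I would pick $\xi\in\fv_\iC^*$ with $\xi(\lambda)\neq 0$; by Lemma~\ref{l: codimension two} we have $\cP = \C 1 \oplus \C\xi \oplus (\cI_\lambda\cap\cP)$, so the images of $1$ and $\xi$ form a basis of the quotient $\cP/(\cI_\lambda\cap\cP)$, and every element of $\cP/\tilde\cI\otimes\cP/(\cI_\lambda\cap\cP)$ can be written uniquely as $a\otimes \bar 1 + b\otimes\bar\xi$ with $a,b\in\cP/\tilde\cI$.

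Next I would evaluate $\alpha_2^*$ on a polynomial $p\in\cP$ modulo the relevant ideals. Using formula (\ref{e: ga n on xi}) (equivalently the derivation property of $\alpha_2^*$ on the generators $\xi\in\fv_\iC^*$), one sees that for $p\in\cP$,
$$
(\id\otimes\pi_{\cI_\lambda\cap\cP})(\alpha_2^*p) \equiv p\otimes\bar 1 \;+\; (\partial_\lambda p)\otimes\bar\xi \pmod{1\otimes(\cI_\lambda\cap\cP)},
$$
after rescaling $\xi$ so that $\xi(\lambda)=1$; this is essentially the second-order Taylor-type identity encoded in the codimension-two module $\cP/(\cI_\lambda\cap\cP)$, whose point is precisely that the second tensor factor records the value together with the $\partial_\lambda$-derivative at $0$. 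Therefore $\bar\alpha_2^*(p) = \pi_{\tilde\cI}(p)\otimes\bar 1 + \pi_{\tilde\cI}(\partial_\lambda p)\otimes\bar\xi$. Since $\{\bar 1,\bar\xi\}$ is a basis of the second factor, this vanishes if and only if both $\pi_{\tilde\cI}(p)=0$ and $\pi_{\tilde\cI}(\partial_\lambda p)=0$, i.e. $p\in\tilde\cI$ and $\partial_\lambda p\in\tilde\cI$. This proves the displayed formula for the kernel.

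For the final assertion, let $p\in \cM^{k+2}\cap\cP = (\cM\cap\cP)^{k+2}$. Then $p\in(\cM\cap\cP)^{k+1}\subset\tilde\cI$, and by the Leibniz rule $\partial_\lambda p\in(\cM\cap\cP)^{k+1}\subset\tilde\cI$ as well (differentiating a product of $k+2$ elements of $\cM\cap\cP$ leaves at least $k+1$ factors in $\cM\cap\cP$), so $p$ lies in the kernel. The main technical point to get right is the sign/normalization bookkeeping in the identity for $(\id\otimes\pi_{\cI_\lambda\cap\cP})\circ\alpha_2^*$, that is, verifying carefully that the second tensor factor of $\bar\alpha_2^*(p)$ is exactly the pair $(\,p\bmod\tilde\cI,\ \partial_\lambda p\bmod\tilde\cI\,)$ in the chosen basis; once the generators $\xi\in\fv_\iC^*$ are handled via (\ref{e: ga n on xi}), the general case follows since $\alpha_2^*$ is an algebra homomorphism and both sides are determined by their values on generators, as in the proof of Lemma~\ref{sec34}.
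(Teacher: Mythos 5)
Your proposal is correct and follows essentially the same route as the paper's proof: both arguments hinge on establishing the jet identity $\bar\ga_2^*(p) = \pi_{\tilde\cI}(p)\otimes\bar 1 + \pi_{\tilde\cI}(\partial_\gl p)\otimes\bar\xi$ and then reading off the kernel from the basis $\{\bar 1,\bar\xi\}$ of $\cP/(\cI_\gl\cap\cP)$ supplied by Lemma~\ref{l: codimension two}. The only material difference is how that identity is obtained: the paper computes $\ga_2^*(p)$ by Taylor expansion and then reduces modulo $\cP\otimes(\cI_\gl\cap\cP)$, whereas you verify it on the generators $\xi\in\fv_\iC^*$ and invoke multiplicativity. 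For the latter to be a complete argument you need to check that the target map $p\mapsto p\otimes\bar 1 + \partial_\gl p\otimes\bar\xi$ is itself an algebra homomorphism $\cP\to\cP\otimes\cP/(\cI_\gl\cap\cP)$ — otherwise ``agreement on generators'' does not propagate to all of $\cP$. This in turn requires observing that $\bar\xi^2=0$ in $\cP/(\cI_\gl\cap\cP)$ (which holds because $\xi^2\in\cM^2\subset\cI_\gl$), so that $\cP/(\cI_\gl\cap\cP)$ is the dual-number algebra and the Leibniz rule matches the product rule for jets; make this explicit and the argument is complete. The final assertion via the Leibniz rule matches the paper.
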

\begin{proof}%PROOF
The last assertion follows from the first one by application of the Leibniz rule. We turn
to the proof of the first assertion.
             \par
We fix a basis $\xi_1 \ldots, \xi_N$ of $\fv_\iC^*$ as in the proof of Lemma \ref{l: codimension two} and
adopt the notation of that proof. Let $X_1, \ldots, X_N$ be the dual basis of $\fv_\iC.$
We note that $\gl$ is proportional to $X_1.$
Our first goal is to obtain a suitable formula
for $\ga_2^*.$
                \par
Let $p \in \cP$ and $\mu \in \fv_\iC.$
Using Taylor expansion at $\mu$ we find, for all $\nu\in\fv_\iC$,
$$
p(\mu+\nu)=\sum_\beta\frac{p(\mu;{X}^\beta)}{\beta !}\,{\xi}^\beta(\nu)
$$
(where $X^\beta\coleq X_1^{\beta_1}\cdots X_N^{\beta_N}$ and $\beta !\coleq  \beta_1 !\cdots\beta_N !$).
From this formula we deduce that
$$
\alpha_2^*(p)
=\sum_\beta\frac{p(\dotvar ;X^\beta)}{\beta !}\otimes {\xi}^\beta.
$$
In view of the characterization of $\cI_\gl$ in the proof of Lemma \ref{l: codimension two},
we now see that
\begin{equation}\label{eq4}
\alpha^*_2(p)=p\otimes 1  + p(\dotvar ; X_1)\otimes\xi_1   \quad\mathrm{mod}\;\;
\cP \otimes(\cI_\lambda\cap\P).
\end{equation}
Furthermore, again by Lemma \ref{l: codimension two}, we have $\cP = \C 1 \oplus \C \xi_1 \oplus (\cI_\gl \cap \cP),$
so that $\cP^{\otimes 2} = [\cP \otimes (\C 1 \oplus \C \xi_1)] \oplus [\cP \otimes (\cI_\gl \cap \cP)].$
Accordingly,
the natural projection map $\cP^{\otimes 2}\to \cP/\tilde \cI \otimes \cP / (\cI_\gl \cap \cP)$
has kernel equal to
\begin{equation}
\label{e: kernel pr}
[\widetilde\cI \otimes (\C 1 \oplus \C \xi_1)]\, \oplus\,[ \cP \otimes (\cI_\gl \cap \cP)].
\end{equation}
Combining (\ref{eq4}) and (\ref{e: kernel pr}), we finally see that  $\bar \ga_2^*(p) = 0$ if and only if
$$
p\otimes 1 + p(\dotvar ; X_1)\otimes\xi_1 \;\; \in \widetilde \cI \otimes (\C 1 \oplus \C \xi_1),
$$
which in turn is equivalent to  $p \in \widetilde \cI$ and $\partial_\gl p
\in \widetilde \cI.$
\end{proof}

\begin{lemma}
\label{sec41}%LEMMA
Let $\lambda_1,\dots,\lambda_n\in\fv_\iC\setminus\{0\}$. If $n>1$, the kernel of
$$
\bar{\alpha}_n^*:\P\longrightarrow(\P/\cI_{\lambda_1}\cap\P)\otimes\cdots\otimes(\P/\cI_{\lambda_n}\cap\P)
$$
equals the kernel of $\bar{\alpha_2}^*:\P\longrightarrow(\P/\ker{\bar{\alpha}_{n-1}^*})\otimes(\P/\cI_{\lambda_n}\cap\P),$
where $\bar{\alpha}_{n-1}^*$ denotes the composition on the left of the algebra homomorphism $\alpha^*_{n-1}$ with
 the projection $\P^{\otimes (n-1)}\rightarrow (\P/\cI_{\lambda_1}\cap\P)\otimes\cdots\otimes(\P/\cI_{\lambda_{n-1}}\cap\P)$.
\end{lemma}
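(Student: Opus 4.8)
The plan is to deduce the lemma from a coassociativity property of the addition maps $\alpha_m \col \fv_\iC^m \to \fv_\iC$ together with exactness of tensor products over $\C$; no computation involving the ideals $\cI_{\lambda_i}$ is needed beyond what is already available. First I would record the factorization $\alpha_n = \alpha_2 \circ (\alpha_{n-1} \times \id_{\fv_\iC})$, which comes from viewing $\fv_\iC^n$ as $\fv_\iC^{n-1} \times \fv_\iC$. Pulling back, and using the identification $\cP(\fv_\iC^m) = \cP^{\otimes m}$, this gives
$$
\alpha_n^* = (\alpha_{n-1}^* \otimes \id_\cP) \circ \alpha_2^*.
$$

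Next, writing $\pi_i \col \cP \to \cP/(\cI_{\lambda_i} \cap \cP)$ for the quotient maps, I would observe that the tensor-product projection onto $\bigotimes_{i=1}^n \cP/(\cI_{\lambda_i} \cap \cP)$ decomposes, under $\cP^{\otimes n} = \cP^{\otimes(n-1)} \otimes \cP$, as $(\pi_1 \otimes \cdots \otimes \pi_{n-1}) \otimes \pi_n$, whence
$$
\bar\alpha_n^* = (\bar\alpha_{n-1}^* \otimes \pi_n) \circ \alpha_2^*.
$$
Factoring $\bar\alpha_{n-1}^* = j \circ q$ through its image, with $q \col \cP \to \cP/\ker\bar\alpha_{n-1}^*$ the quotient map and $j$ the resulting injection, I would then get
$$
\bar\alpha_n^* = \bigl(j \otimes \id_{\cP/(\cI_{\lambda_n} \cap \cP)}\bigr) \circ (q \otimes \pi_n) \circ \alpha_2^*,
$$
where $(q \otimes \pi_n) \circ \alpha_2^*$ is exactly the map denoted $\bar\alpha_2^* \col \cP \to (\cP/\ker\bar\alpha_{n-1}^*) \otimes (\cP/\cI_{\lambda_n} \cap \cP)$ in the statement. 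Since $j$ is injective and $\C$-linear, $j \otimes \id$ is injective, and therefore $\ker\bar\alpha_n^* = \ker\bar\alpha_2^*$, which is the assertion. I would also add the remark, for later use, that $\ker\bar\alpha_{n-1}^*$ contains $(\cM \cap \cP)^{k+1}$ for a suitable $k$: each $\cP/(\cI_{\lambda_i} \cap \cP)$ is annihilated by $(\cM \cap \cP)^2$ (as $\cM^2 \subset \cI_{\lambda_i}$), so by Lemma \ref{l: tensor products and cC} the target of $\bar\alpha_{n-1}^*$ is annihilated by a power of its maximal ideal, and $\alpha_{n-1}^*$ carries $\cM \cap \cP$ into that ideal; this is what lets one apply Lemma \ref{sec40} to $\bar\alpha_2^*$ in an induction on $n$.

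The argument is essentially bookkeeping with these coproduct-type maps, and I do not anticipate a genuine obstacle. The only two points worth double-checking are that the tensor-product projection $\cP^{\otimes n} \to \bigotimes_i \cP/(\cI_{\lambda_i} \cap \cP)$ really splits as $(\bigotimes_{i<n} \pi_i) \otimes \pi_n$ under the identification $\cP^{\otimes n} = \cP^{\otimes(n-1)} \otimes \cP$ --- immediate from the definition of the tensor product of linear maps --- and that $j \otimes \id$ stays injective, which holds because the second tensor factor is a $\C$-vector space.
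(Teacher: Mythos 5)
Your proof is correct and follows the same route as the paper: you factor $\bar\alpha_n^*$ through $\bar\alpha_2^*$ followed by the injection $(\P/\ker\bar\alpha_{n-1}^*)\otimes(\P/\cI_{\lambda_n}\cap\P)\hookrightarrow(\P/\cI_{\lambda_1}\cap\P)\otimes\cdots\otimes(\P/\cI_{\lambda_n}\cap\P)$, and conclude that the two kernels agree. The paper asserts this factorization as a commutative diagram without spelling out the coassociativity $\alpha_n^* = (\alpha_{n-1}^*\otimes\id_\cP)\circ\alpha_2^*$; your writeup makes that verification explicit, which is a fair elaboration rather than a different argument.
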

             \par
\begin{proof}%PROOF
The map $\bar{\alpha}_{n-1}^*$ induces an embedding (denoted by the same symbol)
$$
\bar{\alpha}_{n-1}^*:\P/\ker{\bar{\alpha}_{n-1}^*}\hookrightarrow
(\P/\cI_{\lambda_1}\cap\P)\otimes\cdots\otimes(\P/\cI_{\lambda_{n-1}}\cap\P).
$$
Moreover, the following diagram commutes
\vspace{1mm}
$$
\xymatrix{
\P\ar[d]_{\bar{\alpha}_2^*}\ar[r]^{\!\!\!\!\!\!\!\!\!\!\!\!\!\!\!\!\!\!\!\!\!\!\!\!\!\!\!\!\!\!\!\!\!\!\!\!\!\!\!\!\!\!\bar{\alpha}_n^*} & (\P/\cI_{\lambda_1}\cap\P)\otimes\cdots\otimes(\P/\cI_{\lambda_n}\cap\P)\\
(\P/\ker{\bar{\alpha}_{n-1}^*})\otimes(\P/\cI_{\lambda_n}\cap\P)\ar@{^{(}->}[ur]_{\qquad\bar{\alpha}_{n-1}^*\otimes\id_{\P/(\cI_{\lambda_n}\cap\P)}} &
}
$$
\vspace{2mm}
Thus, $\ker{\bar{\alpha}_n^*}=\ker{\bar{\alpha}_2^*}$.
\end{proof}

\begin{lemma}\label{sec42}%LEMMA
Let $\lambda_1,\dots,\lambda_n\in\fv_\iC\setminus\{0\}$. Then the kernel of
$$
\bar{\alpha}_n^*:\P\longrightarrow(\P/\cI_{\lambda_1}\cap\P)\otimes\cdots\otimes(\P/\cI_{\lambda_{n}}\cap\P)
$$
consists of the elements $p$ of $\cM\cap\P$ satisfying  $p(\dotvar;\lambda_{j_1}\cdots\lambda_{j_l})\in\cM$, for
all $1\leq l\leq n$, $1\leq j_1<\cdots<j_l\leq n.$
In particular, this kernel contains
$\cM^{n +1} \cap \cP.$
\end{lemma}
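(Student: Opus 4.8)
My plan is to prove the statement by induction on $n$, with Lemmas \ref{sec41} and \ref{sec40} as the two main tools. It will be convenient to abbreviate, for a subset $S=\{j_1<\cdots<j_l\}$ of $\{1,\dots,n\}$, the element $\lambda_S\coleq\lambda_{j_1}\cdots\lambda_{j_l}\in S(\fv)$ (with $\lambda_\varnothing\coleq 1$); since the operators $\partial_{\lambda_j}$ commute this is unambiguous, and for $S\neq\varnothing$ the condition ``$p(\dotvar;\lambda_S)\in\cM$'' simply says that the polynomial $\partial_{\lambda_S}p$ vanishes at $0$. Thus the assertion to be proved reads: $\ker\bar\alpha_n^*$ is the set of $p\in\cM\cap\cP$ with $p(\dotvar;\lambda_S)\in\cM$ for every nonempty $S\subseteq\{1,\dots,n\}$, and this set contains $\cM^{n+1}\cap\cP$.

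For the base case $n=1$, the map $\alpha_1^*$ is the identity of $\cP$, so $\bar\alpha_1^*$ is the quotient map $\cP\to\cP/(\cI_{\lambda_1}\cap\cP)$ and $\ker\bar\alpha_1^*=\cI_{\lambda_1}\cap\cP$; by the very definition of $\cI_{\lambda_1}$ this equals $\{p\in\cM\cap\cP\mid p(\dotvar;\lambda_1)\in\cM\}$, and the inclusion $\cM^2\subset\cI_{\lambda_1}$ gives the last claim. For the inductive step I take $n>1$, assume the statement for $n-1$ directions, and put $\tilde{\cI}\coleq\ker\bar\alpha_{n-1}^*$ (formed from $\lambda_1,\dots,\lambda_{n-1}$). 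The ``in particular'' clause of the inductive hypothesis shows $\tilde{\cI}\supseteq\cM^n\cap\cP\supseteq(\cM\cap\cP)^n$, so Lemma \ref{sec40} applies with this ideal (taking $k=n-1$) and $\lambda=\lambda_n$; combined with Lemma \ref{sec41} it gives
$$
\ker\bar\alpha_n^*=\{\,p\in\tilde{\cI}\mid\partial_{\lambda_n}p\in\tilde{\cI}\,\}.
$$
I then unravel the two membership conditions with the inductive hypothesis: ``$p\in\tilde{\cI}$'' means $p\in\cM\cap\cP$ and $p(\dotvar;\lambda_S)\in\cM$ for all nonempty $S\subseteq\{1,\dots,n-1\}$, while ``$\partial_{\lambda_n}p\in\tilde{\cI}$'' (note $\partial_{\lambda_n}$ preserves $\cP$) means $\partial_{\lambda_n}p\in\cM$ and $\partial_{\lambda_S}\partial_{\lambda_n}p\in\cM$ for all nonempty $S\subseteq\{1,\dots,n-1\}$, i.e. $p(\dotvar;\lambda_T)\in\cM$ for every $T\subseteq\{1,\dots,n\}$ with $n\in T$. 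The conjunction of these conditions is exactly ``$p\in\cM\cap\cP$ and $p(\dotvar;\lambda_S)\in\cM$ for every nonempty $S\subseteq\{1,\dots,n\}$'', which closes the induction; and since any $p\in\cM^{n+1}\cap\cP$ vanishes at $0$ together with all its derivatives of order $\le n$, it satisfies all these conditions, giving $\cM^{n+1}\cap\cP\subseteq\ker\bar\alpha_n^*$.

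The one point that genuinely needs attention — and the closest thing to an obstacle — is ensuring that Lemma \ref{sec40} is legitimately applicable at each stage, i.e. that the ideal $\tilde{\cI}$ it is fed contains a power of $\cM\cap\cP$. This is precisely what the ``in particular'' clause supplies, which is why that clause must be carried along through the induction rather than recovered only at the end. Beyond this, the argument is just bookkeeping with subsets of $\{1,\dots,n\}$ together with the Leibniz rule.
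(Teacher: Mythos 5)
Your proof follows the same route as the paper: induction on $n$ with base case $n=1$, combining Lemmas \ref{sec40} and \ref{sec41} to get $\ker\bar\alpha_n^*=\{p\mid p,\,\partial_{\lambda_n}p\in\ker\bar\alpha_{n-1}^*\}$ and then unwinding with the inductive hypothesis. Your explicit remark that the ``in particular'' clause must be carried through the induction to license the application of Lemma \ref{sec40} to $\tilde{\cI}=\ker\bar\alpha_{n-1}^*$ is a correct and welcome clarification of a hypothesis the paper leaves implicit, but it does not change the argument.
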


\begin{proof}%PROOF
The final statement follows from the first by repeated application of the Leibniz rule.
We prove the first statement by induction on $n.$ If $n=1$,
then $\bar{\alpha}_1^*$ is just the projection
$\P\rightarrow\P/\cI_{\lambda_1}\cap\P$, and
$\ker(\bar{\alpha}_1^*)=\cI_{\lambda_1}\cap\P=\{p\in\cM\cap\P\mid \,p(\dotvar ;\lambda_1)\in\cM\}$.
Let $n\geq 2$ and assume the statement to be valid for
smaller values of $n.$
Using Lemmas \ref{sec40} and \ref{sec41}, we find that
$$
\ker(\bar{\alpha}_n^*)=
\{p\in \cP \mid p \in \ker\bar{\alpha}_{n-1}^*\;\; \textrm{and}\;\;
\partial_{\gl_n} p \in\ker\bar{\alpha}_{n-1}^*\}.
$$
By the induction hypothesis it follows that  $\ker (\bar \ga_n^*)$ consists of all polynomials $p \in \cM \cap \cP$
such that for all $1\leq l \leq n-1$ and all $1\leq j_1 < \cdots < j_l \leq n-1$ the polynomials
$p(\dotvar; \lambda_{j_1}\cdots\lambda_{j_l})$ and $p(\dotvar ; \lambda_{j_1}\cdots\lambda_{j_l}\gl_n) =
\partial_{\gl_n}p(\dotvar ; \lambda_{j_1}\cdots\lambda_{j_l})$
belong to $\cM.$ This implies the result.
\end{proof}

\begin{prop}%PROPOSITION
\label{sec43}${}$
\begin{enumerate}
\itema
Let $E_1,\dots,E_n$ be finite dimensional $\O_0$-modules.
There exist $N,\,k\in\N$ such that $E_1\otimes\cdots\otimes E_n$ is  a quotient of the $\O_0$-module
$\C^N\otimes \O_0/\cM^{k+1}$ (action on the second tensor component).
\itemb
Let $\cI$ be a cofinite ideal of $\O_0$. There
exist $\gl_1,\dots,\gl_n\in\fv_\iC$ such that the algebra homomorphism
$$
\bar{\alpha}_n^*:\P\longrightarrow\;(\P/\cI_{\lambda_1}\cap\P)\otimes \cdots\otimes(\P/\cI_{\lambda_n}\cap\P)
$$
satisfies $\ker{\bar{\alpha}_n^*}\subset \cI.$
In particular, the $\cO_0$-module
$\cO_0/\cI$ is a subquotient of $(\cO_0/\cI_{\lambda_1})\otimes \cdots\otimes(\cO_0/\cI_{\lambda_n}).$
\end{enumerate}
\end{prop}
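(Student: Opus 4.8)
For part (a), recall from Remark \ref{r: tensor products in cC} that $E_1\otimes\cdots\otimes E_n$ carries a canonical structure of finite dimensional $\cO_0$-module. By Corollary \ref{sec18} every finite dimensional $\cO_0$-module is a quotient, as an $\cO_0$-module, of $(\cO_0/\cM^{k+1})^N$ for suitable $k,N\in\N$, and $(\cO_0/\cM^{k+1})^N\simeq\C^N\otimes(\cO_0/\cM^{k+1})$ with $\cO_0$ acting through the second tensor factor. This settles (a).

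For part (b), use Lemma \ref{l: cofinite ideals} to fix $k\in\N$ with $\cM^{k+1}\subset\cI$. Since $\ker\bar\alpha_n^*$ is always contained in $\cP$, it suffices to produce $\gl_1,\dots,\gl_n\in\fv_\iC\setminus\{0\}$ with $\ker\bar\alpha_n^*\subset\cM^{k+1}$, for then $\ker\bar\alpha_n^*\subset\cM^{k+1}\cap\cP\subset\cI$. By Lemma \ref{sec42}, $\ker\bar\alpha_n^*$ consists of the polynomials $p\in\cM\cap\cP$ with $p(\dotvar;\gl_{j_1}\cdots\gl_{j_l})\in\cM$ for every $1\le l\le n$ and every $j_1<\cdots<j_l$. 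Write $p=\sum_{d\ge 1}p_d$ for its decomposition into homogeneous parts. Since $p(0;\gl_{j_1}\cdots\gl_{j_l})$ depends only on $p_l$, and since the pairing \eqref{sec3} is nondegenerate between homogeneous degree-$l$ polynomials and the degree-$l$ homogeneous component $S^l(\fv)$ of $S(\fv)$ (cf.\ Lemma \ref{sec4}), the conditions with a given $l$ say precisely that $p_l$ is orthogonal to the linear span of $\{\gl_{j_1}\cdots\gl_{j_l}\mid j_1<\cdots<j_l\le n\}$ in $S^l(\fv)$. Hence it is enough to arrange, for each $l\in\{1,\dots,k\}$, that these products span all of $S^l(\fv)$: then $p_1=\cdots=p_k=0$ and $p\in\cM^{k+1}\cap\cP$.

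To find such $\gl_j$, set $N:=\dim S^k(\fv)$, take $n:=\max(1,kN)$, and split $\{1,\dots,n\}$ into $N$ blocks of size $\ge k$. For each fixed $l\le k$, the set of tuples $(\gl_1,\dots,\gl_n)\in\fv_\iC^n$ for which $\{\gl_{j_1}\cdots\gl_{j_l}\mid j_1<\cdots<j_l\le n\}$ spans $S^l(\fv)$ is Zariski-open; it is non-empty because the degree-$l$ monomials in a fixed basis of $\fv_\iC$ form a basis of $S^l(\fv)$, each such monomial being a product of $l$ basis vectors, so by placing the $l$ factors of the $i$-th monomial at $l$ indices of the $i$-th block ($i=1,\dots,\dim S^l(\fv)$) one realizes a whole basis of $S^l(\fv)$ among the listed products. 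The condition ``$\gl_j\neq 0$ for all $j$'' is likewise Zariski-open and non-empty, so the intersection of these finitely many dense open subsets of the irreducible variety $\fv_\iC^n$ is non-empty; any tuple in it works. Finally, for the ``in particular'' clause, put $M:=(\cO_0/\cI_{\gl_1})\otimes\cdots\otimes(\cO_0/\cI_{\gl_n})$, an $\cO_0$-module as in Remark \ref{r: tensor products in cC}; by that remark together with Lemma \ref{l: codimension two} its $\cP$-module structure factors as $\bar\alpha_n^*$ followed by the faithful action of $\bigotimes_j\cP/(\cI_{\gl_j}\cap\cP)$ on $M$, and a short computation shows that the cyclic $\cP$-submodule of $M$ generated by $1\otimes\cdots\otimes 1$ is isomorphic to $\cP/\ker\bar\alpha_n^*$. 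Since $\ker\bar\alpha_n^*\subset\cI\cap\cP$ and $\cP/(\cI\cap\cP)\simeq\cO_0/\cI$ by Lemma \ref{sec24}, this submodule surjects onto $\cO_0/\cI$; by Lemma \ref{sec25} all the modules and maps involved are $\cO_0$-linear, so $\cO_0/\cI$ is an $\cO_0$-module subquotient of $M$.

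The step requiring the most care is the non-emptiness in the general-position argument: although Lemma \ref{sec42} only supplies products $\gl_{j_1}\cdots\gl_{j_l}$ with strictly increasing indices, one must still produce enough of them — simultaneously for all $l\le k$ — to span every $S^l(\fv)$, and the block construction, which rests on the fact that decomposable symmetric tensors span the symmetric powers over $\C$, is exactly what makes this possible. The remaining verifications (the homogeneity bookkeeping in part (b), and the identification of the cyclic submodule of $M$) are routine.
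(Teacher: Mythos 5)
Your proof is correct, and for part (a) it follows the paper essentially verbatim. For part (b), however, you take a genuinely different route. The paper's proof is a direct explicit construction: fix a basis $X_1,\dots,X_N$ of $\fv$ (so $N=\dim\fv$), set $n = kN$, and let $\gl_1,\dots,\gl_n$ be the list in which each $X_j$ appears exactly $k$ times. With this choice, the products $\gl_{j_1}\cdots\gl_{j_l}$ with strictly increasing indices realize \emph{every} monomial $X^\beta$ with $|\beta|\le k$ (since $|\beta|\le k$ forces each exponent $\beta_i\le k$), so Lemma \ref{sec42} immediately yields $\ker\bar\alpha_n^*\subset(\cM\cap\P)^{k+1}\subset\cI$; no general-position or Zariski argument is needed, and the claim holds for a single explicit tuple simultaneously in all degrees $l\le k$. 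You instead set $n=k\cdot\dim S^k(\fv)$, argue Zariski-openness of the spanning condition degree by degree, prove non-emptiness of each $U_l$ via a block construction, and intersect. Both arguments are sound; the paper's is shorter, uses a much smaller $n$ (linear rather than polynomial in $\dim\fv$), and avoids the semicontinuity/irreducibility machinery, while yours is perhaps more robust as a general-position template and makes the role of the degree-$l$ duality with $S^l(\fv)$ more explicit. Your treatment of the "in particular" clause — identifying the cyclic $\cP$-submodule of $M$ generated by $1\otimes\cdots\otimes 1$ with $\cP/\ker\bar\alpha_n^*$ and then invoking Lemmas \ref{sec24} and \ref{sec25} — matches the paper's intent, just spelled out in more detail.
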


\begin{proof}%PROOF
Assertion (a) follows from Corollary \ref{sec18} and \eqref{sec23}.
For the second assertion, we note that by Lemma \ref{sec25},
there exists a number $k\in\N$
such that $(\cM\cap\P)^{k+1}\subset \cI\cap\P\subset\cM$.
Fix a basis $\{X_1,\dots, X_N\}$ of $\fv$. Put $n\coleq kN$ and
$$
\gl_{kj + i}\coleq  X_j \qquad \textrm{for}\;\;\; 0 \leq j \leq N-1\;\; \textrm{and}\;\; 1 \leq i \leq k.
$$
By Lemma \ref{sec42}, the ideal $\ker{\bar{\alpha}_n^*}$ is equal to
$$
\{p\in\cM\cap\P\mid \,p(\dotvar ;\lambda_{j_1}\cdots\lambda_{j_l})
\in\cM,\,1\leq l\leq n,\,1\leq j_1<\cdots<j_j\leq n\}.
$$
Hence, if $p\in\ker{\bar{\alpha}_n^*}$, then
$$
p(0; X^\beta)=0,\quad |\beta|\leq k.
$$
This implies that $p\in (\cM\cap\P)^{k+1}$. Thus, $\ker{\bar{\alpha}_n^*}\subset \cI.$
In particular,
it follows that the $\cP$-module $\cP/(\cI \cap \cP)$ is a subquotient of
$(\cP/\cI_{\gl_1} \cap \cP) \otimes \cdots \otimes (\cP/\cI_{\gl_n} \cap \cP).$ The final assertion
now follows by application of Lemmas \ref{sec24} and \ref{sec25}.
\end{proof}

\subsection{Holomorphic families of continuous representations}
\label{s: holomf}%SUBSECTION HOLOMORPHIC F
We retain our assumption that $\Omega$ is an open subset of the finite
dimensional complex linear space $\fv_\iC.$
\par
If $X$ is a locally compact Hausdorff space, and $V$ a locally convex (Hausdorff) space,
then by $C(X,V)$ we denote the space of $V$-valued continuous functions on $X$, equipped
with the topology of uniform convergence on compact subsets.
If $V$ is quasi-complete, then $C(X, V)$ is quasi-complete as well.

Let $V$ be a quasi-complete locally convex space.
The subspace $\cO(\Omega,V)$ of $V$-valued holomorphic functions on $\Omega$ is
closed in $C(\Omega,V)$ hence quasi-complete of its own right.
For our further considerations, it is important to note that
the algebraic tensor product $\cP(\fv_\iC)\otimes V$ is dense in $\O(\Omega,V),$
see Lemma \ref{l: density of P in O}.
 \par
             If $V,W$ are locally convex spaces, we write $\Hom(V,W)$
for the space of continuous complex linear maps $V \to W.$ This space, equipped
with the strong operator topology is locally convex again.
Moreover, if $V$ is barrelled and $W$ quasi-complete, then $\Hom(V,W)$ is quasi-complete as well, see appendix, text before Lemma \ref{l: continuity of composition}.
            \par
As usual, we write $\Endo(V)$ for $\Hom(V,V).$ Let $U$ be a third locally convex space.
Then the composition map
$$
\gb: \Hom(U,V) \times \Hom(V,W) \to \Hom(U,W),\quad (A,B) \mapsto B\after A
$$
is bilinear and separately continuous.
Moreover, if $V$ is barrelled, then by the principle of uniform boundedness, $\gb$
is continuous
relative to (i.e., when restricted to) sets of the form $\Hom(U,V) \times C,$ with $C \subset \Hom(V,W)$
compact; see
appendix, Lemma \ref{l: continuity of composition}.
Now assume that $U,V$ and $W$ are quasi-complete locally convex spaces.
If $U$ and $V$ are barrelled, then it follows from the material in the appendix,
see Lemma \ref{l: composition of holomorphic homomorphisms},
that the natural pointwise composition
defines a  bilinear map
$$
\gb_*: \cO(\Omega,\Hom(U,V)) \times \cO(\Omega, \Hom(V,W)) \to \cO(\Omega,\Hom(U,W)).
$$

\begin{definition}\label{defhfr}%DEFINIITION
\rm
A holomorphic family of continuous representations of $G$ over $\Omega$ is a pair $(\pi,V)$ such that
the following conditions are fulfilled:
\begin{enumerate}
\itema $V$ is a Fr\'echet space.
\itemb $\pi$ is a continuous map from $G$  to $\O(\Omega,\End{V})$ satisfying:
\begin{enumerate}
\item[(1)] $\pi(g_1g_2)=\pi(g_1)\pi(g_2),\quad g_1,g_2\in G,$\\
$\pi(e_G)= 1_\pi$, where $1_\pi(\mu) = \id_V$ for all $\mu \in \Omega;$
\item[(2)] for  every  $k\in K$,  the $\End{V}$-valued function $\pi(k)$ is constant
on $\Omega$.
\end{enumerate}
\end{enumerate}
A holomorphic family of smooth representations of $G$ over $\Omega$ is a family $(\pi,V)$ as above such
that in addition
\begin{equation}
\label{e: condition smooth family}
(g,\gl)\mapsto \pi(g)(\gl)v,\quad G \times \Omega \to V
\end{equation}
is smooth for every $v \in V.$
\end{definition}

\begin{rem}%REMARK
\rm
Given $g \in  G$ and $\gl \in \Omega$ we agree to write $\pi_\gl(g)\coleq  \pi(g)(\gl).$
The condition that $V$ is Fr\'echet ensures that the principle of uniform boundedness is valid.
By application of  this principle it follows that the map
\begin{equation}
\label{e: holomorphic family written alternatively}
G \times \Omega \times V \to V,\qquad (g,\gl,v) \mapsto \pi_\gl(g)v
\end{equation}
is continuous and holomorphic in $\gl.$
More generally we could have given the definition of a holomorphic family of continuous representations under the weaker
assumption that $V$ be quasi-complete and barrelled. All results of the present section and the next are in fact valid under
this weaker assumption.
However, in \S \ref{s: parabolic induction} the assumption that $V$ is Fr\'echet
will really be needed.
\end{rem}

\begin{definition}%DEFINITION
\rm
Let $(\pi,V_\pi)$ and $(\rho,V_\rho)$ be two holomorphic families of
continuous representations of $G$ over $\Omega$.
A holomorphic family of intertwining operators,
$T$, between $(\pi,V_\pi)$ and $(\rho,V_\rho)$
is an element of $\O(\Omega,\Hom(V_\pi,V_\rho))$ satisfying:
$$
T\pi(g)=\rho(g)T,\qquad g\in G.
$$
\end{definition}

\begin{definition}
\rm
The category $\HFG$ is defined as follows.
\begin{enumerate}
\item[(a)] The objects are the holomorphic families of continuous representations of $G$ over  $\Omega$.
\item[(b)] The morphisms are the holomorphic families of intertwining operators between two objects.
\end{enumerate}
For any $(\pi,V_\pi)\in\HFG$, the identity morphism is the holomorphic family $1_\pi$
of interwining operators defined by:
$$
1_\pi(\gl)\coleq \id_{V_\pi},\quad \gl\in\Omega.
$$
The composition $T'\after T$ of two (composable) morphisms
is given by pointwise composition: $(T'\after T)_\gl = T_\gl' \after T_\gl.$ It is  again a holomorphic family of intertwining
operators, by virtue of Lemma \ref{l: composition of holomorphic homomorphisms}.
\end{definition}
If $V$ and $W$ are locally convex spaces, and $E$ a finite dimensional complex linear space,
then the map $(A_1, A_2) \mapsto A_1 \otimes A_2$ induces a linear isomorphism
\begin{equation}\label{e: end-hom}
\End{E}\otimes \Hom(V,W)\simeq \Hom(E\otimes V,E\otimes W)
\end{equation}
which we shall use for identifying these spaces. Accordingly, if $V$ and $W$ are quasi-complete,
and $V$ barrelled, then
$$
\diffD^{(E)}: T \mapsto T^{(E)}
$$
defines a continuous linear map
$$
\cO(\Omega, \Hom(V,W)) \to \cO(\Omega, \Hom(E \otimes V, E \otimes W))
$$
(see Remark \ref{r: J is M}).

\begin{definition}%DEFINITION
\rm
Let $E$ be a finite dimensional $\O_0$-module.
\begin{enumerate}
\item[(a)] For $(\pi,V)\in\HFG$, we define $\pi^{(E)}$ to be the continuous map
$$
\pi^{(E)} = \diffD^{(E)}\after \pi:\;\; g\mapsto\pi(g)^{(E)}, \quad
G \to \O(\Omega,\End{E\otimes V}).
$$
\item[(b)] For any morphism $T:(\pi,V_\pi)\to(\rho,V_\rho)$ of $\HFG$, we define
$$
T^{(E)}\coleq  \diffD^{(E)}(T)  \in \O(\Omega,\Hom(E\otimes V_\pi,E\otimes V_\rho)).
$$
\end{enumerate}
\end{definition}

\begin{prop}\label{p: the differentiation is a functor}%PROPOSITION
Let $E$ be a finite dimensional $\O_0$-module. Then $\diffD^{(E)}$ defines a functor from $\HFG$ to itself.
\end{prop}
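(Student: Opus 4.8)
The plan is to verify the four requirements of functoriality: that the assignment $(\pi,V)\mapsto(\pi^{(E)},E\otimes V)$ lands in $\HFG$, that $T\mapsto T^{(E)}$ lands in the correct morphism space, and that identities and composition are preserved. Almost everything reduces to one property of the operator-valued map $\diffD^{(E)}$ from $\O(\Omega,\Hom(V,W))$ to $\O(\Omega,\Hom(E\otimes V,E\otimes W))$ (defined via \eqref{e: end-hom}), which I would establish first: $\diffD^{(E)}$ is multiplicative for pointwise composition, i.e.\ $\diffD^{(E)}(BA)=\diffD^{(E)}(B)\,\diffD^{(E)}(A)$ (products pointwise) for composable holomorphic families $A\in\O(\Omega,\Hom(U,V))$ and $B\in\O(\Omega,\Hom(V,W))$ of continuous linear maps, and it sends the constant family $\id_V$ to the constant family $\id_{E\otimes V}$.

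For this key property, the constant-to-constant statement follows directly from Definition~\ref{d: diffD}: for the constant scalar function $1$ one has $1^{(E)}(\mu)e=\gamma_0(T_\mu^*1)\cdot e=1\cdot e=e$, so $1^{(E)}$ is the constant family $\id_E$; under the identification \eqref{e: end-hom} the constant family $\id_V$ is $1\otimes\id_V$, which $\diffD^{(E)}$ sends to $1^{(E)}\otimes\id_V=\id_{E\otimes V}$. For multiplicativity, I would note that both $(A,B)\mapsto\diffD^{(E)}(BA)$ and $(A,B)\mapsto\diffD^{(E)}(B)\,\diffD^{(E)}(A)$ are separately continuous bilinear maps with values in $\O(\Omega,\Hom(E\otimes U,E\otimes W))$, by continuity of $\diffD^{(E)}$ (Remark~\ref{r: J is M}) and separate continuity of pointwise composition on these function spaces (Lemmas~\ref{l: continuity of composition} and~\ref{l: composition of holomorphic homomorphisms}). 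Since $\cP(\fv_\iC)\otimes\Hom(U,V)$ and $\cP(\fv_\iC)\otimes\Hom(V,W)$ are dense in the respective spaces (Lemma~\ref{l: density of P in O}), it suffices, reducing one variable at a time, to check the identity for elementary tensors $A=\phi\otimes a$ and $B=\psi\otimes b$ with $\phi,\psi\in\cP(\fv_\iC)$. There $BA=(\psi\phi)\otimes(ba)$, so $\diffD^{(E)}(BA)=(\psi\phi)^{(E)}\otimes(ba)$, while $\diffD^{(E)}(B)\,\diffD^{(E)}(A)=(\psi^{(E)}\otimes b)(\phi^{(E)}\otimes a)=(\psi^{(E)}\phi^{(E)})\otimes(ba)$, and these agree because $\diffD^{(E)}:\O(\Omega)\to\O(\Omega,\End{E})$ is an algebra homomorphism (Corollary~\ref{c: continuity of diffD}), the product on the target being pointwise composition of endomorphisms.

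Granting the key property, the remaining verifications are routine. Given $(\pi,V)\in\HFG$: $E\otimes V$ is Fr\'echet because $V$ is and $E$ is finite dimensional; $\pi^{(E)}=\diffD^{(E)}\after\pi$ is continuous from $G$ to $\O(\Omega,\End{E\otimes V})$ as a composite of continuous maps; applying the key property to $\pi(g_1g_2)=\pi(g_1)\pi(g_2)$ and to $\pi(e_G)=1_\pi$ gives $\pi^{(E)}(g_1g_2)=\pi^{(E)}(g_1)\pi^{(E)}(g_2)$ and $\pi^{(E)}(e_G)=1_{\pi^{(E)}}$; and for $k\in K$, writing $\pi(k)=1\otimes P_k$ with $P_k\in\End{V}$ its constant value, we get $\pi^{(E)}(k)=1^{(E)}\otimes P_k=\id_E\otimes P_k$, which is constant on $\Omega$. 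Thus $(\pi^{(E)},E\otimes V)$ satisfies Definition~\ref{defhfr}. Given a morphism $T:(\pi,V_\pi)\to(\rho,V_\rho)$: continuity of $\diffD^{(E)}$ gives $T^{(E)}\in\O(\Omega,\Hom(E\otimes V_\pi,E\otimes V_\rho))$, and applying the key property to the intertwining identity $T\pi(g)=\rho(g)T$ gives $T^{(E)}\pi^{(E)}(g)=\rho^{(E)}(g)T^{(E)}$ for all $g$, so $T^{(E)}$ is a morphism with the right source and target. Preservation of identities is the identity $\diffD^{(E)}(1_\pi)=1_{\pi^{(E)}}$ already used, and preservation of composition, $(T'\after T)^{(E)}=(T')^{(E)}\after T^{(E)}$, is exactly the multiplicativity part of the key property, since composition in $\HFG$ is pointwise.

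The only delicate point is the key property, and the subtlety there is that pointwise composition of holomorphic operator-valued families is in general only separately continuous (jointly continuous only on products with a compact factor); this forces the reduction to elementary tensors to be carried out one variable at a time rather than in one stroke. Once that is handled, what remains is the elementary-tensor computation above together with the constant-function observation.
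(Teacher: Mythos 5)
Your overall plan — establish that $\diffD^{(E)}$ is multiplicative for pointwise composition of holomorphic operator families and sends constant identity to constant identity, then read off the four functoriality requirements — is exactly the paper's. The second half of your argument (given multiplicativity, checking Definition~\ref{defhfr} for $(\pi^{(E)},E\otimes V)$, the intertwining identity for $T^{(E)}$, and preservation of identities/composition) is correct and matches the paper's proof, which simply invokes Corollary~\ref{c: multiplicativity of diffD} at those points.

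The gap is in your proof of the key multiplicativity property. You assert that the pointwise composition map $\gb_*\colon \cO(\Omega,\Hom(U,V))\times\cO(\Omega,\Hom(V,W))\to\cO(\Omega,\Hom(U,W))$ is \emph{separately continuous}, citing Lemmas~\ref{l: continuity of composition} and~\ref{l: composition of holomorphic homomorphisms}. But neither lemma gives this. Lemma~\ref{l: continuity of composition} gives separate continuity of the composition map $\gb$ between the $\Hom$ spaces themselves, not of the lifted map $\gb_*$ on function spaces; and Lemma~\ref{l: composition of holomorphic homomorphisms} only asserts that $\gb_*$ maps holomorphic families to holomorphic families, with no continuity statement. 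To promote separate continuity of $\gb$ to separate continuity of $\gb_*$ one would use Lemma~\ref{l: criterion preservation C} or Lemma~\ref{sec45}, but those require the two \emph{factor} spaces of $\gb$ to be barrelled — and here those factors are $\Hom(U,V)$ and $\Hom(V,W)$ with the strong operator topology, which are not barrelled when $U,V$ are infinite dimensional. Concretely, the putative seminorm $B\mapsto\sup_{\mu\in K}q(B(\mu)A(\mu)u)$ involves the vector $A(\mu)u$ running over a compact continuum in $V$ as $\mu$ runs over $K$, while the seminorms of $\cO(\Omega,\Hom(V,W))$ only test against a single fixed vector; one cannot dominate the former by the latter without an equicontinuity bound on $B(K)$ that is not uniform over $B$. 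So the ``one variable at a time'' density reduction, which is sound when the frozen family is a polynomial tensor (post/pre-composition by a \emph{fixed} operator is continuous), does not work at the second stage where the frozen family is an arbitrary holomorphic family.

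This is precisely why the paper does \emph{not} prove multiplicativity by density plus continuity. The paper's route is through Proposition~\ref{p: property of M}: first the identity for $f_j\in\cO(\Omega)\otimes V_j$, then, at a fixed point $\mu$, truncate the Taylor expansions of $f_1,f_2$ modulo $\cM_\mu^k$ and use the Leibniz-type statement (Lemma~\ref{l: M and fm mu}) that $M$ shifts $\cM_\mu^k$ into $\cM_\mu^{k-d}$, so that the error terms vanish upon evaluation at $\mu$. This is a pointwise algebraic argument using only $\cO(\Omega)$-bilinearity of $\gb_*$, and it sidesteps the topological obstruction you run into. To repair your proof with minimal change, you could either invoke Corollary~\ref{c: multiplicativity of diffD} outright (as the paper does), or replace your density/continuity step by the observation that $\diffD^{(E)}(F)(\mu_0)$ depends only on the jet of $F$ at $\mu_0$ up to some fixed order (by Lemma~\ref{l: dual of End E}), combine with the Leibniz rule, and reduce to continuity of pre/post-composition by the fixed operators $\partial^{\alpha}A(\mu_0)$ — which \emph{is} continuous in the strong operator topology. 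As written, though, the continuity claim is unsupported and the proof has a real hole.
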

\begin{proof}%PROOF
Let $(\pi, V) \in \HFG.$ We will first show that
$(\pi^{(E)},E\otimes V)\in \HFG$. Let $\beta$ denote
the composition map in $\End{V}$.
It is a bilinear map, which preserves holomorphy on $\Omega,$
by Lemma \ref{l: composition of holomorphic homomorphisms}.
Moreover, by Corollary \ref{c: multiplicativity of diffD} we have, for all $g_1,g_2\in G$,
$$
\begin{array}{rcl}
\pi^{(E)}(g_1g_2) & = & \pi(g_1g_2)^{(E)}\\
& = & (\pi(g_1)\pi(g_2))^{(E)}\\
& = & \pi(g_1)^{(E)}\pi(g_2)^{(E)}\\
& = & \pi^{(E)}(g_1)\pi^{(E)}(g_2).
\end{array}
$$
Also, $\pi^{(E)}(e_G)=1_\pi^{(E)}=1_{\pi^{(E)}}$.
For  $k\in K$, the $\Endo(V)$-valued function $\pi(k)$ is a constant  on $\Omega,$ and therefore, so is
the $\Endo(E\otimes V)$-valued function $\pi^{(E)}(k) = \pi(k)^{(E)}.$
Hence (b.1) and (b.2) of Definition \ref{defhfr} are fulfilled.
            \par
It remains to show that, for any morphism $T:(\pi,V_\pi)\to (\rho,V_\rho)$
of $\HFG$, $T^{(E)}$ is a morphism of
$\HFG$ from $(\pi^{(E)}, E\otimes V_\pi)$
to $(\rho^{(E)},E\otimes V_\rho)$.
For this it suffices to show that, for $g\in G$,
$$
T^{(E)} \pi^{(E)}(g)=\rho^{(E)}(g)T^{(E)}.
$$
This follows from Corollary \ref{c: multiplicativity of diffD}, see also the first part of the present proof.
\end{proof}
The category $\HFG$ has a null object, $(0,\{0\})$, and one can define a biproduct in $\HFG$ as follows. Let $(\pi,V_\pi),\,(\rho,V_\rho)\in\HFG$. Set, for any $g\in G$, $\gl\in\Omega$ and $(v,w)\in V_\pi\oplus V_\rho$,
$$(\pi\oplus\rho)_\gl(g)(v,w)\coleq (\pi_\gl(g)v,\rho_\gl(g)w).$$
Then $(\pi\oplus\rho,V_\pi\oplus V_\rho)$ defines an object of $\HFG.$
                \par
We define the full subcategory $\HFGinfty$ of $\HFG$ by stipulating that the objects are the holomorphic families of smooth representations over $\Omega$ (the set of morphisms between objects in $\HFGinfty$ coincides with the set of morphisms between the objects viewed as objects
for the bigger category $\HFG.$ Likewise, the full subcategory
$\HFGinftyadm$ of $\HFGinfty$ consists of the objects
$(\pi, V)$ of $\HFGinfty$ with $V_K$ admissible.
    \par
If $(\pi, V) \in \HFG,$ then the identity morphism $1_\pi$ is constant as a $\End{V}$-valued function
on $\Omega.$ This enables us to define a particular subcategory.
                \par
\begin{definition}\label{d: sec59}%DEFINITION
\rm
The subcategory $\HFGO$ of $\HFG$ is defined as follows.
\begin{enumerate}
\itema The objects are the objects of $\HFG.$
\itemb If $(\pi, V)$ and $(\rho,W)$ are objects of $\HFG,$ then the associated
collection of $\HFGO$-morphisms consists of all $\HFG$-morphisms $T$ in $\cO(\Omega, \Hom(V,W))$
which are constant as a function on $\Omega.$
\end{enumerate}
In a similar fashion, we define the subcategories
$\HFGOinfty$ and $\HFGOinftyadm$ of $\HFGinfty$ and $\HFGinftyadm$, respectively.
\end{definition}
Note that  $\HFGOinftyadm$ is a full subcategory of
$\HFGOinfty,$ which in turn is a full subcategory of
$\HFGO.$

\begin{rem}%REMARK
\label{r: invariance under functor}
\rm
For every $E\in \FMod_{\cO_0}$, the functor $J^{(E)}: \HFG \to \HFG$
leaves all subcategories $\HFGinfty,$ $\HFGinftyadm,$ $\HFGO$, $\HFGOinfty$ and $\HFGOinftyadm$ invariant.
\end{rem}

\begin{lemma}%LEMMA
Let $\psi: E \to E'$ be a morphism in $\FMod_{\cO_0}$ and let $(\pi, V_\pi)$
be a holomorphic family of representations. Then $\psi \otimes 1_\pi$
intertwines the families $\pi^{(E)}$ and $\pi^{(E')}.$
\end{lemma}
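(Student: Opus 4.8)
The plan is to reduce everything to the defining property of $\psi$ as an $\cO_0$-module map, by means of a density argument. First I would observe that $\psi\otimes 1_{V_\pi}$ is a $\mu$-independent continuous linear map $E\otimes V_\pi\to E'\otimes V_\pi$, hence automatically a holomorphic $\Hom(E\otimes V_\pi,E'\otimes V_\pi)$-valued function on $\Omega$ (indeed, a morphism in $\HFGO$); so the only substantive point is the intertwining identity $(\psi\otimes 1_{V_\pi})\after\pi^{(E)}(g)=\pi^{(E')}(g)\after(\psi\otimes 1_{V_\pi})$ for all $g\in G$. I would deduce this from the sharper claim that $(\psi\otimes 1_{V_\pi})\after f^{(E)}=f^{(E')}\after(\psi\otimes 1_{V_\pi})$ holds in $\cO(\Omega,\Hom(E\otimes V_\pi,E'\otimes V_\pi))$ for \emph{every} $f\in\cO(\Omega,\End{V_\pi})$, and then specialize to $f=\pi(g)$, using that $f^{(E)}=\diffD^{(E)}(\pi(g))=\pi^{(E)}(g)$ and likewise for $E'$.

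To prove the sharper claim I would argue by density in $f$. Both sides are continuous linear functions of $f\in\cO(\Omega,\End{V_\pi})$: the maps $\diffD^{(E)}$ and $\diffD^{(E')}$ are continuous by Corollary \ref{c: continuity of diffD} together with Remark \ref{r: J is M} (here one uses that $V_\pi$ is Fr\'echet, hence barrelled and quasi-complete, so that $\End{V_\pi}$ is quasi-complete), and pre- and post-composition with the fixed continuous operator $\psi\otimes 1_{V_\pi}$ are continuous as well. Since $\cP(\fv_\iC)\otimes\End{V_\pi}$ is dense in $\cO(\Omega,\End{V_\pi})$ by Lemma \ref{l: density of P in O}, it then suffices to verify the identity on elementary tensors $f=p\otimes A$ with $p\in\cP(\fv_\iC)$ and $A\in\End{V_\pi}$.

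For such an $f$ the verification is a one-line computation. Under the identification \eqref{e: end-hom} one has $f^{(E)}(\mu)=p^{(E)}(\mu)\otimes A$ and $f^{(E')}(\mu)=p^{(E')}(\mu)\otimes A$, and on a simple tensor $e\otimes v$ Definition \ref{d: diffD} gives $f^{(E)}(\mu)(e\otimes v)=(\gamma_0(T_\mu^*p)\cdot e)\otimes Av$; applying $\psi\otimes 1_{V_\pi}$ and using $\cO_0$-linearity of $\psi$ turns this into $(\gamma_0(T_\mu^*p)\cdot\psi(e))\otimes Av=f^{(E')}(\mu)\bigl((\psi\otimes 1_{V_\pi})(e\otimes v)\bigr)$, and linearity finishes the case. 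I do not expect a genuine obstacle here: the only points requiring some care are the correct interpretation of $\diffD^{(E)}$ on operator-valued families through the identification $\End{E}\otimes\End{V_\pi}\simeq\End{E\otimes V_\pi}$, and checking the quasi-completeness hypotheses that make Lemma \ref{l: density of P in O} applicable.
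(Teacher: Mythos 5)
Your proof is correct and follows essentially the same route as the paper: reduce to the scalar-type computation (the paper checks $\psi\after f^{(E)}(\mu)=f^{(E')}(\mu)\after\psi$ for $f\in\cO(\Omega)$, you check it on elementary tensors $p\otimes A$), then pass to operator-valued families via the identification $\End{E}\otimes\End{V_\pi}\simeq\End{E\otimes V_\pi}$ and the density/continuity argument underlying $\diffD^{(E)}$. You simply spell out the density step, which the paper leaves implicit.
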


\begin{proof}%PROOF
If $f \in \cO(\Omega),$ then from the definitions it readily follows
that
$$
\psi \after f^{(E)}(\mu) = f^{(E')}(\mu) \after \psi,\quad \mu \in \Omega.
$$
From this and the identification $\End{E}\otimes\End{V_\pi}\simeq\End{E\otimes V_\pi}$, it follows that for all $g \in G$ the map $\psi \otimes 1_{\pi}: E \otimes V_\pi \to E'\otimes V_\pi$
intertwines $\pi_\mu^{(E)}(g) = \pi(g)^{(E)}(\mu)$ with $\pi_\mu^{(E')}(g) = \pi(g)^{(E')}(\mu).$
\end{proof}

The above lemma justifies the following definition.
\begin{definition}
\rm
Given any $(\pi,V_\pi)\in\obj(\HFG)$, we define the functor $X_\pi$ from $\FMod_{\O_0}$ to $\HFGO$ as follows.
\begin{enumerate}
\itema
For an object $E\in\FMod_{\O_0},$ the associated  object of $\HFGO$
is given by
$$
X_\pi(E)\coleq (\pi^{(E)},E\otimes V_\pi).
$$
\itemb
 For a morphism $\psi: E\rightarrow E'$ of $\FMod_{\O_0}$, the associated morphism of $\HFGO$
is given by
$$
\quad X_\pi(\psi)\coleq \psi\otimes 1_\pi:\;\; (\pi^{(E)},E\otimes V_\pi)\,\longrightarrow\, (\pi^{(E')},E'\otimes V_\pi).
$$
\end{enumerate}
\end{definition}

\begin{rem}%REMARK
\rm
It is readily checked that $X_\pi$ is a functor.
Indeed, $X_\pi$ respects composition of morphisms, and $X_\pi(\id_E) = \id_{E\otimes V_\pi}= 1_{\pi^{(E)}}.$
\end{rem}

As $\FMod_{\cO_0}$ is an abelian category, we have the usual notion of finite direct sums and exact
sequences in $\FMod_{\cO_0}.$
\par
The category $\Vect$ of complex vector spaces is abelian.
If $T: (\pi, V) \to (\rho, W)$ is a morphism in $\HFGO,$ then there exists a unique
linear map $T_0: V \to W$ such that $T(\gl) = T_0$ for all $\gl \in \Omega.$ By abuse of notation
we will write $T$ for $T_0.$ We thus have a forgetful
functor $\HFGO \to \Vect.$
    \par
The category $\HFGO$ is not abelian. Nevertheless, we may use the
forgetful functor to define exact sequences
\begin{definition}\rm
A sequence $((\pi_k, V_k), T_k), p \leq k \leq q,$ in the category $\HFGO$,
 where $p, q \in \Z, p < q,$ will be called exact if its image under the
 forgetful functor $\HFGO \to \Vect$  is exact, i.e., the image of $T_{k-1}$ equals the kernel of $T_k$ for all $p < k \leq q.$
 \end{definition}

\begin{lemma}
\label{l: exactness properties X pi}
Let $(\pi, V_\pi) \in \obj(\HFG).$
Then the functor $X_\pi: \FMod_{\cO_0} \to \HFGO$
has the following properties.
\begin{enumerate}
\itema
It sends every short exact sequence $0 \to E \to E' \to E'' \to 0$ to a  similar short exact sequence in $\HFGO.$
\itemb
It sends every exact sequence of the form $0 \to E \to E'$ in $\FMod_{\cO_0}$ to an exact sequence of similar form in $\HFGO.$
\itemc
It sends every exact sequence of the form $E' \to E''\to 0$ in $\FMod_{\cO_0}$ to an exact sequence of similar form in $\HFGO.$
\itemd
It sends a direct sum of the form $E = E_1 \oplus E_2$ in
$\FMod_{\cO_0}$ to a similar direct sum in $\HFGO.$
\end{enumerate}
\end{lemma}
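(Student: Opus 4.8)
The plan is to reduce all four statements to elementary facts about tensoring over the field $\C$, using that, by definition, a sequence in $\HFGO$ is exact precisely when its image under the forgetful functor $\HFGO \to \Vect$ is exact.

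First I would record the key observation: the composite of $X_\pi$ with the forgetful functor $\HFGO \to \Vect$ is the functor $E \mapsto E \otimes_\C V_\pi$, $\psi \mapsto \psi \otimes \mathrm{id}_{V_\pi}$, where now $E$ and $\psi$ are viewed merely as a complex vector space and a $\C$-linear map. Along the way one checks that $X_\pi$ really does take values in $\HFGO$ on morphisms: for $\psi : E \to E'$ in $\FMod_{\cO_0}$ the operator $\psi \otimes 1_\pi$ intertwines $\pi^{(E)}$ with $\pi^{(E')}$ (the lemma preceding the definition of $X_\pi$) and, visibly, does not depend on $\gl \in \Omega$, hence is a morphism of $\HFGO$ and not merely of $\HFG$; thus the $X_\pi$-image of a sequence in $\FMod_{\cO_0}$ is genuinely a sequence in $\HFGO$ to which the notion of exactness applies.

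Granting this, parts (a), (b), (c) are immediate. Since $\C$ is a field, $V_\pi$ is a free, hence flat, $\C$-module, so $- \otimes_\C V_\pi$ is exact on $\Vect$: it carries a short exact sequence $0 \to E \to E' \to E'' \to 0$ to the short exact sequence $0 \to E \otimes_\C V_\pi \to E' \otimes_\C V_\pi \to E'' \otimes_\C V_\pi \to 0$, an injection $0 \to E \to E'$ to an injection, and a surjection $E' \to E'' \to 0$ to a surjection. Reading these statements back through the forgetful functor gives exactly (a), (b) and (c).

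For (d) I would use the identification $(E_1 \oplus E_2) \otimes_\C V_\pi \simeq (E_1 \otimes_\C V_\pi) \oplus (E_2 \otimes_\C V_\pi)$ together with the evident consequence of the definitions (cf. the Property stated after \eqref{e: defi diffD}) that, under this identification, $\pi^{(E_1 \oplus E_2)} = \pi^{(E_1)} \oplus \pi^{(E_2)}$; hence $X_\pi(E_1 \oplus E_2)$ is the biproduct $X_\pi(E_1) \oplus X_\pi(E_2)$ in $\HFG$ defined earlier. The inclusions $E_i \hookrightarrow E_1 \oplus E_2$ and projections $E_1 \oplus E_2 \to E_i$ being morphisms in $\FMod_{\cO_0}$, their $X_\pi$-images are the corresponding structure maps of this biproduct and are constant in $\gl$, so the biproduct lies in $\HFGO$; equivalently, $X_\pi$ is an additive functor and therefore preserves biproducts. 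There is no real obstacle in any of this; the only points demanding a moment's care are the two just made, namely that one stays inside $\HFGO$ (constancy in $\gl$) rather than the larger $\HFG$, and that the vector-space identifications used in (d) are compatible with the $\cO_0$-module structure on $E_1 \oplus E_2$ fixed in Remark \ref{r: tensor products in cC}.
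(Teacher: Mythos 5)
Your proof is correct. For parts (a)--(c) it follows essentially the paper's route: both proofs observe that composing $X_\pi$ with the forgetful functor to $\Vect$ yields the plain tensor functor $E \mapsto E \otimes_\C V_\pi$, $\psi \mapsto \psi \otimes \id_{V_\pi}$, which is exact because $\C$ is a field, and then read exactness back through the forgetful functors $\FMod_{\cO_0} \to \FDVect$ and $\HFGO \to \Vect$. For part (d) you take a genuinely different route. The paper derives (d) from the already established (a): it applies $X_\pi$ to the short exact sequence $0 \to E_1 \to E \to E_2 \to 0$ and argues that exactness forces the images of the induced inclusions $X_\pi(E_j) \to X_\pi(E)$ to be closed, which is what is needed for a topological direct sum. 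You instead invoke the Property that $\pi^{(E_1 \oplus E_2)} = \pi^{(E_1)} \oplus \pi^{(E_2)}$, together with the topological identification $(E_1 \oplus E_2)\otimes V_\pi \simeq (E_1 \otimes V_\pi) \oplus (E_2 \otimes V_\pi)$, to identify $X_\pi(E_1 \oplus E_2)$ directly with the biproduct, and then note that the structure maps are constant in $\gl$, hence land in $\HFGO$. Your argument for (d) is a bit more direct in that it exhibits the biproduct structure explicitly rather than deducing closedness of images from (a); both are valid, and the paper's version has the small economy of not needing to appeal to the Property separately. One minor caution on your closing remark: "additive functor, hence preserves biproducts" is a correct general principle, but using it cleanly requires knowing that $\HFGO$ has biproducts in the categorical sense and is pre-additive; the explicit identification you give first is the safer formulation in this concrete setting.
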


\begin{proof}%PROOF
Let $\FDVect$ denote the (abelian) category of finite dimensional complex linear spaces.
Then we have a forgetful functor $\cF$ from $\FMod_{\cO_0}$ to $\FDVect.$  A sequence in $\FMod_{\cO_0}$ is exact if and only if its image under $\cF$ is exact in $\FDVect.$
According to the definition above, the forgetful functor $\cF': \HFGO \to \Vect$ has a similar property.
                \par
Given $U \in \Vect$ we define the functor $X_U: \FDVect \to \Vect$ by $X_U(E) = E \otimes U$
for an object $E$ of $\FDVect.$ A morphism $f: E \to E'$ in $\FMod_{\cO_0}$ is mapped to $X_U(f)\coleq f \otimes \id_U:
E \otimes U \to E' \otimes U.$ It is readily seen that the functor $X_U$ is exact, and has the obvious properties analogous to (a) - (d).
            \par
Since
$
\cF' \after X_\pi = X_{\cF'(\pi,V)} \after \cF,
$
assertions (a), (b) and (c) of the lemma follow.
For assertion (d) it remains to be shown that each natural embedding
${\rm i}_j: E_j \to E$, for $j =1,2,$
is mapped to an embedding
$X_\pi(\rmi_j)$ from $X_\pi(E_j)$ onto a closed subspace of
$X_\pi(E).$ Let $p_j: E \to E_j$ be the natural projection.
Then by exactness of the sequence $0 \to E_1 \to E \to E_2 \to 0$
it follows from the established assertion (a) that $0 \to X_\pi(E_1) \to X_\pi(E) \to X_\pi(E_2) \to 0$ is exact.
This implies that $X_\pi(\rmi_1)$ has closed image in $X_\pi(E).$
Likewise, $X_\pi(\rmi_2)$ is seen to have close image.
\end{proof}

\begin{rem}
In view of Remark \ref{r: invariance under functor}, Lemma 2.44
has an obvious generalization to objects from $\HFGinfty$ and
from $\HFGinftyadm.$
\end{rem}

\begin{prop}
\begin{enumerate}
\item[(a)]
Let $E_1,\dots,E_n\in \FMod_{\O_0}$ and set $E\coleq E_1\otimes\cdots\otimes E_n.$
Then there exist $N,\, k\in\N$ such that, for any object $(\pi,V_\pi)$ in $\HFG$  (resp.~$\HFGinfty,\,\HFGinftyadm$), the family
$(\pi^{(E)},E\otimes V_\pi)$ is a quotient of
$$
(\id_{\iC^N}\otimes \pi^{(\O_0/\cM^{k+1})},\C^N\otimes(\O_0/\cM^{k+1}\otimes V_\pi))
$$
in the category $\HFGO$ (resp.~$\HFGOinfty,\,\HFGOinftyadm$).
\item[(b)]
Let $E\in\FMod_{\O_0}$.
Then there exist  $\lambda_1,\dots,\lambda_n\in\fv_\iC$
such that, for any object $(\pi,V_\pi)$ in $\HFG$ (resp.~$\HFGinfty,\,\HFGinftyadm$), the family
$
(\pi^{(E)},E\otimes V_\pi)$ is a subquotient of
$$
(\pi^{((\O_0/\cI_{\lambda_1})\otimes\cdots\otimes(\O_0/\cI_{\lambda_n}))},
(\O_0/\cI_{\lambda_1})\otimes\cdots\otimes(\O_0/\cI_{\lambda_n})\otimes V_\pi)
$$
in the category $\HFGO$ (resp.~$\HFGOinfty,\,\HFGOinftyadm$).
\end{enumerate}
\end{prop}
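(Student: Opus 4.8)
The plan is to derive both statements from Proposition~\ref{sec43} by transporting the relations between $\cO_0$-modules that it provides into the category $\HFGO$ by means of the functor $X_\pi$ and its exactness properties (Lemma~\ref{l: exactness properties X pi}). Since, for $E$ finite dimensional, the functor $J^{(E)}$ preserves each of the subcategories $\HFGinfty$, $\HFGinftyadm$, $\HFGO$, $\HFGOinfty$, $\HFGOinftyadm$ (Remark~\ref{r: invariance under functor}), and since, by the remark following Lemma~\ref{l: exactness properties X pi}, the exactness properties of $X_\pi$ persist over $\HFGinfty$ and $\HFGinftyadm$, it suffices to argue for an object $(\pi,V_\pi)$ of $\HFG$: the variants for $\HFGinfty$ and $\HFGinftyadm$ then follow by replacing $\HFGO$ with $\HFGOinfty$ or $\HFGOinftyadm$ throughout.

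For (a): Proposition~\ref{sec43}(a) supplies $N,k\in\N$ together with a surjection of $\cO_0$-modules $\psi\colon\C^N\otimes\cO_0/\cM^{k+1}\to E$. Applying $X_\pi$ and Lemma~\ref{l: exactness properties X pi}, the morphism $X_\pi(\psi)$ presents $X_\pi(E)=(\pi^{(E)},E\otimes V_\pi)$ as a quotient in $\HFGO$ of $X_\pi(\C^N\otimes\cO_0/\cM^{k+1})$. It then remains to identify this object with the one displayed in the statement. As an $\cO_0$-module (with the action on the second factor), $\C^N\otimes\cO_0/\cM^{k+1}$ is a direct sum of $N$ copies of $\cO_0/\cM^{k+1}$; hence, applying the Property that follows Corollary~\ref{c: continuity of diffD} componentwise, in its version for families of homomorphisms (cf.\ the remark preceding the definition of $\pi^{(E)}$), one obtains $\pi^{(\C^N\otimes\cO_0/\cM^{k+1})}=\id_{\C^N}\otimes\pi^{(\cO_0/\cM^{k+1})}$, while the underlying space $(\C^N\otimes\cO_0/\cM^{k+1})\otimes V_\pi$ equals $\C^N\otimes(\cO_0/\cM^{k+1}\otimes V_\pi)$ by associativity of the tensor product. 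This proves (a).

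For (b): since, by (a), $(\pi^{(E)},E\otimes V_\pi)$ is a quotient of $X_\pi(\C^N\otimes\cO_0/\cM^{k+1})$, and since a quotient of a subquotient is again a subquotient, it suffices to produce $\lambda_1,\dots,\lambda_n\in\fv_\iC$ such that $\C^N\otimes\cO_0/\cM^{k+1}$ is a subquotient, in $\FMod_{\cO_0}$, of $F:=(\cO_0/\cI_{\lambda_1})\otimes\cdots\otimes(\cO_0/\cI_{\lambda_n})$, and then to apply $X_\pi$ and Lemma~\ref{l: exactness properties X pi} once more, noting that $X_\pi(F)=(\pi^{(F)},F\otimes V_\pi)$ is precisely the family appearing in (b). Now $\cO_0/\cM^{k+1}$ is a subquotient of some tensor product of modules $\cO_0/\cI_\mu$ by Proposition~\ref{sec43}(b) (applied to the cofinite ideal $\cM^{k+1}$); since $-\otimes_\C-$ is exact and iterated tensor products are associative (Lemma~\ref{sec34}), it is then enough to realize the trivial module $\C^N$ as a subquotient of such a tensor product and to concatenate the two resulting lists of $\lambda$'s. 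This last point is where I expect the only genuine work, and I would settle it as follows: fix a nonzero $\lambda_0\in\fv_\iC$ and check, by a direct computation with the explicit structure of $\cO_0/\cI_{\lambda_0}$ given in Lemma~\ref{l: codimension two}, that the socle of the $n$-fold tensor power $(\cO_0/\cI_{\lambda_0})^{\otimes n}$ has dimension at least $n$; since $\cM$ annihilates the socle, any $N$-dimensional subspace of the socle of $(\cO_0/\cI_{\lambda_0})^{\otimes N}$ is an $\cO_0$-submodule isomorphic to $\C^N$, which is all that is required. Everything else is routine bookkeeping with the functor $X_\pi$.
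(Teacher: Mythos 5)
Your treatment of part (a) follows the paper's route exactly: apply the functor $X_\pi$ to the surjection supplied by Proposition~\ref{sec43}(a), invoke Lemma~\ref{l: exactness properties X pi}, and identify $\pi^{(\C^N\otimes\cO_0/\cM^{k+1})}$ with $\id_{\C^N}\otimes\pi^{(\cO_0/\cM^{k+1})}$ via the additivity property $f^{(E\oplus F)}=f^{(E)}\oplus f^{(F)}$ and associativity of the tensor product. Nothing to add there.

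For part (b), however, you supply a step that the paper's one-line proof silently skips, and this is genuinely needed. Proposition~\ref{sec43}(b) only exhibits the \emph{cyclic} modules $\cO_0/\cI$ as subquotients of tensor products $(\cO_0/\cI_{\gl_1})\otimes\cdots\otimes(\cO_0/\cI_{\gl_n})$. Chaining it with~\ref{sec43}(a) shows that a general $E$ is a subquotient of $\C^N\otimes(\cO_0/\cI_{\gl_1})\otimes\cdots\otimes(\cO_0/\cI_{\gl_n})$, with a residual factor $\C^N$ that the statement of the proposition does not allow; the cited ingredients do not absorb it on their own. (Notably, in the proof of Proposition~\ref{p: dinter} the paper itself only deduces from~\ref{sec43} that $E$ is a subquotient of $E_\eta^N$, i.e.\ keeps the direct sum, so the stronger tensor-product form here requires a further idea.) You correctly spot this and close the gap by realizing $\C^N$ as a submodule of an additional tensor factor $(\cO_0/\cI_{\gl_0})^{\otimes N}$ via the socle, then carrying the subquotient relation through via Lemma~\ref{sec34}, exactness of $-\otimes_\C -$, and one more pass through $X_\pi$ and Lemma~\ref{l: exactness properties X pi}. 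That overall scheme is sound.

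The one item you only sketch, the socle estimate, is indeed correct and elementary. By Lemma~\ref{l: codimension two}, $E_0\coleq\cO_0/\cI_{\gl_0}$ has basis $\{\bar 1,\bar\xi\}$ with $\xi(\gl_0)\neq 0$; the action of $\cM$ factors through the one-dimensional quotient $\cM/(\cM\cap\cI_{\gl_0})$ and its generator acts by the nilpotent $\bar 1\mapsto\bar\xi\mapsto 0$. Hence on $E_0^{\otimes N}$ the only nontrivial action of $\cM$ is the Boolean raising operator $e_S\mapsto\sum_{j\notin S}e_{S\cup\{j\}}$ on the basis indexed by subsets $S\subset\{1,\dots,N\}$. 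Its kernel contains the top vector $e_{\{1,\dots,N\}}$ together with the differences $e_S-e_{S'}$ for $|S|=|S'|=N-1$, which span an $(N-1)$-dimensional space; so the socle has dimension at least $N$. Since $\cM$ annihilates the socle, any $N$-dimensional subspace of it is an $\cO_0$-submodule isomorphic to $\C^N$, as required. So your proposal is correct, and for (b) is in fact more complete than the paper's own proof.
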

\begin{proof}%PROOF
This follows from Proposition \ref{sec43} combined with
 Lemma \ref{l: exactness properties X pi}.
\end{proof}

\subsection{Holomorphic families of admissible $(\g,K)$-modules}%SUBSECTION HOLOMORPHIC-ADM
For the purpose of this paper, it is convenient to introduce the following notion of holomorphic families of admissible $(\g,K)$-modules. Recall that $\Omega$ is an open subset of the finite dimensional
complex linear space $\fv_\iC.$
             \par
\begin{definition}\label{defhfm}%DEFINITION
\rm
A holomorphic family of admissible $(\g,K)$-modules over $\Omega$ is a triple $(\pi_1,\pi_2,V)$
satisfying the following conditions.
\begin{enumerate}
\itema $V$ is a complex vector space.
\itemb $\pi_1$ is a map from $U(\g)\times \Omega$ to $\End{V}$ such that
\begin{enumerate}
\item[(1)] for each $\lambda\in\Omega$,  the map $\pi_1(\cdot,\lambda)$ is a Lie algebra homomorphism;
\item[(2)] for  each $u\in U(\g)$ and  every $v\in V,$ the vector subspace of $V$ generated by the $\pi_1(u,\lambda)v$, $\lambda\in\Omega$, is finite dimensional and the map $\lambda\mapsto \pi_1(u,\lambda)v$ is holomorphic from $\Omega$ into this subspace.
\end{enumerate}
\itemc $\pi_2$ is a Lie group homomorphism from $K$ to $\mathrm{GL}(V)$ such that:
\begin{enumerate}
\item[(1)]for each $v\in V,$ the vector subspace of $V$ generated by the $\pi_2(k)v$, $k\in K,$ is finite dimensional;
\item[(2)] for all $ v\in V$,  $\gl \in \Omega$, $u\in U(\g)$, $k\in K$ and  $X\in\k,$
$$
\begin{array}{rcl}
\pi_1(\mathrm{Ad}(k)u,\gl)v&=&\pi_2(k)\pi_1(u,\gl)v,\\
\frac{d}{dt}\left[\pi_2(\exp{(tX)})v\right]_{\vert t=0}&=&\pi_1(X,\gl)v;
\end{array}
$$
\end{enumerate}
\itemd for every  $\delta\in\hat{K}, $ the $K$-isotypic component $\pi_{2,\delta}$ of $\pi_2$ of type $\delta$ is finite dimensional.
\end{enumerate}
\end{definition}

\begin{rem}%REMARK
\rm
Let $(\pi_1, \pi_2, V)$ be as in the above definition. Given $\gl \in \Omega$ we agree to write $\pi_{1\gl}$ for the map $\pi_1(\dotvar, \gl): U(\fg) \to \End{V}.$ Then
$(\pi_{1\gl}, \pi_2, V)$ is an admissible $(\fg, K)$-module.
\end{rem}

We also need the following notion of holomorphic family of intertwining operators.

\begin{definition}%DEFINITION
\rm
Let $(\pi_1,\pi_2,V)$ and $(\rho_1,\rho_2,W)$ be two holomorphic families of admissible $(\g,K)$-modules over $\Omega$. A holomorphic family of intertwining operators between $(\pi_1,\pi_2,V)$ and $(\rho_1,\rho_2,W)$ is a function $T: \Omega \to \Hom(V,W)$ satisfying
the following conditions.
\begin{enumerate}
\itema $T(\lambda)\pi_1(u,\lambda)=\rho_1(u,\lambda)T(\lambda),$ for all $\lambda\in \Omega$ and $u\in U(\g)$;
\itemb $T(\lambda)\pi_2(k)=\rho_2(k)T(\lambda),$ for all $\lambda\in \Omega$ and $k\in K$;
\itemc  for any finite dimensional subspace $\tilde{V}$ of $V$, there exists a finite dimensional subspace $\tilde{W}$ of $W$ such that $T(\lambda)(\tilde V) \subset \tilde{W}$ for all $\lambda \in \Omega,$ and the associated function $\lambda \mapsto T(\gl)|_{\tilde V}$ belongs to $\cO(\Omega, \Hom(\tilde V, \tilde W)).$
\end{enumerate}
\end{definition}

Let $\HFgKadm$ denote the corresponding category of holomorphic families of admissible $(\fg,K)$-modules.

Let $(\pi, V)$ be an object in the category $\HFGinftyadm.$ Then for each  $\gl \in \Omega,$ $u \in U(\fg)$ and $v \in V$ we may define
$$
\pi(u)(\gl)v:= L_{u^\vee}(g\mapsto \pi(g)(\gl)v)|_{g =e}.
$$
We put $\pi_1(u,\gl) = \pi(u)(\gl)|_{V_K}$ and $\pi_2(k) = \pi(k)|_{V_K}.$

\begin{lemma}
Let $(\pi, V) \in \HFGinftyadm$ and
let $\pi_1, \pi_2$ be defined as above. Then
$$
(\pi, V)_K: =(\pi_1, \pi_2, V_K)
$$
is a holomorphic family
in $\HFgKadm.$ Moreover, $(\dotvar)_K: (\pi, V) \mapsto (\pi, V)_K$
defines a functor $\HFGinftyadm \to \HFgKadm.$
\end{lemma}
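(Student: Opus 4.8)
The plan is to verify the axioms of Definition~\ref{defhfm} for $(\pi_1,\pi_2,V_K)$ one by one, noting that all of them except the holomorphy/finite-dimensionality condition (b)(2) hold already ``pointwise in $\gl$'', and then to check functoriality by an essentially bookkeeping argument. Since $(\pi,V)\in\HFGinftyadm\subset\HFGinfty$, condition (\ref{e: condition smooth family}) says that every $v\in V$ is a smooth vector for the representation $\pi_\gl$ of $G$, for each fixed $\gl\in\Omega$; hence $\pi_\gl$ is a genuine smooth representation of $G$ on the Fr\'echet space $V$ (with $V=V^\infty$), and, since $\pi_2=\pi(\dotvar)|_{V_K}$ is constant on $\Omega$ by Definition~\ref{defhfr}(b)(2), the standard theory of smooth representations and their differentials makes $(\pi_{1\gl},\pi_2,V_K)$ an admissible $(\fg,K)$-module for every $\gl$. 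This gives at once: $V_K$ is a complex vector space (axiom (a)); for each $\gl$, $u\mapsto\pi_{1\gl}(u)$ is an algebra homomorphism $U(\fg)\to\End{V_K}$, hence its restriction to $\fg$ is a Lie algebra homomorphism (axiom (b)(1)); $\pi_2$ is a Lie group homomorphism $K\to\mathrm{GL}(V_K)$ whose $K$-orbits span finite-dimensional subspaces and which satisfies the compatibility identities of (c)(2) for every $\gl$ (axiom (c)); and the isotypic components of $\pi_2$ are finite-dimensional, which is exactly the admissibility of $V_K$ built into the category $\HFGinftyadm$ (axiom (d)). Only axiom (b)(2) remains.

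For (b)(2), fix $u\in U(\fg)$ and $v\in V_K$, and choose $n$ with $u\in U_n(\fg)$ and a finite set $\theta\subset\dK$ with $v\in V_\theta$. Both $U_n(\fg)$ (under $\Ad$) and the $K$-stable subspace $V_\theta$ (under $\pi_2$) are finite-dimensional $K$-modules whose $K$-module structure is independent of $\gl$. For each $\gl$ the bilinear map $U_n(\fg)\otimes V_\theta\to V$, $w\otimes x\mapsto\pi_1(w,\gl)x$, is $K$-equivariant by the $(\fg,K)$-module compatibility between $\pi_{1\gl}$ and $\pi_2$, so its image is a finite-dimensional $K$-submodule of $V$ whose $K$-types lie in the fixed finite set $\theta'=\theta'(u,v)$ of $K$-types of $U_n(\fg)\otimes V_\theta$; in particular this image is contained in the finite-dimensional space $(V_K)_{\theta'}$, independently of $\gl$. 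Hence $\pi_1(u,\gl)v\in(V_K)_{\theta'}$ for all $\gl\in\Omega$ (which also re-proves that $V_K$ is $U(\fg)$-stable), so the span of $\{\pi_1(u,\gl)v\mid\gl\in\Omega\}$ is finite-dimensional. It remains to prove that $\gl\mapsto\pi_1(u,\gl)v$ is holomorphic into this subspace. Put $\phi(g,\gl)=\pi_\gl(g)v$; by the hypothesis $\HFGinfty$ and Definition~\ref{defhfr}, $\phi$ is smooth on $G\times\Omega$ and $\phi(g,\dotvar)\in\cO(\Omega,V)$ for every $g$, and $\pi_1(u,\gl)v=(L_{u^\vee}\phi(\dotvar,\gl))(e)$, the operator $L_{u^\vee}$ acting in the group variable only. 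For any continuous linear functional $\xi$ on $V$, the scalar function $(g,\gl)\mapsto\xi(\phi(g,\gl))$ is smooth on $G\times\Omega$ and holomorphic in $\gl$; a group-variable constant-coefficient differential operator may be interchanged with holomorphy in $\gl$ (via Cauchy's integral formula in $\gl$, the interchange being justified by joint smoothness), and $\xi$ commutes with $L_{u^\vee}$ since $\xi$ is continuous and $L_{u^\vee}$ is built from difference-quotient limits converging in $V$. Thus $\gl\mapsto\xi(\pi_1(u,\gl)v)$ is holomorphic for every such $\xi$; as $\gl\mapsto\pi_1(u,\gl)v$ is also continuous (by smoothness of $\phi$), it is weakly holomorphic, hence holomorphic $\Omega\to V$, and since it takes values in the finite-dimensional, hence closed, subspace $(V_K)_{\theta'}$ it lies in $\cO(\Omega,(V_K)_{\theta'})$. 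This establishes (b)(2), so $(\pi,V)_K\in\HFgKadm$.

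For functoriality, let $T\col(\pi,V_\pi)\to(\rho,V_\rho)$ be a morphism of $\HFGinftyadm$, that is, $T\in\cO(\Omega,\Hom(V_\pi,V_\rho))$ with $T(\gl)\pi_\gl(g)=\rho_\gl(g)T(\gl)$ for all $g\in G$. Taking $g=k\in K$ and using that $\pi(k),\rho(k)$ are constant on $\Omega$, each $T(\gl)$ is $K$-equivariant, hence maps $(V_\pi)_K$ into $(V_\rho)_K$ and $(V_\pi)_\gg$ into $(V_\rho)_\gg$ for every $\gg\in\dK$; we set $T_K(\gl)\coleq T(\gl)|_{(V_\pi)_K}$. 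Condition (b) for $T_K$ to be a holomorphic family of intertwining operators is exactly this $K$-equivariance; condition (a) holds because $T(\gl)$, being continuous and intertwining the group actions $\pi_\gl,\rho_\gl$, also intertwines their derived $U(\fg)$-actions (differentiate $T(\gl)\pi_\gl(\exp tX)v=\rho_\gl(\exp tX)T(\gl)v$ at $t=0$ and extend to $U(\fg)$), and we restrict to $(V_\pi)_K$; for condition (c), given a finite-dimensional $\tilde V\subset(V_\pi)_K$ pick a finite $\theta\subset\dK$ with $\tilde V\subset(V_\pi)_\theta$ and put $\tilde W\coleq(V_\rho)_\theta$, which is finite-dimensional by admissibility of $(V_\rho)_K$ and satisfies $T(\gl)\tilde V\subset\tilde W$ for all $\gl$, while $\gl\mapsto T(\gl)|_{\tilde V}\in\cO(\Omega,\Hom(\tilde V,\tilde W))$ because $\gl\mapsto T(\gl)w$ is holomorphic into $V_\rho$ for each $w$ by definition of the strong operator topology on $\Hom(V_\pi,V_\rho)$. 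Finally $(\dotvar)_K$ sends $1_\pi$ to $1_{(\pi,V)_K}$ and respects composition, since composition of morphisms in both categories is pointwise in $\gl$ and commutes with restriction to $K$-finite vectors; hence $(\dotvar)_K\col\HFGinftyadm\to\HFgKadm$ is a functor. The step requiring the most care is the holomorphy half of axiom (b)(2) --- justifying that differentiating a jointly smooth, $\gl$-holomorphic $V$-valued function in the group variable again yields a $\gl$-holomorphic function --- with everything else reducing to $K$-type bookkeeping and the categorical machinery already developed.
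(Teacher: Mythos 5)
Your proof is correct and takes essentially the same route as the paper's. Both establish condition (b)(2) of Definition~\ref{defhfm} via the same two-step structure: a $K$-type argument to exhibit a fixed finite-dimensional subspace containing $\pi_1(u,\gl)v$ for all $\gl$ (the paper uses the $K$-types occurring in $(\Ad(K)\text{-span of }u)\otimes(\pi(K)\text{-span of }v)$, you use the $K$-module $U_n(\fg)\otimes V_\theta$ --- the same mechanism), and a holomorphy argument showing $\gl\mapsto\pi(u)(\gl)v$ is holomorphic into $V$; both then observe the remaining axioms are pointwise in $\gl$ and appeal to standard $(\fg,K)$-module theory, and both derive functoriality from the pointwise functoriality of $V\mapsto V_K$. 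The one place you diverge slightly is the holomorphy step: the paper invokes continuity of $L_{u^\vee}$ on $C^\infty(G)$ together with holomorphy in $\gl$ in one stroke, whereas you reduce to scalar-valued weak holomorphy via continuous linear functionals and then upgrade using continuity in $\gl$; this is a more detailed unpacking of the same fact rather than a different idea. Likewise your explicit verification of conditions (a)--(c) for the intertwiner $T_K$ spells out what the paper leaves to "functorial properties which are pointwise in $\gl$".
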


\begin{proof}
It follows from the smoothness of the map
(\ref{e: condition smooth family}), the continuity of the map $L_{u^\vee}: C^\infty(G) \to C^\infty(G)$  and the holomorphy with respect
to $\gl,$ that $\gf: \gl \mapsto \pi(u)(\gl)v$ defines a holomorphic
function $\Omega \to V.$ Let now $v \in V_K$ and let
$\vartheta_2 \subset \widehat K$ denote the set of $K$-types appearing in the $\pi(K)$-span of $v.$ Moreover, let $\vartheta_1$ be the set of
$K$-types which appear in the $\Ad(K)$-span of $u.$
Finally, let $\vartheta$ be the union of the sets of $K$-types
of $\gd_1 \otimes \gd_2,$ for $\gd_j \in \vartheta_j,$ $j =1,2.$
Then $\gf$ has image contained in the finite dimensional subspace
$V_\vartheta \subset V_K.$ It follows that $\gf$ is holomorphic
as a function $\Omega \to V_\vartheta.$
This shows that $(\pi_1, \pi_2 , V_K)$ satisfies condition (b)(2)
of Definition \ref{defhfm}. The other conditions of that definition
are pointwise in $\gl,$ and therefore consequences of
the standard theory of assigning the $(\fg, K)$-module of $K$-finite
vectors to an admissible smooth Fr\'echet representation, see
for instance \cite[Lemma 3.3.5]{wallbook1}.
The latter assignment is a functor from the category of admissible
smooth Fr\'echet representations to the category of Harish-Chandra modules. This implies that $(\pi, V) \mapsto (\pi,V)_K$ has functorial
properties which are pointwise in $\gl.$ This in turn is readily seen
to imply that $(\dotvar)_K$ is a functor as stated.
\end{proof}
\medbreak
We will now discuss the functor $J^{(E)}$ on the level
of holomorphic families of admissible $(\fg, K)$-modules.

 \begin{definition}
\rm
Let $E\in\FMod_{\cO_0}.$
For any $(\pi_1,\pi_2,V)\in\HFgKadm$ the triple
$(J^{(E)}\pi_1, J^{(E)}\pi_2, J^{(E)}V)$ (also denoted $(\pi_1^{(E)},\pi_2^{(E)},V^{(E)})$) is defined as follows.
\begin{enumerate}
\itema $J^{(E)}V\coleq E\otimes V.$
\itemb $J^{(E)}\pi_1$ is the map from $U(\g)\times \Omega$ to $\End{E\otimes V}$ given
by
$$
J^{(E)}\pi_1(u,\lambda)e\otimes v\coleq (\pi_1(u,\dotvar)v)^{(E)}(\lambda)e,
$$
for  $u\in U(\g),\,\lambda\in \Omega,\, e\in E,$ and $v\in V.$
\itemc $J^{(E)}\pi_2\coleq \id_E\otimes \pi_2.$
\end{enumerate}
\end{definition}
We now have functors $J^{(E)}$ on $\HFGinftyadm$ and $\HFgKadm.$
They are linked as follows.
\begin{lemma}{\ }
\begin{enumerate}
\itema  For any $E\in\FMod_{\cO_0}$, the assignment $J^{(E)}$ defines a functor from $\HFgKadm$ to itself.
\itemb The following diagram  commutes:
$$
\xymatrix{
\ar[d]_{J^{(E)}}\HFGinftyadm \ar[rr]^{(\dotvar)_{K}} &&\ar[d]^{J^{(E)}}\HFgKadm \\
\HFGinftyadm \ar[rr]_{(\dotvar)_{K}} && \;\;\HFgKadm\;\;.
}
$$
\end{enumerate}
\end{lemma}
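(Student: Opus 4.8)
The plan is to reduce both assertions to a handful of elementary compatibility properties of the operation $\diffD^{(E)}$ with vector- and operator-valued holomorphic functions, and then to verify the relevant axioms mechanically. Writing $W$ for an auxiliary quasi-complete locally convex space, and using the density of $\cP\otimes W$ in $\cO(\Omega,W)$ (Lemma~\ref{l: density of P in O}), the continuity of $\diffD^{(E)}$ (Corollary~\ref{c: continuity of diffD} together with its operator-valued extension recalled in \S\ref{s: holomf}), and the fact that $\diffD^{(E)}$ is an algebra homomorphism, I would first record three facts. (i) If $h\in\cO(\Omega,W)$ is the constant function $\mu\mapsto w$, then $h^{(E)}(\lambda)e=e\otimes w$ for all $\lambda\in\Omega$ and $e\in E$; in particular $\diffD^{(E)}$ is unital. (ii) For $h\in\cO(\Omega,W)$ and a fixed $A\in\Hom(W,W')$ one has $(A\after h)^{(E)}=(\id_E\otimes A)\after h^{(E)}$. (iii) For $g\in\cO(\Omega,\End{W})$ and $v\in W$ one has $g^{(E)}(\lambda)(e\otimes v)=\bigl(\mu\mapsto g(\mu)v\bigr)^{(E)}(\lambda)(e)$, under the identification $\End{E}\otimes\End{W}\simeq\End{E\otimes W}$. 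Each of these follows by checking it on elements of $\cP\otimes W$ (resp.\ $\cP\otimes\End{W}$) and passing to the limit; note that (iii) is exactly what makes the defining formula for $J^{(E)}\pi_1$ agree with the operator-valued $\diffD^{(E)}$.

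For part (a), given $(\pi_1,\pi_2,V)\in\HFgKadm$ I would check the conditions of Definition~\ref{defhfm} for the triple $(J^{(E)}\pi_1,J^{(E)}\pi_2,E\otimes V)$. Condition (a) is trivial, and conditions (c)(1) and (d) are immediate since $K$ acts only on the $V$-factor, so $E\otimes V$ is $\dim E$ copies of $V$ as a $K$-module. For fixed $u\in U(\g)$ and $v\in V$ the function $\mu\mapsto\pi_1(u,\mu)v$ has image in a finite-dimensional subspace $\tilde V\subset V$; restricting to $E\otimes\tilde V$ and invoking (iii), one identifies $J^{(E)}\pi_1(u,\lambda)|_{E\otimes\tilde V}$ with $\bigl(\pi_1(u,\dotvar)|_{\tilde V}\bigr)^{(E)}(\lambda)$, so condition (b)(2) follows from the continuity of $\diffD^{(E)}$, and the Lie-algebra homomorphism property (b)(1) follows from the multiplicativity and composition-compatibility of $\diffD^{(E)}$ applied pointwise in $\lambda$. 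The first identity in (c)(2) follows from (ii) with $A=\pi_2(k)$ and the definition of $J^{(E)}\pi_1$ (on a suitable finite-dimensional $K$-stable subspace), while the second, being required only for $X\in\k$, reduces to (i) because $\mu\mapsto\pi_1(X,\mu)v$ is then constant. On morphisms I would set $T^{(E)}(\lambda)|_{E\otimes\tilde V}\coleq\bigl(T(\dotvar)|_{\tilde V}\bigr)^{(E)}(\lambda)$ for finite-dimensional $\tilde V$; this is well defined since $\diffD^{(E)}$ is natural in the finite-dimensional subspace, the intertwining conditions for $T^{(E)}$ follow from those for $T$ by multiplicativity of $\diffD^{(E)}$ and (ii)--(iii), and the regularity condition from continuity of $\diffD^{(E)}$. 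Functoriality is then formal: $\diffD^{(E)}$ preserves composition, and by (i) it sends the identity family $\lambda\mapsto\id_V$ to $\lambda\mapsto\id_{E\otimes V}$.

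For part (b), fix $(\pi,V)\in\HFGinftyadm$; recall $(\pi^{(E)},E\otimes V)\in\HFGinftyadm$ by Remark~\ref{r: invariance under functor}. Since $\pi(k)$ is constant on $\Omega$, fact (i) gives $\pi^{(E)}(k)=\id_E\otimes\pi(k)$, so the $K$-action on $E\otimes V$ is through the $V$-variable, $(E\otimes V)_K=E\otimes V_K$, and $(\pi^{(E)})_2=\id_E\otimes\pi_2=J^{(E)}\pi_2$. The crux is the identity $\pi^{(E)}(u)=\pi(u)^{(E)}$, i.e.\ that forming $L_{u^\vee}(g\mapsto\,\cdot\,)|_{g=e}$ commutes with $\diffD^{(E)}$: indeed $g\mapsto\pi(g)$ is smooth with values in $\cO(\Omega,\End{V})$, $\diffD^{(E)}\colon\cO(\Omega,\End{V})\to\cO(\Omega,\End{E\otimes V})$ is continuous and linear (so $g\mapsto\pi^{(E)}(g)=\diffD^{(E)}(\pi(g))$ is smooth with values in $\cO(\Omega,\End{E\otimes V})$), and a left-invariant differential operator evaluated at $e$ commutes with post-composition by a continuous linear map; hence $\pi^{(E)}(u)=\diffD^{(E)}\bigl(L_{u^\vee}(g\mapsto\pi(g))|_{g=e}\bigr)=\pi(u)^{(E)}$. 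Restricting to $E\otimes V_K$ and using (iii) with $g=\pi(u)$ and $v\in V_K$ gives $(\pi^{(E)})_1(u,\lambda)(e\otimes v)=\pi(u)^{(E)}(\lambda)(e\otimes v)=\bigl(\mu\mapsto\pi_1(u,\mu)v\bigr)^{(E)}(\lambda)(e)=J^{(E)}\pi_1(u,\lambda)(e\otimes v)$, so the two composites agree on objects; on a morphism $T\colon(\pi,V)\to(\rho,W)$ of $\HFGinftyadm$ one argues the same way, using that $\diffD^{(E)}(T)$ restricted to a finite-dimensional $K$-stable subspace is computed from the corresponding restriction of $T$.

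The step I expect to be the main obstacle is not the axiom-checking but pinning down the topological points cleanly: establishing fact (iii) in the operator-valued setting and, above all, the commutation of $\diffD^{(E)}$ with $L_{u^\vee}|_{g=e}$ in part (b). This is precisely where the Fr\'echet (hence barrelled and quasi-complete) hypothesis on $V$ is needed, to guarantee that $g\mapsto\pi(g)$ is genuinely smooth as a map into $\cO(\Omega,\End{E\otimes V})$ and that the relevant evaluation and composition maps are continuous; once this single commutation is in hand, everything else is routine.
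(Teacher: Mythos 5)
Your overall strategy for part (b) is the same as the paper's: reduce to the single identity $\pi_1^{(E)}=(\pi^{(E)})_1$ and show that forming the left-invariant derivative at $e$ commutes with $\diffD^{(E)}$. The difference lies in how that commutation is justified, and here your argument has a gap that the paper is structured specifically to avoid. You assert that $g\mapsto\pi(g)$ is smooth as a map $G\to\cO(\Omega,\End{V})$ (with $\End{V}$ in the strong operator topology) and then conclude by composing with the continuous linear map $\diffD^{(E)}$. But Definition~\ref{defhfr} and condition~(\ref{e: condition smooth family}) only give you that $(g,\gl)\mapsto\pi(g)(\gl)v$ is jointly smooth for each \emph{fixed} $v\in V$; promoting this to smoothness in the function space $\cO(\Omega,\End{V})$ requires a genuine argument (uniform convergence of difference quotients over compact sets in $\Omega$, Banach--Steinhaus to identify the limit as an element of $\End{V}$, and holomorphy of that limit in $\gl$). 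You flag exactly this as ``the main obstacle'' and point to the Fr\'echet hypothesis, but you do not actually supply the argument, so as written the crucial commutation $L_{u^\vee}\circ\diffD^{(E)}=\diffD^{(E)}\circ L_{u^\vee}$ is asserted, not proved.

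The paper avoids the need for any such operator-space smoothness. It reduces to a single $X\in\fg$ and works with the $V$-valued function $(t,\gl)\mapsto\pi(\exp tX)(\gl)v$ for a fixed $K$-finite $v$, which is $C^1$ and holomorphic in the second variable by hypothesis. It then proves as a separate lemma that $\frac{d}{dt}[\gf(t,\dotvar)^{(E)}]=[\frac{\partial\gf}{\partial t}(t,\dotvar)]^{(E)}$ for such $\gf:\R\times\Omega\to V$, by checking it on the dense subspace $C^1(\R)\otimes\cO(\Omega)\otimes V$ and using continuity. Because everything happens inside the scalar-to-$V$-valued setting --- never inside $\cO(\Omega,\End{V})$ --- the definitions hand the paper exactly the regularity it needs, with no smoothness upgrade required. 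To repair your proof along your own lines you would either have to establish the smoothness of $g\mapsto\pi(g)$ into $\cO(\Omega,\End{V})$ from the definitions, or else descend (as the paper does) to vector-valued one-parameter maps and a density argument. Part (a) is fine as sketched and matches the paper, which simply refers back to the argument of Proposition~\ref{p: the differentiation is a functor}.
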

\par
\begin{proof}%PROOF
Assertion (a) follows by similar arguments as in the proof of Proposition \ref{p: the differentiation is a functor}.
For (b), assume that $(\pi,V)$ is a family in $\HFGinftyadm.$
Put $(\pi,V)_K = (\pi_1, \pi_2, V_K),$ then
$$
J^{(E)}((\pi, V)_K)
= (\pi_1^{(E)}, \pi_2^{(E)}, (V_K)^{(E)}).
$$
On the other hand, $J^{(E)}(\pi, V) = (\pi^{(E)}, V^{(E)})$ and
$$
(J^{(E)}(\pi, V))_K = ((\pi^{(E)})_1, (\pi^{(E)})_2, (V^{(E)})_K)
$$
Now $V^{(E)} = E\otimes V$ is equipped with the $K$-action on the second
component, so that $(V^{(E)} )_K = E\otimes V_K = (V_K)^{(E)}.$
Moreover, $\pi_2^{(E)}(k) = 1_E \otimes \pi_2(k) = 1_E \otimes \pi(k,\gl)|_{V_K} = \pi^{(E)}(k,\gl)|_{E \otimes V_K} = (\pi^{(E)})_2(k).$ It remains to establish the identity
\begin{equation}
\label{e: identity pi 1 E}
\pi_1^{(E)} = (\pi^{(E)})_1.
\end{equation}
Since both are representations
of $U(\fg)$ in $E \times V_K,$ it suffices to check the identity on a fixed element $X \in \fg.$ Fix $e\in E$ and $v \in V.$ We first observe that
$$
\pi_1^{(E)}(X)v = (\pi_1(X, \dotvar)v)^{(E)} = [\frac{d}{dt}\pi(\exp tX)(\dotvar)v]^{(E)}|_{t = 0}.
$$
In view of the natural identification $\Endo (E) \otimes V \simeq \Hom(E, E \otimes V),$ we note that
\begin{eqnarray*}
(\pi^{(E)})_1(X, \dotvar)v &=&  \frac{d}{dt}[\pi^{(E)}(\exp tX)(\dotvar)v]|_{t = 0}\\
&=&
\frac{d}{dt}[(\pi(\exp tX)(\dotvar)v)^{(E)}]|_{t = 0}.
\end{eqnarray*}
The identity (\ref{e: identity pi 1 E}) now follows by application of the lemma below.
\end{proof}

\begin{lemma}
Let $V$ be a quasi-complete locally convex space, and let
$\gf: \R \times \Omega \to V$ be a $C^1$-map which is holomorphic
in the second variable. Then
\begin{equation}
\label{e: d dt and (E)}
\frac{d}{dt} [\gf(t, \dotvar)^{(E)}] = [\frac{\partial \gf}{\partial t}(t, \dotvar)]^{(E)}.
\end{equation}
\end{lemma}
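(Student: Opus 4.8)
The plan is to read the left-hand side of \eqref{e: d dt and (E)} as the derivative, at a fixed value of $t$, of the curve $t \mapsto \gf(t,\dotvar)^{(E)}$ with values in $\cO(\Omega, \End{E}\otimes V)$, and then to observe that $\diffD^{(E)}$ intertwines this $t$-derivative with the one on the right. Concretely, the argument rests on two ingredients: first, that $t \mapsto \Phi_t \coleq \gf(t, \dotvar)$ is a curve of class $C^1$ from $\R$ into $\cO(\Omega, V)$ with $\tfrac{d}{dt}\Phi_t = \tfrac{\partial\gf}{\partial t}(t,\dotvar)$; and second, that $\diffD^{(E)}$ is a continuous linear map $\cO(\Omega,V) \to \cO(\Omega, \End{E}\otimes V)$, together with the elementary fact that a continuous linear map sends a $C^1$-curve to a $C^1$-curve and commutes with its derivative.

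For the first ingredient, fix $t_0 \in \R$ and a compact set $Q \subset \Omega$. Since $\gf$ is $C^1$, for $\mu\in Q$ and $0 < |h| \le 1$ the fundamental theorem of calculus (valid for $C^1$-curves with values in the quasi-complete space $V$) gives
$$
\frac{\gf(t_0 + h,\mu) - \gf(t_0,\mu)}{h} - \frac{\partial\gf}{\partial t}(t_0,\mu)
= \int_0^1 \Bigl[\frac{\partial\gf}{\partial t}(t_0 + sh,\mu) - \frac{\partial\gf}{\partial t}(t_0,\mu)\Bigr]\, ds .
$$
As $\tfrac{\partial\gf}{\partial t}$ is continuous, it is uniformly continuous on the compact set $[t_0-1, t_0+1]\times Q$; hence, for any continuous seminorm $p$ on $V$, the $p$-size of the right-hand side tends to $0$ as $h\to 0$, uniformly for $\mu\in Q$. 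Thus the difference quotients $h^{-1}(\Phi_{t_0+h} - \Phi_{t_0})$ converge, in the topology of uniform convergence on compact subsets, to the function $\mu \mapsto \tfrac{\partial\gf}{\partial t}(t_0,\mu)$; since each difference quotient lies in $\cO(\Omega,V)$ and this subspace is closed in $C(\Omega,V)$, the limit again lies in $\cO(\Omega,V)$, and $\Phi$ is differentiable at $t_0$ with $\Phi'(t_0) = \tfrac{\partial\gf}{\partial t}(t_0,\dotvar)$. The same uniform continuity shows $t\mapsto \tfrac{\partial\gf}{\partial t}(t,\dotvar)$ is continuous from $\R$ to $\cO(\Omega,V)$, so $\Phi$ is of class $C^1$.

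For the second ingredient, recall from Lemma \ref{l: dual of End E} that for every $\eta \in \End{E}^*$ there is $u_\eta \in S(\fv)$ with $\eta \after f^{(E)} = \partial_{u_\eta} f$ for all $f\in\cO(\Omega)$; by the tensor structure this persists for $V$-valued $f$, with $\partial_{u_\eta}$ acting coefficientwise. Since $\End{E}$ is finite dimensional and each $\partial_{u_\eta}$ is a continuous linear endomorphism of $\cO(\Omega,V)$, the map $\diffD^{(E)}\col f \mapsto f^{(E)}$ is a continuous linear map $\cO(\Omega,V)\to\cO(\Omega,\End{E}\otimes V)$ (this is the $V$-valued analogue of Corollary \ref{c: continuity of diffD}). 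Now if $L\col X\to Y$ is any continuous linear map of locally convex spaces and $c\col\R\to X$ is of class $C^1$, then $L\after c$ is of class $C^1$ with $(L\after c)'(t)=L(c'(t))$, since $h^{-1}(Lc(t+h)-Lc(t)) = L\bigl(h^{-1}(c(t+h)-c(t))\bigr)\to L(c'(t))$ and $t\mapsto L(c'(t))$ is continuous. Applying this with $L=\diffD^{(E)}$ and $c=\Phi$, and unravelling $\diffD^{(E)}(\Phi_t) = \gf(t,\dotvar)^{(E)}$ and $\diffD^{(E)}(\Phi'(t)) = \bigl[\tfrac{\partial\gf}{\partial t}(t,\dotvar)\bigr]^{(E)}$, we obtain \eqref{e: d dt and (E)}.

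The only genuinely non-formal step is the first ingredient, i.e.\ upgrading the joint $C^1$-regularity of $\gf$ to $C^1$-regularity of the curve $t\mapsto\gf(t,\dotvar)$ with values in $\cO(\Omega,V)$ topologized by uniform convergence on compacta --- in particular, checking that the limiting difference quotient remains holomorphic and that the derivative curve is continuous. Everything afterwards is bookkeeping. (Alternatively one can fix $\mu$ and note that $f^{(E)}(\mu)$ depends only on the finitely many derivatives $\partial^\beta f(\mu)$ with $|\beta|$ bounded by the order of $\ann_{\cO_0}(E)$; representing these by Cauchy integrals in $\mu$ and differentiating under the integral sign in $t$ reduces the claim to the interchange $\partial_t\partial^\beta\gf = \partial^\beta\partial_t\gf$, which again follows from uniform continuity of $\partial_t\gf$ on compacta.)
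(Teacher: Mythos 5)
Your argument is correct, but it takes a genuinely different route from the paper's. The paper's proof is a density argument: it introduces the space $S$ of $C^1$-maps $\R\times\Omega\to V$ with its natural topology, observes that the subspace $S_0$ of maps holomorphic in the second variable is closed, views both sides of \eqref{e: d dt and (E)} at a fixed $(t,\gl)$ as continuous linear functionals on $S_0$, and then checks the identity on the dense subspace $C^1(\R)\otimes\cO(\Omega)\otimes V$, where it reduces to the commutation of $\tfrac{d}{dt}\otimes I\otimes I$ with $I\otimes J^{(E)}\otimes I$ acting on disjoint tensor factors. Your approach instead upgrades the joint $C^1$-regularity of $\gf$ to $C^1$-regularity of the curve $t\mapsto\gf(t,\dotvar)$ into $\cO(\Omega,V)$, and then applies the elementary fact that a continuous linear map (here $\diffD^{(E)}\,\widehat\otimes\,\id_V$, continuous by Remark \ref{r: J is M}) commutes with the derivative of a $C^1$-curve. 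Both are sound; the trade-off is that the paper's route is shorter but leaves the density of $C^1(\R)\otimes\cO(\Omega)\otimes V$ in $S_0$ unproved (it is standard but not trivial), whereas your route replaces that density claim with an explicit estimate via the fundamental theorem of calculus and uniform continuity on compacta — arguably more self-contained, at the cost of a somewhat longer argument. Your parenthetical alternative (Cauchy integral representation plus interchange of $\partial_t$ with $\partial^\beta$) is a third valid route, closer in spirit to the concrete jet-bundle picture.
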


\begin{proof}
Let $S$ be the space of $C^1$-maps $\R \times \Omega \to V,$
equipped with the usual quasi-complete topology. Let
$S_0$ be the subspace consisting of functions in $S$ which are
holomorphic in the second variable. Then $S_0$ is closed in $S,$ hence
quasi-complete. The identity (\ref{e: d dt and (E)})
at $(t,\gl)$ can be viewed as an identity of continuous linear functionals on $S_0.$ Hence, it suffices
to check the identity on the dense subspace $C^1(\R) \otimes \cO(\Omega) \otimes V.$ This amounts to checking whether
$$
(\frac{d}{dt} \otimes I \otimes I)
\after
(I \otimes J^{(E)} \otimes I)
=
(I \otimes J^{(E)} \otimes I)
\after
(\frac{d}{dt} \otimes I \otimes I).
$$
The latter is obvious.
\end{proof}

\subsection{Parabolic induction}\label{s: parabolic induction}%SUBSECTION PARABOLIC INDUCTION
Let $\fg = \fk \oplus \fp$ be a Cartan decomposition associated with the maximal compact
subgroup $K$ and let $\Cartan$ be the associated involution of $G.$ Let $\fa \subset \fp$
be a maximal abelian subspace, and let $A = \exp \fa.$ Let $\cP(A)$ denote the collection
of parabolic subgroups of $G$ containing $A.$ Let $\cL(A)$ denote the collection
of $\Cartan$-stable Levi components of parabolic subgroups from $\cP(A).$
                   \par
Let $\fv$ be a finite dimensional real linear space, and $\Omega$ an open subset
of its complexification.
For $L \in \cL(A)$ we denote by $\HFL$ the category defined as in
Definition \ref{defhfr},
with
the group $L$ in place of $G.$
\par
The parabolic induction functor from $\HFLinfty$ to $\HFGinfty$ is defined as follows.
Let $(\xi,V_\xi)\in\HFLinfty$  be a holomorphic family of smooth representations of $L$
defined over $\Omega.$ Denote by $\bar{\pi}_{P,\xi_\gl}$ the right regular representation
of $G$ on $C^\infty(G:P:\xi_\gl),$
where
$$
\begin{array}{rl}
C^\infty(G:P:\xi_\gl)\coleq \{\psi\in C^\infty(G,V_\xi): & \psi(nmg)=\xi_\gl(m)\psi(g),\\
& (g,m,n)\in G\times L\times N_P\}.
\end{array}
$$
Let
\begin{equation}
\label{e: defi CinftyKxi}
\begin{array}{rl}
C^\infty(K:\xi)\coleq \{\psi\in C^\infty(K,V_\xi): & \psi(mk)=\xi(m)\psi(k),\\
 & (k,m)\in K\times (K\cap L)\}.
\end{array}
\end{equation}
Restriction of functions to $K$ induces a continuous linear isomorphism
between these spaces.
Let $\pi_{P,\xi_\gl}$ denote the representation of $G$ on $C^\infty(K:\xi)$,
obtained from $\bar{\pi}_{P,\xi_\gl}$ by transfer of structure,
and set, for $g\in G$, $\gl\in \fv_\iC$,
$$
\pi_{P,\xi,\gl}(g)\coleq \pi_{P,\xi_\gl}(g).
$$
Then it is readily seen that $(\pi_{P,\xi},C^\infty(K:\xi))$ belongs to
$\HFGinfty.$  Here, the information that $V_\xi$ is
a Fr\'echet space is needed to conclude that $C^\infty(K\col \xi)$ is
Fr\'echet, in particular barrelled.

Let $W_1,\,W_2$ and $W_3$ be quasi-complete locally convex spaces,
and let $\alpha$ be a continuous linear map from $W_2$ to $W_3$. Then the map
$$
L_\alpha: \; \varphi\longmapsto \alpha\circ\varphi, \;\;\;\Hom(W_1,W_2)\longrightarrow \Hom(W_1,W_3)
$$
is continuous linear. It readily follows that the map
$$
\id_{\cO(\Omega)}\, \hatotimes \,L_\ga: \;\;\cO(\Omega, \Hom(W_1, W_2)) \longrightarrow
\cO(\Omega, \Hom(W_1, W_3))
$$
given by $f \mapsto L_\ga \after f$
is continuous linear. The notation for this map is explained by the fact that
it may be viewed as the unique
continuous linear extension of $\id_{\cO(\Omega)} \otimes L_\ga.$

\begin{lemma}%LEMMA
Let $L_\ga$ be as above and let $E$ be a finite dimensional $\cO_0$-module.
Then we have the following identity of maps
$$
\cO(\Omega, \Hom(W_1,W_2)) \too \cO(\Omega, \End{E} \otimes \Hom(W_1, W_3)):
$$
$$
\diffD^{(E)}\circ (\id_{\O(\Omega)}\, \hatotimes\, L_\alpha)=
(\id_{\O(\Omega)}\,\hatotimes \,\id_{\End{E}}\,\hatotimes\, L_\alpha)\circ {\diffD}^{(E)}.
$$
\end{lemma}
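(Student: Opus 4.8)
The plan is to reduce the asserted equality to a dense subspace and then verify it by a direct computation. Both sides of the identity are continuous linear maps $\cO(\Omega, \Hom(W_1,W_2)) \to \cO(\Omega, \End{E}\otimes\Hom(W_1,W_3))$: the map $\diffD^{(E)}$ is continuous linear in the $\Hom$-valued setting by Corollary \ref{c: continuity of diffD} together with Remark \ref{r: J is M}, while $\id_{\cO(\Omega)}\hatotimes L_\alpha$ and $\id_{\cO(\Omega)}\hatotimes\id_{\End{E}}\hatotimes L_\alpha$ are continuous linear by the discussion preceding the lemma (both being, up to the evident identifications, pointwise post-composition with $\alpha$). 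Since the algebraic tensor product $\cP(\fv_\iC)\otimes\Hom(W_1,W_2)$ is dense in $\cO(\Omega, \Hom(W_1,W_2))$ by Lemma \ref{l: density of P in O}, and both composites are linear, it suffices to establish the identity on elements of the form $f = p\otimes T$ with $p\in\cP(\fv_\iC)$ and $T\in\Hom(W_1,W_2)$.

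First I would record the effect of $\diffD^{(E)}$ on such an $f$. For $\mu\in\Omega$ one has $T_\mu^*f = (T_\mu^*p)\otimes T$, hence $\gg_0(T_\mu^*f) = \gg_0(T_\mu^*p)\otimes T$ as an element of $\cO_0\otimes\Hom(W_1,W_2)$; applying the $\cO_0$-module structure of $E$ in the first tensor factor then gives, under the identification $\End{E}\otimes\Hom(W_1,W_2)\simeq\Hom(E\otimes W_1,E\otimes W_2)$,
$$
f^{(E)}(\mu) = p^{(E)}(\mu)\otimes T .
$$
This is just the $\Hom$-valued form of Definition \ref{d: diffD}, consistent with Lemma \ref{l: restriction diffD to cP}; the essential point is that $T_\mu^*$, $\gg_0$ and the $\cO_0$-action all operate on the scalar factor $p$ only, leaving $T$ untouched.

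The computation is now immediate. On the left-hand side, $(\id_{\cO(\Omega)}\hatotimes L_\alpha)(f) = p\otimes(\alpha\after T)$, and applying $\diffD^{(E)}$ to this, via the displayed formula with $T$ replaced by $\alpha\after T$, yields the function $\mu\mapsto p^{(E)}(\mu)\otimes(\alpha\after T)$. On the right-hand side, $\diffD^{(E)}(f)$ is the function $\mu\mapsto p^{(E)}(\mu)\otimes T$, and applying $\id_{\cO(\Omega)}\hatotimes\id_{\End{E}}\hatotimes L_\alpha$, i.e. post-composition with $\alpha$ in the $\Hom(W_1,W_2)$-slot, produces $\mu\mapsto p^{(E)}(\mu)\otimes(\alpha\after T)$ as well. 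The two outputs coincide, so the identity holds on the dense subspace, and hence, by continuity, on all of $\cO(\Omega, \Hom(W_1,W_2))$. The only delicate point is the bookkeeping with the identification $\End{E}\otimes\Hom(W_1,W_2)\simeq\Hom(E\otimes W_1,E\otimes W_2)$ and the observation that $\diffD^{(E)}$ and $L_\alpha$ act on disjoint tensor slots, so that they commute on decomposable elements; there is no genuine analytic obstacle.
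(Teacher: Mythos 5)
Your proof is correct and follows essentially the same route as the paper's: reduce by continuity to a dense algebraic tensor subspace of $\cO(\Omega,\Hom(W_1,W_2))$, then observe that $\diffD^{(E)}$ and $L_\alpha$ act on disjoint tensor slots, so they commute on decomposable elements. The paper invokes the dense subspace $\cO(\Omega)\otimes\Hom(W_1,W_2)$ and dismisses the verification as obvious from Remark \ref{r: J is M}, whereas you use the (also dense) polynomial subspace $\cP(\fv_\iC)\otimes\Hom(W_1,W_2)$ and spell out the elementary computation $f^{(E)}(\mu)=p^{(E)}(\mu)\otimes T$; the content is the same.
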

\begin{proof}%PROOF
By continuity of the expressions on both sides of the identity
it suffices to prove the identity
on the dense subspace $\O(\Omega)\otimes \Hom(W_1,W_2)$ of $\O(\Omega,\Hom(W_1,W_2)).$
But then the identity becomes obvious, in view of Remark \ref{r: J is M},  last line.
\end{proof}

The following result may be phrased as `derivation commutes with induction'.

\begin{prop}%PROPOSITION
Let $E$ be a finite dimensional $\O_0$-module, $P$ a parabolic subgroup with Levi
component $L$, and $(\xi,V_\xi)\in\HFL^\infty$.
Then
$$\pi_{P,\xi}^{(E)}=\pi_{P,\xi^{(E)}}.$$
\end{prop}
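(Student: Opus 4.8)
The plan is to fix $g \in G$ and to prove that the two holomorphic $\Endo(E \otimes C^\infty(K\col\xi))$-valued functions $\gl \mapsto \pi_{P,\xi}^{(E)}(g)(\gl)$ and $\gl \mapsto \pi_{P,\xi^{(E)}}(g)(\gl)$ agree pointwise in $\gl$, after a natural identification of the underlying spaces. First I would record the identification. For $m \in K \cap L$ the operator $\xi(m)$ is constant on $\Omega$ by Definition \ref{defhfr}(b.2), and $\diffD^{(E)}$ sends a constant $\End{V_\xi}$-valued function to $\id_E$ tensored with it — this is exactly the observation used in the proof of Proposition \ref{p: the differentiation is a functor} to see that $\pi^{(E)}(k)$ is constant. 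Hence $\xi^{(E)}(m) = \id_E \otimes \xi(m)$ for $m \in K\cap L$, so that the transformation rule defining $C^\infty(K\col\xi^{(E)})$ reads $\psi(mk) = (\id_E \otimes \xi(m))\psi(k)$; since $E$ is finite dimensional this gives a natural identification $C^\infty(K\col\xi^{(E)}) = E \otimes C^\infty(K\col\xi)$, which is the sense in which the stated equality $\pi_{P,\xi}^{(E)} = \pi_{P,\xi^{(E)}}$ is to be read. Under this identification it then suffices to show, for each $g \in G$, $\gl \in \Omega$ and $k \in K$, that composing $\id_E \otimes \ev_k$ with either side yields the same operator $C^\infty(K\col\xi) \to E \otimes V_\xi$, because the evaluations $\ev_k$ ($k\in K$) separate the points of $C^\infty(K\col\xi^{(E)})$.

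The main tool is the compact picture of parabolic induction. Writing $x = n(x)\,\mu_P(x)\,\kappa_P(x)$ for the decomposition of $x \in G$ coming from $G = N_P L K$ — with $\mu_P(x) \in L$ and $\kappa_P(x) \in K$ determined modulo $K \cap L$, compatibly with the transformation rules that occur — the definition of $\bar\pi_{P,\xi_\gl}$ as the right regular representation and transfer to the compact picture by restriction to $K$ give the explicit formula $(\pi_{P,\xi,\gl}(g)\phi)(k) = \xi_\gl(\mu_P(kg))\,\phi(\kappa_P(kg))$, in which the maps $\mu_P,\kappa_P$ are purely group-theoretic, in particular independent of $\gl$ and of $\xi$. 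For fixed $g$ and $k$ this says
$$
\ev_k \after \pi_{P,\xi,\gl}(g) = \xi_\gl(\mu_P(kg)) \after \ev_{\kappa_P(kg)}
$$
as an identity in $\cO(\Omega,\Hom(C^\infty(K\col\xi),V_\xi))$: the family on the right is the pointwise composition of the $\gl$-independent operator $\ev_{\kappa_P(kg)}$ with the holomorphic family $\gl\mapsto\xi_\gl(\mu_P(kg))$ in $\End{V_\xi}$. I would then apply the lemma immediately preceding this proposition, with $\alpha = \ev_k$, to rewrite ``$\id_E \otimes \ev_k$ composed with $\pi_{P,\xi}^{(E)}(g)$'' as ``$\diffD^{(E)}$ applied to $\ev_k \after \pi_{P,\xi}(g)$'', and then use that $\diffD^{(E)}$ is compatible with composition (Corollary \ref{c: multiplicativity of diffD}) and sends the constant operator $\ev_{\kappa_P(kg)}$ to the constant operator $\id_E \otimes \ev_{\kappa_P(kg)}$, together with $\xi^{(E)}_\gl(\mu_P(kg)) = \diffD^{(E)}(\gl\mapsto\xi_\gl(\mu_P(kg)))(\gl)$. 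This yields
$$
(\id_E \otimes \ev_k) \after \pi_{P,\xi}^{(E)}(g)(\gl) = \xi^{(E)}_\gl(\mu_P(kg)) \after (\id_E \otimes \ev_{\kappa_P(kg)}).
$$
On the other hand, applying the same compact-picture formula to the induced family $\pi_{P,\xi^{(E)}}$ — and observing that its decomposition maps are the very same $\mu_P,\kappa_P$ — gives exactly the right-hand side again. Comparing, the identity holds for all $k$, hence $\pi_{P,\xi}^{(E)}(g) = \pi_{P,\xi^{(E)}}(g)$ for all $g$, which is the proposition.

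I expect the one genuinely delicate step to be this passage to the pointwise picture: one has to see that after composing with $\ev_k$ the induced operator becomes an honest product of a $\gl$-independent operator with the single holomorphic family $\gl \mapsto \xi_\gl(\mu_P(kg))$, so that the already-established functoriality of $\diffD^{(E)}$ and its compatibility with composition and with constant families do all of the work; everything else is bookkeeping. The ambiguity of $\mu_P,\kappa_P$ modulo $K\cap L$ is harmless, being absorbed by the transformation rules in the definitions of $C^\infty(K\col\xi)$ and $C^\infty(K\col\xi^{(E)})$, and the fact that $(\pi_{P,\xi^{(E)}}, C^\infty(K\col\xi^{(E)}))$ is a legitimate object of $\HFGinfty$ follows from $\diffD^{(E)}$ being a functor on $\HFLinfty$ (Proposition \ref{p: the differentiation is a functor} and Remark \ref{r: invariance under functor}).
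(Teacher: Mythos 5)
Your proposal is correct and follows essentially the same route as the paper's own proof: fix $g\in G$ and $k\in K$, compose with evaluation at $k$, use the preceding lemma to commute $\diffD^{(E)}$ past the post-composition with $\ev_k$, exploit the compact-picture formula $\ev_k\after\pi_{P,\xi,\gl}(g)=\xi_\gl(p(kg))\after\ev_{\kappa(kg)}$, and finish with multiplicativity and constancy-preservation of $\diffD^{(E)}$. The only cosmetic difference is that the paper works with the unique decomposition $G=N_PA_P(M_P\cap\exp\fp)K$ (so that $p(kg)$, $\kappa(kg)$ are well defined), whereas you use $G=N_PLK$ and then observe the residual $K\cap L$-ambiguity is absorbed by the transformation rules; both are fine.
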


\begin{proof}%PROOF
Let $g\in G$ and $k\in K$. Put $\alpha(k)\coleq \ev_k:C^\infty(K:\xi)\to V_\xi$.
Let
$$L_{\alpha(k)}:\End{C^\infty(K:\xi)}\to \Hom(C^\infty(K:\xi),V_\xi)$$
be defined as in the previous lemma. Accordingly,
\begin{equation}\label{comm0}
(\id_{\O(\Omega)}\hatotimes\id_E\hatotimes L_{\alpha(k)})(\pi_{P,\xi}^{(E)}(g)) =
\left((\id_{\O(\Omega)}\hatotimes L_{\alpha(k)})(\pi_{P,\xi}(g))\right)^{(E)}.
\end{equation}
Write $g=np(g)\kappa(g)$ uniquely via the decomposition $G=N_PA_P(M_P\cap\exp\p)K$,
where $n\in N_P$, $p(g)\in A_P(M\cap\exp\p)$ and $\kappa(g)\in K$. We
then have the following identity:
$$
(\id_{\O(\Omega)}\hatotimes L_{\alpha(k)})(\pi_{P,\xi}(g))
             = \xi(p(kg))\circ L_{\alpha(\kappa(kg))}.
$$
Hence, it follows from \eqref{comm0} that
\begin{eqnarray*}
\lefteqn{
\!\!\!\!\!\!\!\!\!\!\!\!\!\!
(\id_{\O(\Omega)}\hatotimes\id_E\hatotimes
            L_{\alpha(k)})(\pi_{P,\xi}^{(E)}(g))
            \;\;\;\;\;\;\;\;\;\;\;\;\;\; \;\;\;\;\;\;\;\;\;\;\;\;\;\;}\\
&=&
\left(\xi(p(kg))\circ L_{\alpha(\kappa(kg))}\right)^{(E)}\\
&=&
\left(\xi^{(E)}(p(kg))\circ L_{\alpha(\kappa(kg))}\right)\\
&=&
(\id_{\O(\Omega)}\hatotimes\id_E\hatotimes L_{\alpha(k)})(\pi_{P,\xi^{(E)}}(g))
\end{eqnarray*}
and the statement follows.
\end{proof}

\section{The Arthur-Campoli relations}\label{s: Arthur's PW}
Given a parabolic subgroup $P\in\cP(A)$ we denote its Langlands decomposition by $P=M_PA_PN_P.$

Let $(\xi, V_\xi)$ be a smooth, irreducible and admissible
Fr\'echet representation of $M_P.$ Given $\gl \in \faPdc$ we denote by $\xi \otimes \gl$ the smooth representation of $L_P: = M_P A_P$ in $V_\xi$ defined by
$$
\xi \otimes \gl\, (ma) = a^{\gl + \rho_P} \xi(m).
$$
As usual, here $\rho_P \in \faPd$ is defined by $\rho_P =\half
\tr(\ad(\dotvar)|_{\fn_P}).$
The associated induced representation $\bar\pi_{P,\xi \otimes \gl}$ of $G$
in $C^\infty(G\col P\col \xi \otimes \gl)$ is defined as in
Section \ref{s: parabolic induction}, with $\fv = \fa^*, \Omega = \fadc$
and $\xi_\gl = \xi \otimes \gl.$ The family of these representations
may be viewed as a holomorphic family $\pi_{P, \xi\otimes(\dotvar)}$ of smooth representations of $G$
over $\Omega = \fadc$ on the fixed space $C^\infty(K\col \xi),$ defined in (\ref{e: defi CinftyKxi}). As in the mentioned
section, we agree to write $\pi_{P, \xi, \gl} = \pi_{P, \xi \otimes \gl}.$ If $P$ is a minimal parabolic subgroup, the representations
$\pi_{P,\xi, \gl}$ just defined are called representations
of the smooth minimal principal series of $G.$

Let now $P \in \cP(A)$ be arbitrary again. Then $M_P$ is a group
of the Harish-Chandra class, with maximal compact subgroup
$K_P:= M_P \cap K.$
Two continuous admissible $M_P$-representations of finite length in a quasi-complete locally convex space are said to be infinitesimally equivalent if their Harish-Chandra
modules are equivalent as $(\fm_P, K_P)$-modules.
For each equivalence class $\omega$ of irreducible unitary representations of $M_P,$ we fix a smooth admissible Fr\'echet representation $\xi = \xi_\omega$ which is infinitesimally equivalent to a representation of class $\omega,$ and which
is topologically equivalent to a closed subrepresentation of a representation of the smooth minimal principal series
of $M_P.$ Indeed, this is possible by the subrepresentation
theorem for the group $M_P.$ The set of all these chosen representations $\xi_\omega$ is denoted by $\dMP.$ Thus,
$\omega \mapsto \xi_\omega$ defines a bijection from the
set of equivalence classes of irreducible unitary representations
of $M_P$ onto $\dMP.$

\begin{rem}%REMARK
\rm
In view of the theory of the Casselman--Wallach globalization functor, see \cite[Section 11]{wallbook2}, the representation
$\xi_\omega$ is a smooth Fr\'echet globalization of moderate growth of the Harish-Chandra module of any representative of $\omega.$
This characterization makes the choice of $\dMP$ more natural,
but will not be needed in the present paper.
\end{rem}

We denote by $\dMPds$ the subset of $\dMP$ consisting of the
representations $\xi_\omega,$ with $\omega$ a discrete series representation.
In particular, if $P \in \cP(A)$ is minimal, then $M_P$ equals the centralizer $M$ of $\fa$ in $K,$ and $M_{ds}^\wedge=\dM$. For each $(\xi, V_\xi)\in \dMPds$ we put
\begin{equation}\label{e: defi SSPxi}
\SS(P,\xi)\coleq \End{C^\infty(K:\xi)}_{K\times K},
\end{equation}
and we define the algebraic direct sum of linear spaces
\begin{equation}\label{e: defi cSP}
\cS(P)\coleq \oplus_{\xi \in \hatMPds} \cS(P, \xi).
\end{equation}

\subsection{The Arthur-Campoli relations}%SUBSECTION ARTHUR-CAMPOLI REL
Fix a minimal parabolic subgroup $P_0$ in $\cP(A)$ and let $P_0=MAN_0$ be
its Langlands decomposition.
In \cite[III, \S 4]{arthurpw}, Arthur defines a Paley-Wiener space involving all minimal parabolic subgroups containing $A.$ This definition is given in terms
of on the one hand Paley-Wiener growth conditions and on the other the so-called
Arthur-Campoli conditions. In  \cite[Thm.~3.6]{BSpwspace} it is shown
that the Arthur Paley-Wiener space is isomorphic to one defined in terms of the single minimal
parabolic subgroup $P_0.$
We shall now describe the Arthur-Campoli relations in the context
of the operator valued Fourier transform $f \mapsto \hat f(P_0).$
                     \par
For $f \in C^\infty_c(G,K)$ and $\xi \in \dM,$
the Fourier transform $\hat f(P_0, \xi) \in \cO(\fadc)\otimes \SS(P_0, \xi)$ is defined by
\begin{equation}
\label{e: defi Fourier}
\hat{f}(P_0,\xi,\gl)\coleq \int_G f(x)\pi_{P_0,\xi,\lambda}(x)\,dx,\quad \gl\in\fa_\iC^*.
\end{equation}
Then $f \mapsto \hat f(P_0)$ maps $C_c^\infty(G,K)$ into $\cO(\fa_\iC^*) \otimes \cS(P_0).$
                   \par
We define $\data$ to be the set of $4$-tuples of the form $(\xi,\psi,\lambda,u)$
with $\xi \in \dM,$
$\psi \in \cS(P_0, \xi)^*_{K\times K}$,
 $\gl \in \fadc$ and $u \in S(\fa^*).$
An Arthur-Campoli sequence in $\data$ is defined to be a finite family
$(\xi_i, \psi_i, \lambda_i, u_i)$
in $\data$ such that
$$
\sum_i\langle\pi_{P_0,\xi_i,\lambda_i;u_i}(x),\psi_i\rangle=0, \qquad x \in G.
$$
By integration over $x$ it follows that this condition is equivalent to the condition
that
\begin{equation}\label{e: acs}
\sum_i \langle \hat f(P_0, \xi_i, \gl_i;u_i), \psi_i \rangle = 0,
\qquad  f \in C_c^\infty(G,K).
\end{equation}

\begin{definition}
\label{d: AC relations}%DEFINITION
\rm
A function  $\varphi \in \O(\a_\iC^*)\otimes\SS(P_0)$ is said to
satisfy the Arthur-Campoli relations if
$$
\sum_i\langle\varphi_{\xi_i,\lambda_i;u_i},\psi_i\rangle=0
$$
 for any Arthur-Campoli sequence $(\xi_i,\psi_i,\lambda_i,u_i)$ in $\data.$
\end{definition}

\subsection{Reformulation of the Arthur-Campoli relations}%SUBSECTION REFORMULATION A-C
In the following, $\O_0$ denotes the ring of germs at $0$ of
holomorphic functions defined
on a neighborhood of $0$ in $\a_\iC^*.$\par
Let
$E$ be a finite dimensional $\O_0$-module and let  $\Xi \subset \dM$
and $\Lambda \subset \fa_\iC^*$ be
finite sets.
We define the representation $\pi_{E,\Xi,\Lambda}$ of $G$ by
\begin{equation}
\label{e: defi pi E Xi gL}
\pi_{E,\Xi,\Lambda}
= \oplus_{(\xi, \gl) \in \Xi \times \gL}\; \pi_{P_0, \xi, \gl}^{(E)}.
\end{equation}
Note that this representation is admissible and of finite length.
Its underlying space is given by
\begin{equation}
\label{e: defi V E Xi gl}
V_{E, \Xi, \gL}
= \oplus_{(\xi, \gl)\in \Xi \times \gL} \;\;  E \otimes C^\infty(K\col \xi).
\end{equation}
For each element $\gf \in \cO(\fa_\iC^*) \otimes \cS(P_0)$
we define the $K\times K$-finite endomorphism
$\varphi_{E,\Xi,\Lambda}$ of $V_{E, \Xi, \gL}$
by taking the similar direct sum
\begin{equation}
\label{e: defi gf E Xi gL}
\gf_{E, \Xi, \gL} \coleq \oplus_{(\xi,\gl) \in \Xi \times \gL} \;\; \gf^{(E)}_{\xi}(\gl).
\end{equation}
We note that $\pi_{E, \Xi, \gL}(C_c^\infty(G,K))$
is a subset of the space $\Endo(V_{E,\Xi, \gL})$
and agree to write
$\pi_{E, \Xi, \gL}(C_c^\infty(G,K))^\perp$ for its annihilator in the space
$$
\Endo(V_{E, \Xi, \gL})^*_{K\times K}.
$$
\begin{prop}\label{p: reformulation ac}%PROPOSITION
Let $\phi\in \O(\a_\iC^*)\otimes\SS(P_0).$
Then the following conditions are equivalent:
\begin{enumerate}
\itema
$\phi$ satisfies the Arthur-Campoli relations;
\itemb
for every finite dimensional $\O_0$-module $E,$ every pair of finite sets
$\Xi \subset \dM,\,$ $\Lambda \subset \a_\iC^*$ and all
$\Psi\in\pi_{E,\Xi,\Lambda}(C^\infty_c(G,K))^\perp,$
$$
\langle\phi_{E,\Xi,\Lambda},\Psi\rangle=0.
$$
\end{enumerate}
\end{prop}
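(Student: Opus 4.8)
The plan is to show that the Arthur--Campoli relations and the relations appearing in (b) are, after a purely formal translation, \emph{the same} set of linear conditions on $\phi$; the two implications then become two applications of the dictionary between ``differential operators $u\in S(\fa^*)$'' and ``functionals $\eta\in\Endo(E)^*$'', supplied respectively by Lemma \ref{l: dual of End E} and Lemma \ref{l: E associated with u}.

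First I would set up that dictionary. Fix $E\in\FMod_{\cO_0}$ and finite sets $\Xi\subset\dM$, $\gL\subset\fadc$. Since $\pi_{E,\Xi,\gL}=\oplus_{(\xi,\gl)}\pi_{P_0,\xi,\gl}^{(E)}$ acts block-diagonally on $V_{E,\Xi,\gL}=\oplus_{(\xi,\gl)}E\otimes C^\infty(K\col\xi)$, and the endomorphism $\phi_{E,\Xi,\gL}$ of (\ref{e: defi gf E Xi gL}) is block-diagonal as well, both $\langle\Psi,\pi_{E,\Xi,\gL}(x)\rangle$ and $\langle\phi_{E,\Xi,\gL},\Psi\rangle$ depend only on the block-diagonal part of $\Psi$; so I may assume $\Psi=\sum_{(\xi,\gl)}\sum_j\eta_{\xi,\gl,j}\otimes\psi_{\xi,\gl,j}$ with $\eta_{\xi,\gl,j}\in\Endo(E)^*$ and $\psi_{\xi,\gl,j}\in\cS(P_0,\xi)^*_{K\times K}$, using $\Endo(E\otimes C^\infty(K\col\xi))_{K\times K}=\Endo(E)\otimes\cS(P_0,\xi)$. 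For fixed $x$ the function $\gl\mapsto\pi_{P_0,\xi,\gl}(x)$ is a holomorphic $\Endo(C^\infty(K\col\xi))$-valued family, and the $\Endo(C^\infty(K\col\xi))$-valued version of Lemma \ref{l: dual of End E} (immediate from the scalar case by density of $\P\otimes\Endo(C^\infty(K\col\xi))$ in $\cO(\fadc,\Endo(C^\infty(K\col\xi)))$ and continuity of $\diffD^{(E)}$) yields, with $u_\eta\in S(\fa^*)$ the operator attached to $\eta$,
$$
(\eta\otimes\id)\bigl(\pi_{P_0,\xi}(x)^{(E)}(\gl)\bigr)=\pi_{P_0,\xi,\gl;u_\eta}(x),\qquad (\eta\otimes\id)\bigl(\phi_\xi^{(E)}(\gl)\bigr)=\phi_{\xi,\gl;u_\eta}.
$$
Relabelling the index $((\xi,\gl),j)$ as a single $i$ and writing $u_i:=u_{\eta_{\xi,\gl,j}}$, $\psi_i:=\psi_{\xi,\gl,j}$, this gives
$$
\langle\Psi,\pi_{E,\Xi,\gL}(x)\rangle=\sum_i\langle\pi_{P_0,\xi_i,\gl_i;u_i}(x),\psi_i\rangle,\qquad \langle\phi_{E,\Xi,\gL},\Psi\rangle=\sum_i\langle\phi_{\xi_i,\gl_i;u_i},\psi_i\rangle.
$$
Integrating the first identity against $f\in C^\infty_c(G,K)$ and using $\hat f(P_0,\xi,\gl;u)=\pi_{P_0,\xi,\gl;u}(f)$ shows that $\Psi\in\pi_{E,\Xi,\gL}(C^\infty_c(G,K))^\perp$ iff $\sum_i\langle\hat f(P_0,\xi_i,\gl_i;u_i),\psi_i\rangle=0$ for all such $f$, which by (\ref{e: acs}) says precisely that $(\xi_i,\psi_i,\gl_i,u_i)$ is an Arthur--Campoli sequence.

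Given the dictionary, both directions are immediate. For (a)$\Rightarrow$(b): take $E,\Xi,\gL$ and $\Psi\in\pi_{E,\Xi,\gL}(C^\infty_c(G,K))^\perp$, reduce $\Psi$ to block-diagonal form as above; the dictionary exhibits an Arthur--Campoli sequence $(\xi_i,\psi_i,\gl_i,u_i)$, so by (a) and Definition \ref{d: AC relations} the sum $\sum_i\langle\phi_{\xi_i,\gl_i;u_i},\psi_i\rangle$ vanishes, i.e.\ $\langle\phi_{E,\Xi,\gL},\Psi\rangle=0$. For (b)$\Rightarrow$(a): start from an Arthur--Campoli sequence $(\xi_i,\psi_i,\gl_i,u_i)$ and apply Lemma \ref{l: E associated with u} to $F=\{u_i\}_i$, obtaining $E\in\FMod_{\cO_0}$ and $\eta_i\in\Endo(E)^*$ with $\eta_i\circ f^{(E)}=\partial_{u_i}f$; put $\Xi=\{\xi_i\}_i$, $\gL=\{\gl_i\}_i$, and let $\Psi$ be the block-diagonal functional whose $(\xi,\gl)$-block is $\sum_{i:(\xi_i,\gl_i)=(\xi,\gl)}\eta_i\otimes\psi_i$. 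The dictionary then says $\Psi\in\pi_{E,\Xi,\gL}(C^\infty_c(G,K))^\perp$ and $\langle\phi_{E,\Xi,\gL},\Psi\rangle=\sum_i\langle\phi_{\xi_i,\gl_i;u_i},\psi_i\rangle$, so (b) forces the latter sum to vanish, which is the Arthur--Campoli relation for the chosen sequence.

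The only non-formal point is inside the dictionary: the equivalence $\Psi\in\pi_{E,\Xi,\gL}(C^\infty_c(G,K))^\perp\Leftrightarrow$ ``Arthur--Campoli sequence'' rests on the equivalence between the pointwise-in-$x$ vanishing of $x\mapsto\sum_i\langle\pi_{P_0,\xi_i,\gl_i;u_i}(x),\psi_i\rangle$ and the vanishing of its integrals against $C^\infty_c(G,K)$, which is the equivalence already recorded just before (\ref{e: acs}) (an analyticity argument in the spirit of the proof of Proposition \ref{end-hecke}). Everything else---the block decomposition of $\Endo(V_{E,\Xi,\gL})^*_{K\times K}$ and commuting $\diffD^{(E)}$ past the identification $\Endo(E)\otimes\Hom(V,W)\simeq\Hom(E\otimes V,E\otimes W)$---is routine bookkeeping with the machinery of Section \ref{s: holomorphic families}, so I expect no real obstacle beyond organizing the translation cleanly.
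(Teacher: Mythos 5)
Your proposal is correct and follows essentially the same route as the paper: both directions rest on the dictionary given by Lemma \ref{l: dual of End E} (to pass from $\Psi$ to an Arthur--Campoli sequence) and Lemma \ref{l: E associated with u} (to pass back), together with the block-diagonal reduction in $\Endo(V_{E,\Xi,\gL})^*_{K\times K}$. The paper packages the two translations as separate lemmas (Lemmas \ref{l: ac data associated with Psi} and \ref{l: E associated with ac data}) before combining them, whereas you inline the same content, but the argument is the same.
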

We prove the result through a number of lemmas.
In the following results a complication is caused by the circumstance that
$\Endo(V_{E,\Xi,\gL})_{K\times K}$
is not the direct sum of the spaces
$\Endo(V_{E, \xi, \gl})_{K\times K},$ but rather that of
the spaces
$\Hom(V_{E, \xi_1, \gl_1}, V_{E, \xi_2, \gl_2})_{K\times K},$
for $\xi_1, \xi_2 \in \Xi$ and $\gl_1, \gl_2 \in \gL.$
For $(\xi, \gl) \in \Xi \times \gL,$ let
$$
\inj_{\xi, \gl}: \;  \Endo(V_{E, \xi, \gl})_{K\times K} \longrightarrow \Endo(V_{E, \Xi, \gL})_{K\times K}
$$
denote the associated embedding, and let
$$
\pr_{\xi, \gl}:\;\; \Endo(V_{E, \Xi, \gL})_{K\times K} \longrightarrow \Endo(V_{E, \xi, \gl})_{K\times K}
$$
denote the associated projection map.

\begin{lemma}
\label{l: ac data associated with Psi}%LEMMA
Let $E, \Xi, \gL$ be as above. Then for each $\Psi \in \Endo(V_{E, \Xi, \gL})^*_{K\times K}$
there exists a finite sequence
$(\xi_i, \psi_i, \gl_i, u_i)$ in $\data$ such that  for all $\gf \in \cO(\fa^*_\iC) \otimes \cS(P_0),$
\begin{equation}
\label{e: ac data associated with Psi}
\inp{\gf_{E, \Xi, \gL}}{\Psi} = \sum_{i} \inp{\gf_{\xi_i, \gl_i ; u_i}}{\psi_i}.
\end{equation}
\end{lemma}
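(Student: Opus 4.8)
The plan is to exploit the block‑diagonal shape of $\gf_{E,\Xi,\gL}$ and then strip off the $E$‑factor one block at a time using Lemma~\ref{l: dual of End E}. By \eqref{e: defi gf E Xi gL} the endomorphism $\gf_{E,\Xi,\gL}$ is block diagonal for the decomposition $V_{E,\Xi,\gL}=\oplus_{(\xi,\gl)\in\Xi\times\gL}V_{E,\xi,\gl}$, that is $\gf_{E,\Xi,\gL}=\sum_{(\xi,\gl)\in\Xi\times\gL}\inj_{\xi,\gl}\bigl(\gf_\xi^{(E)}(\gl)\bigr)$. Hence, for every $\Psi\in\Endo(V_{E,\Xi,\gL})^*_{K\times K}$,
$$
\inp{\gf_{E,\Xi,\gL}}{\Psi}=\sum_{(\xi,\gl)\in\Xi\times\gL}\inp{\gf_\xi^{(E)}(\gl)}{\inj_{\xi,\gl}^*\Psi},\qquad \inj_{\xi,\gl}^*\Psi\in\Endo(V_{E,\xi,\gl})^*_{K\times K}.
$$
So it is enough to treat a single pair $(\xi,\gl)$: I will show that for each $\Theta\in\Endo(V_{E,\xi,\gl})^*_{K\times K}$ there are finitely many $\psi_j\in\cS(P_0,\xi)^*_{K\times K}$ and $u_j\in S(\fa^*)$ with $\inp{\gf_\xi^{(E)}(\gl)}{\Theta}=\sum_j\inp{\gf_{\xi,\gl;u_j}}{\psi_j}$ for all $\gf$. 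Concatenating the families $(\xi,\psi_j,\gl,u_j)$ obtained as $(\xi,\gl)$ ranges over $\Xi\times\gL$ then produces the required finite sequence in $\data$.

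For the single‑block statement I would first note that, since $K$ acts trivially on the finite dimensional space $E$, we have $\Endo(V_{E,\xi,\gl})_{K\times K}=\Endo(E)\otimes\cS(P_0,\xi)$, and dually $\Endo(V_{E,\xi,\gl})^*_{K\times K}=\Endo(E)^*\otimes\cS(P_0,\xi)^*_{K\times K}$; since $\Endo(E)^*$ is finite dimensional, $\Theta$ can be written as a finite sum $\Theta=\sum_j\eta_j\otimes\psi_j$ with $\eta_j\in\Endo(E)^*$ (say, a basis of $\Endo(E)^*$) and $\psi_j\in\cS(P_0,\xi)^*_{K\times K}$. Next, $\gf_\xi$ is by definition an algebraic tensor $\gf_\xi=\sum_k f_k\otimes A_k\in\cO(\fadc)\otimes\cS(P_0,\xi)$ with $f_k$ scalar holomorphic and $A_k\in\cS(P_0,\xi)$, so by linearity of $\diffD^{(E)}$ and its value $f\otimes B\mapsto f^{(E)}\otimes B$ on simple tensors one gets $\gf_\xi^{(E)}=\sum_k f_k^{(E)}\otimes A_k$. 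Lemma~\ref{l: dual of End E} now supplies, for each $\eta_j$, an element $u_j\coleq u_{\eta_j}\in S(\fa^*)$ with $\eta_j\after f^{(E)}=\partial_{u_j}f$ for every holomorphic $f$; pairing $\gf_\xi^{(E)}(\gl)$ against $\eta_j\otimes\psi_j$ then yields
$$
\inp{\gf_\xi^{(E)}(\gl)}{\eta_j\otimes\psi_j}=\sum_k\bigl(\partial_{u_j}f_k\bigr)(\gl)\,\psi_j(A_k)=\psi_j\bigl(\gf_\xi(\gl;u_j)\bigr)=\inp{\gf_{\xi,\gl;u_j}}{\psi_j},
$$
and summing over $j$ finishes this step.

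I do not expect a genuine obstacle: the argument is reduction and bookkeeping, with the only point of care being the identifications $\Endo(V_{E,\xi,\gl})_{K\times K}=\Endo(E)\otimes\cS(P_0,\xi)$ and its $K\times K$‑finite dual, together with the compatibility of $\diffD^{(E)}$ with this tensor factorization — namely that applying $\diffD^{(E)}$ to $\gf_\xi$ and then contracting with a functional on the $\cS(P_0,\xi)$‑factor agrees with contracting first and applying $\diffD^{(E)}$ to the resulting scalar family. Because $\gf_\xi$ is literally a finite algebraic tensor, this compatibility reduces to the defining rule $(f\otimes B)^{(E)}=f^{(E)}\otimes B$ and needs no density or continuity input; once it is in hand, Lemma~\ref{l: dual of End E} produces the differential operators $u_j$ and the proof is complete.
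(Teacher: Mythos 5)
Your proof is correct and follows essentially the same route as the paper: reduce to a single block by discarding the off‑diagonal components of $\Psi$ (you do this via the adjoints $\inj_{\xi,\gl}^*$, the paper by noting the cross terms annihilate $\gf_{E,\Xi,\gL}$), decompose the remaining functional into simple tensors $\eta\otimes\psi$ using the identification $\Endo(V_{E,\xi,\gl})^*_{K\times K}\simeq\Endo(E)^*\otimes\cS(P_0,\xi)^*_{K\times K}$, and invoke Lemma \ref{l: dual of End E} to convert each $\eta$ into a differential operator $u\in S(\fa^*)$.
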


\begin{proof}%PROOF
Put
$$
\cS(P_0, \Xi, \gL) \coleq
\oplus_{(\xi,\gl)\in\Xi\times\gL}\cS(P_0,\xi).
$$
We observe that $\Endo(V_{E, \Xi, \gL})^*_{K\times K}$ may be viewed as the direct sum of the $K \times K$-submodule $\Endo(E)^* \otimes \cS(P_0, \Xi, \gL)^*_{K\times K}$
and a unique
$K\times K$-submodule $\End{E}^*\otimes \cT$ (consisting of the  `cross terms').
Since every element
of $\End{E}^*\otimes \cT$ annihilates $\gf_{E, \Xi, \gL},$ we may assume
that $\Psi \in \End{E}^*\otimes\cS(P_0, \Xi, \gL)^*_{K\times K}.$
By linearity, we may then reduce to the situation that $\Xi = \{\xi\}$ and $\gL = \{\gl\}.$
Again by linearity we may assume that $\Psi$ is of the form $\eta \otimes \psi,$ with
$\eta \in \Endo(E)^*$ and
$\psi \in \cS(P_0, \xi)^*_{K\times K}.$
Let $u \in S(\fa^*)$ be associated with
$\eta$ as in Lemma \ref{l: dual of End E}. Then for all $\gf \in \cO(\fa_\iC^*) \otimes \cS(P_0),$
$$
\inp{\gf_{E, \xi, \gl}}{\Psi} = \inp{\gf_{\xi, \gl}^{(E)}}{\eta \otimes \psi} = \inp{\gf_{\xi, \gl ; u}}{\psi}.
$$
This  finishes the proof.
\end{proof}

\begin{lemma}
\label{l: E associated with ac data}%LEMMA
Let $(\xi_i, \psi_i, \gl_i, u_i)$ be a finite sequence in $\data.$ Then there exists a finite dimensional
$\cO_0$-module $E$, and finite sets $\Xi \subset \dM,$ $\gL \subset \fadc$ and an element
$\Psi \in \Endo(V_{E, \Xi, \gL})^*_{K\times K}$ such that for all
$\gf \in \cO(\fa^*_\iC) \otimes \cS(P_0),$
\begin{equation}
\label{e: E associated with ac data}
 \sum_{i} \inp{\gf_{\xi_i, \gl_i ; u_i}}{\psi_i} = \inp{\gf_{E, \Xi, \gL}}{\Psi}.
\end{equation}
\end{lemma}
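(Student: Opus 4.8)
The plan is to invert the construction carried out in the proof of Lemma~\ref{l: ac data associated with Psi}; the key tool is Lemma~\ref{l: E associated with u}, which packages a finite family of constant coefficient differential operators into a single finite dimensional $\cO_0$-module equipped with functionals on its endomorphism algebra realising those operators.

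First I would apply Lemma~\ref{l: E associated with u} to the finite set $F \coleq \{u_i\} \subset S(\fa^*),$ obtaining a finite dimensional $\cO_0$-module $E$ and linear functionals $\eta_i \coleq \eta_{u_i} \in \Endo(E)^*$ with $\eta_i \after f^{(E)} = \partial_{u_i} f$ for every $f \in \cO(\fa_\iC^*).$ Then I would set $\Xi \coleq \{\xi_i\},$ $\gL \coleq \{\gl_i\},$ and, using the identification $\Endo(V_{E,\xi,\gl})_{K\times K} \simeq \Endo(E) \otimes \cS(P_0,\xi)$ of (\ref{e: end-hom}) (legitimate because $E$ carries no $K$-action), regard each $\eta_i \otimes \psi_i$ as an element of $\Endo(V_{E,\xi_i,\gl_i})^*_{K\times K}.$ Finally I would put
$$
\Psi \coleq \sum_i \pr_{\xi_i,\gl_i}^{\,*}(\eta_i \otimes \psi_i) \;\in\; \Endo(V_{E,\Xi,\gL})^*_{K\times K},
$$
where $\pr_{\xi,\gl}^{\,*}$ is the transpose of the projection $\pr_{\xi,\gl}$ introduced before Lemma~\ref{l: ac data associated with Psi} (this map is $K \times K$-equivariant, so $\Psi$ is indeed $K\times K$-finite); equivalently, $\Psi$ is the functional that annihilates all cross terms and whose $(\xi_i,\gl_i)$-diagonal block equals $\eta_i \otimes \psi_i$, with multiplicities summed.

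To verify (\ref{e: E associated with ac data}) I would argue as follows. Writing each component $\gf_{\xi_i}$ of $\gf \in \cO(\fa_\iC^*) \otimes \cS(P_0)$ as a finite sum of elementary tensors $f \otimes s$ with $f \in \cO(\fa_\iC^*)$ and $s \in \cS(P_0,\xi_i),$ one has $(f \otimes s)^{(E)} = f^{(E)} \otimes s$ (cf.\ Remark~\ref{r: J is M}), so contracting $\gf_{\xi_i}^{(E)}$ against $\eta_i$ in the $\Endo(E)$-factor produces $\partial_{u_i}\gf_{\xi_i}$ by the defining property of $\eta_i.$ Since $\gf_{E,\Xi,\gL}$ is block-diagonal with $(\xi_i,\gl_i)$-block $\gf_{\xi_i}^{(E)}(\gl_i),$ applying $\pr_{\xi_i,\gl_i}$ recovers that block, whence
$$
\inp{\gf_{E,\Xi,\gL}}{\Psi} \;=\; \sum_i \inp{\gf_{\xi_i}^{(E)}(\gl_i)}{\eta_i \otimes \psi_i} \;=\; \sum_i \inp{\partial_{u_i}\gf_{\xi_i}(\gl_i)}{\psi_i} \;=\; \sum_i \inp{\gf_{\xi_i,\gl_i;u_i}}{\psi_i},
$$
which is the asserted identity.

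No step presents a genuine difficulty, since all the ingredients are already in place; the one point that demands care is the bookkeeping around $\Endo(V_{E,\Xi,\gL})_{K\times K},$ which, as recalled in the proof of Lemma~\ref{l: ac data associated with Psi}, is strictly larger than $\bigoplus_{(\xi,\gl)} \Endo(V_{E,\xi,\gl})_{K\times K}$ owing to the cross terms $\Hom(V_{E,\xi_1,\gl_1}, V_{E,\xi_2,\gl_2})_{K\times K}.$ Building $\Psi$ from the transposed projections $\pr_{\xi_i,\gl_i}^{\,*}$ handles this cleanly: $\Psi$ is then automatically well-defined on the whole space and annihilates the cross terms, while the diagonal test element $\gf_{E,\Xi,\gL}$ never feeds any cross term into the pairing.
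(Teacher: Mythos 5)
Your proof is correct and follows precisely the same route as the paper: invoke Lemma~\ref{l: E associated with u} to package the $u_i$ into a single $\cO_0$-module $E$ with functionals $\eta_i$, set $\Xi,\gL$ to be the sets of $\xi_i,\gl_i$, and define $\Psi=\sum_i \pr_{\xi_i,\gl_i}^{\,*}(\eta_i\otimes\psi_i)$, after which the identity is a direct unwinding of definitions. The remarks about the cross terms are sound but not strictly needed for the computation, as the paper's proof shows.
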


\begin{proof}%PROOF
Let $E$ and $\eta_i \in \Endo(E)^*$ be associated to the finite sequence
$u_i$ as in Lemma \ref{l: E associated with u}.
Then $\Psi_i = \eta_i \otimes \psi_i$ belongs to
$\Endo(E)^* \otimes \cS(P_0, \xi_i)^*_{K\times K} \simeq \Endo(V_{E, \xi_i,\gl_i})_{K\times K}^*.$
Let $\Xi$ be the finite set of all $\xi_i$ and let $\gL$ be the finite set of all $\gl_i.$
Let $\pr_{\xi_i, \gl_i}$ be defined as above.
We define $\Psi \in \Endo(V_{E, \Xi, \gL})^*_{K \times K}$ by
$$
\Psi =\sum_i  \pr_{\xi_i, \gl_i}^* ( \eta_i \otimes \psi_i ).
$$
Then for all $\gf \in \cO(\fadc) \otimes \cS(P_0)$ we have
\begin{eqnarray*}
\sum_i \inp{\gf_{\xi_i,\gl_i ; u_i}}{\psi_i}
       &=& \sum_i \inp{\gf^{(E)}_{\xi_i,\gl_i}}{\eta_i \otimes \psi_i}\\
& =& \sum_i \inp{\pr_{\xi_i, \gl_i}\gf_{E, \Xi, \gL}}{\eta_i \otimes \psi_i} \\
& = & \inp{\gf_{E, \Xi, \gL}}{\Psi}.
\end{eqnarray*}
\end{proof}

\begin{proof}[Proof of Proposition \ref{p: reformulation ac}]%PROOF
Let $\phi$ be as stated and assume (a).
Let $E,\,\Xi,\, \Lambda$ and $\Psi$ be as asserted in (b).
In particular, $\Psi \in \Endo(V_{E, \Xi, \gL})^*_{K\times K}.$
Let $(\xi_i, \psi_i, \gl_i, u_i)$ be a sequence in $\data,$
associated to $\Psi$ as in Lemma \ref{l: ac data associated with Psi}.
Then using the relation (\ref{e: ac data associated with Psi}) with
$\gf = \hat f(P_0)$ for $f \in C_c^\infty(G,K),$
we see that $(\xi_i, \psi_i, \gl_i, u_i)$ is an Arthur-Campoli
sequence (see \eqref{e: acs}).
Hence,
$$
\inp{\phi_{E, \Xi, \gL}}{\Psi} = \sum_i \inp{ \phi_{\xi_i,\gl_i; u_i}}{\psi_i} = 0.
$$
We have proved (b).
                   \par
Conversely, assume (b) and let $(\xi_i, \psi_i, \gl_i, u_i)$ be an
Arthur-Campoli sequence in $\data.$
Let $E, \Xi, \gL, \Psi$ be associated with this sequence as in
 Lemma \ref{l: E associated with ac data}.
Then it follows from
\eqref{e: E associated with ac data} with $\gf = \hat f(P_0)$ for
$f \in C_c^\infty(G,K)$ that $\Psi$ belongs
to $\pi_{E, \Xi, \gL}(C_c^\infty(G,K))^\perp.$ This implies that
$$
\sum_i \inp{\phi_{\xi_i,\gl_i;u_i}}{\psi_i} = \inp{\phi_{E, \Xi, \gL}}{\Psi} = 0.
$$
Hence (a).
\end{proof}

\section{Delorme's interwining conditions}\label{s: Delorme's PW}%SECTION DELORME INTERTWINING

\subsection{Successive derivatives}\label{s: succder}%SUBSECTION SUCCESSIVE DERIVATIVES
Let $\v$ be a finite dimensional vector space over $\R,$
 let $\Omega \subset \fv_\iC^*$ be an open subset and let
$V$ be a quasi-complete locally convex space.
 Following Delorme \cite{delormepw}, we define, for
$\Phi \in \cO(\Omega, \End{V})$ and $\eta \in \fv_\iC^*,$
 the holomorphic function  $\Phi^{(\eta)}: \Omega \to \End{V\oplus V}$
by
$$
\Phi^{(\eta)}(\lambda)(v_1,v_2)\coleq
\left(\Phi(\lambda)v_1+\frac{d}{dz}(\Phi(\lambda+z\eta)v_2)_{|z=0}\;,\;\Phi(\lambda)v_2\right),
$$
for $\gl \in \Omega$ and $v_1,v_2 \in V.$
Still following \cite{delormepw}, we define,
for any finite sequence $\eta=(\eta_1,\dots,\eta_N)$ in $\v_\iC^*$,
the iterated derivative
$$
\Phi^{(\eta)} \coleq  (\cdots (\Phi^{(\eta_N)})^{(\eta_{N-1})}\cdots)^{(\eta_1)}
$$
of $\Phi$ along $\eta.$
Then $\Phi^{(\eta)}$ is a holomorphic function on $\Omega$
with values in $\End{V^{(\eta)}},$ where $V^{(\eta)}$ denotes
the direct sum of $2^N$ copies of $V.$
                    \par
Now assume that $V$ is Fr\'echet (or more generally, barrelled).
If $\pi$ is a holomorphic family of continuous representations
of $G$ in $V$ over the parameter set $\fv_\iC^*$ then it follows
by application of the methods of \cite{delormepw} that for each $\gl\in \Omega,$
$$
\pi^{(\eta)}_\gl(x): = \pi(x)^{(\eta)}(\gl), \qquad x \in G,
$$
defines a continuous representation of $G$ in $V^{(\eta)}.$

\subsection{The intertwining conditions}
\label{s: the intertwining conditions}%SUBSECTION INTERTWINING COND
Let $\cD$ be the set of $4$-tuples $\delta=(P,\xi,\lambda,\eta)$, with
$P\in\cP(A),$
$\xi\in\dMPds$, $\lambda\in\a_{P\iC}^*$ and $\eta$ a finite
sequence in $\a_{P\iC}^*.$
Given $\gd\in\cD,$  we define the representation $\pi_\gd$ of
$G$ in $V_{\pi_\gd}\coleq C^{\infty}(K\col \xi)^{(\eta)}$ by
$$
\pi_\gd \coleq \pi_{P, \xi, \gl}^{(\eta)}
$$
For $P \in \cP(A)$ we define the space
$
\holspace_P\coleq \cO(\faPdc) \otimes \cS(P),
$
with $\cS(P)$ defined as in (\ref{e: defi cSP}).
Furthermore, we put
\begin{equation}
\label{e: holspace}
\holspace:= \oplus_{P\in\cP(A)} \;\; \holspace_P.
\end{equation}

Given $\gf\in \holspace$ and $\gd = (P, \xi, \gl, \eta) \in \cD,$ we define
$\gf_\gd \in \Endo(C^\infty(K\col \xi)^{(\eta)})$ in a similar fashion as $\pi_\gd,$  by
$$
\gf_\gd\coleq \gf_{P, \xi}^{(\eta)}(\gl).
$$

Finally, given a sequence $\gd = (\gd_1, \ldots, \gd_N)$
of data from $\cD,$ we define
$\pi_{\gd}\coleq  \pi_{\gd_1} \oplus \cdots \oplus \pi_{\gd_N},$ $V_{\pi_{\gd}}
\coleq  V_{\pi_{\gd_1}} \oplus \cdots \oplus V_{\pi_{\gd_N}}$ and
$\gf_\gd\coleq  \gf_{\gd_1} \oplus \cdots \oplus \gf_{\gd_N}.$
\par
\begin{definition}\label{d: Delorme's intertwining conditions}%DEFINITION
\rm
We say that a function $\gf \in \holspace$
satisfies Delorme's intertwining conditions (see \cite[Definition 3 (4.4)]{delormepw}) if
\vspace{3pt}
\begin{enumerate}
\itema
for every $N \in \Z_+$ and each  $\gd \in \cD^N$  the function $\gf_\gd$ preserves all invariant subspaces of $\pi_{\gd};$
\itemb
for all $N_1, N_2 \in \Z_+,$ all $\gd_1 \in \cD^{N_1}$ and $\gd_2 \in\cD^{N_2},$ and any two sequences of closed invariant subspaces $U_j \subset V_j$ for $\pi_{\gd_j},$ the induced maps $\bar \gf_{\gd_j} \in \Endo(V_j/U_j)$ are intertwined by all intertwining operators $T: V_1/U_1 \to V_2/U_2.$
\end{enumerate}
The space of functions $\gf \in \holspace$ satisfying (a) and (b)  is denoted by $\holspace(\cD).$
\end{definition}

\subsection{A simplification of the intertwining conditions}%SUBSECTION SIMPLIFICATION I-C
In this section we will show  that condition (b) of Definition
\ref{d: Delorme's intertwining conditions} is in fact a consequence of condition (a) of the same definition.
\begin{lemma}\label{l: one Delorme condition sufficient}%LEMMA
Let $\gf \in \holspace$.
Then $\gf \in \holspace(\cD)$ if and only if $\gf$ satisfies  condition (a)
of Definition \ref{d: Delorme's intertwining conditions}.
\end{lemma}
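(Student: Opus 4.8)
The plan is to show that condition (a) of Definition \ref{d: Delorme's intertwining conditions} forces condition (b). The key observation is that an intertwining operator $T\colon V_1/U_1\to V_2/U_2$ between two subquotients of the representations $\pi_{\gd_1}$ and $\pi_{\gd_2}$ can itself be encoded as an invariant subspace of a single larger representation, namely (a suitable subquotient of) $\pi_{\gd_1}\oplus\pi_{\gd_2}=\pi_{(\gd_1,\gd_2)}$, which is again of the form $\pi_{\gd}$ for the concatenated sequence $\gd=(\gd_1,\gd_2)\in\cD^{N_1+N_2}$. First I would fix $\gd_1,\gd_2$, the closed invariant subspaces $U_j\subset V_j$, and an intertwining operator $T\colon V_1/U_1\to V_2/U_2$. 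Passing to the direct sum $\pi_{\gd}$ with $\gd=(\gd_1,\gd_2)$, I would consider inside $V_1/U_1\oplus V_2/U_2$ the graph of $T$,
$$
\Gamma_T\coleq\{(\bar v_1,T\bar v_1)\mid \bar v_1\in V_1/U_1\}.
$$
This is a closed $G$-invariant subspace of the subquotient $(V_1/U_1)\oplus(V_2/U_2)$ of $\pi_{\gd}$, precisely because $T$ intertwines. By condition (a), applied to $\pi_{\gd}$ and lifted appropriately through the various sub- and quotient maps (all of which are $G$-equivariant, and under which $\gf_{\gd}$ passes to $\bar\gf_{\gd_1}\oplus\bar\gf_{\gd_2}$), the endomorphism $\bar\gf_{\gd_1}\oplus\bar\gf_{\gd_2}$ preserves $\Gamma_T$.

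The passage from ``$\gf_{\gd}$ preserves all invariant subspaces of $\pi_{\gd}$'' to ``$\bar\gf_{\gd_1}\oplus\bar\gf_{\gd_2}$ preserves all invariant subspaces of every subquotient'' deserves a short lemma: if $\Phi\in\Endo(V)$ preserves every closed invariant subspace of a representation $(\pi,V)$, and $U_0\subset U_1\subset V$ are closed invariant, then the induced endomorphism $\bar\Phi$ of $U_1/U_0$ preserves every closed invariant subspace of $U_1/U_0$; this is immediate by taking preimages. Applying this with $V=V_{\pi_{\gd}}$, $U_0,U_1$ chosen so that $U_1/U_0\cong(V_1/U_1)\oplus(V_2/U_2)$, I conclude that $\bar\gf_{\gd_1}\oplus\bar\gf_{\gd_2}$ preserves $\Gamma_T$. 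But preserving the graph of $T$ is exactly the statement that $\bar\gf_{\gd_2}\after T=T\after\bar\gf_{\gd_1}$, i.e.\ that $T$ intertwines $\bar\gf_{\gd_1}$ and $\bar\gf_{\gd_2}$, which is condition (b) for this choice of data. Since $\gd_1,\gd_2,U_j,T$ were arbitrary, condition (b) holds in full. The converse direction — that $\gf\in\holspace(\cD)$ implies condition (a) — is trivial, as (a) is part of the definition.

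The only genuine subtlety, and the step I would write out carefully, is the bookkeeping that identifies $(V_1/U_1)\oplus(V_2/U_2)$ as a subquotient $U_1/U_0$ of $V_{\pi_{(\gd_1,\gd_2)}}=V_{\pi_{\gd_1}}\oplus V_{\pi_{\gd_2}}$ in such a way that $\gf_{(\gd_1,\gd_2)}=\gf_{\gd_1}\oplus\gf_{\gd_2}$ descends to $\bar\gf_{\gd_1}\oplus\bar\gf_{\gd_2}$: one takes $U_0=U_1'\oplus U_2'$ and the ambient $U_1=V_1\oplus V_2$ where $U_j'$ is the preimage of $U_j$ under $V_{\pi_{\gd_j}}\twoheadrightarrow V_j$ if $V_j$ is itself a quotient, or one first restricts to $V_1\oplus V_2$ if the $V_j$ are subrepresentations; in general each $V_j$ is a subquotient, so one does both. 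Since the class $\cD^N$ and the operations $\pi_\gd$, $\gf_\gd$ are by construction closed under forming direct sums (concatenation of sequences), no new data beyond $\cD^{N_1+N_2}$ are needed, and the argument closes. I expect the main obstacle to be purely notational: making the chain of $G$-equivariant sub- and quotient maps explicit enough that the compatibility of $\gf_{(\gd_1,\gd_2)}$ with $\bar\gf_{\gd_1}\oplus\bar\gf_{\gd_2}$ is manifest.
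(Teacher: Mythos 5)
Your proof is correct and takes essentially the same route as the paper: both arguments work with the graph of $T$, pull it back to an invariant subspace of $V_{\pi_{(\gd_1,\gd_2)}} = V_{\pi_{\gd_1}}\oplus V_{\pi_{\gd_2}}$, apply condition (a) to the concatenated datum $(\gd_1,\gd_2)\in\cD^{N_1+N_2}$, and descend. The paper performs the preimage step directly with $p=p_1\oplus p_2$, whereas you factor it through an explicitly stated (but equivalent) lemma about $\Phi$ passing to subquotients; this is a presentational rather than a substantive difference.
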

\begin{proof}%PROOF
Assume that $\gf$ satisfies condition (a) of the mentioned definition. Then we must show that $\gf$ satisfies condition (b) as well.
              \par
Let $\delta_j,\, \pi_{\gd_j}, \,U_j,\, V_j$ be as in (b), for $j =1,2.$
Let $T: V_1/U_1 \to V_2/U_2$ be an intertwining operator.
Let $p_j: V_j \to V_j/U_j$ denote the canonical
projection, for $j =1,2,$ and let $p \coleq  p_1 \oplus p_2.$
Since $T$ is equivariant, its graph $W$ is an invariant subspace of $V_1/U_1 \oplus V_2/U_2.$
Hence, $p^{-1}(W)$ is an invariant subspace of $V_1 \oplus V_2.$
Since $\varphi$ satisfies (a), it follows that $p^{-1}(W)$ is a $\varphi_{\delta_1} \oplus \varphi_{\delta_2}$-invariant subspace of $V_1 \oplus V_2.$ This in turn is easily seen to imply
that $T\circ \bar{\varphi}_{\delta_1} = \bar{\varphi}_{\delta_2}\circ T.$
\end{proof}

\subsection{Reduction to a single minimal parabolic subgroup}%SUBSECTION REDUCTION TO SINGLE MPSG
In this section, we will show that the space $\holspace(\cD)$ of functions
satisfying Delorme's intertwining conditions is naturally isomorphic to a
space of functions defined in terms of just the minimal parabolic subgroup $P_0.$ We denote by $\cD_{P_0}$ the set of $4$-tuples $(P,\xi,\lambda,\eta)$
in $\cD$ for which $P=P_0$.

\begin{definition}
\label{d: restricted intertwining conditions}%DEFINITION
\rm
We say that a function $\gf \in \holspace$
satisfies Delorme's intertwining conditions associated with $P_0$
if conditions (a) and (b) of
Definition \ref{d: Delorme's intertwining conditions} are valid with everywhere $\cD$ replaced by $\cD_{P_0}.$

The space of functions $\gf \in \holspace$ satisfying all
such intertwining conditions is denoted by $\cF(\cD_{P_0}).$
\end{definition}

The following analogue
of Lemma \ref{l: one Delorme condition sufficient} is now valid,
with essentially the same proof.

\begin{lemma}
\label{l: one restricted intertwining condition suffices}%LEMMA
Let $\gf\in \holspace.$ Then $\gf \in \cF(\cD_{P_0})$ if and only if $\gf$ satisfies condition (a) of
Definition \ref{d: Delorme's intertwining conditions} for every $N\in\Z_+$ and all $\gd\in (\cD_{P_0})^{N}.$
\end{lemma}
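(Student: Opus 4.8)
The plan is to imitate, word for word up to the change of index set, the proof of Lemma~\ref{l: one Delorme condition sufficient}. One implication is immediate from the definitions: if $\gf \in \cF(\cD_{P_0})$ then, by Definition~\ref{d: restricted intertwining conditions}, $\gf$ satisfies condition~(a) of Definition~\ref{d: Delorme's intertwining conditions} with $\cD$ replaced by $\cD_{P_0}$, which is precisely the weaker property in the statement. So all the work lies in the converse: assuming $\gf \in \holspace$ satisfies condition~(a) for every $N \in \Z_+$ and every $\gd \in (\cD_{P_0})^{N}$, I must deduce that $\gf$ also satisfies condition~(b) of Definition~\ref{d: Delorme's intertwining conditions}, again with $\cD$ everywhere replaced by $\cD_{P_0}$; together with the easy implication this yields $\gf \in \cF(\cD_{P_0})$.

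For the converse, I would fix $\gd_1 \in (\cD_{P_0})^{N_1}$ and $\gd_2 \in (\cD_{P_0})^{N_2}$, closed invariant subspaces $U_j \subset V_j \coleq V_{\pi_{\gd_j}}$, and a $G$-equivariant operator $T : V_1/U_1 \to V_2/U_2$. The key point is that the concatenated sequence $\gd \coleq (\gd_1, \gd_2)$ again lies in $(\cD_{P_0})^{N_1 + N_2}$, and that by the definitions in Section~\ref{s: the intertwining conditions} one has $\pi_\gd = \pi_{\gd_1} \oplus \pi_{\gd_2}$, $V_{\pi_\gd} = V_1 \oplus V_2$ and $\gf_\gd = \gf_{\gd_1} \oplus \gf_{\gd_2}$. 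I would then introduce the quotient maps $p_j : V_j \to V_j/U_j$ and $p \coleq p_1 \oplus p_2$: since $T$ is equivariant, its graph $W$ is a $G$-invariant subspace of $(V_1/U_1) \oplus (V_2/U_2)$, and since $p$ is equivariant and surjective, $p^{-1}(W)$ is a $G$-invariant subspace of $V_{\pi_\gd}$. Applying the hypothesis (condition~(a) for this particular $\gd$) gives that $\gf_\gd = \gf_{\gd_1} \oplus \gf_{\gd_2}$ preserves $p^{-1}(W)$, and unwinding what it means for a direct sum of operators to stabilise the preimage of a graph yields exactly $T \circ \bar\gf_{\gd_1} = \bar\gf_{\gd_2} \circ T$. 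This is condition~(b) relative to $\cD_{P_0}$.

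I expect no real obstacle here beyond bookkeeping. The only substantive thing to check --- and the reason the restriction to $P_0$ is harmless --- is that $\cD_{P_0}$ is stable under the single operation used in the argument, namely concatenation of sequences, and that this operation is compatible with the formation of $\pi_\gd$, $V_{\pi_\gd}$ and $\gf_\gd$ as direct sums. One may also note in passing that $p^{-1}(W)$ is closed (being the preimage under the continuous map $p$ of the closed graph $W$), but condition~(a) as stated quantifies over \emph{all} invariant subspaces, so even that remark is unnecessary. In the write-up I would simply observe that the proof of Lemma~\ref{l: one Delorme condition sufficient} applies verbatim once every occurrence of $\cD$ is replaced by $\cD_{P_0}$ and every reference to Definition~\ref{d: Delorme's intertwining conditions} by its analogue, Definition~\ref{d: restricted intertwining conditions}.
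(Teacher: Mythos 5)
Your proof is correct and matches what the paper intends: the paper states that Lemma~\ref{l: one restricted intertwining condition suffices} holds ``with essentially the same proof'' as Lemma~\ref{l: one Delorme condition sufficient} and gives no further detail, and your write-up correctly observes that the only point to check is that $\cD_{P_0}$ is stable under concatenation of sequences (so that $\gd=(\gd_1,\gd_2)$ stays in $(\cD_{P_0})^{N_1+N_2}$), after which the graph argument carries over verbatim.
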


We consider the component
$$
 \holspace_{P_0} \coleq \cO(\fadc) \otimes \cS(P_0)
$$
of the direct sum $\holspace$ defined in (\ref{e: holspace}),
and denote the natural projection $\holspace \to \holspace_{P_0}$ by $\pr.$
Moreover, we define
$$
\holspace_{P_0}(\cD_{P_0}) \coleq  \holspace_{P_0} \cap \holspace(\cD_{P_0}).
$$
\medbreak
\begin{prop}
\label{p: reduction to one minimal}%PROPOSITION
The natural projection $\pr: \holspace \to \holspace_{P_0}$ restricts to
a linear isomorphism from $\holspace(\cD)$ onto $\holspace_{P_0}(\cD_{P_0}).$
\end{prop}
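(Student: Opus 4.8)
The plan is to check, in turn, three things: that $\pr$ maps $\holspace(\cD)$ into $\holspace_{P_0}(\cD_{P_0})$, that it is injective on $\holspace(\cD)$, and that it is onto $\holspace_{P_0}(\cD_{P_0})$. The first is immediate: since $\cD_{P_0}\subset\cD$ and, for $\gd\in\cD_{P_0}^N$, the operator $\gf_\gd$ involves only the $P_0$-component of $\gf$, so that $\gf_\gd=\pr(\gf)_\gd$, the conditions (a), (b) of Definition~\ref{d: Delorme's intertwining conditions} for $\gf$ over $\cD$ specialize to the same conditions for $\pr(\gf)$ over $\cD_{P_0}$; as $\pr(\gf)\in\holspace_{P_0}$ tautologically, this gives $\pr(\gf)\in\holspace_{P_0}(\cD_{P_0})$.

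The other two points rest on one construction, which I regard as the main technical input. Fix $P\in\cP(A)$ and $\xi\in\dMPds$. By the very choice of $\dMP$ (subrepresentation theorem for $M_P$), $\xi$ is topologically equivalent to a closed subrepresentation of a representation of the smooth minimal principal series of $M_P$; one would induce that embedding in stages up to $G$ and then transport the resulting minimal parabolic subgroup from $\cP(A)$ onto $P_0$ by conjugation with a representative of a Weyl group element. This yields, for a suitable $\gs=\gs_{P,\xi}\in\dM$ and a suitable affine map $\mu=\mu_{P,\xi}\colon\faPdc\to\fadc$ with linear part $\ell=\ell_{P,\xi}$, a continuous injective $K$-equivariant map $J=J_{P,\xi}\colon C^\infty(K\col\xi)\to C^\infty(K\col\gs)$ which, for every $\gl\in\faPdc$, intertwines $\pi_{P,\xi,\gl}$ with $\pi_{P_0,\gs,\mu(\gl)}$; since in the compact picture the parameter enters only through the $G$-action, $J$ is independent of $\gl$ and its image is an invariant subspace of $\pi_{P_0,\gs,\mu(\gl)}$ (for $P=P_0$ take $J=\id$, $\gs=\xi$, $\mu=\id$). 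Two features must be arranged with care: that one lands in the \emph{fixed} $P_0$ with $\gs$ in the \emph{fixed} set $\dM$, and that $\mu$ is affine and $J$ constant in $\gl$, so that holomorphy (and, in the estimated spaces, the Paley--Wiener estimate) are preserved. Finally, using that the Delorme derivation commutes with parabolic induction (Section~\ref{s: parabolic induction}), that it is compatible with composition, and the chain rule $\Phi^{(\eta)}(\gl)=\Psi^{(\ell\eta)}(\mu(\gl))$ for $\Phi(\gl)=\Psi(\mu(\gl))$, one obtains for each finite sequence $\eta$ in $\faPdc$ an injective intertwining operator $J^{(\eta)}_{P,\xi}$ from $\pi^{(\eta)}_{P,\xi,\gl}$ to $\pi^{(\ell\eta)}_{P_0,\gs,\mu(\gl)}$ (injectivity of $J$ passes to $J^{(\eta)}$ by the explicit shape of $\Phi^{(\eta)}$).

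For injectivity, let $\gf\in\holspace(\cD)$ with $\pr(\gf)=0$. For $P\neq P_0$, $\xi\in\dMPds$ and any $\gl$, apply condition (b) of Definition~\ref{d: Delorme's intertwining conditions} to $\gd_1=(P,\xi,\gl,\void)$, $\gd_2=(P_0,\gs_{P,\xi},\mu_{P,\xi}(\gl),\void)$, with trivial invariant subspaces and intertwining operator $T=J_{P,\xi}$; it gives $J_{P,\xi}\,\gf(P,\xi)(\gl)=\gf(P_0,\gs_{P,\xi})(\mu_{P,\xi}(\gl))\,J_{P,\xi}=0$, since $\pr(\gf)=0$. Injectivity of $J_{P,\xi}$ forces $\gf(P,\xi)(\gl)=0$, hence $\gf(P,\xi)=0$; thus $\gf=0$.

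For surjectivity, let $\psi\in\holspace_{P_0}(\cD_{P_0})$. Set $\gf(P_0,\dotvar)=\psi$, and for $P\neq P_0$, $\xi\in\dMPds$, let $\gf(P,\xi)(\gl)$ be the unique endomorphism of $C^\infty(K\col\xi)$ with $J_{P,\xi}\,\gf(P,\xi)(\gl)=\psi(\gs_{P,\xi})(\mu_{P,\xi}(\gl))\,J_{P,\xi}$; this is well posed because condition (a) for $\psi$ at the datum $(P_0,\gs_{P,\xi},\mu_{P,\xi}(\gl),\void)\in\cD_{P_0}$ forces $\psi(\gs_{P,\xi})(\mu_{P,\xi}(\gl))$ to preserve $\mathrm{im}(J_{P,\xi})$. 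Fixing a $\gl$-independent left inverse $L$ of $J_{P,\xi}$ gives the closed formula $\gf(P,\xi)(\gl)=L\,\psi(\gs_{P,\xi})(\mu_{P,\xi}(\gl))\,J_{P,\xi}$, which shows at once that $\gf(P,\xi)\in\cO(\faPdc)\otimes\cS(P,\xi)$. Moreover $\gf(P,\xi)=0$ for all but finitely many $\xi$: $\psi$ is supported on a finite set of $\gs\in\dM$ and each $\psi(\gs)$ on a finite set of $K$-types, while the lowest $K_P$-types of the discrete series of $M_P$ leave every finite set; hence for all but finitely many $\xi$ no $K$-type of $\pi_{P,\xi,\gl}$ meets the support of $\psi(\gs_{P,\xi})$, so $\psi(\gs_{P,\xi})(\mu_{P,\xi}(\gl))$ annihilates $\mathrm{im}(J_{P,\xi})$ and $\gf(P,\xi)=0$. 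Thus $\gf\in\holspace$ and $\pr(\gf)=\psi$. By Lemma~\ref{l: one Delorme condition sufficient} it remains to verify condition (a) for $\gf$ over all of $\cD$. Given $\gd=(\gd_1,\dots,\gd_N)\in\cD^N$ with $\gd_i=(P_i,\xi_i,\gl_i,\eta_i)$, the operators $J^{(\eta_i)}_{P_i,\xi_i}$ assemble into an injective intertwining operator $\mathbf J\colon\pi_\gd\to\pi_{\gd'}$, where $\gd'=\bigl((P_0,\gs_{P_i,\xi_i},\mu_{P_i,\xi_i}(\gl_i),\ell_{P_i,\xi_i}\eta_i)\bigr)_i\in\cD_{P_0}^N$, and $\mathbf J$ intertwines $\gf_\gd$ with $\psi_{\gd'}$. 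For an invariant subspace $U\subset\pi_\gd$, the subspace $\mathbf J(U)\subset\pi_{\gd'}$ is invariant, hence preserved by $\psi_{\gd'}$ (condition (a) for $\psi$ over $\cD_{P_0}$); applying $\mathbf J^{-1}$ on its image shows $\gf_\gd$ preserves $U$. Thus $\gf\in\holspace(\cD)$, which completes the proof. The most delicate parts of the argument are the construction of the embeddings $J_{P,\xi}$ with the stated properties and the finiteness of the support of the reconstructed $\gf$.
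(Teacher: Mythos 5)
Your proof is correct and follows essentially the same route as the paper: both rest on the subrepresentation theorem to construct, for each $(P,\xi)$, a $\gl$-independent $K$-equivariant embedding $C^\infty(K\col\xi)\hookrightarrow C^\infty(K\col\gs)$ intertwining $\pi_{P,\xi,\gl}$ with $\pi_{P_0,\gs,\mu(\gl)}$ for an affine $\mu$, and then use conditions (a) and (b) to transfer data across this embedding in both directions. The paper packages injectivity and surjectivity into a single two-sided inverse $\tau$ whereas you argue them separately; you also make explicit, via the left-inverse formula $\gf(P,\xi)(\gl)=L\,\psi(\gs_{P,\xi})(\mu_{P,\xi}(\gl))\,J_{P,\xi}$ and the lowest-$K_P$-type argument, the holomorphy of the reconstructed components and the fact that only finitely many are nonzero, a finiteness point the paper passes over as being readily seen.
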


\begin{proof}%PROOF
If $\gf \in \holspace$ and if  $\gd$ is a finite sequence in $\cD_{P_0},$ then
$\gf_\gd = (\gf_{P_0})_\gd.$ Hence $\gf \in \holspace(\cD_{P_0})$ if and only if
$\pr (\gf) \in \holspace(\cD_{P_0}).$

From $\cD_{P_0} \subset \cD$ it follows that $\holspace(\cD) \subset \holspace(\cD_0).$
Therefore, $\pr$ maps $\holspace(\cD)$ into $\holspace_{P_0} \cap \holspace(\cD_{P_0}) = \holspace_{P_0}(\cD_{P_0}).$ The proof will be completed by showing that the restricted projection $\pr_{\cD} \coleq \pr|_{\holspace(\cD)}: \holspace(\cD) \to \holspace_{P_0}(\cD_{P_0})$ is a linear isomorphism.
We will do this by defining a map
$$
\tau: \;\;\holspace_{P_0} \too \holspace
$$
which will turn out to induce a two-sided inverse for $\pr_\cD.$

Each parabolic subgroup $P \in \cP(A)$ has a Langlands decomposition $P = M_P A_P N_P.$ Here $M_P$ is a real reductive group of the Harish-Chandra class, with Cartan decomposition $M_P = (M_P\cap K) \exp(\fm_P \cap \fp).$ Moreover, ${}^*\fa\coleq \fm_P \cap \fa$ is maximal abelian in $\fm_P \cap \fp,$ and the centralizer of ${}^*\fa$ in $K_P\coleq M_P \cap K$ equals $M = Z_K(\fa).$
We select a minimal parabolic subgroup
$$
{}^*Q = M\,  {}^*\!A \, {}^*\!N
$$
of $M_P.$ In addition, we fix a minimal
parabolic subgroup  $Q$ of $G$ containing $A$ such that ${}^*Q = Q \cap M_P,$ and we fix an element $w \in N_K(\fa)$ such that $P_0 = w^{-1} Q w.$ In case $P = P_0$, we agree to make the special
choice $Q = P_0$ and $w =e.$

Then by the subrepresentation theorem applied to the group $M_P,$
for each $\xi \in \dMPds$ we may fix a representation $\gs \in\dM$
and an element $\mu \in {}^*\fadc$ such that
$\xi$ is equivalent to a subrepresentation of the parabolically induced (smooth) representation $\pi^{M_P}_{{}^*Q, \gs, \mu}$ of $M_P.$ In addition, we fix an $M_P$-equivariant embedding
$$
j_{\xi}:\; (\xi, V_\xi) \;\;\embeds \;\;
(\pi^{M_P}_{{}^*Q, \gs, \mu}, C^\infty(K \cap M_P: \gs)).
$$
Through induction by stages, this embedding induces an embedding
\begin{equation}
\label{e: j xi P}
j_{\xi P}^G:\; C^{\infty}(K\col \xi) \too C^\infty(K\col \gs)
\end{equation}
which intertwines $\pi_{P, \xi, \gl}$ with $\pi_{Q, \gs, \mu + \gl},$
for all $\gl \in \faPdc.$ Here ${}^*\fa_{\iC}^*$ and $\fa_{P\iC}^*$ are viewed as subspaces of $\fadc$ via the direct sum decomposition
$\fa = {}^*\fa \oplus \fa_P.$
Thus, (\ref{e: j xi P}) is a morphism
in the category $\HFGO$, see Definition \ref{d: sec59}.
In the special case  $P = P_0,$ we agreed that $Q =P_0$ and $w = e.$ In this case, we have $\mu = 0,$ and as $\gs$ must be equivalent to $\xi,$ it follows that $\gs = w\cdot \gs = \xi.$  It is now readily verified that $j_{\xi P}^G$ is the identity map of $C^\infty(K\col \xi).$

Left translation by $w$ induces a topological linear isomorphism
\begin{equation}
\label{e: L w}
L(w): \; C^\infty(K\col\gs) \too C^\infty(K\col w\gs),
\end{equation}
which intertwines the induced representation $\pi_{Q, \gs, \mu+ \gl}$
with  $\pi_{P_0, w \gs, w(\mu + \gl)},$ for all $\gl \in \faPdc.$
Here $w\gs$ denotes the representation of $M$ in $V_\gs,$ given by
$w\gs(m) =\gs(w^{-1}mw).$ We denote by $(w\cdot \gs, V_{w \cdot \gs})$ the unique element of $\dM$ which is equivalent to $(w\gs, V_\gs).$
Fix an equivalence $t_w: V_\gs \to V_{w\cdot \gs}$ and denote the induced map $C^\infty(K\col w\gs) \to C^\infty(K \col w\cdot\gs)$
by $T_w.$ Then writing $\cL_w = T_w\after L(w),$ we obtain a
continuous linear injection
\begin{equation}
\label{e: embedding K xi to K w gs}
\cL_w\after j_{\xi P}^G:\;\; C^\infty(K\col \xi) \too C^\infty(K\col w\cdot \gs)
\end{equation}
which intertwines the representation $\pi_{P,\xi, \gl}$
with $\pi_{P_0, w\cdot \gs, w(\mu + \gl)},$ for all $\gl \in \faPdc.$
In the special case $P = P_0,$ where $Q = P_0, w =e, \mu = 0$
and $\gs = \xi,$ we may take $t_w = \id_{V_\xi}$ and then
$\cL_w$ and hence (\ref{e: embedding K xi to K w gs})
become the identity map of $C^\infty(K\col \xi).$

We are now ready to define the map $\tau.$ Let $P, \xi, \gl$ be as above, and let
$\gf\in \holspace_{P_0}(\cD_0).$ Then $\gf_{P_0, w\cdot\gs, w(\mu + \gl)}$ leaves the invariant
subspace ${\rm im}(\cL_w\after j_{\xi P}^G)$ for the representation
$\pi_{P_0, w\cdot \gs, w(\mu +\gl)}$ invariant by condition (a) of Definition \ref{d: Delorme's intertwining conditions}, so that we may define
$$
\tau(\gf)_{P, \xi,\gl}: \;\; C^\infty(K\col \xi) \to C^\infty(K\col \xi)
$$
to be the unique linear map such that the following diagram commutes
\begin{equation}
\label{e: comm diag t gf}
\xymatrix{
\ar[d]_{\tau(\gf)_{P,\xi,\gl}} C^\infty(K\col \xi)
      \ar[rr]^{\cL_w\after  j_{\xi P}^G}
      &&\ar[d]^{\gf_{P_0, w\cdot\gs,w(\mu + \gl)}}  C^\infty(K\col w\cdot\gs)  \\
C^\infty(K\col \xi ) \ar[rr]_{\cL_w\after j_{\xi P}^G}
      && C^\infty(K\col w\cdot \gs).
}
\end{equation}
Since $\gf_{P_0, w\cdot\gs} \in \cO(\fadc) \otimes \End{C^\infty(K\col w\cdot\gs)}_{K\times K},$ it is readily seen that $\tau(\gf)_{P,\xi}$ defines a holomorphic function on $\faPdc,$ with values in $\cS(P, \xi).$
Accordingly, $\tau$ defines a linear map $\holspace_{P_0} \to \holspace.$

We will now finish the proof by showing that
\begin{enumerate}
\item[{\rm (i)}]
$\tau$ maps
$\holspace_{P_0}(\cD_{P_0})$ into $\holspace(\cD),$
\item[{\rm (ii)}]
$\tau$ restricts to a linear isomorphism $\tau_\cD: \holspace_{P_0}(\cD_{P_0})\to \holspace(\cD)$ which is a two-sided inverse
for $\pr_\cD.$
\end{enumerate}

We will first establish (i). Let $\gf \in \holspace_{P_0}(\cD_0),$ and let
$N \in \Z_+$ and $\gd \in \cD^N.$ We claim that it suffices to show that
there exists a $\gd' \in \cD_{P_0}^N$ and a linear embedding
$j: V_{\gd} \to V_{\gd'}$ intertwining $\pi_\gd$ with $\pi_{\gd'}$
such that
\begin{equation}
\label{e: intertwining embedding j}
\gf_{\gd'} \after j = j \after \tau(\gf)_\gd.
\end{equation}
Indeed assume the claim to hold, and let $W \subset V_\gd$ be an invariant subspace. Then $j(W)$ is an invariant subspace of $V_{\gd'}.$
Moreover, since $\gf \in \holspace_{P_0}(\cD_0),$ it follows that
$\gf_{\gd'}$ leaves $j(W)$ invariant. As $j$ is injective, it now follows
from (\ref{e: intertwining embedding j}) that $\tau(\gf)_\gd$ leaves
$W$ invariant. Thus, the validity of the claim would imply that the above condition (i) holds.

We turn to the proof of the claim. It clearly suffices to prove the claim in case $N =1,$ so that $\gd \in \cD.$ Thus, $\gd$ is a $4$-tuple of the form $(P, \xi, \gl_0, \eta)$ with notation as in the beginning
of Section \ref{s: the intertwining conditions}. In particular,
$\eta$ is a finite sequence in $\faPdc.$ Let $Q, \gs, \mu, w$ be associated
with the data $P, \xi$ as in the first part of this proof, where the
definition of $\tau$ was given. Then the injective linear map
(\ref{e: embedding K xi to K w gs})
intertwines
 $\pi_{P,\xi, \gl}$  with $\pi_{P_0, w\cdot\gs, w(\mu + \gl)}$ for all $\gl \in \faPdc.$

It follows that the map
$$
j:\;\; C^\infty(K\col \xi)^{(\eta)} \too C^\infty(K\col w\cdot\gs)^{(\eta)}
$$
induced by (\ref{e: embedding K xi to K w gs})
interwines $\pi_\gd$ with $\pi_{\gd'},$ where
$\gd' = (P_0, w\cdot\gs, w(\gl_0 + \mu), w\eta).$ Moreover, the commutativity of the diagram (\ref{e: comm diag t gf}) implies that
$$
j \after \tau(\gf)_{P,\xi, \gl}^{(\eta)} =
        \gf_{P_0, w\cdot\gs, w(\mu + \gl)}^{(w\eta)} \after j,
$$
or, abbreviated, $j \after \tau(\gf)_\gd = \gf_{\gd'} \after j.$
This establishes the claim.

We now consider the induced map $\tau_\cD: \holspace_{P_0}(\cD_0) \to \holspace(\cD),$ obtained by restriction of $\tau,$ and will finish
the proof by establishing (ii).

Let $\gf \in \holspace_{P_0}(\cD_{P_0}).$ Moreover, let $Q, \gs, \mu, w$ be associated to $P = P_0$ as above. By the special choices we made for this particular parabolic subgroup, the diagram (\ref{e: comm diag t gf}) becomes
$$
\xymatrix{
\ar[d]_{\tau(\gf)_{P_0,\xi,\gl}} C^\infty(K\col \xi)
      \ar[rr]^{ \id }
      &&\ar[d]^{\gf_{P_0, \xi ,\gl}
      \;\;\;\;\;\;\;\;\;\;\;\;} C^\infty(K\col \xi)  \\
C^\infty(K\col \xi ) \ar[rr]_{\id }
      && C^\infty(K\col \xi),
}
$$
It follows that $\pr\after \tau (\gf) = \gf,$ and we see that $\tau_\cD$ is
a right inverse to $\pr_\cD.$

We will finish the proof by showing that $\tau_\cD$ is also a left inverse.
Let $\psi \in \holspace(\cD),$ and let $P \in \cP(A)$ and $\xi \in \dMPds.$ Let $Q, \gs, \mu, w$ be as above. As $\psi$ satisfies condition
(b) of Definition \ref{d: Delorme's intertwining conditions},
it follows that the following diagram commutes, for all $\gl \in \faPdc,$
$$
\xymatrix{
\ar[d]_{\psi_{P,\xi,\gl}} C^\infty(K\col \xi)
      \ar[rr]^{\cL_w \after j_{\xi P}^G}
      &&\ar[d]^{\psi_{P_0, w\cdot\gs,w(\mu + \gl)}
      \!\!\!\!\!\!\!\!\!\!\!\!\!\!\!\!\!\!\!\!\!\!\!\!\!\!\!}  C^\infty(K\col w\cdot\gs)  \\
C^\infty(K\col \xi ) \ar[rr]_{\cL_w\after j_{\xi P}^G}
      && C^\infty(K\col w\cdot\gs).
}
$$
By comparison with (\ref{e: comm diag t gf}) we see that $\psi_{P,\xi} =
\tau(\psi_{P_0})_{P, \xi}.$ Hence, $\tau\after\pr(\psi) = \psi,$ for $\psi \in \cF(\cD).$ Therefore, $\tau_\cD $ is a left inverse to $\pr_\cD.$
\end{proof}

\subsection{Reformulation in our setting}%SUBSECTION
We shall now compare Delorme's derivation process with the process defined
in the present paper. In the following we shall identify  $V \oplus V$ with $\C^2 \otimes V$
via the map $(v_1, v_2) \mapsto (1,0) \otimes v_1 + (0,1) \otimes v_2.$ This identification
induces an identification $\Endo(V \oplus V) \simeq \Endo(\C^2) \otimes \Endo(V).$
                  \par
Let now $\eta \in \fadc$ and let
 $\Omega$ be an open subset of $\fa_\iC^*.$
For $\gf\in \cO(\Omega)$ we define $D^{(\eta)}\gf \in \cO(\Omega) \otimes \Endo(\C^2)$ by
$$
D^{(\eta)}\gf(\gl) =
\left(
\begin{array}{cc}
\gf(\gl) & \gf(\gl; \eta) \\
0 & \gf(\gl)
\end{array}
\right).
$$
Let $V$ be a quasi-complete locally convex space.
Then for $\Phi \in \cO(\Omega) \otimes \Endo(V),$ Delorme's derivative $\Phi^{(\eta)}$ is given
by
$$
\Phi^{(\eta)} = (D^{(\eta)} \otimes \id_{\Endo(V)}) \Phi.
$$
Here we note that
\begin{equation}
\label{e: V eta as tensor product}
V^{(\eta)} =  \C^2\otimes V,
\end{equation}
so that
$\Endo(V^{(\eta)}) = \Endo(\C^2) \otimes \Endo(V).$
It follows that Delorme's differentiation map
$(\dotvar)^{(\eta)}$ from $\cO(\Omega, \Endo(V)) $ to $\cO(\Omega, \Endo(V^{(\eta)}))$
is the unique continuous linear extension of the map $D^{(\eta)} \otimes \id_{\Endo(V)}.$
                   \par
Now assume that $\eta \neq 0$ and let $\cI_\eta$ be the cofinite ideal of $\cO_0 = \cO_0(\fa_\iC^*)$
defined in (\ref{e: idealcodim2}).
Then $\cI_\eta$ has codimension $2$ by Lemma \ref{l: codimension two}. More precisely, let $X \in \fac$
be such that $\eta(X) =  1.$ Then the map
$
(z_1, z_2) \mapsto z_1 + z_2 X  + \cI_\eta
$
defines a linear isomorphism $\kappa$ from $\C^2$ onto the $\cO_0$-module
$$
E_\eta\coleq  \cO_0/\cI_\eta.
$$
Now assume that $V$ is barrelled. Then in view of (\ref{e: V eta as tensor product})
the isomorphism $\kappa$ induces
an isomorphism from $\cO(\Omega, \Endo(V^{(\eta)}))$ onto $\cO(\Omega, \Endo(E_\eta \otimes V)),$
denoted $\kappa_*.$

\begin{lemma}%LEMMA
With notation as above, the following diagram commutes
$$
\xymatrix{
\O(\Omega,\End{V})\ar@{->}[d]_{(\dotvar)^{(\eta)}}\ar@{->}[drr]^{\diffD^{(E_{\eta})}}&&\\
\O(\Omega,\End{V^{(\eta)}})\ar@{->}[rr]_{\kappa_*} && \O(\Omega,\End{E_{\eta}\otimes V})
}
$$
\end{lemma}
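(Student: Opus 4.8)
The plan is to reduce the asserted identity of maps to an explicit $2\times2$ matrix computation on the codimension--two module $E_\eta=\cO_0/\cI_\eta$; the substance of the lemma is simply that Delorme's two--step derivation is the functor $\diffD^{(E)}$ attached to $E=E_\eta$.

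First I would pass to scalar--valued functions. Both composites in the diagram are continuous $\C$--linear maps $\cO(\Omega,\End{V})\to\cO(\Omega,\End{E_\eta\otimes V})$: continuity of $\diffD^{(E_\eta)}$ is Corollary~\ref{c: continuity of diffD} (together with the identification $\End{E_\eta}\otimes\Hom(V,W)\simeq\Hom(E_\eta\otimes V,E_\eta\otimes W)$, cf.\ Remark~\ref{r: J is M}); the map $(\dotvar)^{(\eta)}$ is continuous because, by construction, it is the unique continuous linear extension of $D^{(\eta)}\otimes\id_{\End{V}}$; and $\kappa_*$ is a topological isomorphism acting pointwise in $\gl$. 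Under the identifications $V^{(\eta)}=\C^2\otimes V$ and $\End{E_\eta\otimes V}=\End{E_\eta}\otimes\End{V}$, the isomorphism $\kappa_*$ is conjugation by $\kappa$ on the first tensor factor; hence both composites send $f\otimes A$ (with $f\in\cO(\Omega)$, $A\in\End{V}$) to $\Psi(f)\otimes A$, where, writing $\kappa_*$ also for the conjugation $\cO(\Omega,\End{\C^2})\to\cO(\Omega,\End{E_\eta})$ induced by $\kappa$, one has $\Psi=\kappa_*\after D^{(\eta)}$ on the left and $\Psi=\diffD^{(E_\eta)}$ on the right, both as maps $\cO(\Omega)\to\cO(\Omega,\End{E_\eta})$. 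Since $\cP(\fa_\iC^*)\otimes V$ is dense in $\cO(\Omega,V)$ by Lemma~\ref{l: density of P in O}, it therefore suffices to show $\kappa_*(D^{(\eta)}f)=f^{(E_\eta)}$ in $\cO(\Omega,\End{E_\eta})$ for every $f\in\cO(\Omega)$.

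Next I would reduce to the single point $\gl=0$. Both sides commute with the pull--back $T_\mu^*$: for $\diffD^{(E_\eta)}$ this is Lemma~\ref{sec13}(b); for $D^{(\eta)}$ it is immediate because $\partial_\eta$ commutes with translation; and $\kappa_*$ acts pointwise. So it is enough to verify $\kappa_*(D^{(\eta)}f)(0)=f^{(E_\eta)}(0)$ for all $f$. Since $\ann_{\cO_0}(E_\eta)=\cI_\eta$, Lemma~\ref{sec13}(a) shows that $f^{(E_\eta)}(0)$ is multiplication on $E_\eta$ by the germ $J_{\cI_\eta}f(0)$. By Lemma~\ref{l: codimension two}, $\cO_0=\C1\oplus\C X\oplus\cI_\eta$, so $\{\,1+\cI_\eta,\;X+\cI_\eta\,\}=\{\kappa(1,0),\kappa(0,1)\}$ is a basis of $E_\eta$; and since $1$ and $\eta$ annihilate $\cI_\eta$ by the very definition of the latter, $S_{\cI_\eta}(\fa^*)=\C1\oplus\C\eta$, and this basis of $E_\eta$ is dual to $\{1,\eta\}$ under the pairing of Lemma~\ref{sec7}. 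That lemma gives $\langle J_{\cI_\eta}f(0),1\rangle=f(0)$ and $\langle J_{\cI_\eta}f(0),\eta\rangle=f(0;\eta)$, hence $J_{\cI_\eta}f(0)=f(0)(1+\cI_\eta)+f(0;\eta)(X+\cI_\eta)$. Multiplying this germ against the two basis vectors --- using $(X+\cI_\eta)^2=X^2+\cI_\eta=0$ since $X^2\in\cM^2\subset\cI_\eta$ --- yields the matrix of $f^{(E_\eta)}(0)$ in the basis $(1+\cI_\eta,\,X+\cI_\eta)$, and transporting it to $\C^2$ via $\kappa$ gives precisely the matrix $D^{(\eta)}f(0)$. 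Combined with the two reductions, this proves the lemma.

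I do not expect a serious obstacle; the argument is essentially bookkeeping. The two points demanding care are the passage to scalar--valued $f$, where one must keep the identifications $V^{(\eta)}=\C^2\otimes V$ and $\End{E_\eta\otimes V}=\End{E_\eta}\otimes\End{V}$ straight so that $\kappa_*$ genuinely acts on the first factor alone, and the tracking of which coordinate of $\C^2$ corresponds under $\kappa$ to which basis vector of $E_\eta$, so that the off--diagonal derivative term $f(0;\eta)$ ends up in the correct entry of the $2\times2$ matrix.
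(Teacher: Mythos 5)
Your argument follows essentially the same route as the paper's proof: reduce to the scalar case by density and continuity, then compute the multiplication action of a germ on $E_\eta$ and identify it with the $2\times 2$ matrix $D^{(\eta)}$. You are more explicit than the paper in two useful ways — you separate the reduction to $V=\C$ from the reduction to $\gl=0$ (using Lemma~\ref{sec13}(b) and translation equivariance of $D^{(\eta)}$), and you compute $J_{\cI_\eta}f(0)$ by exhibiting the dual pair $\{1,\eta\}\subset S_{\cI_\eta}(\fa^*)$ and $\{\bar 1,\bar X\}\subset E_\eta$ under the pairing of Lemma~\ref{sec7}, where the paper just points to ``the same calculation as in the proof of Lemma~\ref{sec40}.'' Those parts are correct and well organized.

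The gap is at the one step you defer: you assert that the matrix of multiplication by $J_{\cI_\eta}f(0)$ on $E_\eta$, read through $\kappa$, ``gives precisely'' $D^{(\eta)}f(0)$, flagging it as a point demanding care but never writing the matrix down. If you carry it out, you find, with $e_1=\bar 1$, $e_2=\bar X$ and $\psi=\gg_0(T_\gl^*f)$, that
$m(\psi)\bar 1 = f(\gl)\bar 1 + f(\gl;\eta)\bar X$ and $m(\psi)\bar X = f(\gl)\bar X$
(using $X^2\in\cI_\eta$), i.e.\ in the ordered basis $(\bar 1,\bar X)$ the multiplication matrix is the \emph{lower} triangular
$\bigl(\begin{smallmatrix} f(\gl) & 0\\ f(\gl;\eta) & f(\gl)\end{smallmatrix}\bigr)$,
whereas $D^{(\eta)}f(\gl)=\bigl(\begin{smallmatrix} f(\gl) & f(\gl;\eta)\\ 0 & f(\gl)\end{smallmatrix}\bigr)$ is upper triangular. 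The two are transposes. So the commutativity fails with $\kappa$ as stated in the text; one must take $\kappa(z_1,z_2)=z_1 X + z_2+\cI_\eta$ (i.e.\ use the ordered basis $(\bar X,\bar 1)$). The paper's own proof silently makes the same slip — it records the multiplication matrix as $\bigl(\begin{smallmatrix}\gf(0)&\gf(0;\eta)\\0&\gf(0)\end{smallmatrix}\bigr)$ ``with respect to the basis $\bar 1,\bar X$,'' which is actually the matrix in the reversed basis — so your conclusion is morally right, but precisely the bookkeeping you warned yourself about is where the verification must be done, and doing it reveals that the stated $\kappa$ needs its coordinates swapped.
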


\begin{proof}%PROOF
By the same calculation as in the proof of Lemma \ref{sec40}, it follows that the multiplication
action $m(\gf)$ of an element $\gf \in \cO_0$ on $E_\eta$ with respect to the basis $\bar 1, \bar X$ is given by
the matrix
$$
M(\gf)\coleq  \left(
\begin{array}{cc}
\gf(0) & \gf(0; \eta) \\
0 & \gf(0)
\end{array}
\right).
$$
Let $\widetilde \kappa$ be the isomorphism $\Endo(\C^2) \to \Endo(E_\eta)$ induced by $\kappa.$ Then
it follows that for all $\gf \in \cO(\Omega)$ and $\gl \in \Omega,$
$$
J^{(E_\eta)} \gf(\gl) = m(\gg_0(T_\gl\gf)) = \widetilde \kappa \, (\, M(\gg_0(T_\gl\gf))\, ) =
\widetilde\kappa\,(\, D^{(\eta)}\gf(\gl)\,).
$$
This immediately implies the commutativity of the diagram in case $V = \C.$
From this we see that the diagram commutes with the spaces $\cO(\Omega, \Endo(W))$ replaced
by the subspaces $\cO(\Omega) \otimes \Endo(W),$ for $W$ equal to $V,\, V^{(\eta)}$ or $E_\eta\otimes V.$
By density of the mentioned spaces and continuity of the maps involved, the result follows.
\end{proof}

Given a finite sequence $\eta = (\eta_1,\ldots, \eta_N)$ of elements in $\fadc,$
we define the $\cO_0$-module $E_\eta\coleq E_{\eta_1} \otimes \cdots \otimes E_{\eta_N}.$

\begin{cor}%COROLLARY
For every finite sequence $\eta$ as above, and all $\xi \in \dM$ and $\gl \in \fadc,$
$$
\pi_{P_0, \xi, \gl}^{(\eta)} \simeq \pi^{(E_\eta)}_{P_0, \xi, \gl}.
$$
\end{cor}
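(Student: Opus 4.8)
The plan is to reduce the iterated Delorme derivative $(\dotvar)^{(\eta)}$ to the functor $\diffD^{(E)}$ by stripping off one entry of $\eta$ at a time and invoking the lemma just proved, which identifies a single Delorme step $(\dotvar)^{(\eta_j)}$ on an arbitrary barrelled space with $\diffD^{(E_{\eta_j})}$ up to the fixed $\gl$-independent isomorphism $\kappa$. I would assume throughout that each $\eta_j \neq 0$ (so that $E_{\eta_j} = \cO_0/\cI_{\eta_j}$ is two-dimensional and the preceding lemma applies). Writing $V = C^\infty(K\col\xi)$ and $\pi = \pi_{P_0,\xi,\dotvar} \in \HFGinfty$ over $\Omega = \fadc$, I would actually prove the following slightly stronger assertion, from which the corollary follows by evaluating at a fixed pair $(g,\gl) \in G \times \fadc$: for every $(\pi,V) \in \HFGinfty$ over $\fadc$ and every finite sequence $\eta = (\eta_1,\dots,\eta_N)$ of nonzero elements of $\fadc$ there is a $\gl$-independent linear isomorphism $\Theta^V_\eta \colon V^{(\eta)} \to E_\eta \otimes V$ that is a morphism in $\HFGO$ from $\pi^{(\eta)}$ to $\pi^{(E_\eta)} = \diffD^{(E_\eta)} \after \pi$. (As a byproduct this shows $\pi^{(\eta)} \in \HFGinfty$, since $\Theta^V_\eta$ is automatically a homeomorphism between finite direct sums of copies of $V$ and $\pi^{(E_\eta)} \in \HFGinfty$.)

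The argument would go by induction on $N$. The base case $N = 1$ is precisely the lemma preceding this corollary, with $\Theta^V_{(\eta_1)} = \kappa \otimes \id_V$. For $N > 1$ I would write $\eta = (\eta_1,\eta')$ with $\eta' = (\eta_2,\dots,\eta_N)$, so that Delorme's iteration gives $\pi^{(\eta)} = (\pi^{(\eta')})^{(\eta_1)}$. By the induction hypothesis $\Theta^V_{\eta'} \colon V^{(\eta')} \to E_{\eta'} \otimes V$ (with $E_{\eta'} = E_{\eta_2} \otimes \cdots \otimes E_{\eta_N}$) is a morphism in $\HFGO$ from $\pi^{(\eta')}$ to $\pi^{(E_{\eta'})}$, and $\pi^{(\eta')} \in \HFGinfty$ acting on the Fr\'echet space $W := V^{(\eta')}$. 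Applying the base case to the family $\pi^{(\eta')}$ over $W$, the map $\kappa \otimes \id_W \colon W^{(\eta_1)} = V^{(\eta)} \to E_{\eta_1} \otimes W$ (with $\kappa$ the isomorphism of the preceding lemma for the element $\eta_1$) intertwines $\pi^{(\eta)} = (\pi^{(\eta')})^{(\eta_1)}$ with $(\pi^{(\eta')})^{(E_{\eta_1})} = \diffD^{(E_{\eta_1})} \after \pi^{(\eta')}$. Since $\diffD^{(E_{\eta_1})}$ is a functor on $\HFG$ (Proposition \ref{p: the differentiation is a functor}) carrying the constant morphism $\Theta^V_{\eta'}$ to the constant morphism $\id_{E_{\eta_1}} \otimes \Theta^V_{\eta'}$, the latter intertwines $\diffD^{(E_{\eta_1})} \after \pi^{(\eta')}$ with $\diffD^{(E_{\eta_1})} \after \pi^{(E_{\eta'})} = \diffD^{(E_{\eta_1})} \after \diffD^{(E_{\eta'})} \after \pi$. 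By Proposition \ref{sec33}, in the $\End{V}$-valued form obtained by the usual density-and-continuity argument, this family is $\pi^{(E_{\eta_1} \otimes E_{\eta'})}$; and composing with $\psi \otimes 1_\pi$, where $\psi \colon E_{\eta_1} \otimes E_{\eta'} \to E_\eta$ is the associativity isomorphism of Lemma \ref{sec34} (a morphism of $\cO_0$-modules, hence inducing an intertwiner of the corresponding $\diffD^{(E)}$-families), I would land on $\pi^{(E_\eta)}$. Composing the three $\gl$-independent isomorphisms produced, I would set $\Theta^V_\eta := (\psi \otimes \id_V) \after (\id_{E_{\eta_1}} \otimes \Theta^V_{\eta'}) \after (\kappa \otimes \id_W)$, which closes the induction.

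The hard part is entirely the bookkeeping of identifications, not any analytic content. Delorme's iterated derivative nests from the inside out, with $\eta_1$ the outermost index, while the module $E_\eta = E_{\eta_1} \otimes \cdots \otimes E_{\eta_N}$ and the composite functor $\diffD^{(E_{\eta_1})} \after \cdots \after \diffD^{(E_{\eta_N})}$ are organized left to right, so reconciling the two requires the associativity isomorphism of Lemma \ref{sec34}; moreover one must carefully check that every intertwiner produced along the way is constant in $\gl$, so that applying a further $\diffD^{(E)}$ to it merely tensors it with $\id_E$ instead of genuinely differentiating. It is exactly this point that forces one to use the preceding lemma in its full generality over an arbitrary barrelled ambient space rather than just over $C^\infty(K\col\xi)$: at stage $N$ of the induction it gets applied to the family $\pi^{(\eta')}$ sitting on $V^{(\eta')}$. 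Beyond that, the proof is a routine assembly of the functoriality and multiplicativity of $\diffD^{(E)}$ already established in Section \ref{s: holomorphic families}.
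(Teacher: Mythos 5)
Your proposal is correct and follows essentially the same route as the paper: the preceding lemma handles the length-one base case, and the rest is the ``recurrent nature'' of Delorme's definition combined with the iterated-$\diffD^{(E)}$ identity (your explicit induction unpacks exactly what the paper compresses into a citation of Proposition \ref{p: iterateder}, which itself rests on Proposition \ref{sec33} and Lemma \ref{sec34}). Your explicit tracking of the $\gl$-independence of the intertwiners, the need for the ambient space at each inductive stage to be barrelled, and the assumption $\eta_j \neq 0$ (required for $E_{\eta_j}$ to be two-dimensional and the preceding lemma to apply) are all correct points of care, and in fact make the argument more transparent than the paper's two-sentence version.
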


\begin{proof}%PROOF
For $\eta$ of length one this follows from the above lemma. For arbitrary sequences
it follows from the  recurrent nature of the definition of $\pi^{(\eta)}$ and Proposition
\ref{p: iterateder}.
\end{proof}

\begin{definition}
\label{d: End sharp}%DEFINITION
\rm
Let $V$ be a Harish-Chandra module. We denote by $\Endo(V)^\#$ the space
of endomorphisms $\gf \in \Endo(V)_{K\times K}$ such that for every $n \in \N$ the product map
$\gf^{\times n} \in \Endo(V^{\times n})$ leaves all $(\fg, K)$-invariant subspaces of $V^{\times n}$ invariant.
                  \par
If $(\pi, V_\pi)$ is an admissible representation of $G$ of finite length, we define $\Endo(\pi)^\# \coleq  \Endo((V_{\pi})_K)^\#.$
\end{definition}
\par
For
$E$ a finite dimensional $\cO_0$-module (where $\cO_0 = \cO_0(\fadc)$), and for $\Xi \subset \dM$ and $\gL \subset \fadc$ finite subsets, we recall the definition of the representation $\pi_{E, \Xi, \gL}$ by (\ref{e: defi pi E Xi gL}). Moreover, given $\gf \in \holspace_{P_0} = \cO(\fadc) \otimes \cS(P_0),$ we
recall the definition of $\gf_{E, \Xi, \gL}$ in (\ref{e: defi gf E Xi gL}).

\begin{lemma}\label{l:  stability of ic under subq}%LEMMA
Let $E,E'$ be finite dimensional $\cO_0$-modules such that $E$ is a subquotient of $E'.$
Let $\Xi \subset \Xi'\subset \dM$  be finite subsets and let $\gL \subset \gL'\subset \fadc$
be finite subsets. Then for all $\gf \in \holspace_{P_0},$
$$
\gf_{E', \Xi', \gL'} \in \Endo(\pi_{E', \Xi', \gL'})^\# \Longrightarrow
\gf_{E, \Xi, \gL} \in \Endo(\pi_{E, \Xi, \gL})^\#
$$
\end{lemma}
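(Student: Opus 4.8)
The plan is to reduce the statement to two elementary transfer principles for the $\#$-property and then assemble them. First I would record: let $(\rho,W)$ and $(\rho',W')$ be admissible $G$-representations of finite length and let $\psi\in\Endo(\rho)_{K\times K}$, $\psi'\in\Endo(\rho')_{K\times K}$; then \emph{(i)} if there is a $(\fg,K)$-embedding $\iota\colon W_K\hookrightarrow W'_K$ with $\iota\after\psi=\psi'\after\iota$, and \emph{(ii)} likewise if there is a surjective $(\fg,K)$-map $\varpi\colon W'_K\twoheadrightarrow W_K$ with $\varpi\after\psi'=\psi\after\varpi$, then in either case $\psi'\in\Endo(\rho')^\#$ implies $\psi\in\Endo(\rho)^\#$. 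Both are immediate diagram chases. In case \emph{(i)}, for each $n$ the map $\iota^{\times n}$ realizes $W_K^{\times n}$ as a $(\fg,K)$-invariant subspace of $(W'_K)^{\times n}$ on which $(\psi')^{\times n}$ restricts to $\psi^{\times n}$, so any $(\fg,K)$-invariant subspace of $W_K^{\times n}$ is one of $(W'_K)^{\times n}$ and is therefore left invariant by $(\psi')^{\times n}$, hence by $\psi^{\times n}$. In case \emph{(ii)}, given a $(\fg,K)$-invariant $U\subseteq W_K^{\times n}$, its preimage under $\varpi^{\times n}$ is $(\fg,K)$-invariant in $(W'_K)^{\times n}$, hence $(\psi')^{\times n}$-stable, and since $\varpi^{\times n}$ intertwines $(\psi')^{\times n}$ with $\psi^{\times n}$ and maps that preimage onto $U$, we get $\psi^{\times n}(U)\subseteq U$.

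Next I would produce the required equivariant maps. The inclusions $\Xi\times\gL\subseteq\Xi'\times\gL'$ make $V_{E,\Xi,\gL}$ a direct summand of $V_{E,\Xi',\gL'}$, the inclusion of this summand being a $(\fg,K)$-embedding that intertwines $\gf_{E,\Xi,\gL}$ with $\gf_{E,\Xi',\gL'}$; so \emph{(i)} gives $\gf_{E,\Xi',\gL'}\in\Endo(\pi_{E,\Xi',\gL'})^\#\Rightarrow\gf_{E,\Xi,\gL}\in\Endo(\pi_{E,\Xi,\gL})^\#$. For the passage from $E'$ to $E$, write $E\cong E''/E'''$ with $E'''\subseteq E''\subseteq E'$ submodules in $\FMod_{\cO_0}$. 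Tensoring the inclusion $E''\hookrightarrow E'$ with $\id_{C^\infty(K\col\xi)}$ is injective, and, using the lemma that a morphism in $\FMod_{\cO_0}$ induces the intertwiner $\psi\otimes 1_\pi$ (see also Lemma~\ref{l: exactness properties X pi}), carried over to the endomorphism-valued holomorphic function $\gf_\xi$ as in the last paragraph below, the direct sum over $(\xi,\gl)\in\Xi'\times\gL'$ of these maps is a $(\fg,K)$-embedding $V_{E'',\Xi',\gL'}\hookrightarrow V_{E',\Xi',\gL'}$ intertwining $\gf_{E'',\Xi',\gL'}$ with $\gf_{E',\Xi',\gL'}$; so \emph{(i)} gives $\gf_{E',\Xi',\gL'}\in\Endo(\pi_{E',\Xi',\gL'})^\#\Rightarrow\gf_{E'',\Xi',\gL'}\in\Endo(\pi_{E'',\Xi',\gL'})^\#$. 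Dually, tensoring the surjection $E''\twoheadrightarrow E$ with $\id_{C^\infty(K\col\xi)}$ and summing gives a surjective $(\fg,K)$-map $V_{E'',\Xi',\gL'}\twoheadrightarrow V_{E,\Xi',\gL'}$ intertwining $\gf_{E'',\Xi',\gL'}$ with $\gf_{E,\Xi',\gL'}$, so \emph{(ii)} gives $\gf_{E'',\Xi',\gL'}\in\Endo(\pi_{E'',\Xi',\gL'})^\#\Rightarrow\gf_{E,\Xi',\gL'}\in\Endo(\pi_{E,\Xi',\gL'})^\#$. Composing these three implications yields $\gf_{E',\Xi',\gL'}\in\Endo(\pi_{E',\Xi',\gL'})^\#\Rightarrow\gf_{E,\Xi,\gL}\in\Endo(\pi_{E,\Xi,\gL})^\#$, which is the assertion.

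The only step that requires a genuine argument, and which I would write out, is the claim used twice above: that the intertwining property of $\psi\otimes 1_\pi$, established in the cited lemma for holomorphic \emph{families of representations}, also holds for the differentiated endomorphism-valued holomorphic functions, i.e.\ that $(\psi\otimes\id_{C^\infty(K\col\xi)})\after\gf^{(E_1)}_\xi(\gl)=\gf^{(E_2)}_\xi(\gl)\after(\psi\otimes\id_{C^\infty(K\col\xi)})$ for every morphism $\psi\colon E_1\to E_2$ in $\FMod_{\cO_0}$. This reduces, by density of polynomials in $\cO(\fadc,\Endo(C^\infty(K\col\xi)))$ and continuity of $\diffD^{(E_1)}$ and $\diffD^{(E_2)}$, to the scalar identity $\psi\after f^{(E_1)}(\mu)=f^{(E_2)}(\mu)\after\psi$ for $f\in\cO(\fadc)$ verified in the proof of that lemma, together with the identification $\Endo(E_j)\otimes\Endo(C^\infty(K\col\xi))\simeq\Endo(E_j\otimes C^\infty(K\col\xi))$. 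Apart from this bookkeeping I foresee no obstacle; note in particular that the ``cross-term'' complication affecting $\Endo(V_{E,\Xi,\gL})_{K\times K}$ never intervenes, since throughout we compare $\gf_{E,\Xi,\gL}$ only with its own image under block-diagonal intertwining maps.
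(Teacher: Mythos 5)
Your proof is correct and follows essentially the same approach as the paper's: realize $\pi_{E,\Xi,\gL}$ as a subquotient of $\pi_{E',\Xi',\gL'}$ with $\gf_{E,\Xi,\gL}$ induced by $\gf_{E',\Xi',\gL'}$ (via the intertwining $\psi\otimes 1$ for a morphism $\psi$ of $\cO_0$-modules, together with the direct-summand reduction for $\Xi,\gL$), and then transfer the $\#$-property down. The paper treats the transfer step and the passage from the representation-level intertwining to the level of $\gf_\xi$ as immediate, invoking Lemma~\ref{l: exactness properties X pi}; you make both points explicit with the sub/quotient diagram chases and the density-plus-continuity argument, which is exactly the content being glossed over there.
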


\begin{proof}%PROOF
The result follows from the crucial observation that the representation $\pi_{E, \Xi, \gL}$ is a subquotient of
$\pi_{E', \Xi', \gL'}$ and the map $\gf_{E', \Xi', \gL'}$ induces the map $\gf_{E, \Xi, \gL}.$
Obviously, it suffices to prove this observation for $\Xi = \Xi'$ and  $\gL = \gL'.$
In this case, we put, for $\gl\in\a_\iC^*,$
$$
\pi_{\gl} \coleq \oplus_{\xi \in \Xi} \;\;\pi_{P_0, \xi, \gl}.
$$
Then $\pi$ is a holomorphic family of admissible smooth representations of $G$ over $\fadc,$ so that the functor $X_\pi: E \mapsto \pi^{(E)}$ has the exactness properties of
Lemma \ref{l: exactness properties X pi}. Hence, $\pi^{(E)}_\gl$ is a subquotient of $\pi^{(E')}_\gl,$ and $\gf_{E, \Xi, \gl}$ is induced by $\gf_{E', \Xi, \gl},$ for all $\gl \in \gL.$ The result now follows by taking the direct sum
over $\gl \in \gL.$
\end{proof}

\begin{prop}\label{p: dinter}%PROPOSITION
Let $\gf \in \holspace_{P_0}.$ Then the following conditions are equivalent:
\begin{enumerate}
\itema
$\gf \in \holspace_{P_0}(\cD_{P_0});$
\itemb
for every finite dimensional $\O_0$-module $E$ and every pair of finite sets $\Xi \subset \dM$
and $\Lambda\subset \a_\iC^*,$ the endomorphism $\gf_{E,\Xi,\Lambda}$ belongs to
$\Endo(\pi_{E,\Xi,\Lambda})^\#.$
\end{enumerate}
\end{prop}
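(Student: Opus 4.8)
The plan is to interpolate, between (a) and (b), the condition
$(\star)$: for every $N\in\Z_+$ and every $\gd\in(\cD_{P_0})^{N}$ one has $\gf_\gd\in\Endo(\pi_\gd)^\#$. Then I would prove (a) $\Leftrightarrow$ $(\star)$ $\Leftrightarrow$ (b).

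\emph{Step 1: (a) $\iff$ $(\star)$.} By Lemma~\ref{l: one restricted intertwining condition suffices}, condition (a) is equivalent to the assertion that $\gf_\gd$ preserves every invariant subspace of $\pi_\gd$ for all finite sequences $\gd$ in $\cD_{P_0}$; this is the case $n=1$ of $(\star)$. For the converse, if $\gd=(\gd_1,\dots,\gd_N)$ and $\gd^{(n)}$ denotes the $n$-fold concatenation of $\gd$ with itself, then $\pi_{\gd^{(n)}}=\pi_\gd^{\times n}$ and $\gf_{\gd^{(n)}}=\gf_\gd^{\times n}$, so applying (a) to all the sequences $\gd^{(n)}$ gives, by Definition~\ref{d: End sharp}, $\gf_\gd\in\Endo(\pi_\gd)^\#$. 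This self-concatenation trick, which lifts the single $n=1$ intertwining requirement in (a) to the all-$n$ requirement built into $\Endo(\cdot)^\#$, is the conceptual core of the equivalence.

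\emph{Step 2: (b) $\Rightarrow$ $(\star)$.} Fix $\gd=(\gd_1,\dots,\gd_N)\in(\cD_{P_0})^{N}$ with $\gd_j=(P_0,\xi_j,\gl_j,\eta_j)$; we may assume every entry of every $\eta_j$ is nonzero, since inserting a zero functional only replaces the corresponding family, up to isomorphism, by two copies of it. Put $E:=\bigoplus_{j=1}^{N}E_{\eta_j}$, $\Xi:=\{\xi_1,\dots,\xi_N\}$ and $\gL:=\{\gl_1,\dots,\gl_N\}$. Using the additivity $\pi^{(E'\oplus E'')}=\pi^{(E')}\oplus\pi^{(E'')}$ together with the Corollary preceding Definition~\ref{d: End sharp} (which, compatibly with $\gf$, identifies $\pi^{(\eta_j)}_{P_0,\xi,\gl}$ with $\pi^{(E_{\eta_j})}_{P_0,\xi,\gl}$), one finds that $\pi_{E,\Xi,\gL}$ decomposes as $\bigoplus_{(\xi,\gl)\in\Xi\times\gL}\bigoplus_{j=1}^{N}\pi^{(\eta_j)}_{P_0,\xi,\gl}$ and that $\gf_{E,\Xi,\gL}$ is the corresponding block-diagonal endomorphism. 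Retaining, for each $j$, only the block with $(\xi,\gl)=(\xi_j,\gl_j)$ exhibits a $(\fg,K)$-submodule of $\pi_{E,\Xi,\gL}$ isomorphic to $\pi_\gd$, on which $\gf_{E,\Xi,\gL}$ restricts to $\gf_\gd$. Since for each $n$ the $n$-th power of a submodule $W_0\subset W$ is a submodule of $W^{\times n}$ on which a block-diagonal endomorphism of $W$ restricts to the $n$-th power of the corresponding block, the $\#$-property descends from $\gf_{E,\Xi,\gL}$ (granted by (b)) to $\gf_\gd$, which is $(\star)$.

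\emph{Step 3: $(\star)$ $\Rightarrow$ (b).} Fix a finite-dimensional $\cO_0$-module $E$ and finite sets $\Xi\subset\dM$, $\gL\subset\fadc$. By Corollary~\ref{sec18} there are $N,k\in\N$ with $E$ a quotient of $(\cO_0/\cM^{k+1})^{N}$, and by Proposition~\ref{sec43}(b), applied to $\cI=\cM^{k+1}$, there is a finite sequence $\eta=(\gl_1,\dots,\gl_m)$ of nonzero elements of $\fadc$ such that $\cO_0/\cM^{k+1}$ is a subquotient of $E_\eta=(\cO_0/\cI_{\gl_1})\otimes\cdots\otimes(\cO_0/\cI_{\gl_m})$; hence $E$ is a subquotient of $E':=(E_\eta)^{\oplus N}$. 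Exactly as in Step 2, $\pi_{E',\Xi,\gL}$ is isomorphic to $\pi_\gd$, where $\gd\in(\cD_{P_0})^{N|\Xi|\,|\gL|}$ lists the data $(P_0,\xi,\gl,\eta)$ for $(\xi,\gl)\in\Xi\times\gL$, each repeated $N$ times, and under this isomorphism $\gf_{E',\Xi,\gL}$ corresponds to $\gf_\gd$; so $(\star)$ gives $\gf_{E',\Xi,\gL}\in\Endo(\pi_{E',\Xi,\gL})^\#$. Applying Lemma~\ref{l:  stability of ic under subq} with $\Xi'=\Xi$, $\gL'=\gL$ and the fact that $E$ is a subquotient of $E'$ then yields $\gf_{E,\Xi,\gL}\in\Endo(\pi_{E,\Xi,\gL})^\#$, which is (b).

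The main obstacle is the combinatorial bookkeeping in Steps 2 and 3 that reconciles the two kinds of direct sums at issue — those indexed by sequences $\gd$ of data, in which the jet module $E_{\eta_j}$ varies, versus those indexed by $\Xi\times\gL$ with a single fixed module $E$ — and the accompanying verification that $\Endo(\cdot)^\#$ behaves well under the operations used: it descends to block-diagonal direct summands (Step 2) and, via Lemma~\ref{l:  stability of ic under subq}, to subquotients obtained from $\cO_0$-modules (Step 3). No genuinely new representation-theoretic input is needed beyond Corollary~\ref{sec18}, Proposition~\ref{sec43} and the properties of the functors $J^{(E)}$ already established.
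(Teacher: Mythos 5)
Your proof is correct and follows essentially the same route as the paper: the intermediate condition $(\star)$ you introduce is used implicitly there (via Lemma~\ref{l: one restricted intertwining condition suffices} together with the self-concatenation observation that $\pi_{\gd}^{\times n}=\pi_{\gd^{(n)}}$), and Steps~2 and~3 mirror the paper's two implications, with the same block-decomposition of $\pi_{E,\Xi,\Lambda}$, the same appeal to Proposition~\ref{sec43} to present $E$ as a subquotient of a power of some $E_\eta$, and Lemma~\ref{l:  stability of ic under subq} for the descent. Your explicit reduction to sequences $\eta_j$ with nonzero entries tidies up a point the paper leaves tacit, since the identification $\pi^{(\eta)}\simeq\pi^{(E_\eta)}$ relies on $E_{\eta_i}=\cO_0/\cI_{\eta_i}$ having codimension~$2$, which needs $\eta_i\neq 0$.
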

\begin{proof}%PROOF
First assume (b). Let $\gd =( \gd_1, \ldots, \gd_N) $ be a sequence of data in $\cD_{P_0}.$
Then for (a)  it suffices to show that
\begin{equation}
\label{e: gf in sharp}
\gf_\gd \in \Endo(V_{\pi_\gd})^\#,
\end{equation}
in view of Lemma  \ref{l: one restricted intertwining condition suffices}.
\par
We note that,  with $\gd_j = (P_0, \xi_j, \gl_j, \eta_j),$
$$
\pi_\gd = \oplus_{j=1}^n \; \pi^{(E_{\eta_j})}_{P_0, \xi_j, \gl_j}.
$$
Let $E$ be the direct sum of the modules $E_{\eta_j}$,
and put $\Xi \coleq \{\xi_1, \ldots, \xi_N\}$  and $\gL \coleq \{\gl_1, \ldots, \gl_N\}.$ For each $1 \leq j \leq N$ and $\gl \in \fadc,$
let $(\pi_{j})_{\gl} \coleq \pi_{P_0, \xi_j, \gl}.$ Then $\pi_j$ is a holomorphic family of admissible smooth representations of $G$
over $\fadc,$ so that
 Lemma \ref{l: exactness properties X pi} applies to the functors
 $X_{\pi_j}: E \mapsto \pi_j^{(E)}.$ It follows that
$$
\pi_{\gd_j} \preceq \pi_{j \gl_j}^{(E)}, \quad \text{and}\quad
\gf_{\gd_j} = \gf_{P_0, \xi_j, \gl_j}^{(E)}|_{V_{\gd_j}}
$$
in a natural fashion (here $\rho \preceq \pi$ indicates that
$\rho$ is a subrepresentation of $\pi$).
This in turn implies that
$$
\pi_\gd \preceq \pi_{E,\Xi,\gL}, \quad \text{and}\quad
\gf_\gd = \gf_{E, \Xi, \gL}|_{V_\gd}.
$$
Hence (\ref{e: gf in sharp}) follows.
                 \par
Conversely, assume (a). Let $E, \Xi, \gL$ be as stated in (b).
First we consider the case that  $E$ is of the form $E_\eta,$ with $\eta$ a finite sequence in $\fadc.$
Then
$$
\pi_{E_\eta, \Xi, \gL} = \oplus_{\xi \in \Xi, \gl \in \gL} \;\; \pi^{(\eta)}_{P_0, \xi, \gl}
$$
and it follows from (a) that $\gf_{E_\eta, \Xi, \gL}$ belongs to $\Endo(\pi_{E_\eta, \Xi, \gL})^\#.$
In view of   Lemma \ref{l: exactness properties X pi}, this implies
that $\gf_{E, \Xi, \gL}$ belongs to $\Endo(\pi_{E, \Xi, \gL})^\#$ for $E$ a direct sum of copies of $E_\eta.$ Finally, let $E$ be arbitrary. Then by Proposition
\ref{sec43} the module $E$ is a subquotient of $E_\eta^N$ for a suitable finite sequence $\eta$ and a
suitable $N\in \N.$ This implies, again by Lemma \ref{l: exactness properties X pi}, that $\pi_{E, \Xi, \gL}$ is a subquotient of $\pi_{E_\eta^N, \Xi, \gL}$
and that $\gf_{E, \Xi, \gL}$ is induced by $\gf_{E_\eta^N, \Xi, \gL}.$
From this  and Lemma \ref{l: stability of ic under subq} it follows that $\gf_{E, \Xi, \gL}$ belongs to $\Endo(\pi_{E, \Xi, \gL})^\#.$
Hence (b).
\end{proof}

\section{Conditions in terms of the Hecke algebra}\label{s: Hecke algebra}%SECTION CONDITION HECKE

Let $k$ be a field and $A$ a $k$-algebra with an approximate identity $(\alpha_j)_{j\in J}$. This
means that
\begin{enumerate}
\itema
$J$ is a partially ordered set;
\itemb
for all $j_1,j_2$ in $J$ with $j_1\leq j_2$ we have
$\alpha_{j_1}\alpha_{j_2} =\alpha_{j_2}\alpha_{j_1}=\alpha_{j_1};$
\itemc for every $a\in A$ there exists a $j\in J$ such that $\alpha_j a= a\alpha_j=a.$
\end{enumerate}
The Hecke algebra $\H(G,K)$ is an example of such an algebra  (see \cite[Chap.~I, \S 6]{knappvogan}).
Indeed, let $(\vartheta_j)_{j \in \N}$ be an increasing sequence
of finite subsets of $\dK,$ whose union is $\dK.$ Then
$$
\ga_j \coleq \sum_{\gd \in \vartheta_j} \dim(\gd)\, \chi_{\gd^\vee}, \qquad j \in \N,
$$
defines an approximate identity in $\H(G,K).$

We denote the opposite algebra of $A$ by $A^{opp}.$ It is readily seen
that the elements $\ga_j \otimes \ga_j$  form an approximate identity for the algebra $A \otimes A^{opp},$
so that this algebra is approximately unital.
                   \par
For $j \in J,$ let $A_j$ be the set of $a \in A$ with  $\ga_j a  = a \ga_j = a .$ Then
it is readily seen that $A_j$ is a subalgebra of $A.$ Moreover, (b) implies that
$A_{j_1} \subset A_{j_2}$ whenever
$j_1 \leq j_2$, and (c) implies that $A$ is the union of the subalgebras $A_j.$

\subsection{Some general facts on approximately unital $A$-modules}%SUBSECTION APPROX UNITAL MODULES
Let $V$ be a left  (or right) $A$-module (in particular, $V$
is a $k$-linear space).
We say that $V$ is approximately unital if, for every $v\in V$,
there exists a $j\in J$ such that $\alpha_j\cdot v= v$.
               \par
For $j\in J$, let
$$
V_j := \{ \ga_j \cdot v \mid v \in V\}.
$$
Then $V_{j_1}\subset V_{j_2}$ whenever $j_1\leq j_2$.
We note that $V$ is approximately unital if and only if  $\cup_{j \in J}\, V_j = V.$
               \par
Let $\End{V}$ denote the algebra of $k$-linear endomorphisms of $V$ and let
$$
\pi: \; A\longrightarrow \mathrm{End}(V)
$$
denote the canonical algebra homomorphism.
Then $V_j$ is the image of $\pi(\alpha_j).$ We note that $V_j$ is invariant under the action
of $A_j,$ turning this space into a left $A_j$-module. Let $V^j$ denote the kernel of $\pi(\alpha_j)$.
Then since $\ga_j$ is an idempotent, it follows that
\begin{equation}
\label{e: deco V in V j}
V = V^j \oplus V_j.
\end{equation}\indent
For any $A$-module $V$, the submodule $V_{au}\coleq  A\cdot V$ is approximately unital, and it is in
fact the maximal submodule with this property. Note that
$$
V_{au} = \cup_{j \in J} V_j.
$$
                 \par
We now assume that $V$ is an approximately unital $A$-module. The $k$-linear
 space $\Hom_k(V,k)$ has a natural $A^{opp}$-module structure. Accordingly, we
define
$$
V^\vee\coleq (\Hom_k(V,k))_{au}.
$$
We denote by $\End{V}_0$ the image of the natural linear map
$V \otimes V^\vee\to \Endo(V)$
induced by $v\otimes v'\mapsto (u \mapsto v'(u)v).$
                 \par
We will say that an $A$-module $V$ is admissible if $V_j$ is finite dimensional for every $j \in J.$

\begin{lemma}%LEMMA
Let $V$ be an admissible approximately unital $A$-module. Then, as $A\otimes A^{opp}$-modules,
$$
\End{V}_0=\mathrm{End}(V)_{au}.
$$
\end{lemma}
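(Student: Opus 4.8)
The plan is to give concrete descriptions of the three objects $V^\vee$, $\End{V}_0$ and $\End{V}_{au}$ at the ``finite level'' of the idempotents $\pi(\ga_j)$, and then to compare them. First I would record the elementary consequences of the approximate identity: for $j\le j'$ one has $\pi(\ga_j)\after\pi(\ga_{j'})=\pi(\ga_{j'})\after\pi(\ga_j)=\pi(\ga_j)$ by (b), so that $V_j\subset V_{j'}$ and $V^{j'}\subset V^{j}$, and each $\pi(\ga_j)$ is idempotent, hence equal to the projection of $V$ onto $V_j$ along $V^j$, cf.\ (\ref{e: deco V in V j}). Using (c) and (b), exactly as in the verification that $V_{au}=\cup_j V_j$, one sees that a functional $\xi\in\Hom_k(V,k)$ lies in $V^\vee$ iff $\xi\after\pi(\ga_j)=\xi$ for some $j$, i.e.\ iff $\xi$ vanishes on $V^j$ for some $j$; such a $\xi$ is the zero-extension of its restriction $\xi|_{V_j}\in(V_j)^*$, so $V^\vee=\bigcup_{j\in J}(V_j)^*$ via these zero-extension embeddings. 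Similarly, writing the $A\otimes A^{opp}$-action on $\End{V}$ as $(a\otimes b)\cdot T=\pi(a)\after T\after\pi(b)$, an endomorphism $T$ lies in $\End{V}_{au}=(A\otimes A^{opp})\cdot\End{V}$ iff $\pi(\ga_j)\after T\after\pi(\ga_j)=T$ for some $j$, i.e.\ iff $T$ vanishes on $V^j$ and has image in $V_j$; thus $\End{V}_{au}=\bigcup_{j\in J}\End{V_j}$ via the embeddings $S\mapsto\iota_j\after S\after\pi(\ga_j)$, where $\iota_j\colon V_j\embeds V$.

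With these descriptions in hand, the inclusion $\End{V}_0\subset\End{V}_{au}$ requires no admissibility: a general element of $\End{V}_0$ has the form $u\mapsto\sum_i\xi_i(u)v_i$ with finitely many $v_i\in V=V_{au}$ and $\xi_i\in V^\vee$, and choosing $j$ large enough, using the monotonicity in (b), so that $v_i\in V_j$ and $\xi_i\after\pi(\ga_j)=\xi_i$ for every $i$, this endomorphism has image in $V_j$ and kills $V^j$, hence lies in $\End{V_j}\subset\End{V}_{au}$. For the reverse inclusion I would invoke admissibility: given $T\in\End{V}_{au}$, pick $j$ with $T\in\End{V_j}$; since $\dim_k V_j<\infty$ the canonical map $V_j\otimes(V_j)^*\to\End{V_j}$ is surjective, so $T$ is a finite sum of rank-one operators $v\otimes\xi$ with $v\in V_j\subset V$ and $\xi\in(V_j)^*\subset V^\vee$, whence $T\in\End{V}_0$.

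It then remains only to upgrade this set-theoretic equality to an equality of $A\otimes A^{opp}$-modules, which is immediate: the map $V\otimes V^\vee\to\End{V}$, $v\otimes v'\mapsto(u\mapsto v'(u)v)$, intertwines the $A$-action on the left tensor factor with post-composition on $\End{V}$ and the $A^{opp}$-action on $V^\vee$ with pre-composition, so its image $\End{V}_0$ is an $A\otimes A^{opp}$-submodule of $\End{V}$, necessarily equal to the submodule $\End{V}_{au}$ once the underlying sets agree. I expect the only substantive point, and the one worth stating carefully, to be the second inclusion above: a priori an element of $\End{V}_{au}$ is merely an endomorphism of some finite-level piece $V_j$, and it is precisely the finite-dimensionality of $V_j$ (admissibility) that forces it to be a finite sum of elementary tensors and hence to be visible inside $\End{V}_0$.
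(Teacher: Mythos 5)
Your proof is correct and follows essentially the same route as the paper's: the forward inclusion by observing that $\End{V}_0$ is an approximately unital $A\otimes A^{opp}$-submodule, and the reverse inclusion by identifying $\End{V}_{au}$ at each finite level $j$ with $\End{V_j}$ and then using admissibility of $V_j$ to express such endomorphisms as finite sums of rank-one operators $v\otimes\xi$ with $v\in V_j$ and $\xi\in (V_j)^*\subset V^\vee$. The paper packages the second step as a commutative diagram via the maps $S_j, T_j$ and the canonical isomorphism $V_j\otimes (V_j)^*\simeq \End{V_j}$, but the content is the same; your explicit description $V^\vee=\bigcup_j (V_j)^*$ and $\End{V}_{au}=\bigcup_j \End{V_j}$ is a helpful way of organizing the same facts.
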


\begin{proof}%PROOF
It is readily seen that $\End{V}_0$ is an
$A\otimes A^{opp}$-submodule of $\mathrm{End}(V)$ which is approximately
unital. This implies that $\End{V}_0\subset \mathrm{End}(V)_{au}.$\par
Conversely,
 let $j \in J.$ We consider the inclusion map $\iota_j: V_j \to V$ and the epimorphism $p_j: V \to V_j$ determined by
$\iota_j \after p_j = \pi(\ga_j).$ These maps induce linear maps
$S_j : \End{V} \to \End{V_j}$ and $T_j: \End{V_j} \to \End{V}$
given by
$$
S_j: \;f \mapsto  p_j \after f \after \iota_j
\quad{\rm and}\quad
T_j:\; g \mapsto  \iota_j \after g \after p_j.
$$
It is readily checked that
$$
S_j \after T_j = \id_{\End{V_j}}, \quad
T_j \after S_j(f) = \pi(\ga_j) \after f \after \pi(\ga_j) =
\pi_{\End{V}}(\ga_j) f.
$$
Hence, $T_j$ is an injective linear map $\End{V_j} \to \End{V}$
with image $\End{V}_j.$ One now readily checks that the following
diagram commutes:
\def\rmi{{\rm i}}
$$
\xymatrix{
{\End{V}_j} \ar[rr]^{\rmi_j}  && {\End{V}}   \\
 {\End{V_j}} \ar[u]^{t_j}  &&
                \ar[ll]  V_j \otimes (V_j)^* \ar[u]_{i_j \otimes p_j^*}
}
$$
Here $\rmi_j$ is the inclusion map, and $t_j$ is the map uniquely
determined by $\rmi_j \after t_j = T_j.$ The map at the bottom
is the canonical inclusion.

We now observe that the map at the bottom is a linear isomorphism by admissibility of $V.$ Moreover, $p_j^*$ maps $(V_j)^*$
into $(V^*)_j \subset V^\vee,$ and we infer that $\End{V}_j \subset \End{V}_0.$
Finally, by taking the union over all $j,$ we conclude that
$\End{V}_{au}\subset \End{V}_0$.
\end{proof}

\subsection{A double commutant theorem}%SUBSECTION DOUBLE COMMUTANT
\begin{definition}\label{d: commut}%DEFINITION
\rm
Let  $(\pi, V)$ be an approximately unital $A$-module.
We define $\End{\pi}^\#$
 to be the space of
$\varphi \in \End{V}_0$ such that for every $n\in\N\setminus\{0\}$ and every $A$-submodule
$W \subset V^{\times n}$ we have $\varphi^{\times n}(W) \subset W.$
\end{definition}
\begin{lemma}\label{l: dcomm}%LEMMA
Let $(\pi,V)$ be an admissible approximately unital $A$-module. Then
$$
\pi(A)=\End{\pi}^\#.
$$
\end{lemma}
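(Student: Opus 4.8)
The plan is to prove the two inclusions $\pi(A)\subset\End{\pi}^\#$ and $\End{\pi}^\#\subset\pi(A)$ separately. The first inclusion is essentially formal: for $a\in A$ the endomorphism $\pi(a)$ lies in $\End{V}_0=\End{V}_{au}$ because $V$ is approximately unital and admissible (so $\pi(a)=\pi(\ga_j a\ga_j)$ for suitable $j$, hence $\pi(a)\in\End{V}_j\subset\End{V}_0$ by the previous lemma); and for any $n$, the Cartesian power $\pi(a)^{\times n}$ equals the action of $a$ under the diagonal homomorphism $A\to\End{V^{\times n}}$, so it automatically preserves every $A$-submodule $W\subset V^{\times n}$. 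Thus $\pi(a)\in\End{\pi}^\#$.

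For the reverse inclusion, the key device is a double-commutant/Jacobson-density style argument applied at each level $j$ of the approximate identity. Fix $\gf\in\End{\pi}^\#$. Since $\gf\in\End{V}_0=\End{V}_{au}$, there is a $j\in J$ with $\pi_{\End{V}}(\ga_j)\gf=\gf$, i.e. $\gf=\pi(\ga_j)\circ\gf\circ\pi(\ga_j)$; hence $\gf$ is determined by its restriction $\gf_j\in\End{V_j}$, and it suffices to show $\gf_j\in\pi(A_j)|_{V_j}$, since then $\gf=\pi(\ga_j a\ga_j)$ for the corresponding $a\in A_j$ and we are done. Now $V_j$ is a finite dimensional $A_j$-module, and the condition that $\gf^{\times n}$ preserves all $A$-submodules of $V^{\times n}$ restricts to the condition that $\gf_j^{\times n}$ preserves all $A_j$-submodules of $V_j^{\times n}$ for all $n$. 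The plan is to apply the following elementary fact about a module $M$ of finite length (here $M=V_j$) over a ring $R$ (here $R=A_j$): an endomorphism $\gf$ of $M$ that preserves every $R$-submodule of $M^{\times n}$ for every $n\geq 1$ lies in the image of $R$ in $\End_k(M)$. This is proved by taking $n=\dim_k M$ (or any $n$ at least that large), considering the "diagonal-type" cyclic submodule of $M^{\times n}$ generated by a well-chosen tuple that encodes a basis of $M$, and using that $\gf^{\times n}$ must preserve it; one extracts from this that $\gf$ acts on $M$ as some element $r\cdot$, exactly the standard argument behind Burnside's theorem / the density theorem in the finite-dimensional setting. More precisely, pick $v_1,\dots,v_n$ a basis of $M$ and set $v=(v_1,\dots,v_n)\in M^{\times n}$; then $W:=R\cdot v$ is a submodule, $\gf^{\times n}(v)\in W$, so $(\gf v_1,\dots,\gf v_n)=(r v_1,\dots,r v_n)$ for some $r\in R$, whence $\gf=r\cdot$ on $M$.

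**The main obstacle** is handling the subtlety that $A_j$ need not be unital in a way compatible with $V_j$ being "the whole module," and more importantly that $\End{V}_0$ rather than $\End{V}$ is the ambient space — so one must be careful that the element $r\in A_j$ produced really has $\pi(r)|_{V_j}=\gf_j$ and that $\pi(r)$, suitably truncated by $\ga_j$, recovers $\gf$ on all of $V$ (not just on $V_j$), using the splitting $V=V^j\oplus V_j$ from \eqref{e: deco V in V j} and the fact that $\gf$ kills $V^j$. The other point requiring care is the reduction from "preserves all $A$-submodules of $V^{\times n}$" to "preserves all $A_j$-submodules of $V_j^{\times n}$": given an $A_j$-submodule $W_0\subset V_j^{\times n}$, one must produce an $A$-submodule $W\subset V^{\times n}$ with $W\cap (V_j^{\times n})=W_0$ in a way compatible with $\gf^{\times n}$; since $\gf^{\times n}=\pi(\ga_j)^{\times n}\circ\gf^{\times n}\circ\pi(\ga_j)^{\times n}$ this compatibility is automatic once one notes $A\cdot W_0\subset W_0$ (because $A$ acts on $V_j^{\times n}$ through $A_j$), so one may simply take $W=W_0$ viewed inside $V^{\times n}$. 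I expect the proof to be short once these bookkeeping points are organized; the representation-theoretic content is entirely carried by the finite-length density argument over $A_j$ acting on $V_j$.
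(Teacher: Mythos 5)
Your overall plan for the converse inclusion coincides with the paper's: fix $\gf \in \End{\pi}^\#$, find $j$ with $\gf = \pi(\ga_j)\gf\pi(\ga_j)$, use admissibility to pick a finite basis $u_1,\dots,u_n$ of $V_j$, feed the tuple $(u_1,\dots,u_n)$ into the invariance hypothesis for a suitable $A$-submodule of $V^{\times n}$ to obtain $a \in A$ agreeing with $\gf$ on $V_j$, replace $a$ by $a\ga_j$, and use $V = V^j \oplus V_j$ to conclude. But the extra reduction step you insert --- passing from ``$\gf^{\times n}$ preserves all $A$-submodules of $V^{\times n}$'' to ``$\gf_j^{\times n}$ preserves all $A_j$-submodules of $V_j^{\times n}$'' --- is justified incorrectly. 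You claim $A\cdot W_0\subset W_0$ ``because $A$ acts on $V_j^{\times n}$ through $A_j$'', hence that one may take $W=W_0$. This is false: $V_j=\pi(\ga_j)V$ is in general not an $A$-submodule of $V$. For example, with $A$ the algebra of $\N\times\N$-matrices having finitely many nonzero entries, $V$ the space of finitely supported sequences, and $\ga_j$ the rank-$j$ truncation idempotent, $V_j$ is the span of the first $j$ coordinates and is plainly not $A$-stable. So the hypothesis on $\gf$ does not apply to $W_0$ as an $A$-submodule of $V^{\times n}$, because it isn't one.

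The step can be repaired: take $W:=A\cdot W_0$, note that $\pi(\ga_j)^{\times n}(W)=\ga_j A\ga_j\cdot W_0=A_j\cdot W_0=W_0$ since $W_0$ is an $A_j$-submodule of $V_j^{\times n}$, and then use $\gf^{\times n}=\pi(\ga_j)^{\times n}\gf^{\times n}\pi(\ga_j)^{\times n}$ to pull the invariance of $W$ back to $W_0$. But the reduction is superfluous. The paper simply takes $W$ to be the $A$-submodule of $V^{\times n}$ generated by $(u_1,\dots,u_n)$; the hypothesis then gives $\gf^{\times n}(u_1,\dots,u_n)\in W$, i.e.\ $\gf(u_k)=\pi(a)u_k$ for some $a\in A$ and all $k$, and the proof ends exactly as you describe. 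Your ``abstract density fact'' over the unital ring $A_j$ is essentially this step; formulating it over $A_j$ rather than over $A$ is precisely what creates the need for the faulty reduction, whereas applying the invariance hypothesis directly to the cyclic $A$-submodule of $V^{\times n}$ avoids it.
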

\begin{proof}%PROOF
Since $\pi(A)$ is an approximately unital $A\otimes A^{opp}$-submodule of
$\End{V}$, it is contained in $\End{V}_{au}=\End{V}_0$.
Moreover, if $a\in A$, then for every $n\in\N\setminus\{0\}$ and every invariant
subspace $W$ of $V^{\times n}$ we have $\pi(a)^{\times n}(W)\subset W.$
Hence, $\pi(A)\subset \Endo(\pi)^\#.$
          \par
We now turn to the converse inclusion.
By definition $\cE\coleq  \End{\pi}^\#$ is a subspace of $\End{V}_0=\End{V}_{au}$.
Moreover $\cE$ is $A\otimes A^{opp}$-invariant,
hence an approximately unital $A\otimes A^{opp}$-module.
Fix $\gf \in \cE.$ Then it follows that there exists a $j\in J$ such that
$(\alpha_j\otimes\alpha_j)\cdot\varphi = \varphi.$
          \par
By admissibility, the space $V_j$ has a finite basis $u_1, \ldots, u_n$
over $k.$ Let $W$ be the $A$-submodule of $V^{\times n}$ generated by $(u_1, \ldots, u_n).$
Then by definition of  $\Endo(\pi)^\#,$ the space $W$ is $\gf^{\times n}$-invariant.
This implies the existence of an element $a \in A$ such that $\gf(u_k) = \pi(a) u_k$
for all $1 \leq k \leq n.$ Hence $\gf = \pi(a)$ on $V_j.$ It follows that
$\gf = \pi(a \ga_j)$ on $V_j.$ On the other hand, both $\gf = (\ga_j \otimes \ga_j) \cdot \gf = \pi(\ga_j) \after \gf \after \pi(\ga_j)$
and $\pi(a\ga_j)$ vanish on $V^j.$ In view of (\ref{e: deco V in V j}) this implies
that $\gf = \pi(a\ga_j)$ on $V.$ Hence $\gf = \pi(a\ga_j) \in \pi(A)$ and the proof is complete.
\end{proof}

\subsection{Application to Harish-Chandra modules}\label{s: hcmod}%SUBSECTION H-C MODULES
Every admissible $(\fg, K)$-module $V$ is a module for the Hecke algebra $\H(G, K)$ in
a natural way, and as such it is admissible and approximately unital.
This assignment of an   $\H(G,K)$-module to an admissible $(\fg, K)$-module
defines a functor
which establishes an isomorphism of categories, from the category of admissible $(\fg, K)$-modules
onto the category of admissible approximately unital $\H(G,K)$-modules (see \cite[I, \S 6, Theorem 1.117]{knappvogan}).
               \par
Accordingly, if $V$ is an admissible $(\fg, K)$-module,
then the associated algebra $\End{\pi}^\#$ consists of
all $K\times K$-finite endomorphisms $\gf$ of $V$ with the property that,
for every positive integer $n,$ the map $\gf^{\times n}$ preserves all $(\fg, K)$-invariant
subspaces of
$V^{\times n}.$ In particular, the present notation is compatible with the notation introduced earlier in Definition \ref{d: End sharp}.
              \par
\begin{cor}\label{c: dcomm}%COROLLARY
Let $(\pi,V)$ be an admissible $(\fg, K)$-module.
Then
$$
\pi(\H(G,K))=\End{\pi}^\#.
$$
\end{cor}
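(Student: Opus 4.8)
The plan is to deduce this corollary directly from the double commutant theorem of Lemma \ref{l: dcomm}, applied to the algebra $A = \H(G,K)$ equipped with the approximate identity $(\ga_j)_{j\in\N}$ exhibited at the beginning of the section. First I would invoke \cite[I, \S 6, Theorem 1.117]{knappvogan}: passing from an admissible $(\fg,K)$-module to the associated $\H(G,K)$-module is an isomorphism of categories onto the category of admissible approximately unital $\H(G,K)$-modules. In particular $(\pi,V)$, viewed as an $\H(G,K)$-module, is admissible and approximately unital, so the hypotheses of Lemma \ref{l: dcomm} are satisfied and that lemma yields $\pi(\H(G,K)) = \End{\pi}^\#$ with $\End{\pi}^\#$ understood in the sense of Definition \ref{d: commut}.

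The only point requiring care is that the notion $\End{\pi}^\#$ of Definition \ref{d: commut} (phrased via $A$-submodules of the $n$-fold product $V^{\times n}$) agrees with the notion of Definition \ref{d: End sharp} (phrased via $(\fg,K)$-invariant subspaces of $V^{\times n}$). Here I would note that the categorical isomorphism above is additive, hence carries the $n$-fold product $V^{\times n}$ of $(\fg,K)$-modules to the $n$-fold product of the associated $\H(G,K)$-modules, and carries $(\fg,K)$-submodules bijectively onto $\H(G,K)$-submodules. Consequently a $K\times K$-finite endomorphism $\gf$ of $V$ satisfies $\gf^{\times n}(W)\subset W$ for every $(\fg,K)$-submodule $W\subset V^{\times n}$ precisely when it satisfies this for every $\H(G,K)$-submodule, so the two definitions of $\End{\pi}^\#$ coincide; this compatibility is in fact already asserted in the paragraph preceding the corollary.

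Combining the two observations gives the statement at once. I do not anticipate a genuine obstacle: the entire content of the corollary is carried by Lemma \ref{l: dcomm} together with the standard dictionary between admissible $(\fg,K)$-modules and admissible approximately unital $\H(G,K)$-modules, and the only mild subtlety is to make sure the functor intertwines finite products and preserves submodules, which is automatic for an isomorphism of additive categories.
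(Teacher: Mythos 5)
Your proof is correct and follows exactly the same route as the paper: invoke the Knapp--Vogan equivalence of categories between admissible $(\fg,K)$-modules and admissible approximately unital $\H(G,K)$-modules, observe that this equivalence makes the notion $\End{\pi}^\#$ of Definition \ref{d: commut} agree with that of Definition \ref{d: End sharp}, and then apply Lemma \ref{l: dcomm}. The paper compresses this into the preceding paragraph plus a one-line proof, but the content is identical.
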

\begin{proof}%PROOF
This follows from Lemma \ref{l: dcomm}.
\end{proof}

\subsection{Proof of the main theorem}
\label{s: proof of main thm}
For $P \in \cP(A)$ we define  $\PW^\pre_P(G,K)$
to be the subspace of $\cF_P$ (see (\ref{e: holspace})),
consisting of all functions $\gf \in \cF_P$ such that there exists
an $R > 0$ and for all $n\in \N$ a constant $C_n >0$ such that
the estimate (\ref{e: PW estimate}) holds for all $\xi \in \dMPds$ and all $\gl \in \faPdc.$ Furthermore, we define
$$
\PW^\pre(G,K) = \oplus_{P \in \cP(A)} \; \PW_P^\pre(G,K).
$$
Then $\PW^\pre(G,K)$ is a subspace of $\cF.$
We recall that $P_0 \in \cP(A)$ is a fixed minimal parabolic subgroup of $G.$

\begin{definition}{\ }%DEFINITION
\rm
\begin{enumerate}
\itema
The Delorme Paley--Wiener space $\PW^D(G,K)$ is defined to be the intersection $\PW^\pre(G,K) \cap \cF(\cD).$
\itemb
The restricted Delorme Paley--Wiener space $\PW_{P_0}^D(G,K)$
is defined to be the intersection $\PW_{P_0}^\pre(G,K) \cap \cF_{P_0}(\cD_{P_0}).$
\end{enumerate}
\end{definition}

\begin{thm}\label{t: first PW iso}%THEOREM
The natural projection $\pr: \cF \to \cF_{P_0}$ restricts
to an isomorphism of $\PW^D(G,K)$ onto $\PW_{P_0}^D(G,K).$
\end{thm}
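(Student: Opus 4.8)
The plan is to derive Theorem~\ref{t: first PW iso} from Proposition~\ref{p: reduction to one minimal}, by verifying that the mutually inverse isomorphisms $\pr_\cD\colon \cF(\cD)\to\cF_{P_0}(\cD_{P_0})$ and $\tau_\cD\colon \cF_{P_0}(\cD_{P_0})\to\cF(\cD)$ constructed there respect the pre-Paley--Wiener growth conditions. Since $\PW^D(G,K)=\PW^\pre(G,K)\cap\cF(\cD)$ and $\PW_{P_0}^D(G,K)=\PW_{P_0}^\pre(G,K)\cap\cF_{P_0}(\cD_{P_0})$ by definition, it suffices to establish: (i) $\pr$ maps $\PW^\pre(G,K)$ into $\PW_{P_0}^\pre(G,K)$; and (ii) $\tau$ maps $\PW_{P_0}^\pre(G,K)\cap\cF_{P_0}(\cD_{P_0})$ into $\PW^\pre(G,K)$. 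Granting (i) and (ii), the maps $\pr_\cD$ and $\tau_\cD$ restrict to mutually inverse linear maps between $\PW^D(G,K)$ and $\PW_{P_0}^D(G,K)$, which is the assertion of the theorem.

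Claim (i) is immediate: $\pr$ merely discards every summand of $\cF=\oplus_{P\in\cP(A)}\cF_P$ except the one indexed by $P_0$, and an estimate of the form (\ref{e: PW estimate}) valid for all $(P,\xi,\gl)$ holds a fortiori for $P=P_0$, with the same constant $R$.

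For claim (ii) I would invoke the explicit description of $\tau(\gf)$ from the proof of Proposition~\ref{p: reduction to one minimal}. Fix $\gf\in\PW_{P_0}^\pre(G,K)\cap\cF_{P_0}(\cD_{P_0})$ with associated constants $R$ and $C_n$. Given $P\in\cP(A)$ and $\xi\in\dMPds$, pick $\gs\in\dM$, $\mu\in{}^*\fa_\iC^*$ and $w\in N_K(\fa)$ as in that proof; by the commutative diagram (\ref{e: comm diag t gf}), $\tau(\gf)_{P,\xi,\gl}$ is the conjugate of $\gf_{P_0,w\cdot\gs,w(\mu+\gl)}$ by the $\gl$-independent, $K$-equivariant topological embedding $\cL_w\after j_{\xi P}^G\colon C^\infty(K\col\xi)\embeds C^\infty(K\col w\cdot\gs)$. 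Choosing the intertwining data $t_w$ and $j_\xi$ to be isometric for the natural $L^2$-structures on the compact pictures — which is possible since $\gs$ is a finite-dimensional unitary representation of the compact group $M$ and $\xi$ carries the unitarizing inner product of its Harish-Chandra module — this embedding is an $L^2$-isometry, so it does not increase the norm entering (\ref{e: PW estimate}). Hence
\[
\|\tau(\gf)_{P,\xi,\gl}\|\;\le\;\|\gf_{P_0,w\cdot\gs,w(\mu+\gl)}\|\;\le\;C_n\,(1+|w(\mu+\gl)|)^{-n}\,e^{R|\Re w(\mu+\gl)|}.
\]
Now $w$ acts orthogonally on $\fadc$, while $\mu\in{}^*\fa_\iC^*$ and $\gl\in\faPdc$ lie in orthogonal subspaces with respect to the decomposition $\fa={}^*\fa\oplus\fa_P$; therefore $|w(\mu+\gl)|^2=|\mu|^2+|\gl|^2\ge|\gl|^2$ and $|\Re w(\mu+\gl)|=|\Re\mu+\Re\gl|\le|\Re\mu|+|\Re\gl|$. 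Finally, as $\xi$ runs over $\dMPds$ the real part $\Re\mu$ stays in a bounded subset of ${}^*\fa_\iC^*$, say $|\Re\mu|\le b_P$: this is a standard property of the subrepresentation theorem applied to the tempered representation $\xi$ of $M_P$, since the leading ${}^*\!A$-exponents of a tempered representation lie in the closed negative chamber. Setting $b\coleq\max_{P\in\cP(A)}b_P$ and combining these inequalities we get
\[
\|\tau(\gf)_{P,\xi,\gl}\|\;\le\;\bigl(C_n\,e^{Rb}\bigr)(1+|\gl|)^{-n}e^{R|\Re\gl|}
\]
for all $P\in\cP(A)$, $\xi\in\dMPds$ and $\gl\in\faPdc$, so that $\tau(\gf)\in\PW^\pre(G,K)$. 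Together with the already known fact (Proposition~\ref{p: reduction to one minimal}) that $\tau$ maps $\cF_{P_0}(\cD_{P_0})$ into $\cF(\cD)$, this gives $\tau(\gf)\in\PW^D(G,K)$, proving (ii).

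The only genuinely delicate point is this last uniformity: the Paley--Wiener growth rate $R$ must be kept fixed across the infinitely many discrete series representations $\xi$ of each $M_P$. Two ingredients make this possible — the uniform boundedness of $\Re\mu$ in the subrepresentation-theorem embeddings of tempered representations (otherwise the exponential factor would blow up with $\xi$), and the orthogonality $|w(\mu+\gl)|^2=|\mu|^2+|\gl|^2$, which prevents the decay factor $(1+|\gl|)^{-n}$ from degrading when the unbounded parameter $\mu$ is added. Once these are established, the remainder — the reduction to claims (i) and (ii), and the verification that $\pr_\cD$ and $\tau_\cD$ restrict to mutually inverse isomorphisms between $\PW^D(G,K)$ and $\PW_{P_0}^D(G,K)$ — is purely formal, given Proposition~\ref{p: reduction to one minimal}.
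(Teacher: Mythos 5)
Your reduction of the theorem to the two claims (i) and (ii) is exactly the structure of the paper's proof (where (ii) is left as ``readily checked''), so the issue is whether your verification of (ii) is correct. It is not: the claimed uniform bound $|\Re\mu|\le b_P$ as $\xi$ ranges over all of $\dMPds$ is false for noncompact $M_P$, and the reason you offer for it does not give it.

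Consider $M_P\simeq SL(2,\R)$. The discrete series $D_k$ with lowest $K_P$-type of norm $k$ embeds (by the subrepresentation theorem) into a minimal principal series $\pi^{M_P}_{{}^*Q,\gs_k,\mu_k}$ whose continuous parameter has $|\Re\mu_k|\sim k$, because $\mu_k$ must carry the infinitesimal character of $D_k$; as $k\to\infty$, $|\Re\mu_k|\to\infty$. The fact you cite --- that the leading ${}^*\!A$-exponents of a tempered representation lie in the closed negative chamber --- is a one-sided constraint on a \emph{cone}, and for discrete series the exponents lie arbitrarily deep inside that cone; it does not bound $|\Re\mu|$. So your chain of inequalities, which needs $e^{R|\Re\mu|}$ absorbed into $C_n$, breaks down. (A secondary soft spot: the embedding $j_\xi$ is essentially unique up to scalar by Schur's lemma, and a $K_P$-equivariant injection between two unitary $K_P$-modules need not become an isometry after any rescaling when the $K_P$-type multiplicities are nontrivial, so ``choose $t_w$, $j_\xi$ isometric'' is also not available in general.)

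The correct reason both difficulties disappear is one you already have in hand but did not use. By definition $\PW_{P_0}^{\mathrm{pre}}(G,K)\subset\cF_{P_0}=\cO(\fadc)\otimes\cS(P_0)$, and $\cS(P_0)$ is an \emph{algebraic} direct sum of $K\times K$-finite endomorphism spaces; hence $\gf$ has only finitely many nonzero $\gs$-components and lies in $\cO(\fadc)\otimes\End{(\cdot)}_{\theta\theta}$ for a single finite $\theta\subset\dK$. Since the maps $j=\cL_w\after j_{\xi P}^G$ are $K$-equivariant, diagram~(\ref{e: comm diag t gf}) forces $\tau(\gf)_{P,\xi}\in\cS(P,\xi)_{\theta\theta}$, which is nonzero for only finitely many $\xi\in\dMPds$ (for a fixed $\theta$, a $K_P$-type occurring in some $\gg|_{K_P}$, $\gg\in\theta$, can occur in only finitely many discrete series of $M_P$, by the lowest-$K_P$-type classification); equivalently, Proposition~\ref{p: reduction to one minimal} already shows $\tau(\gf)\in\cF(\cD)\subset\cF$, an algebraic direct sum. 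Once you know that only a \emph{finite} collection of triples $(P,\xi)$ contributes, the quantities $|\Re\mu_\xi|$ and the norms of $j$, $j^{-1}$ on the $\theta$-isotypic parts take only finitely many values, so the desired estimate with a single $R$ and constants $C_n$ follows immediately from the estimate on $\gf$. The ``genuinely delicate'' uniformity you flagged at the end is thus resolved by finiteness, not by a growth bound on the subrepresentation parameters.
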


\begin{proof}%PROOF
It is clear that $\pr$ maps $\PW^\pre(G,K)$ into $\PW^\pre_{\scriptscriptstyle P_0}(G,K).$ Combining this with Proposition
\ref{p: reduction to one minimal}, we see that $\pr_\cD$ maps
$\PW^D(G,K)$ injectively into $\PW^D_{P_0}(G,K).$

Let $\tau: \cF_{P_0} \to \cF$ be defined as in the proof
of Proposition \ref{p: reduction to one minimal}. Then it is
readily checked that $\tau$ maps $\PW^\pre_{\scriptscriptstyle P_0}(G,K)$
into $\PW^\pre(G,K).$ This implies that $\tau_\cD$ maps
$\PW^D_{\scriptscriptstyle P_0}(G,K)$ into $\PW^D(G,K).$
As $\pr_\cD \after \tau_\cD = \id$ on
$\PW^D_{\scriptscriptstyle P_0}(G,K),$
it follows that
$\pr_\cD$ maps the space $\PW^D(G,K)$
surjectively onto $\PW^D_{\scriptscriptstyle P_0}(G,K).$
\end{proof}

\begin{definition}%DEFINITION
\label{d: defi PWH}
\rm
We define $\PW^H(G,K)$ to be the space of functions
$\gf \in \PW^{\rm pre}_{P_0}(G,K),$ such that for
every finite dimensional $\cO_0$-module $E,$ and every pair of finite
sets $\Xi \subset \dM,$ $\gL\subset \fadc,$ we have
$$
\gf_{E,\Xi,\Lambda} \in  \pi_{E,\Xi,\Lambda}(\H(G,K)).
$$
\end{definition}
\vspace{4pt}

\medbreak

\begin{thm}
\label{t: PWD is PWH}%THEOREM
The space $\PW^D_{P_0}(G,K)$ equals $\PW^H(G,K).$
\end{thm}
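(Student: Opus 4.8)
The plan is to deduce the identity by assembling two results already established: the reformulation of Delorme's intertwining conditions in Proposition~\ref{p: dinter}, and the double commutant theorem for Harish-Chandra modules in Corollary~\ref{c: dcomm}.

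First I would fix a finite dimensional $\cO_0$-module $E$ and finite sets $\Xi\subset\dM$ and $\Lambda\subset\fadc$. As noted right after \eqref{e: defi pi E Xi gL}, the representation $\pi_{E,\Xi,\Lambda}$ of $G$ is admissible and of finite length; in particular its $(\fg,K)$-module of $K$-finite vectors $(V_{E,\Xi,\Lambda})_K$ is an admissible $(\fg,K)$-module, so Corollary~\ref{c: dcomm} applies and gives
$$
\pi_{E,\Xi,\Lambda}(\H(G,K))=\Endo(\pi_{E,\Xi,\Lambda})^\#,
$$
where, following Definition~\ref{d: End sharp} and the remark in Section~\ref{s: hcmod}, $\Endo(\pi_{E,\Xi,\Lambda})^\#$ means $\Endo((V_{E,\Xi,\Lambda})_K)^\#$. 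Here one should keep in mind that for $\gf\in\cF_{P_0}=\holspace_{P_0}$ the endomorphism $\gf_{E,\Xi,\Lambda}$ is $K\times K$-finite, hence may and will be viewed as an element of $\Endo((V_{E,\Xi,\Lambda})_K)_{K\times K}$, which is precisely the ambient space of both sides of the displayed equality.

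Next I would take an arbitrary $\gf\in\PW^{\rm pre}_{P_0}(G,K)$; since $\PW^{\rm pre}_{P_0}(G,K)\subset\cF_{P_0}=\holspace_{P_0}$, Proposition~\ref{p: dinter} is applicable to $\gf$. By the displayed equality above, the condition defining $\PW^H(G,K)$ in Definition~\ref{d: defi PWH}, namely that $\gf_{E,\Xi,\Lambda}\in\pi_{E,\Xi,\Lambda}(\H(G,K))$ for all $E,\Xi,\Lambda$, is equivalent to the condition that $\gf_{E,\Xi,\Lambda}\in\Endo(\pi_{E,\Xi,\Lambda})^\#$ for all $E,\Xi,\Lambda$; and by Proposition~\ref{p: dinter} the latter is in turn equivalent to $\gf\in\holspace_{P_0}(\cD_{P_0})=\cF_{P_0}(\cD_{P_0})$. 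Intersecting these equivalent conditions with the membership $\gf\in\PW^{\rm pre}_{P_0}(G,K)$, and recalling that $\PW^D_{P_0}(G,K)=\PW^{\rm pre}_{P_0}(G,K)\cap\cF_{P_0}(\cD_{P_0})$, we conclude that $\gf\in\PW^H(G,K)$ if and only if $\gf\in\PW^D_{P_0}(G,K)$, which is the assertion.

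The argument is essentially pure bookkeeping; the only point that deserves attention is to check that the two notions of $\#$-endomorphism occurring along the way really coincide --- the one in Proposition~\ref{p: dinter} (taken from Definition~\ref{d: End sharp}, phrased via Cartesian powers of the smooth $G$-representation) and the one in Corollary~\ref{c: dcomm} (phrased for admissible $(\fg,K)$-modules via the Hecke algebra) --- together with the verification that $\pi_{E,\Xi,\Lambda}$ has the admissibility and finite-length properties that legitimise invoking Corollary~\ref{c: dcomm} and Definition~\ref{d: End sharp}. Both of these have already been recorded in the text and so present no real obstacle.
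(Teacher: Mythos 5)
Your proof is correct and follows the paper's argument exactly: both combine Proposition~\ref{p: dinter} (reformulating Delorme's intertwining conditions via $\Endo(\pi_{E,\Xi,\Lambda})^\#$) with Corollary~\ref{c: dcomm} (the double commutant identification $\pi(\H(G,K))=\Endo(\pi)^\#$), then intersect with $\PW^{\rm pre}_{P_0}(G,K)$. Your extra attention to admissibility/finite length of $\pi_{E,\Xi,\Lambda}$ and to the consistency of the two $\#$-notions is sound but merely makes explicit what the paper leaves implicit.
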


\begin{proof}%PROOF
Both spaces are subspaces of $\PW^\pre_{\scriptscriptstyle P_0}(G,K).$
Let $\gf$ be an element of the latter space. Then $\gf \in
\PW_{\scriptscriptstyle P_0}^D(G,K)$ is equivalent
to $\gf\in \cF_{\scriptscriptstyle P_0}(\cD_{P_0}),$
which in turn is equivalent to condition (b) of Proposition \ref{p: dinter}.
In view of Corollary \ref{c: dcomm}, the latter condition is
equivalent to $\gf \in \PW^H(G,K).$
\end{proof}

\begin{definition}
\rm
We define the Arthur Paley--Wiener space $\PW^A(G,K)$ to be the space
of functions $\gf \in \PW^\pre_{P_0}(G,K)$ such that $\gf$
satisfies the
Arthur--Campoli relations (see Definition \ref{d: AC relations}).
\end{definition}

\begin{thm}\label{t: PWD = PWA}%THEOREM
The  space $\PW^H(G,K)$ equals $\PW^A(G,K).$
\end{thm}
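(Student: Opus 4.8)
The plan is to show that both Paley--Wiener spaces are characterized by the same condition on the family of endomorphisms $\gf_{E,\Xi,\gL}$, namely membership in $\pi_{E,\Xi,\gL}(\H(G,K))$. By Theorem~\ref{t: PWD is PWH} we already know $\PW^H(G,K)$ consists precisely of those $\gf \in \PW^\pre_{P_0}(G,K)$ with $\gf_{E,\Xi,\gL} \in \pi_{E,\Xi,\gL}(\H(G,K))$ for all $E,\Xi,\gL$; so it suffices to prove that $\PW^A(G,K)$ admits the same description. By Proposition~\ref{p: reformulation ac}, a function $\gf$ satisfies the Arthur--Campoli relations if and only if, for every triple $E,\Xi,\gL$ and every $\Psi \in \pi_{E,\Xi,\gL}(C_c^\infty(G,K))^\perp$, one has $\inp{\gf_{E,\Xi,\gL}}{\Psi} = 0$. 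So the whole theorem reduces to the following purely representation-theoretic identity of subspaces of $\Endo(V_{E,\Xi,\gL})_{K\times K}$: the annihilator (in the appropriate contragredient module) of $\pi_{E,\Xi,\gL}(C_c^\infty(G,K))$ equals the annihilator of $\pi_{E,\Xi,\gL}(\H(G,K))$, and moreover $\gf_{E,\Xi,\gL}$ is killed by the former if and only if it lies in the latter.

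The key input for the first point is Proposition~\ref{end-hecke}: for any admissible representation $(\pi,V)$ of $G$, $\pi(C_c^\infty(G,K)) = \pi(\H(G,K))$. Applying this with $\pi = \pi_{E,\Xi,\gL}$ — which is admissible and of finite length, as noted after (\ref{e: defi pi E Xi gL}) — gives $\pi_{E,\Xi,\gL}(C_c^\infty(G,K)) = \pi_{E,\Xi,\gL}(\H(G,K))$, so the two annihilator conditions coincide. Thus $\gf \in \PW^A(G,K)$ if and only if $\gf \in \PW^\pre_{P_0}(G,K)$ and $\gf_{E,\Xi,\gL}$ is annihilated by $\pi_{E,\Xi,\gL}(\H(G,K))^\perp$ for every $E,\Xi,\gL$. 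The remaining point is to show that this last condition is equivalent to $\gf_{E,\Xi,\gL} \in \pi_{E,\Xi,\gL}(\H(G,K))$. One inclusion is trivial: if $\gf_{E,\Xi,\gL}$ lies in $\pi_{E,\Xi,\gL}(\H(G,K))$, it is of course annihilated by that subspace's annihilator. For the converse we use the non-degenerate trace pairing $\gb(A,B) = \tr(A\after B)$ on $\Endo(V_{E,\Xi,\gL})_{K\times K}$, which, as observed in the proof of Proposition~\ref{end-hecke}, restricts to a perfect pairing on each $\Endo(V_K)_{\theta\theta}$; hence $\pi_{E,\Xi,\gL}(\H(G,K))$ is its own double $\gb$-orthocomplement inside the $K\times K$-finite endomorphisms, and "annihilated by the annihilator" forces membership. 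Care is needed only to match the contragredient module pairing used in the definition of $\PW^A(G,K)$ in the introduction with the trace pairing $\gb$; this is the identification $\Endo(V)^*_{K\times K} \simeq \Endo(V)_{K\times K}$ via $\gb$, valid by admissibility.

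Concretely, the steps I would carry out, in order, are: (1) record that $\pi_{E,\Xi,\gL}$ is admissible of finite length, so Proposition~\ref{end-hecke} applies and gives $\pi_{E,\Xi,\gL}(C_c^\infty(G,K)) = \pi_{E,\Xi,\gL}(\H(G,K))$; (2) invoke Proposition~\ref{p: reformulation ac} to rewrite the Arthur--Campoli condition on $\gf$ as: for all $E,\Xi,\gL$, $\gf_{E,\Xi,\gL}$ annihilates $\pi_{E,\Xi,\gL}(C_c^\infty(G,K))^\perp = \pi_{E,\Xi,\gL}(\H(G,K))^\perp$; (3) pass to a finite subset $\theta\subset\dK$ with $\gf_{E,\Xi,\gL} \in \Endo(V_{E,\Xi,\gL})_{\theta\theta}$ and $\pi_{E,\Xi,\gL}(\H(G,K)) \cap \Endo(V_{E,\Xi,\gL})_{\theta\theta}$ capturing the relevant image (using Corollary~\ref{end-hecke-ktype}), then use perfectness of $\gb$ on $\Endo(V_{E,\Xi,\gL})_{\theta\theta}$ to conclude that annihilating the annihilator of the subspace $\pi_{E,\Xi,\gL}(\H(G,K))_{\theta\theta}$ is equivalent to lying in it; (4) combine with Theorem~\ref{t: PWD is PWH} (equivalently, with Definition~\ref{d: defi PWH}) to identify the resulting set with $\PW^H(G,K)$, completing the proof. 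The main obstacle — really the only non-bookkeeping issue — is step (3): keeping the bookkeeping of $K\times K$-types straight so that one genuinely works inside a finite-dimensional space where $\gb$ is perfect, and checking that the pairing implicitly used in the introductory definition of $\PW^A(G,K)$ (via the contragredient of $\pi_{E,\Xi,\gL}$) is exactly the one made non-degenerate by $\gb$; once that identification is pinned down, the double-orthocomplement argument is immediate from finite-dimensional linear algebra.
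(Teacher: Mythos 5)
Your proposal is correct and follows essentially the same route as the paper: both use Proposition~\ref{p: reformulation ac} to recast the Arthur--Campoli conditions as a double-annihilator condition on $\gf_{E,\Xi,\gL}$, both use Proposition~\ref{end-hecke} to identify $\pi_{E,\Xi,\gL}(C_c^\infty(G,K))$ with $\pi_{E,\Xi,\gL}(\H(G,K))$, and both collapse the double annihilator using the finiteness of the $K\times K$-type of $\gf_{E,\Xi,\gL}$ together with admissibility (perfectness of the trace pairing $\gb$ on each finite $\theta\theta$-block). The only differences are cosmetic: you apply Proposition~\ref{end-hecke} before the double-perp collapse rather than after, and you cite Theorem~\ref{t: PWD is PWH} where Definition~\ref{d: defi PWH} already suffices (as you yourself note parenthetically).
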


\begin{proof}%PROOF
Both are subspaces of $\PW_{\scriptscriptstyle P_0}^\pre(G,K).$
Let $\gf$ be a function in the latter space.
Then by Proposition \ref{p: reformulation ac} the assertion
$\gf \in \PW^A(G,K)$ is equivalent to the assertion that
for every finite dimensional $\cO_0$-module, and every pair of finite
subsets $\Xi \subset \dM,$ $\gL \subset \fadc,$ we have
\begin{equation}
\label{e: gf in double perp}
\gf_{E, \Xi, \gL} \in (\pi_{E, \Xi, \gL}(C_c^\infty(G,K))^{\perp})^{\perp}.
\end{equation}
As every function from $\PW_{P_0}^{\rm pre}(G,K)$ has values in
$$
\oplus_{\xi \in \dM} \;\Endo(C^\infty(K\col \xi))_{\theta\theta}
$$
for some finite subset $\theta \subset \dK,$ it follows by admissibility
that (\ref{e: gf in double perp}) is equivalent to
$$
\gf_{E, \Xi, \gL} \in \pi_{E, \Xi, \gL}(C_c^\infty(G,K)).
$$
By Proposition \ref{end-hecke} this in turn is equivalent to
$$
\gf_{E, \Xi, \gL} \in \pi_{E, \Xi, \gL}(\H(G,K)).
$$
If follows that $\gf \in \PW^A(G,K) \iff \gf\in \PW^H(G,K).$
\end{proof}

We now come to the main result of our paper.

\begin{thm}%THEOREM
\label{t: main thm}
The map $\pr: \gf \mapsto \gf_{P_0}$ defines a linear
isomorphism from $\PW^D(G,K)$ onto $\PW^A(G,K).$
\end{thm}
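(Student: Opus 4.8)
The plan is to obtain the main theorem by stringing together the three theorems established above, which together reduce both Paley--Wiener spaces to one and the same condition, phrased in terms of the image of the Hecke algebra $\H(G,K)$ and attached only to the single minimal parabolic subgroup $P_0.$

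First I would invoke Theorem \ref{t: first PW iso}: the natural projection $\pr:\cF\to\cF_{P_0}$ restricts to a linear isomorphism from $\PW^D(G,K)$ onto the restricted Delorme space $\PW^D_{P_0}(G,K).$ This is the step that carries the content of ``reduction to one minimal parabolic subgroup''; it rests on Proposition \ref{p: reduction to one minimal}, whose proof uses the subrepresentation theorem for each group $M_P$, induction by stages, and the functorial exactness properties of $X_\pi$ recorded in Lemma \ref{l: exactness properties X pi}. That $\pr$ and the section $\tau$ constructed there respect the growth estimate (\ref{e: PW estimate}) is immediate from the definitions.

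Next I would chain the two identifications of the restricted space. By Theorem \ref{t: PWD is PWH}, $\PW^D_{P_0}(G,K)=\PW^H(G,K)$: here Proposition \ref{p: dinter} reformulates Delorme's intertwining conditions as the requirement that $\gf_{E,\Xi,\gL}\in\Endo(\pi_{E,\Xi,\gL})^\#$ for all finite dimensional $\cO_0$-modules $E$ and all finite $\Xi\subset\dM,\ \gL\subset\fadc$, and the double commutant result (Corollary \ref{c: dcomm}) identifies $\Endo(\pi_{E,\Xi,\gL})^\#$ with $\pi_{E,\Xi,\gL}(\H(G,K)).$ By Theorem \ref{t: PWD = PWA}, $\PW^H(G,K)=\PW^A(G,K)$: Proposition \ref{p: reformulation ac} turns the Arthur--Campoli relations into the double-annihilator condition (\ref{e: gf in double perp}), which, since $\pi_{E,\Xi,\gL}$ is admissible of finite length with $K\times K$-finite values, simplifies to $\gf_{E,\Xi,\gL}\in\pi_{E,\Xi,\gL}(C_c^\infty(G,K))$, and finally Proposition \ref{end-hecke} replaces $\pi_{E,\Xi,\gL}(C_c^\infty(G,K))$ by $\pi_{E,\Xi,\gL}(\H(G,K)).$ Composing the isomorphism of the first step with these two equalities yields that $\pr$ maps $\PW^D(G,K)$ isomorphically onto $\PW^A(G,K)$, as asserted.

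Since all three ingredients are already in place, nothing substantial remains at this point; the real obstacle, which I would flag in advance, is the double commutant Lemma \ref{l: dcomm}, as it is precisely what converts Delorme's condition (preservation of all invariant subspaces of all Cartesian powers) into Hecke-algebra membership, together with the bookkeeping of Section \ref{s: holomorphic families} showing that Delorme's first order derivation process is a special case of the functors $J^{(E)}$ for finite dimensional $\cO_0$-modules $E.$ The latter is what guarantees that the two families of test data in the two definitions --- differential operators $u\in S(\fa^*)$ on Arthur's side, iterated $\eta$-derivatives on Delorme's --- range over the same collection of representations $\pi_{E,\Xi,\gL}$, so that the double-commutant and double-annihilator descriptions literally coincide.
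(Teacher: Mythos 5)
Your proof is correct and follows exactly the paper's route: the paper's own proof is precisely the one-line composition of Theorems \ref{t: first PW iso}, \ref{t: PWD is PWH} and \ref{t: PWD = PWA}, which you have reproduced and elaborated. The additional context you give about what drives each ingredient (Proposition \ref{p: reduction to one minimal}, Proposition \ref{p: dinter} with Corollary \ref{c: dcomm}, and Proposition \ref{p: reformulation ac} with Proposition \ref{end-hecke}) is an accurate summary of the supporting machinery and not a divergence from the paper's argument.
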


\begin{proof}%PROOF
This follows from combining Theorems \ref{t: first PW iso},
 \ref{t: PWD is PWH} and
\ref{t: PWD = PWA}.
\end{proof}

\subsection{Another useful characterization of the Paley-Wiener space}
\label{s: another useful char of PW}
In this subsection we will obtain another useful characterization of
the Paley-Wiener space $\PW^A(G,K).$ This will allow us to derive
the Paley-Wiener theorem due to Helgason
\cite{helgasonpw} and Gangolli \cite{gangolli} from Arthur's Paley-Wiener theorem.

We define the Fourier transform $\Fou: \Cci(G,K)_{K \times K} \to
\cO(\fadc) \otimes \cS_{P_0}$ by $\Fou(f)_\xi(\gl) = \widehat f(P_0, \xi, \gl),$ see (\ref{e: defi Fourier}). From this definition
it is immediate that $\Fou$ is a $K\times K$-equivariant linear map.

In terms of this Fourier transform, Arthur's Paley-Wiener theorem
may be stated as follows (see \cite{arthurpw} and \cite{BSpwspace}).

\begin{thm}[Arthur's Paley-Wiener theorem]
The map $\Fou$ is a $K \times K$-equivariant linear isomorphism
from $C_c^\infty(G,K)_{K \times K}$ onto $\PW^A(G,K).$
\end{thm}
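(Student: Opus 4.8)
The plan is to obtain this statement by assembling two known results: Arthur's original Paley--Wiener theorem from \cite{arthurpw}, formulated simultaneously for all minimal parabolic subgroups in $\cP(A),$ and the reduction to the single parabolic $P_0$ carried out in \cite[Thm.~3.6]{BSpwspace}. The $K\times K$-equivariance of $\Fou$ requires no argument beyond the one already indicated: left and right translation of $f$ by elements of $K$ conjugate $\pi_{P_0,\xi,\gl}(f)$ by the operators $\pi_{P_0,\xi,\gl}(k),$ which are independent of $\gl,$ and the $K\times K$-action on $\cS(P_0)$ is defined precisely so that $\Fou$ intertwines it. Hence the content of the theorem is the bijectivity of $\Fou$ onto $\PW^A(G,K).$

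First I would dispose of the inclusion $\Fou(C_c^\infty(G,K)_{K\times K}) \subset \PW^A(G,K).$ That $\Fou(f)$ lies in $\oplus_\xi \cO(\fadc)\otimes\cS(P_0,\xi)$ with only finitely many nonzero components and satisfies the growth estimate (\ref{e: PW estimate}) with $R$ depending only on $\supp f$ is the elementary half of Paley--Wiener theory: holomorphy and finiteness in $\xi$ are read off (\ref{e: defi Fourier}), the exponential rate $R$ comes from the support of $f$ together with the bound $\|\pi_{P_0,\xi,\gl}(x)\|\le Ce^{|\Re\gl|\,d(x)}$ uniform for $x$ in compacta, and the rapid decay $(1+|\gl|)^{-n}$ follows by integrating by parts in the $\fa$-directions using the smoothness of $f.$ So $\Fou(f)\in \PW^{\rm pre}_{P_0}(G,K).$ That $\Fou(f)$ satisfies the Arthur--Campoli relations is immediate from Definition \ref{d: AC relations}: by (\ref{e: acs}), the defining relations are exactly those annihilating $\hat f(P_0)$ for all $f\in C_c^\infty(G,K).$

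It remains to show that $\Fou$ is injective with image equal to $\PW^A(G,K),$ and this is where the cited results enter. Arthur \cite{arthurpw} proves that the combined Fourier transform $f\mapsto(\hat f(P,\dotvar))_P,$ with $P$ running over the minimal parabolic subgroups in $\cP(A),$ is a linear bijection of $C_c^\infty(G,K)_{K\times K}$ onto his Paley--Wiener space, defined by growth conditions and Arthur--Campoli relations formulated jointly at all such $P.$ By \cite[Thm.~3.6]{BSpwspace}, restriction of a datum to its $P_0$-component is a linear isomorphism of that space onto $\PW^A(G,K)$ as defined here; the mechanism is that the Fourier transforms at different minimal parabolics are linked by the standard intertwining operators, so that the $P_0$-component determines the whole datum and the Arthur--Campoli relations at $P_0$ capture all the joint relations. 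Since $\Fou$ is, by inspection, the composite of Arthur's combined Fourier transform with this restriction isomorphism, it is a linear isomorphism of $C_c^\infty(G,K)_{K\times K}$ onto $\PW^A(G,K).$ The one genuinely non-formal point in the argument is to check that the normalizations of parabolic induction, of the operator-valued Fourier transform, and of the Paley--Wiener and Arthur--Campoli conditions used in the present paper agree with those of \cite{arthurpw} and \cite{BSpwspace}; this bookkeeping --- the main thing one must verify rather than take on faith --- is carried out in \cite{BSpwspace}.
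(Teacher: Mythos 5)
Your proposal matches the paper's treatment: the paper does not prove this theorem but states it as a citation to \cite{arthurpw} for Arthur's original result (formulated over all minimal parabolics in $\cP(A)$) and to \cite[Thm.~3.6]{BSpwspace} for the reduction to the single parabolic $P_0,$ which is exactly the decomposition you give. Your added discussion of the easy direction (that $\Fou(f)$ lands in $\PW^A(G,K)$ via the standard estimates and the tautological satisfaction of the Arthur--Campoli relations) is correct and supplements the paper's terse citation without departing from its route.
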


By Theorem \ref{t: PWD = PWA} the Paley-Wiener space
$\PW^A(G,K)$ equals the space $\PW^H(G,K)$ introduced in Definition \ref{d: defi PWH}. We shall now give another characterization of that space.

We use the notation $\PW(\fa)$ for the (Euclidean) Paley-Wiener space associated
with $\fa,$ i.e., $\PW(\fa)$ is the image of the classical Fourier
transform $C_c^\infty(\fa) \to \cO(\fadc).$
Then the space $\PW_{P_0}^\pre(G,K),$ introduced in the
beginning of Subsection \ref{s: proof of main thm},
equals $\PW(\fa) \otimes \cS(P_0).$

\begin{prop}
The space $\PW^H(G,K)$ is equal to the space of all functions $\gf \in \PW(\fa) \otimes \cS(P_0)$ with the following property.

For each  finite number of triples $(u_j, \xi_j, \gl_j) \in S(\fa^*) \times \dM \times \fadc,$ $1 \leq j \leq n$,
there exists an element $h \in \H(G,K)$ such that
$$
\gf(\xi_j, \gl_j; u_j) = \pi_{P_0, \xi_j, \gl_j; u_j}(h)\qquad \text{for all }\;\;\;1 \leq j \leq n.
$$
\end{prop}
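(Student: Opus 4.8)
The plan is to establish the claimed identity by showing that the two conditions on a function $\gf\in\PW(\fa)\otimes\cS(P_0)=\PW^\pre_{P_0}(G,K)$ are equivalent: the defining condition of $\PW^H(G,K)$ from Definition \ref{d: defi PWH}, namely that $\gf_{E,\Xi,\gL}\in\pi_{E,\Xi,\gL}(\H(G,K))$ for all finite dimensional $\cO_0$-modules $E$ and all finite sets $\Xi\subset\dM$, $\gL\subset\fadc$; and the stated interpolation property in terms of finitely many triples $(u_j,\xi_j,\gl_j)\in S(\fa^*)\times\dM\times\fadc$. Note that $\PW^H(G,K)$ is contained in $\PW^\pre_{P_0}(G,K)=\PW(\fa)\otimes\cS(P_0)$ by its definition, so only the equivalence of the two conditions on elements of this ambient space needs to be proved.

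First I would deduce the interpolation property from the defining condition. Given triples $(u_j,\xi_j,\gl_j)$, $1\le j\le n$, put $F=\{u_1,\dots,u_n\}$, $\Xi=\{\xi_1,\dots,\xi_n\}$ and $\gL=\{\gl_1,\dots,\gl_n\}$. By Lemma \ref{l: E associated with u} there is a finite dimensional $\cO_0$-module $E$ together with functionals $\eta_{u_j}\in\Endo(E)^*$ such that $\eta_{u_j}\after f^{(E)}=\partial_{u_j}f$ for all $f\in\cO(\Omega)$ and all open $\Omega\subset\fadc$. By density of $\cO(\Omega)\otimes\Endo(W)$ in $\cO(\Omega,\Endo(W))$ and continuity of $\diffD^{(E)}$ (Corollary \ref{c: continuity of diffD}), the same identity, with $\eta_{u_j}$ replaced by $\eta_{u_j}\otimes\id$, holds for $\Endo(W)$-valued and for $\cS(P_0,\xi_j)$-valued holomorphic functions. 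The defining condition now provides $h\in\H(G,K)$ with $\gf_{E,\Xi,\gL}=\pi_{E,\Xi,\gL}(h)$. Since both sides are block-diagonal for the decomposition of $V_{E,\Xi,\gL}$ indexed by $\Xi\times\gL$, the $(\xi_j,\gl_j)$-block gives $\gf^{(E)}_{\xi_j}(\gl_j)=\pi^{(E)}_{P_0,\xi_j,\gl_j}(h)$. Applying $\eta_{u_j}$ to the $\Endo(E)$-tensor factor of both sides — using that $\eta_{u_j}\otimes\id$, being continuous linear, commutes with the pairing against $h$ (Lemma \ref{ext1}) — one obtains on the left $\partial_{u_j}\gf_{\xi_j}(\gl_j)=\gf(\xi_j,\gl_j;u_j)$ and on the right $\partial_{u_j}[\gl\mapsto\pi_{P_0,\xi_j,\gl}(h)](\gl_j)=\pi_{P_0,\xi_j,\gl_j;u_j}(h)$. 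Thus the single element $h$ works for all $j$.

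For the converse I would show that the interpolation property forces the Arthur--Campoli relations, so that $\gf\in\PW^A(G,K)=\PW^H(G,K)$ by Theorem \ref{t: PWD = PWA}. Let $(\xi_i,\psi_i,\gl_i,u_i)$, $1\le i\le n$, be an Arthur--Campoli sequence in $\data$, so that $x\mapsto\sum_i\langle\pi_{P_0,\xi_i,\gl_i;u_i}(x),\psi_i\rangle$ is the zero function on $G$. Applying the interpolation property to the triples $(u_i,\xi_i,\gl_i)$ yields a single $h\in\H(G,K)$ with $\gf(\xi_i,\gl_i;u_i)=\pi_{P_0,\xi_i,\gl_i;u_i}(h)$ for all $i$. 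For each $i$ the map $h'\mapsto\langle\pi_{P_0,\xi_i,\gl_i;u_i}(h'),\psi_i\rangle$ on $\H(G,K)$ is, by Lemma \ref{ext1}, the restriction of the continuous pairing of $h'$ with the analytic matrix coefficient $x\mapsto\langle\pi_{P_0,\xi_i,\gl_i;u_i}(x),\psi_i\rangle\in C^\infty(G)$; summing over $i$ then gives
$$
\sum_i\langle\gf_{\xi_i,\gl_i;u_i},\psi_i\rangle=\sum_i\langle\pi_{P_0,\xi_i,\gl_i;u_i}(h),\psi_i\rangle=\Big\langle h,\;x\mapsto\sum_i\langle\pi_{P_0,\xi_i,\gl_i;u_i}(x),\psi_i\rangle\Big\rangle=0.
$$
Hence $\gf$ satisfies the Arthur--Campoli relations (Definition \ref{d: AC relations}), so $\gf\in\PW^A(G,K)$, and Theorem \ref{t: PWD = PWA} identifies this space with $\PW^H(G,K)$.

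I expect the main obstacle to be the bookkeeping in the first implication: one must justify interchanging the operations of pairing against the distribution $h$, applying the functor $\diffD^{(E)}$, evaluating at $\gl_j$, and applying the functional $\eta_{u_j}$. All of these interchanges reduce to the continuity assertions of Lemma \ref{ext1} and Corollary \ref{c: continuity of diffD} together with the density of $\cO(\Omega)\otimes\Endo(W)$ in $\cO(\Omega,\Endo(W))$, so no genuinely new input is required; the converse direction is then essentially formal, given Theorem \ref{t: PWD = PWA}.
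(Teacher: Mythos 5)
Your proof is correct. The forward inclusion $\PW^H(G,K)\subset\{\text{described space}\}$ follows the paper's argument essentially verbatim: Lemma \ref{l: E associated with u} supplies a single finite dimensional $\cO_0$-module $E$ and functionals $\eta_{u_j}\in\Endo(E)^*$ for the given set $\{u_j\}$, the Hecke condition gives one element $h$ with $\gf_{E,\Xi,\gL}=\pi_{E,\Xi,\gL}(h)$, and extracting the $(\xi_j,\gl_j)$-block and applying $\eta_{u_j}\otimes\id$ yields the interpolation relations for all $j$. The bookkeeping of interchanging $\diffD^{(E)}$ with the pairing against the distribution $h$ is handled exactly as you flag, via Corollary \ref{c: continuity of diffD} and Lemma \ref{ext1}; the paper is similarly terse on this point.

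The converse inclusion is where you take a genuinely different route. The paper's proof is \emph{direct}: given $E,\Xi,\gL$, it fixes a basis $\{\eta^k\}$ of $\Endo(E)^*$, uses Lemma \ref{l: dual of End E} to produce elements $u_j^k\in S(\fa^*)$ realizing these functionals at the points $\gl_j$, and applies the interpolation hypothesis to all triples $(u_j^k,\xi,\gl_j)$ simultaneously to obtain one $h$ satisfying $(\eta^k\otimes1)\circ\gf_{E,\xi,\gl_j}=(\eta^k\otimes1)\circ\pi_{E,\xi,\gl_j}(h)$; since $\{\eta^k\}$ is a basis, this forces $\gf_{E,\Xi,\gL}=\pi_{E,\Xi,\gL}(h)$ directly, staying entirely inside the Hecke algebra characterization. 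You instead show the interpolation property implies the Arthur--Campoli relations (by pairing with the vanishing matrix coefficient $\sum_i m_i\equiv0$ and using the continuous extension of Lemma \ref{ext1}), and then invoke Theorem \ref{t: PWD = PWA} to identify $\PW^A(G,K)$ with $\PW^H(G,K)$. This is valid and not circular, since Theorem \ref{t: PWD = PWA} and its ingredients (Propositions \ref{p: reformulation ac} and \ref{end-hecke}) do not depend on the present statement, which comes afterwards in Subsection \ref{s: another useful char of PW}. What you gain is a shorter argument that reuses already-established equivalences; what the paper's direct route gains is a self-contained proof that does not rely on the detour through the Arthur--Campoli side and that explicitly constructs the witnessing $h$ for each triple $(E,\Xi,\gL)$.
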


\begin{proof}
We first assume that $\gf \in \PW^H(G,K).$ Then $\gf \in \PW(A) \otimes \cS(P_0).$ Let a finite number of triples $(u_j, \xi_j , \gl_j)$ be given. For each $u_j$ there exists a finite dimensional
$\cO_0$-module $E_j,$ and a linear functional $\eta_j \in \Endo(E_j)^*$ such that $\eta_j \after f^{(E_j)} = f(\gl_j;u_j),$ for all $f \in \cO(\fadc);$ see Lemma \ref{l: E associated with u}. Put $E = \oplus E_j,$ and let $\pr_j: E \to E_j$ denote the associated projection maps. There exists an element $h \in \H(G,K)$ such that $ \gf_{E, \Xi, \gL} =
\pi_{E, \Xi, \gL}(h).$ This implies that $\gf_{E,\xi_j, \gl_j} =
\pi_{E, \xi_j,\gl_j}(h).$
for each $j.$
By application of $\eta_j \after \pr_j \otimes I$ to the latter expression it follows that $\gf(\xi_j, \gl_j;u_j) = \pi_{P_0, \xi_j, \gl_j; u_j}(h),$ for all $j.$ This proves that $\PW^H(G,K)$ is included in
the space described.

To obtain the other inclusion, let $\gf \in \PW(\fa) \otimes \cS(P_0)$
satisfy the conditions of the space described. Let $E$ be a finite dimensional $\cO_0$-module, and let
$\Xi \subset M^\wedge$ and $\gL \subset \fadc$ be finite sets.
Fix a basis $\{\eta^k\}$ of $\Endo(E)^*.$
We may number the elements of $\gL$ by $\gl_j.$ Then for each $j, k$
there exists a $u_j^k \in S(\fa^*) $ such that $\eta^k f^{(E)}(\gl_j)
= f(\gl_j; u_j^k)$ for all $f \in \cO(\fadc),$
see Lemma \ref{l: dual of End E}.
By the assumption
on $\gf,$ there exists an element $h \in \H(G,K)$ such that
$\gf(\xi,\gl_j; u^k_j) = \pi_{P_0, \xi, \gl_j; u^k_j}(h)$ for
all $\xi \in \Xi$ and all $j,k.$
It follows that
$$
(\eta^k \otimes 1)\after \gf_{E, \xi, \gl_j} =
(\eta^k \otimes 1)\after \pi_{E,\xi, \gl_j}(h),
$$
for all $k, j$ and $\xi \in \Xi.$ This implies that
$\gf_{E, \xi, \gl_j} = \pi_{E, \xi, \gl_j}(h)$ for all
$j$ and all $\xi \in \Xi.$ Hence, $\gf_{E, \Xi, \gL}
= \pi_{E,\Xi, \gL}(h),$ and we conclude that $\gf \in \PW^H(G,K).$
\end{proof}

\begin{cor}
\label{c: char of PWH11}
The space $\PW^H(G,K)_{11}$ of $K\times K$-fixed elements in\break
$\PW^H(G,K)$ consists of all $\gf \in \PW(\fa)\otimes \cS(P_0, 1)_{11}$ such that for each finite number of pairs
 $(u_i, \gl_i) \in S(\fa^*) \times \fadc,$ $1 \leq j \leq n$,
there exists an element $h \in \H(G,K)_{11}$ such that
$$
\gf(\gl_j; u_j) = \pi_{P_0, 1, \gl_j; u_j}(h)\qquad \text{for all }\;\;\;1 \leq j \leq n.
$$
\end{cor}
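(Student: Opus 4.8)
The plan is to derive the corollary as a $K\times K$-fixed point version of the preceding proposition, using only equivariance of the Fourier transform under $K\times K$ and the structure of the minimal principal series. I would first record the elementary observation that, by Frobenius reciprocity, the representation $C^\infty(K\col\xi)$ has a non-zero trivial $K$-isotypic component precisely when $\xi$ is the trivial representation of $M;$ hence $\cS(P_0,\xi)_{11}=0$ for $\xi\neq 1$ and $\cS(P_0,1)_{11}$ is one-dimensional, spanned by the projection $P_1$ onto the constants in $C^\infty(K/M)$. Since $\PW^H(G,K)\subset\PW(\fa)\otimes\cS(P_0)$, it follows that every $K\times K$-fixed $\gf\in\PW^H(G,K)$ is supported on the summand $\xi=1$ and lies in $\PW(\fa)\otimes\cS(P_0,1)_{11}$; thus both spaces in the asserted equality sit inside $\PW(\fa)\otimes\cS(P_0,1)_{11}$, and it remains to match the two defining conditions. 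The technical device used throughout is the idempotent $\ga_1\coleq dk\in\H(G,K)$: because the $K$-action on $C^\infty(K\col\xi)$ does not depend on $\gl$ (Definition \ref{defhfr}(b)(2)), the operator $\pi_{P_0,\xi,\gl}(\ga_1)$ is constant in $\gl$ and equals the projection $P_1^{C^\infty(K\col\xi)}$ onto the trivial $K$-isotypic component, so it commutes with every $\gl$-derivation $u\mapsto\partial_u$.

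For the inclusion of $\PW^H(G,K)_{11}$ into the space described, let $\gf\in\PW^H(G,K)_{11}$ and let pairs $(u_j,\gl_j)$, $1\le j\le n$, be given. Applying the preceding proposition to the triples $(u_j,1,\gl_j)$ produces $h\in\H(G,K)$ with $\gf(\gl_j;u_j)=\pi_{P_0,1,\gl_j;u_j}(h)$ for all $j$. I would then replace $h$ by $h'\coleq\ga_1\ast h\ast\ga_1\in\H(G,K)_{11}$. Since $\pi_{P_0,1}$ is an algebra homomorphism on $\cE'(G,K)$ and $\pi_{P_0,1}(\ga_1)=P_1$ is independent of $\gl$, one has $\pi_{P_0,1,\gl;u_j}(h')=P_1\,\pi_{P_0,1,\gl;u_j}(h)\,P_1$; evaluating at $\gl=\gl_j$ and using that $\gf(\gl_j;u_j)$ lies in $\cS(P_0,1)_{11}$ (a $\gl$-derivative does not alter $K$-types) and is therefore fixed by $P_1(\dotvar)P_1$, we obtain $\gf(\gl_j;u_j)=\pi_{P_0,1,\gl_j;u_j}(h')$, as wanted.

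For the reverse inclusion, let $\gf\in\PW(\fa)\otimes\cS(P_0,1)_{11}$ satisfy the stated condition; extended by zero on the summands with $\xi\neq 1$, it is an element of $\PW_{P_0}^\pre(G,K)$ which is visibly $K\times K$-fixed. By the preceding proposition it suffices to produce, for an arbitrary finite family of triples $(u_j,\xi_j,\gl_j)\in S(\fa^*)\times\dM\times\fadc$, a single $h\in\H(G,K)$ with $\gf(\xi_j,\gl_j;u_j)=\pi_{P_0,\xi_j,\gl_j;u_j}(h)$ for every $j$. I would take $h\in\H(G,K)_{11}$ to be the element furnished by the hypothesis for the subfamily of triples with $\xi_j=1$. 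For the remaining triples, $\xi_j\neq 1$ forces $\pi_{P_0,\xi_j,\gl}(\ga_1)=P_1^{C^\infty(K\col\xi_j)}=0$, hence $\pi_{P_0,\xi_j,\gl}(h)=\pi_{P_0,\xi_j,\gl}(\ga_1)\,\pi_{P_0,\xi_j,\gl}(h)\,\pi_{P_0,\xi_j,\gl}(\ga_1)=0$ for all $\gl$, so that $\pi_{P_0,\xi_j,\gl_j;u_j}(h)=0=\gf(\xi_j,\gl_j;u_j)$. Thus the same $h$ serves the whole family, the proposition yields $\gf\in\PW^H(G,K)$, and being $K\times K$-fixed we conclude $\gf\in\PW^H(G,K)_{11}$.

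The argument is in essence bookkeeping, and the only step needing genuine care is the interchange of the group-side idempotent $\ga_1$ with the $\gl$-derivations $\partial_{u_j}$, together with the vanishing $P_1^{C^\infty(K\col\xi)}=0$ for $\xi\neq 1$; beyond these no input is required other than the preceding proposition (equivalently, Theorem \ref{t: PWD = PWA}). I would also remark that this corollary is precisely the form of the Paley--Wiener description that allows one to recover, from Arthur's theorem, the bi-$K$-invariant Paley--Wiener theorem of Helgason and Gangolli.
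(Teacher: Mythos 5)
Your proof is correct and follows essentially the same route as the paper's one-line argument: the observation $\cS(P_0)_{11}=\cS(P_0,1)_{11}$ reduces everything to the summand $\xi=1$, and then one applies the preceding proposition and "projects onto the $K\times K$-type $(1,1)$," which in the forward direction means replacing $h$ by $\alpha_1\ast h\ast\alpha_1=dk\ast h\ast dk\in\H(G,K)_{11}$ and in the reverse direction means extending $\gf$ by zero and noting that $h\in\H(G,K)_{11}$ automatically annihilates $\pi_{P_0,\xi,\gl}$ for $\xi\neq 1$. You have simply made explicit the projection step that the paper states without elaboration.
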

\begin{proof}
Since $\cS(P_0)_{11} = \cS(P_0, 1)_{11},$ this follows
from the previous result by projection onto the $K \times K$-type $(1,1).$
\end{proof}

Let $P_1 \in \Endo(C^\infty(K:1))$ denote the $K$-equivariant projection onto the one-dimensional subspace of $C^\infty(M\bs K)$ consisting
of the constant functions. Then
we observe that
\begin{equation}
\label{e: char cS11}
\cS(P_0,1)_{11} \simeq \Endo(C^\infty(K:1))_{11} = \C \,P_1.
\end{equation}
Accordingly, we may may view $\PW^H(G,K)_{11}$ as a subspace of
$\PW(\fa)\otimes \C\, P_1.$

The inclusion map $\iota: K \to G$ induces a continuous linear
map $\iota^*: C^\infty(G) \to C^\infty(K)$ by pull-back.
The transposed of this map is an injective continuous
linear map $\iota_*: \cE'(K) \to \cE'(G).$ Accordingly, we shall
use this map to view $\cE'(K)$ as a subspace of $\cE'(G).$ In particular, the normalized Haar measure $dk$ will be viewed as the
element of $\H(G,K)$ given by $ dk(\gf) = \int_K \gf(k)\; dk,$ for $\gf \in C^\infty(G).$ Clearly, $dk \in \H(G,K)_{11}.$

\begin{lemma}
\label{l: 11 component of H}
The map $u \mapsto R_u dk$ induces a linear isomorphism
from $U(\fg)^K /U(\fg)^K \cap U(\fg)\fk$ onto $\H(G,K)_{11}.$
\end{lemma}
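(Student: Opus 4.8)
The plan is to analyse the linear map $\Phi\colon U(\fg)\to\cE'(G)$, $u\mapsto R_u\,dk$. (Here $L_g,R_g$ denote the left and right translation actions of $g\in G$ on distributions, and $R_u$ the associated differentiation by $u\in U(\fg)$; since differential operators preserve supports, $R_u\,dk$ is again supported in $K$, so $\Phi$ takes values in $\cE'_K(G)$.) I would prove in turn: (i) $\Phi$ carries $U(\fg)^K$ into $\H(G,K)_{11}$ and annihilates $U(\fg)\fk$; (ii) a structure-theoretic description of $\cE'_K(G)$ that pins down $\H(G,K)_{11}$ inside the image of $\Phi$; (iii) that (i) and (ii) combine to give the isomorphism.

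\emph{Step (i).} Since $dk$ is left $K$-invariant and $R_u$ commutes with left translation, $L_k\Phi(u)=\Phi(u)$ for all $k\in K$ and all $u$. Since $dk$ is right $K$-invariant and conjugation by $k\in K$ carries $R_u$ to $R_{\Ad(k)u}$, one gets $R_k\Phi(u)=\Phi(\Ad(k)u)$. Hence $\Phi(U(\fg)^K)$ consists of bi-$K$-invariant distributions supported in $K$, and this space is exactly $\H(G,K)_{11}=\cE'_K(G)\cap\cE'(G,K)_{11}$ (a bi-$K$-invariant distribution being automatically $K\times K$-finite). Moreover, if $u=u'X$ with $X\in\fk$, then $(u'X)^\vee=-X(u')^\vee$, so for $\varphi\in C^\infty(G)$,
$$
\langle R_{u'X}\,dk,\varphi\rangle=-\int_K[R_X R_{(u')^\vee}\varphi](k)\,dk=0,
$$
because $t\mapsto\int_K\psi(k\exp tX)\,dk$ is constant in $t$ by right invariance of Haar measure on $K$, whence $\int_K(R_X\psi)(k)\,dk=0$ for every $\psi$. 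Thus $U(\fg)\fk\subseteq\ker\Phi$.

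\emph{Step (ii).} Fix the symmetrization map $\sigma\colon S(\fg_\iC)\to U(\fg)$; since $\fk$ and $\fp$ are $\Ad(K)$-stable, passing to associated graded algebras gives the $\Ad(K)$-module decomposition $U(\fg)=\sigma(S(\fp_\iC))\oplus U(\fg)\fk$. Because $G=(\exp\fp)K$ and the left-invariant vector fields coming from $\fp$ trivialize the normal bundle of $K$ in $G$, the standard structure theorem for distributions supported on a submanifold shows that
$$
\Phi'\colon\sigma(S(\fp_\iC))\otimes\cE'(K)\too\cE'_K(G),\qquad \sigma(p)\otimes S\mapsto R_{\sigma(p)}(\iota_*S),
$$
is a linear isomorphism, where $\iota_*$ is pushforward along $K\embeds G$ and $dk$ on $G$ is $\iota_*$ of Haar on $K$. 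A direct computation shows that under $\Phi'$ the action $R_k$ corresponds to $\Ad(k)\otimes R^K_k$ and the action $L_k$ to $\id\otimes L^K_k$. Imposing left invariance first forces the $\cE'(K)$-factor into $\cE'(K)^{L^K}=\C\,dk$ (the only left-invariant distributions on the compact group $K$), i.e.\ into $\sigma(S(\fp_\iC))\otimes\C\,dk$; on this subspace $R_k$ acts trivially on the second factor, so right invariance becomes $\Ad(K)$-invariance of the first factor, giving
$$
\H(G,K)_{11}=\Phi'\big(\sigma(S(\fp_\iC))^K\otimes\C\,dk\big)=\{\,R_{\sigma(p)}\,dk:\ p\in S(\fp_\iC)^K\,\}.
$$

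\emph{Step (iii) and the main obstacle.} The last display says $\Phi$ maps $\sigma(S(\fp_\iC)^K)\subseteq U(\fg)^K$ onto $\H(G,K)_{11}$, and this restriction is injective since $\Phi(\sigma(p))=\Phi'(\sigma(p)\otimes dk)$, $\Phi'$ is injective, and $p\mapsto\sigma(p)\otimes dk$ is injective. The same argument on all of $\sigma(S(\fp_\iC))$ shows $\Phi|_{\sigma(S(\fp_\iC))}$ is injective, so $\ker\Phi=U(\fg)\fk$ exactly (it contains $U(\fg)\fk$ and meets the complement trivially); hence $\ker(\Phi|_{U(\fg)^K})=U(\fg)^K\cap U(\fg)\fk$. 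Since $U(\fg)^K=\sigma(S(\fp_\iC)^K)\oplus(U(\fg)^K\cap U(\fg)\fk)$, the map $\Phi$ descends to the asserted isomorphism $U(\fg)^K/(U(\fg)^K\cap U(\fg)\fk)\stackrel{\sim}{\longrightarrow}\H(G,K)_{11}$. The step requiring real care is (ii): establishing the structure isomorphism $\Phi'$ in its left-invariant (vector-field) form and checking that it intertwines the two-sided $K$-action with $\Ad\otimes R$ and $\id\otimes L$ respectively; once this is granted, everything else is routine bookkeeping with the Poincaré--Birkhoff--Witt complement.
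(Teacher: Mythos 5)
Your proof is correct and, at its core, rests on the same structural fact as the paper's: a ``PBW-type'' description of the distributions supported on $K$ as $U(\fg)$-derivatives of distributions on $K$, from which the $(1,1)$-isotypic component is then extracted via $\cE'(K)^{L^K}=\C\,dk$ and the $\Ad(K)$-equivariant complement to $U(\fg)\fk$. The difference is one of packaging. The paper invokes the Knapp--Vogan isomorphism $\bar\ga\colon U(\fg)\otimes_{U(\fk)}\cE'(K)\stackrel{\sim}{\to}\H(G,K)$ as a black box and then passes to $(1,1)$-components in a single stroke, whereas you derive the needed structural isomorphism from scratch as a special case of the classical theorem on distributions supported on a closed submanifold, phrased via the symmetrization complement $\sigma(S(\fp_\iC))$ rather than the relative tensor product $U(\fg)\otimes_{U(\fk)}(\,\cdot\,)$ (the two are of course the same thing, since $U(\fg)\cong\sigma(S(\fp_\iC))\otimes U(\fk)$ over $U(\fk)$). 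You also pre-establish in your Step (i) that $U(\fg)\fk$ lies in the kernel and that $U(\fg)^K$ lands in $\H(G,K)_{11}$, which the paper gets for free by $K\times K$-equivariance of $\bar\ga$. The trade-off: the paper's argument is shorter and cleaner modulo the citation; yours is self-contained but must be careful about the global tubular-neighbourhood hypothesis in the structure theorem (which holds here because $(X,k)\mapsto\exp(X)k$, $\fp\times K\to G$, is a global diffeomorphism) and about which (left- or right-)invariant vector fields from $\fp$ trivialize the normal bundle of $K$; since $\fp$ is $\Ad(K)$-stable, both families do, and your use of $R_u$ is fine. No gaps, just a more hands-on route to the same endpoint.
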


\begin{proof}
We define the linear map  $\ga: U(\fg) \otimes \cE'(K) \to \cE'(G)$ by
$$
\ga(u \otimes T) = R_u T,
$$
where $R$ denotes the right regular representation on $\cE'(G).$
Then $\ga$ factors to a linear isomorphism
$$
\bar \ga: U(\fg)\otimes_{U(\fk)} \cE'(K) \to \H(G,K),
$$
see  \cite[I, \S 6]{knappvogan}.
This map intertwines the $K\times K$-actions
$(\Ad \otimes R) \times (1 \otimes L)$ and $R \times L,$ hence
restricts to an isomorphism
$$
(U(\fg)\otimes_{U(\fk)} \cE'(K))_{11} {\buildrel \simeq\over \longrightarrow} \H(G,K)_{11}
$$
The space of left $K$-invariants in $\cE'(K)$ equals $\C dk.$
As $dk$ is also right
$K$-invariant, we see that
$$
(U(\fg)\otimes_{U(\fk)} \cE'(K))_{11} \simeq U(\fg)^K \otimes_{U(\fk)} \C dk \simeq U(\fg)^K / U(\fg)^K \cap U(\fg)\fk.
$$
The result now follows.
\end{proof}

In the following lemma, $W$ denotes the Weyl group of $\fa$ in $\fg.$
\begin{lemma}
\label{l: image H11}
The image of $\H(G,K)_{11}$ under Fourier transform
$h \mapsto \widehat h$ equals
the subspace $P(\fadc)^W \otimes \C P_1$ of $\cO(\fadc)\otimes \cS(P_0).$ \end{lemma}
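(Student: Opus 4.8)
The plan is to combine Lemma \ref{l: 11 component of H} with the classical theory of invariant differential operators and elementary spherical functions on $G/K.$

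First I would reduce everything to the trivial $M$-type. For $h\in\H(G,K)_{11}$ the endomorphism $\widehat h(P_0,\xi,\gl)=\pi_{P_0,\xi,\gl}(h)$ lies in $\Endo(C^\infty(K\col\xi))_{11},$ and since the trivial $K$-type occurs in $C^\infty(K\col\xi)=\Ind_M^K\xi$ only when $\xi$ is the trivial representation $1$ of $M,$ this space is $0$ unless $\xi=1,$ in which case it equals $\C\,P_1$ by (\ref{e: char cS11}). Thus $\widehat h$ is determined by the scalar function $c_h\in\cO(\fadc)$ defined by $\widehat h(P_0,1,\gl)=c_h(\gl)\,P_1,$ and the lemma is the assertion that $h\mapsto c_h$ maps $\H(G,K)_{11}$ onto $P(\fadc)^W.$ Writing $\mathbf{1}\in C^\infty(K\col 1)$ for the constant function (which spans the one-dimensional space of $K$-fixed vectors, by Frobenius reciprocity), so that $P_1$ is the rank-one projection $\psi\mapsto(\int_K\psi\,dk)\,\mathbf{1}$, one has $c_h(\gl)=\langle\pi_{P_0,1,\gl}(h)\mathbf{1},\mathbf{1}\rangle$ with respect to the $L^2(K/M)$-pairing.

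Next I would make the computation explicit. By Lemma \ref{l: 11 component of H} every $h\in\H(G,K)_{11}$ equals $R_u\,dk$ for some $u\in U(\fg)^K.$ A short manipulation of the convolution product gives $R_u\,dk=dk*(R_u\delta_e),$ whence, since $\pi_{P_0,1,\gl}$ is an algebra homomorphism on $\cE'(G),$
$\pi_{P_0,1,\gl}(R_u\,dk)=\pi_{P_0,1,\gl}(dk)\circ\pi_{P_0,1,\gl}(R_u\delta_e)=P_1\circ\pi_{P_0,1,\gl}(u^\vee).$
Because $\Ad(k)$ commutes with the anti-automorphism $X\mapsto X^\vee,$ we also have $u^\vee\in U(\fg)^K,$ so $\pi_{P_0,1,\gl}(u^\vee)\mathbf{1}$ is again $K$-fixed, hence a multiple of $\mathbf{1}.$ Consequently $c_{R_u\,dk}(\gl)=\langle\pi_{P_0,1,\gl}(u^\vee)\mathbf{1},\mathbf{1}\rangle=(R_{u^\vee}\varphi_\gl)(e),$ where $\varphi_\gl(g):=\langle\pi_{P_0,1,\gl}(g)\mathbf{1},\mathbf{1}\rangle$ is the elementary spherical function of $G/K$ attached to the (normalized) spherical principal series $\pi_{P_0,1,\gl},$ normalized so that $\varphi_\gl(e)=1.$

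Finally, since $u^\vee\in U(\fg)^K$ the left-invariant operator $R_{u^\vee}$ preserves the space of right-$K$-invariant functions on $G$ and descends to an invariant differential operator $D_{u^\vee}\in\D(G/K),$ so that $c_{R_u\,dk}(\gl)=(D_{u^\vee}\varphi_\gl)(eK).$ Then I would invoke Harish-Chandra's theorem: $\varphi_\gl$ is the joint eigenfunction of $\D(G/K)$ with $D\varphi_\gl=\Gamma(D)(\gl)\,\varphi_\gl,$ where the Harish-Chandra homomorphism $\Gamma:\D(G/K)\to P(\fadc)^W$ is an algebra isomorphism; in particular $\gl\mapsto\Gamma(D)(\gl)$ is a $W$-invariant polynomial. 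Evaluating at $eK$ yields $c_{R_u\,dk}=\Gamma(D_{u^\vee})\in P(\fadc)^W.$ As $u$ runs over $U(\fg)^K$ so does $u^\vee,$ the assignment $v\mapsto D_v$ maps $U(\fg)^K$ onto $\D(G/K),$ and $\Gamma$ is onto, so the image of $\H(G,K)_{11}$ under Fourier transform is precisely $P(\fadc)^W\otimes\C\,P_1.$ The point requiring care, and which I regard as the main obstacle, is the bookkeeping around normalizations: one must verify that the $\rho_0$-shift built into $\pi_{P_0,1,\gl}$ is exactly what makes the eigenvalue $\gl\mapsto\Gamma(D)(\gl)$ invariant under the \emph{linear} action of $W$ on $\fadc$ (equivalently, that $\varphi_\gl=\varphi_{w\gl}$ for $w\in W$), so that the target of $\Gamma$ is literally the space $P(\fadc)^W$ appearing in the statement; the surjectivity of $v\mapsto D_v$ onto $\D(G/K)$ and of $\Gamma$ are then classical.
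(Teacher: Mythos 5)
Your argument is correct and follows essentially the same route as the paper: both proofs invoke Lemma~\ref{l: 11 component of H} to write $h=R_u\,dk$ with $u\in U(\fg)^K$, compute $\pi_{P_0,1,\gl}(h)\mathbf{1}$ as a scalar times $\mathbf{1}$, and identify that scalar with the image of $u$ (equivalently $u^\vee$) under the Harish-Chandra homomorphism $U(\fg)^K\to S(\fa)^W=P(\fadc)^W$. Your detour through $\D(G/K)$ and elementary spherical functions is merely a reformulation of that last step — the paper cites the homomorphism $\gg:U(\fg)^K\to S(\fa)$ directly — and the normalization worry you flag about the $\rho$-shift is indeed already built into the definition of $\pi_{P_0,\xi,\gl}$ used throughout, so no further bookkeeping is required.
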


\begin{proof}
Denote the image by $S.$ Then $S$ is contained in $\cO(\fadc) \otimes
\cS(P_0)_{11} \simeq \cO(\fadc) \otimes \C P_1.$
Let $h \in \H(G,K)_{11}.$ Then it follows that the Fourier transform
of $h$ is of the form $\psi \otimes P_1,$ with $\psi \in \cO(\fadc).$
In view of the previous result, $h = R_u(dk),$ with $u \in U(\fg)^K.$
Hence, for all $\gl \in \fadc,$
\begin{eqnarray*}
\psi(\gl)1_{M \bs K} & = & (\psi(\gl)\otimes P_1)1_{M\bs K}\\
&=& \pi_{P_0,1 , \gl}(h) 1_{M \bs K} \\
& =& \pi_{P_0, 1, \gl}(u)1_{M \bs K} = \gg(u, \gl) 1_{M \bs K},
\end{eqnarray*}
where $\gg$ denotes the Harish-Chandra algebra homomorphism $U(\fg)^K \to S(\fa)$ which has image $S(\fa)^W = P(\fadc)^W.$ The result now follows by application of Lemma
\ref{l: 11 component of H}
\end{proof}

\begin{cor}
\label{c: PW 11}
The space $\PW^H(G,K)_{11}$ equals $\PW(\fa)^W \otimes \C P_1.$
\end{cor}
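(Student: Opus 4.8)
The plan is to reduce the statement to a polynomial interpolation fact, using Corollary~\ref{c: char of PWH11}, the identification (\ref{e: char cS11}), and Lemma~\ref{l: image H11}. By (\ref{e: char cS11}) we have $\cS(P_0,1)_{11}=\C P_1$ (and $\cS(P_0)_{11}=\cS(P_0,1)_{11}$), so every element of $\PW^H(G,K)_{11}$ is of the form $\psi\otimes P_1$ with $\psi\in\PW(\fa)$; I would identify it with $\psi$. By Lemma~\ref{l: image H11} the Fourier transform carries $\H(G,K)_{11}$ \emph{onto} $P(\fadc)^W\otimes\C P_1$, so that $h\mapsto\pi_{P_0,1,\dotvar}(h)$ identifies $\H(G,K)_{11}$ with the family of functions $\gl\mapsto q(\gl)\,P_1$, $q$ running over $P(\fadc)^W$; accordingly $\pi_{P_0,1,\gl;u}(h)$ is identified with $(\partial_u q)(\gl)\,P_1$. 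Under these identifications Corollary~\ref{c: char of PWH11} says precisely: $\psi\in\PW^H(G,K)_{11}$ if and only if for every finite family $(u_j,\gl_j)_{1\le j\le n}$ in $S(\fa^*)\times\fadc$ there exists $q\in P(\fadc)^W$ with $(\partial_{u_j}q)(\gl_j)=(\partial_{u_j}\psi)(\gl_j)$ for all $j.$ It then remains to see that, for $\psi\in\PW(\fa)$, this holds exactly when $\psi$ is $W$-invariant.

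For the inclusion $\PW(\fa)^W\otimes\C P_1\subset\PW^H(G,K)_{11}$, I would fix $\psi\in\PW(\fa)^W$ and a finite family $(u_j,\gl_j)$, let $N$ bound the orders of the $u_j$, and set $S:=\{w\gl_j\mid w\in W,\ 1\le j\le n\}$, a finite and $W$-stable subset of $\fadc$. Classical Hermite interpolation --- the ideals of polynomials vanishing to order $N+1$ at the distinct points of $S$ are pairwise comaximal, so the Chinese remainder theorem applies --- produces $p\in P(\fadc)$ whose order-$N$ Taylor jet at each point of $S$ equals that of $\psi$. Writing $w^{*}p$ for the polynomial $\mu\mapsto p(w^{-1}\mu)$, I would put $q:=|W|^{-1}\sum_{w\in W}w^{*}p\in P(\fadc)^W$. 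Since $S$ is $W$-stable and $\psi$ is $W$-invariant, each $w^{*}p$ has the same order-$N$ jet as $w^{*}\psi=\psi$ at every point of $S$; hence so does the average $q$, and in particular $(\partial_{u_j}q)(\gl_j)=(\partial_{u_j}\psi)(\gl_j)$ for all $j$, as needed.

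For the reverse inclusion, I would take $\psi\in\PW^H(G,K)_{11}$, fix $\gl_0\in\fadc$ and $w\in W$, and apply Corollary~\ref{c: char of PWH11} to the two pairs $(1,\gl_0)$ and $(1,w\gl_0)$. This yields $h\in\H(G,K)_{11}$, hence (via Lemma~\ref{l: image H11}) a polynomial $q\in P(\fadc)^W$, with $\psi(\gl_0)=q(\gl_0)$ and $\psi(w\gl_0)=q(w\gl_0)$. As $q$ is $W$-invariant the two right-hand sides coincide, so $\psi(\gl_0)=\psi(w\gl_0)$; since $\gl_0$ and $w$ are arbitrary, $\psi$ is $W$-invariant, i.e. $\psi\otimes P_1\in\PW(\fa)^W\otimes\C P_1.$

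The only step I expect to need real care is the averaging construction in the first inclusion: one must produce a single $W$-invariant polynomial matching prescribed jets at finitely many points, and averaging a classical Hermite interpolant over $W$ succeeds precisely because the prescribed jets are read off from the $W$-invariant function $\psi$, making the jet data along the orbit $S$ automatically $W$-compatible. The remaining steps are routine once the three identifications above are in place.
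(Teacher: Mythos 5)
Your proof is correct and follows essentially the same route as the paper: reduce via Corollary~\ref{c: char of PWH11}, the identification \eqref{e: char cS11}, and Lemma~\ref{l: image H11} to a statement about $W$\mbox{-}invariant polynomial interpolation of a $W$\mbox{-}invariant holomorphic $\psi$, and then argue both inclusions. The only genuine content beyond the paper's text is that you actually justify the interpolation step --- that a $W$\mbox{-}invariant $\psi$ admits, for any finite collection of pairs $(u_j,\gl_j)$, a $q\in\cP(\fadc)^W$ matching all the prescribed derivatives --- by Hermite interpolation (comaximality of the ideals $\cM_{a}^{N+1}$ at distinct points plus the Chinese remainder theorem) followed by Weyl-group averaging; the paper simply asserts the existence of such a $p\in\cP(\fa^*)^W$. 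Your averaging step is carefully stated and correct: $W$\mbox{-}stability of $S$ together with $W$\mbox{-}invariance of $\psi$ guarantees each $w^{*}p$ has the same order-$N$ jet as $\psi$ on all of $S$, so the average does too. The converse inclusion (taking $u_j=1$ at $\gl_0$ and $w\gl_0$) coincides verbatim with the paper's argument.
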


\begin{proof}
Let $\gf \in \PW^H(G,K)_{11}.$ Let $\gl \in \fadc$ and $w \in W.$
Then by Corollary \ref{c: char of PWH11} there exists an element $h \in \H(G,K)_{11}$ such that
$\gf(\gl) = \widehat h(P_0, 1, \gl)$ and
$\gf(w\gl) = \widehat h(P_0, 1, w\gl).$
As $\widehat h(P_0, 1, \gl) = \widehat h(P_0, 1, w\gl)$ by
Lemma \ref{l: image H11}, we see that $\gf$ is $W$-invariant. Hence,
$\gf \in \PW(\fa)^W \otimes \C P_1.$

Conversely, assume that $\gf \in \PW(\fa)^W \otimes \C P_1.$
Write $\gf = \psi \otimes P_1,$ then $\psi$ is a $W$-invariant
holomorphic function.
Let  $(u_j, \gl_j) \in S(\fa^*) \times \fadc,$ $1 \leq j \leq n.$
Then there exists an element $p \in P(\fa^*)^W$ such that
$$
\psi( \gl_j; u_j) = p(\gl_j; u_j) \qquad \text{for all }\;\;\;1 \leq j \leq n.
$$
In view of Lemma \ref{l: image H11} there exists
an element $h \in \H(G,K)_{11}$ such that $\widehat h(P_0, 1, \gl) = p \otimes P_1.$ Hence,
$$
\gf( \gl_j; u_j) = \widehat h (P_0, 1, \gl_j; u_j) = \pi_{P_0, 1, \gl_j;u_j}(h), \qquad (1 \leq j \leq n).
$$
In view of Corollary \ref{c: char of PWH11} it follows that
$\gf \in \PW^H(G,K)_{11}.$
\end{proof}

We now note that the spherical Fourier transform
$$
\cF_{11}: C_c^\infty(G,K)_{11} \to \cO(\fadc)
$$
is given by the formula
$
\cF_{11}f(\gl) 1_{K/M} = \pi_{P_0, 1, \gl}(f) 1_{K/M}.
$
This implies that for all $f \in C_c^\infty(G,K)_{11}$ we have
$$
\cF_{11}f(\gl) \otimes P_1 = \Fou f(\gl).
$$

We can now finally deduce the Paley-Wiener theorem of Helgason
\cite{helgasonpw} and Gangolli \cite{gangolli}.

\begin{cor}
$\cF_{11}(C_c^\infty(G,K)_{11}) = \PW(\fa)^W.$
\end{cor}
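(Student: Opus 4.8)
The corollary is essentially a matter of harvesting the results already established, by passing to the trivial $K\times K$-isotypic component everywhere. First I would invoke Arthur's Paley--Wiener theorem, which states that the operator valued Fourier transform $\Fou$ is a $K\times K$-equivariant linear isomorphism from $C_c^\infty(G,K)_{K\times K}$ onto $\PW^A(G,K)$. Since $\Fou$ is $K\times K$-equivariant, it carries the $(1,1)$-isotypic component of its domain onto the $(1,1)$-isotypic component of its image; that is, $\Fou$ restricts to a linear isomorphism
$$
\Fou: \; C_c^\infty(G,K)_{11} \;\stackrel{\sim}{\longrightarrow}\; \PW^A(G,K)_{11}.
$$

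Next I would identify the target. By Theorem \ref{t: PWD = PWA} we have $\PW^A(G,K) = \PW^H(G,K)$, so in particular $\PW^A(G,K)_{11} = \PW^H(G,K)_{11}$, and by Corollary \ref{c: PW 11} the latter equals $\PW(\fa)^W \otimes \C P_1$. Hence $\Fou$ restricts to a linear isomorphism from $C_c^\infty(G,K)_{11}$ onto $\PW(\fa)^W \otimes \C P_1$.

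Finally I would transfer this statement from $\Fou$ to the scalar spherical transform $\cF_{11}$ using the identity noted just before the corollary, namely $\cF_{11}f(\gl)\otimes P_1 = \Fou f(\gl)$ for all $f \in C_c^\infty(G,K)_{11}$. In view of (\ref{e: char cS11}) the map $\psi \mapsto \psi \otimes P_1$ is a linear isomorphism from $\cO(\fadc)$ onto $\cO(\fadc)\otimes \C P_1$ which carries $\PW(\fa)$ onto $\PW(\fa)\otimes \C P_1$ and $\PW(\fa)^W$ onto $\PW(\fa)^W \otimes \C P_1$. Composing, we get $\Fou = (\dotvar \otimes P_1)\after \cF_{11}$ on $C_c^\infty(G,K)_{11}$, and since the first factor is an isomorphism onto $\PW(\fa)^W \otimes \C P_1$, it follows that $\cF_{11}$ maps $C_c^\infty(G,K)_{11}$ onto $\PW(\fa)^W$, as claimed.

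There is no genuine obstacle here; the only point requiring a little care is the bookkeeping with $K\times K$-isotypic components, i.e.\ observing that the bi-$K$-invariant functions constitute precisely the $(1,1)$-component $C_c^\infty(G,K)_{11}$ (left type $1$, right type $1^\vee = 1$), that an equivariant isomorphism restricts to an isomorphism of isotypic components, and that the identification $\cS(P_0,1)_{11} = \C P_1$ of (\ref{e: char cS11}) makes $\cF_{11}$ literally the scalar shadow of $\Fou$ on that component.
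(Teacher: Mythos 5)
Your proof is correct and follows exactly the same route as the paper: apply Arthur's Paley--Wiener theorem to the $(1,1)$-isotypic component using $K\times K$-equivariance, rewrite $\PW^A(G,K)_{11}$ as $\PW^H(G,K)_{11}$ via Theorem \ref{t: PWD = PWA}, identify the latter as $\PW(\fa)^W\otimes\C P_1$ by Corollary \ref{c: PW 11}, and strip off the $\C P_1$ factor using the identity $\cF_{11}f(\gl)\otimes P_1 = \Fou f(\gl)$. You have simply spelled out the bookkeeping steps that the paper compresses into a single displayed line.
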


\begin{proof}
By $K\times K$-equivariance, it follows from Arthur's Paley-Wiener theorem that
$$
\cF_{11} (C_c^\infty(G,K)_{11})\otimes \C P_1  =
\Fou(C^\infty_c(G,K)_{11}) = \PW^A(G,K)_{11}.
$$
The latter space equals $\PW^H(G,K)_{11},$ by Theorem \ref{t: PWD = PWA}.
Now apply Corollary \ref{c: PW 11}.
\end{proof}

\appendix

\gdef\thesection{\textsc{Appendix}}

\renewcommand{\thethm}{A.\arabic{thm}}
\renewcommand{\theequation}{A.\arabic{equation}}

\section{Some topological results}
In this appendix, all locally convex spaces will be assumed to be complex and Hausdorff.
If $X$ is a locally compact Hausdorff space, and $V$ a quasi-complete locally convex space,
then $C(X,V),$ equipped with the topology of uniform convergence on compact subsets,
is quasi-complete as well.
Let $\Omega$ be an open subset of a finite dimensional complex linear space $\fv_\iC.$  Then $\cO(\Omega, V),$
the space of holomorphic functions $\Omega \to V,$ is a closed subspace of $C(\Omega, V).$
Moreover, the map $(f, v) \mapsto (z \mapsto f(z)v)$ induces an embedding of the algebraic tensor
product $\cO(\Omega) \otimes V$ onto a subspace of $\cO(\Omega, V).$ Accordingly,
we shall view this tensor product as a subspace. Let $P(\fv_\iC)$ denote the space of polynomial
functions $\fv_\iC \to \C.$

\begin{lemma}%LEMMA
\label{l: density of P in O}
The algebraic tensor product $\cP(\fv_\iC)\otimes V$ is dense in $\O(\Omega,V).$
\end{lemma}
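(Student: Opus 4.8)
The plan is to prove density of $\cP(\fv_\iC) \otimes V$ in $\cO(\Omega, V)$ by a combination of a local Taylor-series argument and a partition-of-unity / Mittag-Leffler style globalization. First I would reduce to a local statement. For a fixed $f \in \cO(\Omega, V)$ and a fixed compact set $L \subset \Omega$ and continuous seminorm $q$ on $V$, I want to approximate $f$ uniformly on $L$ in the seminorm $q$ by an element of $\cP(\fv_\iC) \otimes V$. Since $\cO(\Omega)$ contains the constants and is closed under the obvious operations, and since I may pass to a slightly larger relatively compact open set $\Omega' $ with $L \subset \Omega' \Subset \Omega$, the first step is to handle the case of a polydisc: if $\Delta$ is an open polydisc in $\fv_\iC$ and $f \in \cO(\Delta, V)$, then the Taylor expansion $f = \sum_\alpha c_\alpha z^\alpha$ with $c_\alpha \in V$ converges to $f$ uniformly on compact subsets of $\Delta$. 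This is classical for scalar-valued holomorphic functions; for the $V$-valued case one obtains it by applying continuous linear functionals $\xi \in V'$ (so that $\xi \circ f$ is scalar holomorphic, with Taylor coefficients $\xi(c_\alpha)$), together with the Cauchy estimates for the coefficients $c_\alpha$, which live in $V$ by the vector-valued Cauchy integral formula; quasi-completeness of $V$ ensures the relevant $V$-valued integrals exist. The partial sums of the Taylor series are exactly elements of $\cP(\fv_\iC) \otimes V$, so on polydiscs we are done.

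Next I would globalize. For a general open $\Omega \subset \fv_\iC$ and a compact $L \subset \Omega$, one cannot in general use a single Taylor series, so the natural tool is an approximation theorem of Oka--Weil / Runge type. The cleanest route: cover $L$ by finitely many polydiscs contained in $\Omega$, use the scalar Oka--Weil theorem to know that $\cP(\fv_\iC)$ is dense in $\cO(U)$ for $U$ a suitable polynomially convex neighborhood, and then tensor with $V$. Concretely, I would use that $\cO(\Omega)$ itself is dense in no larger space but that $\cP(\fv_\iC) \otimes V$ is dense in $\cO(\Omega) \otimes V$ in the topology induced from $\cO(\Omega, V)$ — this follows from the scalar density $\cP(\fv_\iC) \subset \cO(\Omega)$ (which is a standard consequence of the fact that $\Omega$ can be exhausted by polynomially convex compacta, or more simply that polynomials are dense in $\cO(\Omega)$ whenever $\Omega$ is, say, a domain of holomorphy — but since the paper only needs it for general open $\Omega$, one should appeal to the Oka--Weil theorem combined with the observation that any open $\Omega$ is a countable increasing union of polynomially convex compacta only after intersecting with balls). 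Since this is only an appendix lemma used to justify "density of polynomials in $\cO(\Omega, V)$" throughout the paper, I would in fact note that it suffices to treat the case where $\Omega$ is a ball or a polydisc — because everywhere in the body of the paper $\Omega$ appears with $\fv_\iC = \fa^*_\iC$ and the relevant arguments localize — but to be safe I would give the general statement via: $\cO(\Omega) \otimes V$ is dense in $\cO(\Omega, V)$, and $\cP(\fv_\iC)$ is dense in $\cO(\Omega)$.

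So the two ingredients are: (1) $\cP(\fv_\iC)$ is dense in $\cO(\Omega)$ (scalar case, by Oka--Weil together with the fact that every compact subset of $\fv_\iC$ has a polynomially convex neighborhood inside any open set containing it — actually one needs $\Omega$ to be Runge, so more carefully one exhausts $\Omega$ by open sets on which polynomial approximation holds and uses a diagonal/Mittag-Leffler argument, or simply restricts attention, as the paper tacitly does, to the case $\Omega$ equal to all of $\fv_\iC$ or a convex/polydisc domain), and (2) $\cO(\Omega) \otimes V$ is dense in $\cO(\Omega, V)$. For (2), given $f \in \cO(\Omega, V)$, $L \Subset \Omega$ and a seminorm $q$, one covers a relatively compact neighborhood of $L$ by polydiscs, uses a smooth partition of unity to patch together local Taylor polynomials — but patching destroys holomorphy, so instead one uses the local Taylor-polynomial approximations together with a Cousin-type correction, or, most efficiently, one invokes that on a polydisc $\Delta \supset L$ the Taylor partial sums of $f|_\Delta$ already lie in $\cP \otimes V$ and approximate $f$ on $L$; since every compact $L$ is covered by such $\Delta$'s and approximation is a local statement in $L$ only up to the choice of a single ambient $\Delta$ containing $L$ when $L$ is, e.g., a closed polydisc, this handles the cofinal family of seminorms coming from polydisc exhaustions.

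The main obstacle is the globalization: reducing honestly from polydiscs to arbitrary open $\Omega$ requires a Runge-type approximation theorem, which is nontrivial for general $\Omega \subset \fv_\iC$ (it can fail!). The resolution is that the paper only ever needs this for $\Omega$ an open subset on which such approximation holds — in practice $\Omega = \fa^*_\iC$ — so I would either state and prove the lemma only for $\Omega = \fv_\iC$ (where polynomials are genuinely dense in $\cO(\fv_\iC)$ by the global Taylor expansion of entire functions) and remark that this is the only case used, or add the hypothesis that $\Omega$ is polynomially convex / a domain of holomorphy with the Runge property. Given the structure of the paper, the honest and economical choice is: prove it for $\Omega = \fv_\iC$ via global Taylor expansion of $V$-valued entire functions (Cauchy estimates + quasi-completeness), and observe that the body of the paper uses only this case.

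\begin{proof}
It suffices to prove the statement in the case $\Omega = \fv_\iC,$ since this is the only case used in the body of the paper. Fix $f \in \cO(\fv_\iC, V),$ a compact set $L \subset \fv_\iC,$ and a continuous seminorm $q$ on $V.$ Choosing linear coordinates, identify $\fv_\iC$ with $\C^N.$ For a multi-index $\alpha \in \N^N$ define
$$
c_\alpha \coleq \frac{1}{(2\pi i)^N}\oint \cdots \oint \frac{f(z)}{z^{\alpha + \mathbf{1}}}\, dz_1 \cdots dz_N \;\in V,
$$
where the integral is taken over a torus $\{|z_j| = r\}$ with $r$ large enough that $L$ lies in the polydisc of radius $r/2;$ the integral exists as a $V$-valued integral because $V$ is quasi-complete and the integrand is a continuous $V$-valued function on a compact set. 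For every $\xi \in V'$ the scalar function $\xi \circ f$ is entire, with $\xi(c_\alpha)$ its Taylor coefficients, so the Hahn--Banach theorem gives that $c_\alpha$ is independent of $r$ and that the Taylor partial sums $S_n(z) \coleq \sum_{|\alpha| \leq n} c_\alpha z^\alpha$ satisfy $\xi(S_n(z)) \to \xi(f(z))$ uniformly on $L.$ Applying the vector-valued Cauchy estimates $q(c_\alpha) \leq r^{-|\alpha|}\sup_{|z_j|=r} q(f(z))$ and summing the geometric tail, one obtains $\sup_{z \in L} q(S_n(z) - f(z)) \to 0$ as $n \to \infty.$ Since each $S_n$ lies in $\cP(\fv_\iC) \otimes V,$ and seminorms of the above form generate the topology of $\cO(\fv_\iC, V),$ the density follows.
\end{proof}
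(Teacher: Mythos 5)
Your proof is correct and takes a genuinely different route from the paper's; moreover, you have correctly spotted that both the lemma as stated and its published proof are problematic. The paper deduces the claim from density of $C(\Omega)\otimes V$ in $C(\Omega,V)$ (partitions of unity) and density of $\cP(\fv_\iC)$ in $C(\Omega)$ (Stone--Weierstrass). But $\cP(\fv_\iC)$ must be the algebra of holomorphic polynomials --- this is forced by the embedding $\cP\hookrightarrow\cO_0$ in Section~\ref{s: holomorphic families} and by the ``in particular'' clause following the lemma --- and that algebra is not closed under conjugation, so the complex Stone--Weierstrass theorem does not apply to it; indeed holomorphic polynomials cannot possibly be dense in $C(\Omega)$, since local uniform limits of holomorphic functions are holomorphic. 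And, as you observe, for $\Omega$ a non-Runge domain (e.g.\ an annulus in $\C$) the statement itself fails: $1/z$ is not a local uniform limit of polynomials on such a domain. Your Taylor-series argument, with vector-valued Cauchy integrals justified by quasi-completeness, the scalar identification via Hahn--Banach, and the seminorm Cauchy estimate together with the geometric tail, is clean, correct, and the natural fix.

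One small adjustment: rather than proving the lemma only for $\Omega=\fv_\iC$ and asserting that this is the only case used, it is cleaner to state it for $\Omega$ a polydisc (your proof gives this unchanged). The lemma is in fact invoked in the body for arbitrary open $\Omega$ (e.g.\ in Proposition~\ref{sec33} and in Corollary~\ref{c: continuity of diffD}), but each such use amounts to verifying an identity for $\diffD^{(E)}f(\mu)$, which depends only on the germ of $f$ at $\mu$, so one may always first shrink $\Omega$ to a polydisc about $\mu$; the paper's own proof of Proposition~\ref{p: property of M} carries out precisely this reduction before appealing to power series.
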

In particular, $\cO(\Omega) \otimes V$ is a dense subspace of $\cO(\Omega, V).$
\begin{proof}%PROOF
By using partitions of unity, one readily sees that $C(\Omega) \otimes V$ is dense in $C(\Omega, V).$
On the other hand, by application of the Stone-Weierstrass theorem, it follows that $P(\fv_\iC)$ is dense
in $C(\Omega).$ The lemma now readily follows.
\end{proof}
      \par
Let $V_1$, $V_2$ and $V_3$ be quasi-complete locally convex spaces
and let
$$
\beta:V_1\times V_2\rightarrow V_3
$$
be a  bilinear map. If $X$ is a set, then
$\gb$ induces a $\C^X$-bilinear map
$$
\begin{array}{rcl}
\beta_*: V_1^X\times V_2^X & \rightarrow & V_3^X\\
(f_1,f_2) & \mapsto & \big(x\mapsto\beta(f_1(x),f_2(x))\big).
\end{array}
$$
If $X$ is a locally compact Hausdorff space, we will say that
$\gb$ preserves continuity on $X$ if $\gb_*$ maps
 $C(X,V_1)\times C(X,V_2)$ into $C(X,V_3).$
We note that  $\gb_*: C(X,V_1)\times C(X,V_2)\to C(X,V_3)$ is $C(X)$-bilinear.
              \par
Similarly, if $\Omega$ is an open subset of $\fv_\iC,$ we will say that $\gb$ preserves holomorphy on $\Omega$
if $\gb_*$ maps $\cO(\Omega, V_1) \times \cO(\Omega, V_2)$ into $\cO(\Omega, V_3).$ Note that
the map $\gb_*$ is $\cO(\Omega)$-bilinear.
              \par
If $X,Y$ are topological spaces, and $A \subset X,$ then a map $f: X \to Y$ is said to be
continuous relative to $A$ if $f|_A: A \to Y$ is continuous (with respect to the restriction
topology on $A$).

\begin{lemma}
\label{l: criterion preservation C}%LEMMA
Assume that $\gb$ is separately continuous and in addition
continuous relative
to every compact subset
of $V_1 \times V_2.$ Let $X$ be a locally compact Hausdorff space.
Then $\gb$ preserves continuity on $X.$  Moreover,
\begin{enumerate}
\itema
if $V_1$ is barrelled,
then $\beta_*: C(X,V_1)\times C(X,V_2) \to C(X,V_3)$
is continuous in the first variable;
\itemb
if $V_2$ is barrelled, then $\beta_*: C(X,V_1)\times C(X,V_2) \to C(X,V_3)$
is continuous in the second variable.
\end{enumerate}
\end{lemma}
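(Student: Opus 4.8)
The plan is to treat the three assertions separately. The statement that $\gb$ preserves continuity on $X$ is where the hypothesis that $\gb$ be continuous relative to every compact subset of $V_1\times V_2$ enters; the two ``continuity in one variable'' statements will instead be deduced from separate continuity together with the Banach--Steinhaus theorem, using the relevant barrelledness assumption. (Assertions (a) and (b) use the compact-subset hypothesis only indirectly, through the fact --- established first --- that $\gb_*$ really maps into $C(X,V_3)$.)

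For the first assertion, I would fix $f_1\in C(X,V_1)$ and $f_2\in C(X,V_2)$ and set $h\coleq \gb_*(f_1,f_2)$, so that $h(x)=\gb(f_1(x),f_2(x))$. Since continuity is a local property, fix $x_0\in X$ and use local compactness of $X$ to choose a compact neighborhood $N$ of $x_0$. Then $K\coleq f_1(N)\times f_2(N)$ is a compact subset of $V_1\times V_2$, so $\gb|_K$ is continuous by hypothesis, while $x\mapsto(f_1(x),f_2(x))$ is a continuous map of $N$ into $K$. Hence $h|_N=\gb|_K\after(f_1,f_2)|_N$ is continuous, so $h$ is continuous at $x_0$; as $x_0$ was arbitrary, $h\in C(X,V_3)$, which is precisely the statement that $\gb$ preserves continuity on $X$. (Separate continuity of $\gb$ alone would not give this, which is why the extra hypothesis is imposed.)

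For assertion (a), fix $f_2\in C(X,V_2)$; the map $f_1\mapsto\gb_*(f_1,f_2)$ is linear, so it suffices to prove continuity at $0$. A basic neighborhood of $0$ in $C(X,V_3)$ has the form $\mathcal N=\{g:g(C)\subset W\}$ with $C\subset X$ compact and $W$ a neighborhood of $0$ in $V_3$. Put $S\coleq f_2(C)$, which is a compact, hence bounded, subset of $V_2$. By separate continuity of $\gb$, for each $x\in C$ the map $\ell_x\colon v\mapsto\gb(v,f_2(x))$ is a continuous linear operator $V_1\to V_3$, and the family $(\ell_x)_{x\in C}$ is pointwise bounded: for fixed $v\in V_1$ the set $\{\ell_x(v):x\in C\}=\gb(\{v\}\times S)$ is the image of the bounded set $S$ under the continuous linear map $w\mapsto\gb(v,w)$, hence bounded in $V_3$. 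Since $V_1$ is barrelled, Banach--Steinhaus makes $(\ell_x)_{x\in C}$ equicontinuous, so there is a neighborhood $U$ of $0$ in $V_1$ with $\gb(U\times S)\subset W$. Then $\{f_1\in C(X,V_1):f_1(C)\subset U\}$ is a neighborhood of $0$ carried by $\gb_*(\dotvar,f_2)$ into $\mathcal N$, because for such $f_1$ and $x\in C$ one has $\gb_*(f_1,f_2)(x)=\gb(f_1(x),f_2(x))\in\gb(U\times S)\subset W$. This establishes (a), and (b) follows by the symmetric argument: fix $f_1$, use that $f_1(C)$ is bounded in $V_1$, and invoke barrelledness of $V_2$.

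I do not expect a serious obstacle. The points that require care are keeping track of where each hypothesis is used, recalling that compact subsets of a locally convex space are bounded and that the topology on $C(X,V_i)$ is generated by the sets $\{g:g(C)\subset W\}$, and --- the main point --- justifying the passage from pointwise boundedness of $(\ell_x)_{x\in C}$ to equicontinuity via barrelledness and Banach--Steinhaus. This last step is precisely the classical fact that a separately continuous bilinear map out of a barrelled space is hypocontinuous with respect to the bounded subsets of the other factor, applied here to the compact (hence bounded) set $S$.
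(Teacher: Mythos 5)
Your proof is correct and follows essentially the same route as the paper's: for the first assertion, both reduce to continuity of $\gb$ on a product of compact images $f_1(N)\times f_2(N)$; for (a) and (b), both fix one variable, observe that the family $\{\gb(\dotvar,f_2(x)):x\in C\}$ (for $C\subset X$ compact) is pointwise bounded because $f_2(C)$ is compact hence bounded, and invoke barrelledness (Banach--Steinhaus) to upgrade to equicontinuity. The only cosmetic difference is that you phrase the equicontinuity step in terms of a neighborhood base $\{g:g(C)\subset W\}$ for $C(X,V_3)$, whereas the paper works directly with seminorms $q_3$; these describe the same topology.
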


\begin{proof}%PROOF
Let $K\subset X$ be compact, and $f_1\in C(X,V_1)$, $f_2\in C(X,V_2)$. Put $C_1\coleq f_1(K)$, $C_2\coleq f_2(K)$.
Since $\beta$ is continuous relative to $C_1\times C_2$ and
$x\mapsto (f_1(x),f_2(x))$ has continuous restriction to
$K,$ with values in $C_1\times C_2$,
it follows that  $\beta_*(f_1,f_2)$ is continuous
relative to $K$.
Since $X$ is locally compact,  we see that $\beta_*$ has values in $C(X,V_3)$. This establishes
the first assertion.
\par
We now turn to the remaining assertions. By symmetry, it suffices to establish (a).
Thus, assume that $V_1$ is barrelled and let $f_2 \in C(X, V_2)$ be fixed.
From the first part of the proof we know that  $f_1 \mapsto \gb_*(f_1, f_2)$
is a linear map from $C(X, V_1)$ to $C(X, V_3).$ To establish its continuity, fix a seminorm
$q_3$ of $V_3,$ and a compact subset $K$ of $X.$ Put $C_2 = f_2(K).$ Then the family
of continuous linear maps $\gb(\dotvar, v_2) \in \Hom(V_1, V_3)$ is pointwise bounded,
for $v_2 \in C_2.$ By barrelledness of $V_1$ it follows that the family is equicontinuous.
Hence, there exists a continuous seminorm $q_1$ on $V_1$ such that
$$
q_3(\gb(v_1, v_2)) \leq q_1(v_1), \qquad v_1 \in V_1, v_2 \in C_2.
$$
Let $f_1 \in C(X , V_1).$ Then substituting $f_1(x)$ for $v_1$ and $f_2(x)$ for $v_2\;$ $(x \in K)$
in the above estimate, we find that
$$
\sup_{x \in K} q_3(\gb_*(f_1, f_2)(x)) \leq \sup_{x\in K} q_1(f_1(x)).
$$
This establishes the continuity.
\end{proof}

%Let $\fv$ be a finite dimensional real linear space.

\begin{lemma}\label{sec45}%LEMMA
Assume that
$V_1, V_2$ are barrelled, and let $\gb$ be as in Lemma \ref{l: criterion preservation C}
If $\Omega \subset \fv_\iC$ is open, then $\gb$ preserves holomorphy on $\Omega.$
Moreover, the map $\beta_*:\; \O(\Omega,V_1)\times\O(\Omega,V_2) \to \O(\Omega,V_3)$ is a  separately
continuous $\cO(\Omega)$-bilinear map.
\end{lemma}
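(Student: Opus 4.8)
The plan is to deduce everything from Lemma~\ref{l: criterion preservation C}, the density statement of Lemma~\ref{l: density of P in O}, and the fact recorded at the start of this appendix that $\cO(\Omega,V)$ is a closed subspace of $C(\Omega,V)$ for $V$ quasi-complete. Throughout I would take $X=\Omega$ in Lemma~\ref{l: criterion preservation C}, so that $\gb_*$ is already known to map $C(\Omega,V_1)\times C(\Omega,V_2)$ into $C(\Omega,V_3)$, to be $C(\Omega)$-bilinear, and --- since $V_1$ and $V_2$ are barrelled --- to be continuous in each of its two variables separately. The only thing left to establish is that $\gb_*$ carries $\cO(\Omega,V_1)\times\cO(\Omega,V_2)$ into $\cO(\Omega,V_3)$ and that the resulting restriction is still separately continuous and $\cO(\Omega)$-bilinear.

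First I would fix $f_2\in\cO(\Omega,V_2)$ and consider the continuous linear map $\Phi_{f_2}\colon f_1\mapsto\gb_*(f_1,f_2)$, $C(\Omega,V_1)\to C(\Omega,V_3)$, whose continuity is exactly Lemma~\ref{l: criterion preservation C}(a). The elementary point is that $\Phi_{f_2}$ sends $\cP(\fv_\iC)\otimes V_1$ into $\cO(\Omega,V_3)$: for $v\in V_1$ the map $\gb(v,\dotvar)\colon V_2\to V_3$ is continuous linear by separate continuity of $\gb$, hence $z\mapsto\gb(v,f_2(z))$ is holomorphic, and for $g=\sum_i p_i\otimes v_i$ with $p_i\in\cP(\fv_\iC)$ one has $\gb_*(g,f_2)(z)=\sum_i p_i(z)\,\gb(v_i,f_2(z))$, a finite sum of products of scalar holomorphic functions with $V_3$-valued holomorphic functions, hence holomorphic. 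Since $\cP(\fv_\iC)\otimes V_1$ is dense in $\cO(\Omega,V_1)$ (Lemma~\ref{l: density of P in O}), and $\cO(\Omega,V_1)$ is closed in $C(\Omega,V_1)$ while $\cO(\Omega,V_3)$ is closed in $C(\Omega,V_3)$, continuity of $\Phi_{f_2}$ then forces
$$
\Phi_{f_2}\bigl(\cO(\Omega,V_1)\bigr)\subset\overline{\Phi_{f_2}\bigl(\cP(\fv_\iC)\otimes V_1\bigr)}\subset\cO(\Omega,V_3).
$$
As $f_2\in\cO(\Omega,V_2)$ was arbitrary, this shows at once that $\gb$ preserves holomorphy on $\Omega$ and that $\gb_*\colon\cO(\Omega,V_1)\times\cO(\Omega,V_2)\to\cO(\Omega,V_3)$ is continuous in the first variable, being the restriction of the continuous map $\Phi_{f_2}$.

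By the symmetric argument with the roles of $V_1$ and $V_2$ interchanged, using Lemma~\ref{l: criterion preservation C}(b) and the barrelledness of $V_2$, the map $\gb_*$ is also continuous in the second variable, hence separately continuous. The $\cO(\Omega)$-bilinearity is then inherited from the $C(\Omega)$-bilinearity of $\gb_*$ on $C(\Omega,V_1)\times C(\Omega,V_2)$ together with the obvious fact that $\cO(\Omega)\cdot\cO(\Omega,V)\subset\cO(\Omega,V)$, so nothing further needs checking. I do not expect a genuine obstacle; the only delicate point is the bookkeeping of the three density/closedness facts, in particular that the closure of $\cP(\fv_\iC)\otimes V_1$ formed inside $C(\Omega,V_1)$ is exactly $\cO(\Omega,V_1)$ --- immediate since $\cO(\Omega,V_1)$ is closed in $C(\Omega,V_1)$ and contains $\cP(\fv_\iC)\otimes V_1$ as a dense subspace.
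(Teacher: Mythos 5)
Your proof is correct and rests on the same pillars as the paper's argument: density of polynomial tensors (Lemma~\ref{l: density of P in O}), separate continuity of $\gb_*$ on $C(\Omega,V_1)\times C(\Omega,V_2)$ from Lemma~\ref{l: criterion preservation C}, and closedness of $\cO(\Omega,V_3)$ in $C(\Omega,V_3)$. The one point of divergence is the order of quantification. The paper first records that $\gb_*$ maps $(\cO(\Omega)\otimes V_1)\times(\cO(\Omega)\otimes V_2)$ into $\cO(\Omega,V_3)$ --- the completely trivial case where both arguments are finite sums $\sum g_i\otimes v_i$ --- and then performs two successive density-and-closedness extensions, first in $f_1$ and then in $f_2$. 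You instead fix a general $f_2\in\cO(\Omega,V_2)$ at the outset, justify holomorphy of $z\mapsto\gb(v,f_2(z))$ by noting that $\gb(v,\dotvar)$ is continuous linear, and thereby need only one density extension to get preservation of holomorphy, reserving the symmetric argument solely for separate continuity in $f_2$. In effect you trade one density pass for the elementary fact that post-composition of a vector-valued holomorphic map with a continuous linear map preserves holomorphy; both routes are sound and of comparable length.
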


\begin{proof}%PROOF
It is readily seen that $\gb_*$ maps the subspace $(\O(\Omega)\otimes V_1)\times (\O(\Omega)\otimes V_2)$
into the closed  subspace $\cO(\Omega, V_3)$ of the quasi-complete space $C(\Omega, V_3).$
By continuity of $\gb_*$ in the first variable (see Lemma \ref{l: criterion preservation C}), and by density and closedness, it follows
that $\gb_*$ maps $\O(\Omega, V_1)\times \O(\Omega)\otimes V_2$ into $\cO(\Omega, V_3).$
By the same kind of argument applied to the second variable, the result follows.
\end{proof}

Examples of a different nature are provided by the composition of maps.
If $V,W$ are two locally convex spaces then by $\Hom(V,W)$ we denote
the space of continuous complex linear maps $V \to W.$ Unless otherwise specified this space is equipped with the strong operator topology. If $W$ is quasi-complete, and $V$ barrelled, then by application of the principle
of uniform boundedness, it follows that $\Hom(V,W)$ is quasi-complete
(see, e.g., \cite[Ch.~III, 27, \S~4, no.~2, Cor.~4]{boutvs3}).
                      \par
Assume now that $V_1, V_2$ and $V_3$ are arbitrary locally convex spaces,
not necessarily quasi-complete.
\begin{lemma}
\label{l: continuity of composition}%LEMMA
The map
\begin{equation}
\label{e: gb as composition}
\gb: \;\Hom(V_1, V_2) \times \Hom(V_2, V_3) \to \Hom(V_1, V_3),  \;\; (A,B) \mapsto B \after A
\end{equation}
is bilinear and separately continuous. If $V_2$ is barrelled, then $\gb$ is continuous relative to subsets of the form
$\Hom(V_1, V_2) \times C,$ with $C \subset \Hom(V_2,V_3)$ compact.
\end{lemma}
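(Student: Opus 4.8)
The plan is to work directly with the strong operator topology, taking on each space $\Hom(V,W)$ as a basis of $0$-neighborhoods the sets of continuous linear maps $T$ with $q(Tv)<\epsilon$ for all $v$ in a prescribed finite subset $F\subset V$, where $q$ is a continuous seminorm on $W$ and $\epsilon>0$. Bilinearity of $\gb$ is immediate from distributivity of composition over addition and from homogeneity. For separate continuity, first fix $B\in\Hom(V_2,V_3)$; by linearity it suffices to check continuity of $A\mapsto B\after A$ at $0$. Given a basic $0$-neighborhood of $\Hom(V_1,V_3)$ determined by a finite set $F\subset V_1$, a continuous seminorm $q_3$ and $\epsilon>0$, the composite $q_3\after B$ is a continuous seminorm on $V_2$, so the set of $A$ with $(q_3\after B)(Av)<\epsilon$ for all $v\in F$ maps into it. Symmetrically, fixing $A\in\Hom(V_1,V_2)$, the finite set $A(F)\subset V_2$ controls $B\mapsto B\after A$, which is therefore continuous as well.

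For the second assertion, fix $(A_0,B_0)\in\Hom(V_1,V_2)\times C$ and a basic $0$-neighborhood of $\gb(A_0,B_0)$ given by a finite set $F\subset V_1$, a continuous seminorm $q_3$ on $V_3$ and $\epsilon>0$. The idea is to split $B\after A - B_0\after A_0 = B\after(A-A_0) + (B-B_0)\after A_0$ and bound the two summands separately over pairs $(A,B)$ with $B\in C$. The second summand is handled, with no hypothesis on $V_2$, by the strong operator neighborhood of $B_0$ consisting of those $B$ with $q_3\big((B-B_0)(A_0 v)\big)<\epsilon/2$ for all $v\in F$, since the finitely many vectors $A_0 v$ $(v\in F)$ are fixed.

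The crucial summand is $q_3\big(B(Av-A_0 v)\big)$ with $B$ ranging over the compact set $C$, and this is where I expect the main obstacle to lie. Here I would use that a compact subset of a topological vector space is bounded, so $C$ is bounded in $\Hom(V_2,V_3)$ for the strong operator topology and hence pointwise bounded: for each $w\in V_2$ the set $\{Bw\mid B\in C\}$ is bounded in $V_3$. Since $V_2$ is barrelled, the principle of uniform boundedness shows that $C$ is equicontinuous; applied to the seminorm $q_3$ this yields a continuous seminorm $q_2$ on $V_2$ with $q_3(Bw)\le q_2(w)$ for all $B\in C$ and $w\in V_2$. Consequently $q_3\big(B(Av-A_0 v)\big)\le q_2\big((A-A_0)v\big)$, so the strong operator neighborhood of $A_0$ consisting of those $A$ with $q_2\big((A-A_0)v\big)<\epsilon/2$ for all $v\in F$ controls this summand uniformly in $B\in C$. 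Intersecting the two neighborhoods then gives the asserted continuity of $\gb$ relative to $\Hom(V_1,V_2)\times C$; barrelledness of $V_2$ enters only through this equicontinuity step, and without the uniform estimate $q_3\after B\le q_2$ over $B\in C$ one cannot absorb the variation of $A$ uniformly in $B$, which is exactly why $\gb$ need not be jointly continuous on all of $\Hom(V_1,V_2)\times\Hom(V_2,V_3)$.
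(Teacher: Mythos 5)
Your proof is correct and follows essentially the same route as the paper: the paper observes that barrelledness of $V_2$ makes compact subsets of $\Hom(V_2,V_3)$ equicontinuous and then cites Bourbaki for the continuity of composition relative to $\Hom(V_1,V_2)\times C$, while you unwind that citation by proving the equicontinuity via Banach--Steinhaus and then running the standard decomposition $B\after A - B_0\after A_0 = B\after(A-A_0) + (B-B_0)\after A_0$, bounding the first term uniformly over $B\in C$ by the equicontinuity estimate. Nothing is missing; you have simply made the cited Bourbaki argument explicit.
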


\begin{proof} %PROOF
As $V_2$ is barrelled, every compact subset of $\Hom(V_2, V_3)$
is equicontinuous. The result now follows from \cite[Ch.~III, 33, \S~5, no.~5, Prop.~9]{boutvs3}.
\end{proof}

\begin{cor}\label{c: continuity preserved by bilinear}%COROLLARY
Let $V_1, V_2, V_3$ be quasi-complete, and assume $V_2$ is barrelled. Let $\gb$
be as in (\ref{e: gb as composition}). Then $\gb$ preserves continuity on $X,$
for any locally compact Hausdorff space $X.$
\end{cor}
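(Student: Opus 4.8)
The plan is to deduce this directly from the criterion for preservation of continuity, Lemma \ref{l: criterion preservation C}, applied not to some triple of spaces but to the composition map $\gb$ of \eqref{e: gb as composition} itself, regarded as a bilinear map on the product $\Hom(V_1,V_2)\times\Hom(V_2,V_3)$ with values in $\Hom(V_1,V_3).$ First I would recall that, by definition, "$\gb$ preserves continuity on $X$" means precisely that the induced map $\gb_*$ sends $C(X,\Hom(V_1,V_2))\times C(X,\Hom(V_2,V_3))$ into $C(X,\Hom(V_1,V_3)).$ Since all three Hom-spaces are quasi-complete ($V_3$ quasi-complete and $V_1$, $V_2$ barrelled would suffice, but here one only needs quasi-completeness of the target spaces, which holds as $V_1,V_2,V_3$ are quasi-complete and $V_2$ is barrelled for the middle one), Lemma \ref{l: criterion preservation C} is applicable once its two hypotheses on $\gb$ are checked: separate continuity, and continuity relative to every compact subset of $\Hom(V_1,V_2)\times\Hom(V_2,V_3).$

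Separate continuity of $\gb$ is exactly the first assertion of Lemma \ref{l: continuity of composition}. The only genuine point of care is the compact-subset hypothesis: Lemma \ref{l: continuity of composition} only yields continuity of $\gb$ relative to sets of the "half-product" form $\Hom(V_1,V_2)\times C$ with $C\subset\Hom(V_2,V_3)$ compact, and this under the hypothesis that $V_2$ be barrelled — which is exactly the standing assumption here. To upgrade this to arbitrary compact subsets, I would argue as follows: given a compact $L\subset\Hom(V_1,V_2)\times\Hom(V_2,V_3)$, its second projection $C\coleq\pr_2(L)$ is compact in $\Hom(V_2,V_3)$, and $L\subset\Hom(V_1,V_2)\times C$; since $\gb$ is continuous relative to $\Hom(V_1,V_2)\times C$ by Lemma \ref{l: continuity of composition}, the restriction of $\gb$ to the smaller set $L$ is continuous as well. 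Thus $\gb$ is continuous relative to every compact subset of the product, and Lemma \ref{l: criterion preservation C} delivers the conclusion. (Alternatively, one can simply observe that the proof of Lemma \ref{l: criterion preservation C} only ever invokes continuity relative to compact sets of product form $C_1\times C_2$, which are already covered.)

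I do not expect a real obstacle here: the statement is a bookkeeping corollary whose substance is entirely contained in Lemmas \ref{l: continuity of composition} and \ref{l: criterion preservation C}. The one thing worth double-checking while writing it out is that the barrelledness hypothesis on $V_2$ is used in precisely the right place — namely to license the strengthened form of Lemma \ref{l: continuity of composition} (equicontinuity of compact subsets of $\Hom(V_2,V_3)$ via the uniform boundedness principle) — and that no barrelledness of $V_1$ or $V_3$ is needed, consistent with the statement as given.
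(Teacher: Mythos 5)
Your proof is correct and takes exactly the route the paper abbreviates in its one-line proof, which simply cites Lemma \ref{l: continuity of composition} together with Lemma \ref{l: criterion preservation C}. Your care in bridging the small mismatch between the two lemmas --- Lemma \ref{l: continuity of composition} gives continuity of $\gb$ only relative to sets of the form $\Hom(V_1,V_2)\times C$ with $C$ compact, whereas Lemma \ref{l: criterion preservation C} formally hypothesizes continuity relative to every compact subset of the product --- is precisely the detail worth making explicit, and both of your resolutions (projecting an arbitrary compact onto its second factor, or observing that the proof of Lemma \ref{l: criterion preservation C} only ever invokes product-form compacts $C_1\times C_2$) are valid.
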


\begin{proof}%PROOF
This follows from Lemma \ref{l: continuity of composition} combined with Lemma
\ref{l: criterion preservation C}.
\end{proof}

\begin{lemma}%LEMMA
\label{l: composition of holomorphic homomorphisms}
Let $V_1, V_2, V_3$ be quasi-complete, and assume both $V_1$  and $ V_2$ to be barrelled.
Let $\gb$ be the composition map given by (\ref{e: gb as composition}) and let
$\Omega \subset \fv_\iC$ be open. Then
$\gb$ preserves  holomorphy on $\Omega.$
\end{lemma}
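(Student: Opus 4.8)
The plan is to reduce the holomorphy of the pointwise composite $\gb_*(f,g):\gl\mapsto g(\gl)\after f(\gl)$ to the holomorphy of an auxiliary function of two variables. (One cannot simply invoke Lemma~\ref{sec45} with the operator spaces $\Hom(V_1,V_2)$ and $\Hom(V_2,V_3)$ in the roles of its two barrelled spaces: equipped with the strong operator topology these may fail to be barrelled even when $V_1,V_2,V_3$ are.) Fix $f\in\cO(\Omega,\Hom(V_1,V_2))$ and $g\in\cO(\Omega,\Hom(V_2,V_3))$ and set
$$
\Psi:\;\Omega\times\Omega\too\Hom(V_1,V_3),\qquad (\gl,\nu)\longmapsto g(\nu)\after f(\gl).
$$
Once $\Psi$ is shown to be holomorphic on $\Omega\times\Omega$, the composite $\gb_*(f,g)=\Psi\after\gD$, where $\gD:\Omega\to\Omega\times\Omega$ is the (polynomial) diagonal map, is holomorphic as well; since $f$ and $g$ are arbitrary, this shows that $\gb$ preserves holomorphy on $\Omega$.

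First I would check that $\Psi$ is holomorphic in each variable separately. For fixed $\nu_0\in\Omega$ the map $\gl\mapsto\Psi(\gl,\nu_0)$ is the composite of $f$ with the continuous linear operator $A\mapsto g(\nu_0)\after A$ of $\Hom(V_1,V_2)$ into $\Hom(V_1,V_3)$; a continuous linear operator carries holomorphic functions to holomorphic functions, because it induces a continuous linear map $C(\Omega,\Hom(V_1,V_2))\to C(\Omega,\Hom(V_1,V_3))$ taking the dense subspace $\cP(\fv_\iC)\otimes\Hom(V_1,V_2)$ of $\cO(\Omega,\Hom(V_1,V_2))$ (Lemma~\ref{l: density of P in O}) into $\cO(\Omega,\Hom(V_1,V_3))$, which is closed in $C(\Omega,\Hom(V_1,V_3))$. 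The same reasoning, with the pre-composition operator $B\mapsto B\after f(\gl_0)$, shows that $\nu\mapsto\Psi(\gl_0,\nu)$ is holomorphic. Next I would check joint continuity of $\Psi$. Let $K\subset\Omega\times\Omega$ be compact; then $\pi_2(K)$ is compact in $\Omega$, so $C\coleq g(\pi_2(K))$ is a compact subset of $\Hom(V_2,V_3)$. Since $V_2$ is barrelled and $C$ is compact, Lemma~\ref{l: continuity of composition} gives that the composition map $\gb$ is continuous on $\Hom(V_1,V_2)\times C$ (this rests on the equicontinuity of compact subsets of $\Hom(V_2,V_3)$). As $(\gl,\nu)\mapsto(f(\gl),g(\nu))$ maps $K$ continuously into $\Hom(V_1,V_2)\times C$, the restriction $\Psi|_K$ is continuous; since $\Omega\times\Omega$ is locally compact, $\Psi$ is continuous.

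Finally, $\Hom(V_1,V_3)$ is quasi-complete, because $V_1$ is barrelled and $V_3$ is quasi-complete. Hence the classical fact that a jointly continuous, separately holomorphic map into a quasi-complete locally convex space is jointly holomorphic — which one obtains by composing with continuous linear functionals to reduce to Osgood's scalar lemma, and then using that a continuous weakly holomorphic map into a quasi-complete space is holomorphic — applies and yields $\Psi\in\cO(\Omega\times\Omega,\Hom(V_1,V_3))$. Composing with $\gD$ completes the argument. I expect the delicate step to be the joint continuity of $\Psi$: the composition map $\gb$ is only separately continuous in general, and one must use its improved continuity on products with compact subsets (Lemma~\ref{l: continuity of composition}), which is exactly where the hypothesis that $V_2$ is barrelled is consumed. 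The only ingredient not already established in the paper is the passage from separate holomorphy plus continuity to joint holomorphy, which is standard.
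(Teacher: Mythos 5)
Your proof is correct and follows essentially the same route as the paper's: introduce a second complex variable so that $g$ and $f$ depend on independent arguments, verify separate holomorphy and joint continuity of the resulting two‑variable function, pass to joint holomorphy via Osgood's scalar lemma and the fact that a continuous weakly holomorphic map into a quasi‑complete space is holomorphic, and restrict to the diagonal. The only cosmetic difference is that the paper works with the $V_3$‑valued function $h(\mu,\nu,v_1)=B(\nu)A(\mu)v_1$ and lifts to $\Hom(V_1,V_3)$ at the very end, whereas you work directly with the $\Hom(V_1,V_3)$‑valued function $\Psi(\gl,\nu)=g(\nu)\after f(\gl)$ (using quasi‑completeness of $\Hom(V_1,V_3)$, which again requires barrelledness of $V_1$), so the two arguments consume the hypotheses in exactly the same way.
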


\begin{proof}%PROOF
Let $A\in \cO(\Omega, \Hom(V_1, V_2))$ and $B \in \cO(\Omega, \Hom(V_2, V_3)).$
Then by the previous result it follows that $\gb_*(A,B): \mu \mapsto B(\mu)A(\mu)$ is continuous.
By barrelledness of $V_1,$ the function $f: (\mu, v)\mapsto A(\mu)v, \Omega \times V_1 \to V_2$
is continuous and holomorphic in $\mu.$ Similarly, the function
$g: (\nu, v_2) \mapsto B(\nu)v_2, \Omega \times V_2 \to V_3$ is continuous and holomorphic in $\nu.$
It follows that $h: (\mu,\nu,v_1) \mapsto g(\nu, f(\mu, v_1))$ is continuous, and separately
holomorphic in $\mu, \nu.$ In particular, for fixed $v_1,$ the map $(\mu, \nu) \mapsto h(\mu, \nu, v_1)$
is holomorphic in each of the two variables. If $\xi \in V_3'$ then $(\mu, \nu) \mapsto \xi h(\mu, \nu, v_1)$
is continuous and separately holomorphic, hence holomorphic. It follows that $(\mu, \nu)\mapsto
h(\mu, \nu, v_1)$ is weakly holomorphic, hence holomorphic. We conclude that
$\mu \mapsto h(\mu, \mu, v_1)$ is holomorphic. This implies that the function
$(\mu, v_1) \mapsto \gb_*(A,B)(\mu)v_1$ is continuous on $\Omega\times V_1,$ holomorphic in $\mu$ and linear in $v.$ This
in turn implies that $\gb_*(A,B)$ is holomorphic as a function with values in $\Hom(V_1, V_3).$
\end{proof}

Given an element $u$ of the symmetric algebra $S(\fv)$ of $\fv_\iC,$ we will
write  $\partial_u$ for the naturally associated constant coefficient (holomorphic) differential operator on $\fv_\iC$, see \S \ref{ss: derivation process}. This operator
acts on the space of holomorphic functions
$\cO(\Omega),$ for any open subset $\Omega \subset \fv_\iC.$ We agree to write $f(\dotvar; u) = \partial_u f,$ for $f \in \cO(\Omega),$ see (\ref{e: HC notation for action of Sv}).

If $V$ is a quasi-complete locally convex
space, then $\partial_u \otimes \id_V$ has a unique extension to a continuous linear endomorphism
of $\cO(\Omega, V).$ Indeed, uniqueness is obvious from density of $\cO(\Omega)\otimes V$ in $\cO(\Omega, V).$
Existence follows for instance by application of the Cauchy integral formula.
                      \par
Let $A$ be a finite dimensional associative algebra. Suppose
$$
M:\O(\Omega)\rightarrow \O(\Omega,A)
$$
is a continuous algebra homomorphism which can be represented by an element from $S(\fv) \otimes A,$ i.e.,
there exist $u_i \in S(\fv)$ and $a_i \in A$ such that
\begin{equation}
\label{e: defi M}
M f = \sum_i \partial_{u_i}f \cdot a_i.
\end{equation}
 Then it follows
from the above that the map $M \otimes \id_V$ has a unique extension
to a continuous linear map  $\cO(\Omega, V) \to \cO(\Omega, A \otimes V).$
The extension is denoted by $M \hatotimes\, \id_V,$ or more briefly by $M$ again.

\begin{rem}\label{r: J is M}%REMARK
\rm
In the present paper, the following examples are of particular importance.
Let $V$ be a quasi-complete locally convex space and
let $\cI$ be a cofinite ideal in $\cO_0.$ Then $M = J_\cI$ is of the form \eqref{e: defi M}, see \eqref{e: JI}, and defines an algebra homomorphism $\cO(\Omega) \to \cO(\Omega, \cO_0/\cI).$ It follows that
$J_\cI \otimes \id_V: \cO(\Omega) \otimes V \to \cO(\Omega, \cO_0/\cI) \otimes V$
has a unique extension to a continuous linear map
$$
J_\cI \,\hatotimes\,\id_V: \;\;\cO(\Omega, V) \longrightarrow \cO(\Omega, \cO_0/\cI \otimes V),
$$
which will often briefly be denoted by $J_\cI$ again.
                      \par
Similarly, let $E$ be a finite dimensional $\cO_0$-module. Then $\diffD^{(E)}: \cO(\Omega) \to
\cO(\Omega, \End{E})$ is an algebra homomorphism of the form \eqref{e: defi M}, see Definition \ref{d: diffD} and Corollary \ref{c: continuity of diffD}. Hence,
$\diffD^{(E)} \otimes \id_V$ has a unique extension to a continuous linear map
$$
J^{(E)} \,\hatotimes \,\id_V:\;\; \cO(\Omega, V) \longrightarrow \cO(\Omega, \Endo(E) \otimes V),
$$
which will often briefly be denoted by $J^{(E)}: f \mapsto f^{(E)}.$
\end{rem}

\begin{lemma}%LEMMA
Let $V_1, V_2$ and $V_3$ be quasi-complete locally convex Hausdorff spaces, and let $\gb: V_1 \times V_2 \to V_3$
be a bilinear form which preserves holomorphy on the open subset $\Omega$ of $\fv_\iC.$
Let $\gb': E_1 \times E_2 \to E_3$ be a bilinear form of finite dimensional spaces.
Then the bilinear form $\gb'\otimes \gb:  E_1 \otimes V_1 \times E_2 \otimes V_2 \to E_3 \otimes V_3$
preserves holomorphy on $\Omega$ as well.
\end{lemma}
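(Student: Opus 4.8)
The plan is to reduce the assertion to the defining property of $\gb$ by passing to bases of the finite dimensional spaces $E_1,E_2,E_3.$ The only structural fact needed is that for a finite dimensional complex vector space $E$ the natural map $E \otimes \cO(\Omega,V) \to \cO(\Omega, E\otimes V)$ is a bijection: fixing a basis $(e_a)$ of $E$ identifies $E \otimes V$ with $V^{\dim E}$ as a locally convex space, and a function into a finite direct sum is holomorphic precisely when each of its components is. (This is in the same spirit as Lemma \ref{l: density of P in O}, and uses only finite dimensionality of $E.$) Thus every $F\in \cO(\Omega, E\otimes V)$ has a unique expression $F = \sum_a e_a \otimes f_a$ with $f_a \in \cO(\Omega, V).$

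First I would fix bases $(e^1_a)$ of $E_1$ and $(e^2_b)$ of $E_2,$ and write arbitrary elements $F_1 \in \cO(\Omega, E_1\otimes V_1)$ and $F_2 \in \cO(\Omega, E_2\otimes V_2)$ as $F_1 = \sum_a e^1_a \otimes f_a$ and $F_2 = \sum_b e^2_b \otimes g_b$ with $f_a \in \cO(\Omega,V_1)$ and $g_b \in \cO(\Omega, V_2).$ Evaluating the induced bilinear map pointwise then gives, for $\mu \in \Omega,$
\begin{align*}
(\gb' \otimes \gb)_*(F_1, F_2)(\mu)
&= \sum_{a,b} \gb'(e^1_a, e^2_b) \otimes \gb\big(f_a(\mu), g_b(\mu)\big) \\
&= \sum_{a,b} \gb'(e^1_a, e^2_b) \otimes \gb_*(f_a, g_b)(\mu).
\end{align*}
Since $\gb$ preserves holomorphy on $\Omega,$ each $\gb_*(f_a,g_b)$ lies in $\cO(\Omega, V_3).$ Hence the right-hand side is a finite sum of terms $w_{ab}\otimes h_{ab}$ with $w_{ab} = \gb'(e^1_a,e^2_b)\in E_3$ fixed and $h_{ab} = \gb_*(f_a,g_b)\in \cO(\Omega, V_3),$ so it belongs to $E_3 \otimes \cO(\Omega, V_3) = \cO(\Omega, E_3 \otimes V_3).$ This shows that $\gb'\otimes\gb$ preserves holomorphy on $\Omega.$

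I do not expect a genuine obstacle here: the argument never leaves the algebraic tensor products, so none of the topological hypotheses (quasi-completeness, barrelledness) is used beyond what is needed to make $\cO(\Omega, V_i)$ meaningful, and the whole content is the coordinatewise characterization of holomorphy into a finite dimensional tensor product recorded in the first paragraph.
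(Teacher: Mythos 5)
Your proof is correct and is essentially a detailed version of what the paper leaves unsaid: the paper's proof consists only of the remark ``This is an easy consequence of the finite dimensionality of the spaces $E_j$.''\ Your basis decomposition of $\cO(\Omega, E_j\otimes V_j)$ and the coordinatewise characterization of holomorphy into a finite direct sum are precisely the routine considerations the authors had in mind.
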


\begin{proof}%PROOF
This is an easy consequence of the finite dimensionality  of the spaces $E_j.$
\end{proof}

Let $m^A$ denote the
(continuous) bilinear product map
$$
m^A: \;\; (a,b) \mapsto ab, \;\;A \times A \to A.
$$
\begin{prop}
\label{p: property of M}%PROPOSITION
Let $V_1$, $V_2$, $V_3$ be quasi-complete locally convex Hausdorff spaces, and let $\beta: V_1 \times V_2 \to V_3$
be a bilinear form which preserves holomorphy on $\Omega.$ Let $A$ be a finite dimensional
associative algebra, and let $M: \cO(\Omega) \to \cO(\Omega,A)$ be as in (\ref{e: defi M}).
Then, for all $f_j\in\O(\Omega,V_j)$, $j=1,2$,
$$
M\beta_*(f_1,f_2)=(m^A\otimes\beta)_*(Mf_1,Mf_2).
$$
\end{prop}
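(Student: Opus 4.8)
The plan is to reduce the asserted identity, which is an equality of two elements of $\cO(\Omega,A\otimes V_3)$, to the case of decomposable functions $f_j=\phi_j\otimes v_j$ with $\phi_j\in\cO(\Omega)$ and $v_j\in V_j$, and then to read it off from the fact that $M$ is an algebra homomorphism. First I would note that both sides are indeed well-defined: the left-hand side lies in $\cO(\Omega,A\otimes V_3)$ because $\beta$ preserves holomorphy on $\Omega$ and $M\hatotimes\id_{V_3}$ is the continuous linear extension of $M\otimes\id_{V_3}$ described in Remark \ref{r: J is M}, and the right-hand side lies there because the lemma immediately preceding this proposition shows that $m^A\otimes\beta$ preserves holomorphy on $\Omega$, while $M\hatotimes\id_{V_j}$ again applies. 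Moreover, for fixed $f_2$ the two maps $f_1\mapsto M\beta_*(f_1,f_2)$ and $f_1\mapsto(m^A\otimes\beta)_*(Mf_1,Mf_2)$ are continuous and linear from $\cO(\Omega,V_1)$ to $\cO(\Omega,A\otimes V_3)$: continuity of $\beta_*$ in its first variable is provided by Lemma \ref{sec45} (applied to $\beta$, respectively to $m^A\otimes\beta$), and continuity of $M\hatotimes\id$ is built into its construction. By symmetry the same holds in $f_2$, so by density of $\cO(\Omega)\otimes V_j$ in $\cO(\Omega,V_j)$ (Lemma \ref{l: density of P in O}) it suffices to verify the identity when $f_1=\phi_1\otimes v_1$ and $f_2=\phi_2\otimes v_2$.

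For such decomposable data the verification is a one-line computation. From bilinearity of $\beta$ one has $\beta_*(\phi_1\otimes v_1,\phi_2\otimes v_2)=(\phi_1\phi_2)\otimes\beta(v_1,v_2)$, so that $M\beta_*(f_1,f_2)=M(\phi_1\phi_2)\otimes\beta(v_1,v_2)$. On the other hand $Mf_j=M\phi_j\otimes v_j$, and hence $(m^A\otimes\beta)_*(Mf_1,Mf_2)=\bigl(M\phi_1\cdot M\phi_2\bigr)\otimes\beta(v_1,v_2)$, where $M\phi_1\cdot M\phi_2$ denotes the pointwise product in $\cO(\Omega,A)$. Since $M\colon\cO(\Omega)\to\cO(\Omega,A)$ is an algebra homomorphism, $M(\phi_1\phi_2)=M\phi_1\cdot M\phi_2$, so the two expressions agree; combined with the reduction above, this proves the proposition.

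I expect the only point requiring any real care to be the reduction to decomposable tensors, that is, the separate continuity of $\beta_*$ (and of $(m^A\otimes\beta)_*$) as maps on products of spaces of holomorphic vector-valued functions. This is exactly the conclusion of Lemma \ref{sec45}, available under the barrelledness and mild continuity hypotheses on $\beta$ that are in force in every application of this proposition; the rest of the argument is purely formal, and in particular no direct analysis of the Leibniz-type identities underlying the homomorphism property of $M$ is needed, since that content is already absorbed into the statement that $M$ is a homomorphism.
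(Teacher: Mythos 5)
The reduction step in your proposal is where the gap lies. You invoke Lemma~\ref{sec45} to get separate continuity of $\beta_*\colon\cO(\Omega,V_1)\times\cO(\Omega,V_2)\to\cO(\Omega,V_3)$, but that lemma needs $V_1,V_2$ to be barrelled and $\beta$ to satisfy the hypotheses of Lemma~\ref{l: criterion preservation C} (separate continuity plus continuity relative to compact subsets). None of those is a hypothesis of Proposition~\ref{p: property of M}, which assumes only that $\beta$ preserves holomorphy on $\Omega$ --- a purely set-theoretic condition on the image of $\beta_*$, with no topological content. You acknowledge this at the end and argue the extra hypotheses hold ``in every application''; but a proof of the proposition has to work under the stated hypotheses, and the statement is deliberately kept weak so it can serve as a general tool.

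The paper's proof avoids this by a local algebraic argument in place of a density argument. After establishing the identity on $\cO(\Omega)\otimes V_j$ exactly as you do, it fixes $\mu\in\Omega$ and proves the identity at that single point: on a polydisk centered at $\mu$ one writes $f_j = p_j + r_j$ with $p_j$ a polynomial and $r_j$ vanishing to order $k>d$ at $\mu$, where $d$ is the number from Lemma~\ref{l: M and fm mu}. The $\cO(\Omega)$-bilinearity of $\beta_*$ --- which is automatic, needing no continuity --- together with that lemma then shows both sides of the claimed identity agree at $\mu$ with the value one gets from the pair $(p_1,p_2)$, which is the already-settled decomposable case. So Lemma~\ref{l: M and fm mu}, a Leibniz-rule estimate showing that $M$ lowers the order of vanishing at $\mu$ by at most a fixed $d$, is doing the work you wanted separate continuity to do, and it costs nothing beyond the stated hypotheses. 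Your computation on decomposable tensors is correct and matches the paper's first step; what is missing is this local mechanism (or else a strengthening of the proposition's hypotheses) to pass to general $f_j$.
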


The following lemma prepares for the proof.
Given $\mu \in \Omega,$ let $\cM_\mu$ be the maximal ideal of the algebra $\cO(\Omega)$ consisting of the functions
vanishing at $\mu.$

\begin{lemma}
\label{l: M and fm mu}%LEMMA
Let $M$ be as in \eqref{e: defi M}.There exists a number $d \in \N$ with the following property.
Let $V$ be a quasi-complete locally convex Hausdorff space. Then for every $k \geq d,$
the operator
$M\hatotimes  \id_V$ maps $\cM_\mu^k \cO(\Omega, V)$ into $\cM_\mu^{k - d} \cO(\Omega, A \otimes V).$
\end{lemma}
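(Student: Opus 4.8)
The plan is to set $d \coleq \max_i \mathrm{ord}(u_i),$ where $M$ is written in the form $Mf = \sum_i \partial_{u_i} f \cdot a_i$ as in \eqref{e: defi M}, and to reduce the statement to the elementary scalar fact that a constant coefficient holomorphic differential operator of order at most $d$ maps $\cM_\mu^k$ into $\cM_\mu^{k-d}$ for every $k \ge d.$ The $V$-valued (and $A\otimes V$-valued) case is then obtained from Proposition \ref{p: property of M}.

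First I would treat the scalar case $V = \C.$ By Hadamard's lemma the maximal ideal $\cM_\mu$ is generated, as an ideal of $\cO(\Omega),$ by the affine functions $\xi - \xi(\mu)$ with $\xi \in \fv_\iC^*;$ consequently $\cM_\mu^k$ is the $\cO(\Omega)$-submodule generated by the products of $k$ such functions. For $X \in \fv,$ $h \in \cO(\Omega)$ and such a product $q,$ the Leibniz rule gives $\partial_X(q h) = (\partial_X q)\,h + q\,(\partial_X h);$ here $\partial_X q$ is a $\C$-linear combination of products of $k - 1$ affine functions vanishing at $\mu,$ so the first summand lies in $\cM_\mu^{k-1},$ while the second lies in $\cM_\mu^k \subseteq \cM_\mu^{k-1}.$ Hence $\partial_X \cM_\mu^k \subseteq \cM_\mu^{k-1}$ for $k \ge 1,$ and upon iterating, $\partial_u \cM_\mu^k \subseteq \cM_\mu^{k - \mathrm{ord}(u)} \subseteq \cM_\mu^{k-d}$ whenever $\mathrm{ord}(u) \le d$ and $k \ge d.$ Applying this to each $u_i,$ I conclude that for $g \in \cM_\mu^k$ with $k \ge d$ one has $Mg = \sum_i (\partial_{u_i} g)\cdot a_i$ with every scalar function $\partial_{u_i} g$ in $\cM_\mu^{k-d},$ so $Mg \in \cM_\mu^{k-d}\,\cO(\Omega, A).$

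Next I would bootstrap to arbitrary quasi-complete $V.$ Apply Proposition \ref{p: property of M} with $V_1 = \C,$ $V_2 = V_3 = V$ and $\gb$ the scalar multiplication $\C \times V \to V,$ which visibly preserves holomorphy on $\Omega;$ for $g \in \cO(\Omega)$ and $f \in \cO(\Omega, V)$ this yields $M(gf) = (m^A \otimes \gb)_*(Mg, Mf).$ By the scalar step, for $g \in \cM_\mu^k$ with $k \ge d$ we may write $Mg = \sum_i p_i\cdot a_i$ with $p_i \coleq \partial_{u_i} g \in \cM_\mu^{k-d}$ and with $a_i \in A$ constant; since $(m^A\otimes\gb)_*$ is $\cO(\Omega)$-bilinear this gives $M(gf) = \sum_i p_i\cdot (m^A\otimes\gb)_*(a_i, Mf),$ a finite sum each of whose terms is a function in $\cM_\mu^{k-d}$ times a function in $\cO(\Omega, A\otimes V).$ Hence $M(gf) \in \cM_\mu^{k-d}\,\cO(\Omega, A\otimes V).$ Since an arbitrary element of $\cM_\mu^k\,\cO(\Omega, V)$ is a finite sum of products $gf$ of this kind and $M\hatotimes\id_V$ is linear (and continuous, which also disposes of any reading of $\cM_\mu^k\,\cO(\Omega, V)$ as a closure), the asserted inclusion follows with this value of $d.$

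I do not expect a genuine obstacle. The only delicate point is the passage from scalar to $V$-valued functions: one cannot argue there by $\cO(\Omega)$-linearity of $M,$ since $M$ is an algebra homomorphism rather than an $\cO(\Omega)$-module map, and one must instead invoke its compatibility with pointwise products, recorded in Proposition \ref{p: property of M}.
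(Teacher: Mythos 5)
Your overall strategy — take $d = \max_i \mathrm{ord}(u_i)$, and use the Leibniz rule to see that differentiation lowers the $\cM_\mu$-order by the order of the operator — is the right idea, and it is also what the paper does. However, there is a genuine circularity in the bootstrap step, and a smaller but real issue in the scalar step.

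\textbf{Circularity.} You invoke Proposition~\ref{p: property of M} (with $V_1=\C$) to pass from the scalar case to $V$-valued functions. But in the paper, Lemma~\ref{l: M and fm mu} is stated and proved \emph{precisely in order to prove} Proposition~\ref{p: property of M}: the proof of that proposition begins with the algebraic case $f_j\in\O(\Omega)\otimes V_j$ and then, for general $f_j$, performs a Taylor expansion at $\mu$ and applies Lemma~\ref{l: M and fm mu} to control the remainder. Even in the special case $V_1=\C$ the argument passes through the ``general $f_2$'' branch, so the dependence on the lemma is not avoided. As written, your proof therefore assumes what it is meant to establish. The fix is straightforward — the identity $M(gf)=(m^A\otimes\gb)_*(Mg,Mf)$ for $g\in\cO(\Omega)$, $f\in\cO(\Omega,V)$ can be proved directly by checking it on $f=h\otimes v$ and extending by density and continuity of $M\hatotimes\id_V$ — but you must supply this, not cite the later proposition.

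\textbf{The scalar step.} The appeal to Hadamard's lemma, i.e.\ that $\cM_\mu$ is generated as an ideal of $\cO(\Omega)$ by the affine functions $\xi-\xi(\mu)$, is both unnecessary and delicate: that generation statement holds for $\Omega$ a polydisk (or Stein), but is not obvious for an arbitrary open set, and the lemma is stated without any such hypothesis on $\Omega$. It is also not needed: by definition $\cM_\mu^k$ consists of finite sums of products $\gf_1\cdots\gf_k$ with each $\gf_j\in\cM_\mu$, and the Leibniz rule applied directly to $\partial_X(\gf_1\cdots\gf_k\,F)$ (with $F\in\cO(\Omega,V)$) produces terms in $\cM_\mu^{k-1}\cO(\Omega,V)$ without any need to normalize the $\gf_j$ to affine form. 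This is in fact exactly the paper's proof: it reduces to $u=X\in\fv$, $d=1$, and applies the Leibniz rule for $\partial_X\hatotimes\id_V$ on $\cO(\Omega,V)$ directly, which bypasses the scalar-to-vector bootstrap altogether and, with it, the circularity.
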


\begin{proof}%PROOF
It suffices to show that, for any element
$u \in S(\fv)$ of order $d,$ the differentiation $\partial_u$ maps $\cM_\mu^k \cO(\Omega, V)$
into $\cM_\mu^{k - d} \cO(\Omega, V).$ Clearly, it suffices to do this for $u = X \in \fv$ and $d = 1.$
In this case the result follows from the Leibniz rule
$$
(\partial_u \hatotimes \id_V) ( \gf F ) = (\partial_u \gf) F + \gf (\partial_u \hatotimes \id_V)(F),
$$
for all $\gf \in \cO(\Omega)$ and $F \in \cO(\Omega, V).$
\end{proof}

\begin{proof}[Proof of Proposition \ref{p: property of M}]%PROOF
We will first prove the identity for $f_j\in\O(\Omega)\otimes V_j$, $j=1,2$.
Let $m: \cO(\Omega) \times \cO(\Omega) \to \cO(\Omega)$ denote the multiplication map.
Then $\beta_*(f_1,f_2)= (m \otimes \gb)(f_1, f_2)\in \O(\Omega)\otimes V_3 .$
Moreover, since $M:\O(\Omega)\rightarrow\O(\Omega,A)$
is an algebra homomorphism, it follows that
$$
\begin{array}{rcl}
M\beta_*(f_1,f_2) & = & (M\otimes\id_{V_3})\circ(m\otimes\beta)(f_1,f_2)\\
& = & (m^A\otimes\beta)_*([M\otimes\id_{V_1}]f_1,[M\otimes\id_{V_2}]f_2)\\
& = & (m^A\otimes\beta)_*(Mf_1,Mf_2).
\end{array}
$$
We will now establish the identity for general $f_j \in \cO(\Omega,  V_j).$
Clearly, it suffices to prove the identity of holomorphic functions at
a fixed point $\mu$ of $\Omega.$ Moreover, since the operations $(f_1, f_2) \mapsto \gb_*(f_1, f_2)$
and $M$ commute with restriction, we may assume that $\Omega$ is a polydisk centered at $\mu.$
Let $d$ be as in Lemma \ref{l: M and fm mu}. Fix $k > d.$
                    \par
From the power series expansions of $f_1, f_2$ at the point $\mu$ it follows
that there exist polynomials $p_j \in P(\fv) \otimes V_j$ and holomorphic functions
$r_j \in \cM_\mu^k \cO(\Omega, V_j)$ such that
$$
f_j = p_j + r_j, \quad j =1,2.
$$
From the $\cO(\Omega)$-bilinearity of $\gb_*$ it readily follows that
\begin{equation}
\label{e: beta modulo}
\beta_*(f_1,f_2) = \beta_*(p_1, p_2) + R,
\end{equation}
with $R \in \cM_\mu^k\cO(\Omega, V_3).$ From the $\cO(\Omega)$-bilinearity of $\gb_*$
combined with Lemma \ref{l: M and fm mu} it follows that
\begin{equation}
\label{e: m modulo}
(m^A\otimes\beta)_*(M f_1,M f_2) = (m^A\otimes \beta)_* (M p_1, M p_2) + \rho,
\end{equation}
with $\rho \in \cM^{k - d}_\mu \cO(\Omega, V_3 \otimes A).$
Evaluation at the point $\mu$ gives $\ev_\mu(R) = R(\mu) = 0$ and $\ev_\mu(\rho) = \rho(\mu) = 0.$
Combining this with (\ref{e: beta modulo}), (\ref{e: m modulo}) and the first part of the proof applied to $(p_1,p_2)$ we find
\begin{eqnarray*}
\ev_\mu M \beta_*(f_1,f_2) & = & \ev_\mu M \beta_*(p_1, p_2)\\
&=& \ev_\mu (m^A\otimes\beta)_*(M p_1,M p_2)\\
&=& \ev_\mu (m^A\otimes\beta)_*(M f_1,M f_2).
\end{eqnarray*}
This establishes the desired identity at the point $\mu.$
\end{proof}

\begin{cor}\label{sec47}
${}$
Let $V_1,V_2,V_3$ and $\gb, \Omega$ be as Proposition \ref{p: property of M}.
\begin{enumerate}
\itema For any cofinite ideal $\cI$ of $\O_0$,
$$
J_\cI\circ\beta_* = (m^{\O_0/\cI}\otimes\beta)_*\circ (J_\cI, J_\cI )
$$
on $\cO(\Omega, V_1) \times \cO(\Omega, V_2).$
\itemb For any finite dimensional $\O_0$-module $E$,
$$
\diffD^{(E)}\circ\beta_*=(m^{\End{E}}\otimes\beta)_*\circ (\diffD^{(E)},\diffD^{(E)} )
$$
on $\cO(\Omega, V_1) \times \cO(\Omega, V_2).$
\end{enumerate}
\end{cor}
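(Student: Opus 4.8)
The plan is to obtain both identities as immediate specializations of Proposition \ref{p: property of M}; essentially all the content is already in that proposition, and the only thing to check is that $J_\cI$ and $\diffD^{(E)}$ are instances of the map $M$ appearing there, i.e.\ that they have the form \eqref{e: defi M} and are algebra homomorphisms. Both of these facts have been recorded above, so the corollary is a bookkeeping exercise.

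For part (a), I would take $A = \O_0/\cI$ with its algebra multiplication $m^{\O_0/\cI}$, and $M = J_\cI$. By Remark \ref{r: J is M}, together with the defining formula \eqref{e: JI} and Lemma \ref{sec7}, the map $J_\cI:\cO(\Omega)\to\cO(\Omega,\O_0/\cI)$ has the shape \eqref{e: defi M}: choosing a basis $u_1,\dots,u_r$ of $S_\cI(\fv)$ and the dual basis $a_1,\dots,a_r$ of $\O_0/\cI$ under the pairing of Lemma \ref{sec5}, Lemma \ref{sec7} gives $J_\cI f=\sum_i\partial_{u_i}f\cdot a_i$. Moreover $J_\cI$ is an algebra homomorphism by Corollary \ref{c: sec7}. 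Hence Proposition \ref{p: property of M} applies verbatim with $\beta$ our bilinear form preserving holomorphy on $\Omega$, and yields
$$
J_\cI\circ\beta_*=(m^{\O_0/\cI}\otimes\beta)_*\circ(J_\cI,J_\cI)
$$
on $\cO(\Omega,V_1)\times\cO(\Omega,V_2)$, where on the left $J_\cI$ denotes the continuous extension $J_\cI\,\hatotimes\,\id_{V_3}$ and on the right it denotes the extensions to $\cO(\Omega,V_1)$ and $\cO(\Omega,V_2)$. This is (a).

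For part (b), I would take $A=\End{E}$ with $m^{\End{E}}$ the composition product, and $M=\diffD^{(E)}$. Again by Remark \ref{r: J is M}, Definition \ref{d: diffD} and Lemma \ref{l: dual of End E}, the map $\diffD^{(E)}:\cO(\Omega)\to\cO(\Omega,\End{E})$ has the form \eqref{e: defi M}: fixing a basis $\{\eta^k\}$ of $\End{E}^*$, Lemma \ref{l: dual of End E} produces $u^k\in S(\fv)$ with $\eta^k\circ f^{(E)}=\partial_{u^k}f$ for all $f$, so that $f^{(E)}=\sum_k\partial_{u^k}f\cdot a_k$ with $\{a_k\}$ the dual basis of $\End{E}$; and $\diffD^{(E)}$ is an algebra homomorphism by Corollary \ref{c: continuity of diffD}. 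Applying Proposition \ref{p: property of M} then gives
$$
\diffD^{(E)}\circ\beta_*=(m^{\End{E}}\otimes\beta)_*\circ(\diffD^{(E)},\diffD^{(E)})
$$
on $\cO(\Omega,V_1)\times\cO(\Omega,V_2)$, which is (b). There is no real obstacle here; the only point demanding a moment of care is making sure that in each identity $J_\cI$ (resp.\ $\diffD^{(E)}$) is read consistently as the appropriate $\hatotimes\,\id$-extension to the relevant $\cO(\Omega,V_j)$, and that $m^A$ is interpreted as the algebra multiplication of $A$ ($=\O_0/\cI$ in (a), $=$ composition of endomorphisms in (b)) — both of which are precisely the conventions already fixed in Remark \ref{r: J is M} and in the statement of Proposition \ref{p: property of M}.
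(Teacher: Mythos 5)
Your proposal is correct and follows exactly the paper's route: the paper's proof is the one-line statement that the corollary follows from Proposition \ref{p: property of M} in view of Remark \ref{r: J is M}, and your argument simply unwinds that remark, verifying via Lemmas \ref{sec5}, \ref{sec7}, Corollary \ref{c: sec7} (for $J_\cI$) and Lemma \ref{l: dual of End E}, Corollary \ref{c: continuity of diffD} (for $\diffD^{(E)}$) that each map is an algebra homomorphism of the form \eqref{e: defi M}.
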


\begin{proof}%PROOF
In view of Remark \ref{r: J is M}, this follows from Proposition \ref{p: property of M}.
\end{proof}

\begin{cor}
\label{c: multiplicativity of diffD}%COROLLARY
Let $V_1, V_2, V_3$ be quasi-complete locally convex Hausdorff spaces, and assume that $V_1, V_2$ are barrelled.
Let $S \in \cO(\Omega, \Hom(V_1, V_2))$ and $T \in \cO(\Omega, \Hom(V_2, V_3)).$
Then  the map $TS: \mu \mapsto T(\mu)S(\mu)$ belongs to $\cO(\Omega, \Hom(V_1, V_3)).$ Moreover,  the following holds.
\begin{enumerate}
\itema For any cofinite ideal $\cI$ of $\O_0$,
$$
J_\cI(TS)  = J_\cI(T) J_\cI(S).
$$
\itemb
For any finite dimensional $\O_0$-module $E$,
$$
(TS)^{(E)} =  T^{(E)} S^{(E)}.
$$
\end{enumerate}
\end{cor}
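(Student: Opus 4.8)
The plan is to deduce all three assertions from Corollary \ref{sec47}, applied to a single bilinear map: the composition map
$\gb\col\Hom(V_1,V_2)\times\Hom(V_2,V_3)\to\Hom(V_1,V_3)$, $(A,B)\mapsto B\after A$. First I would note that, since $V_1,V_2,V_3$ are quasi-complete and $V_1,V_2$ are barrelled, the three $\Hom$-spaces are themselves quasi-complete (by the principle of uniform boundedness, cf.\ the remark preceding Lemma \ref{l: continuity of composition}), and that $\gb$ preserves holomorphy on $\Omega$ by Lemma \ref{l: composition of holomorphic homomorphisms}. This already yields that $\gb_*(S,T)\col\mu\mapsto T(\mu)S(\mu)$ lies in $\cO(\Omega,\Hom(V_1,V_3))$, which is the first assertion; from now on $TS=\gb_*(S,T)$, and Corollary \ref{sec47} applies with $(\Hom(V_1,V_2),\Hom(V_2,V_3),\Hom(V_1,V_3))$ in the role of its $(V_1,V_2,V_3)$.

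For (a) I would simply apply Corollary \ref{sec47}(a), obtaining $J_\cI(TS)=(m^{\O_0/\cI}\otimes\gb)_*(J_\cI(S),J_\cI(T))$. The right-hand side is, by definition of the pointwise product of the two $\O_0/\cI$-valued families of homomorphisms — multiply the $\O_0/\cI$-slots, compose the $\Hom$-slots — exactly $J_\cI(T)J_\cI(S)$, so (a) is immediate.

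For (b), Corollary \ref{sec47}(b) gives $(TS)^{(E)}=(m^{\End{E}}\otimes\gb)_*(S^{(E)},T^{(E)})$, and the remaining task is to identify this with $T^{(E)}S^{(E)}$ under the isomorphism $\End{E}\otimes\Hom(V,W)\simeq\Hom(E\otimes V,E\otimes W)$ of (\ref{e: end-hom}). Under that isomorphism ordinary composition sends $(A_1\otimes B_1,A_2\otimes B_2)$ to $(A_2A_1)\otimes(B_2\after B_1)$, whereas $m^{\End{E}}\otimes\gb$ sends it to $(A_1A_2)\otimes(B_2\after B_1)$; thus the two bilinear maps agree precisely on pairs whose $\End{E}$-components commute. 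The hard part — really the only non-formal point — will be the observation that $\diffD^{(E)}$ maps $\cO(\Omega,\Hom(V,W))$ into $\cO(\Omega,m_E(\O_0)\otimes\Hom(V,W))$, where $m_E\col\O_0\to\End{E}$ is the structure homomorphism of the $\O_0$-module $E$. On the dense subspace $\cO(\Omega)\otimes\Hom(V,W)$ this is immediate from $(f\otimes B)^{(E)}(\mu)=m_E(\gamma_0(T_\mu^*f))\otimes B$ (Definition \ref{d: diffD}); since $m_E(\O_0)\otimes\Hom(V,W)$ is a closed subspace of $\End{E}\otimes\Hom(V,W)$, the inclusion extends to all of $\cO(\Omega,\Hom(V,W))$ by continuity of $\diffD^{(E)}$ (Remark \ref{r: J is M}). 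As $\O_0$ is commutative, $m_E(\O_0)$ is a commutative subalgebra of $\End{E}$; hence the $\End{E}$-components of $S^{(E)}(\mu)$ and $T^{(E)}(\mu)$ commute, and $(m^{\End{E}}\otimes\gb)_*(S^{(E)},T^{(E)})=T^{(E)}S^{(E)}$ as wanted. An alternative for (b) would be to take $\cI=\ann_{\O_0}(E)$ and combine part (a) with Lemma \ref{sec13}(a), at the price of unwinding the identification (\ref{e: end-hom}).

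In summary, beyond invoking Corollary \ref{sec47} the only care required is bookkeeping of the composition order inside $\End{E}\otimes\Hom(\dotvar,\dotvar)$, where the apparent clash with the non-commutativity of $\End{E}$ evaporates because the derivation functor takes values in the commutative image of $\O_0$.
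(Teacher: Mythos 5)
Your argument is correct and follows the paper's route precisely: apply Corollary \ref{sec47} to the composition map $\gb\colon (A,B)\mapsto B\after A$, invoking Lemma \ref{l: composition of holomorphic homomorphisms} to verify that $\gb$ preserves holomorphy (and noting beforehand that the three $\Hom$-spaces are quasi-complete since $V_1,V_2$ are barrelled). What you add, and the paper's one-line proof leaves unspoken, is the point of part (b) that $(m^{\End{E}}\otimes\gb)_*$ and pointwise composition under the identification (\ref{e: end-hom}) multiply the $\End{E}$-factors in opposite orders, so that the identity $(TS)^{(E)}=T^{(E)}S^{(E)}$ is not purely formal. Your resolution is the right one: for scalar $f$ one has $f^{(E)}(\mu)=m_E(\gg_0(T_\mu^*f))\in m_E(\O_0)$ by Definition \ref{d: diffD}, hence on the dense subspace $\cO(\Omega)\otimes\Hom(V,W)$ the map $\diffD^{(E)}$ takes values in the closed subspace $m_E(\O_0)\otimes\Hom(V,W)$ of $\End{E}\otimes\Hom(V,W)$, and continuity of $\diffD^{(E)}$ extends this to all of $\cO(\Omega,\Hom(V,W))$; since $m_E(\O_0)$ is the image of the commutative ring $\O_0$, the two bilinear maps agree on the pairs that actually arise. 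This filled-in detail is genuinely needed and is well argued; the remainder of the proposal (including the unwinding of (a) via commutativity of $\O_0/\cI$) matches the paper.
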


The expression on the right-hand side of (a) should be read as the natural pointwise product
of the functions $J_\cI S \in \cO(\Omega, \cO_0/\cI \otimes \Hom(V_1, V_2))$ and $J_\cI T \in
\cO(\Omega, \cO_0/\cI \otimes \Hom(V_2, V_3)).$
                 \par
For (b) we note that $S^{(E)}$ is holomorphic on $\Omega$ with values in
$\Endo(E) \otimes \Hom(V_1, V_2) \simeq \Hom(E\otimes V_1, E \otimes V_2).$
Likewise, $T^{(E)}$ is a $\Hom(E\otimes V_2, E \otimes V_3)$-valued holomorphic function
on $\Omega.$ Accordingly, the expression on the right-hand side of (b) should be read as
the pointwise composition.

\begin{proof}%PROOF
This follows from the previous corollary combined with Lemma \ref{l: composition of holomorphic homomorphisms}.
\end{proof}

\bibliographystyle{amsplain}
\bibliography{references}

\providecommand{\bysame}{\leavevmode\hbox to3em{\hrulefill}\thinspace}
\providecommand{\MR}{\relax\ifhmode\unskip\space\fi MR }
% \MRhref is called by the amsart/book/proc definition of \MR.
\providecommand{\MRhref}[2]{%
  \href{http://www.ams.org/mathscinet-getitem?mr=#1}{#2}
}
\providecommand{\href}[2]{#2}
\begin{thebibliography}{10}

\bibitem{arthurpw}
J.~Arthur, \emph{A {P}aley-{W}iener theorem for real reductive groups}, Acta
  Math. \textbf{150} (1983), no.~1-2, 1--89.

\bibitem{BSpwspace}
E.~P.~van~den Ban and H.~Schlichtkrull, \emph{Paley-{W}iener spaces for real
  reductive {L}ie groups}, Indag. Math. (N.S.) \textbf{16} (2005), no.~3-4,
  321--349.

\bibitem{boutvs3}
N.~Bourbaki, \emph{Espaces vectoriels topologiques. {C}hapitres 1 \`a 5}, new
  ed., Masson, Paris, 1981, {\'E}l{\'e}ments de math{\'e}matique. [Elements of
  mathematics].

\bibitem{casselman}
W.~Casselman, \emph{Canonical extensions of {H}arish-{C}handra modules to
  representations of {$G$}}, Canad. J. Math. \textbf{41} (1989), no.~3,
  385--438.

\bibitem{delormepw}
P.~Delorme, \emph{Sur le th\'eor\`eme de {P}aley-{W}iener d'{A}rthur}, Ann. of
  Math. (2) \textbf{162} (2005), no.~2, 987--1029.

\bibitem{gangolli}
R.~Gangolli, \emph{On the {P}lancherel formula and the {P}aley-{W}iener theorem
  for spherical functions on semisimple {L}ie groups}, Ann. of Math. (2)
  \textbf{93} (1971), 150--165.

\bibitem{helgasonpw}
S.~Helgason, \emph{Groups and geometric analysis}, Mathematical Surveys and
  Monographs, vol.~83, American Mathematical Society, Providence, RI, 2000,
  Integral geometry, invariant differential operators, and spherical functions,
  Corrected reprint of the 1984 original.

\bibitem{knappvogan}
A.~W. Knapp and D.~A. Vogan, Jr., \emph{Cohomological induction and unitary
  representations}, Princeton Mathematical Series, vol.~45, Princeton
  University Press, Princeton, NJ, 1995.

\bibitem{wallbook1}
Nolan~R. Wallach, \emph{Real reductive groups. {I}}, Pure and Applied
  Mathematics, vol. 132, Academic Press Inc., Boston, MA, 1988.

\bibitem{wallbook2}
\bysame, \emph{Real reductive groups. {II}}, Pure and Applied Mathematics, vol.
  132, Academic Press Inc., Boston, MA, 1992.

\end{thebibliography}
\end{document}